\numberwithin{equation}{section}
\newcommand{\doublewidetilde}[1]{{%
		\mathpalette\double@widetilde{#1}%
}}
\newcommand{\double@widetilde}[2]{%
	\sbox\z@{$\m@th#1\widetilde{#2}$}%
	\ht\z@=.9\ht\z@
	\widetilde{\box\z@}%
}
\def\eps{\varepsilon }
\def\e{\varepsilon}
\newcommand\R{\mathbb R}
\def\eps{\varepsilon}
\def\e{\varepsilon}
\newcommand\br{\begin{remark}}
\newcommand\er{\end{remark}}
\newcommand\brs{\begin{remarks}}
\newcommand\ers{\end{remarks}}
\newcommand\bp{\begin{pmatrix}}
\newcommand\ep{\end{pmatrix}}
\newcommand{\be}{\begin{equation}}
\newcommand{\ee}{\end{equation}}
\newcommand\ba{\begin{equation}\begin{aligned}}
\newcommand\ea{\end{aligned}\end{equation}}
\newcommand{\bap}{\begin{app}}
\newcommand{\eap}{\end{app}}
\newcommand{\begs}{\begin{exams}}
\newcommand{\eegs}{\end{exams}}
\newcommand{\beg}{\begin{example}}
\newcommand{\eeg}{\end{exaplem}}
\newcommand{\bpr}{\begin{proposition}}
\newcommand{\epr}{\end{proposition}}
\newcommand{\bt}{\begin{theorem}}
\newcommand{\et}{\end{theorem}}
\newcommand{\bc}{\begin{corollary}}
\newcommand{\ec}{\end{corollary}}
\newcommand{\bl}{\begin{lemma}}
\newcommand{\el}{\end{lemma}}
\newcommand{\bd}{\begin{definition}}
\newcommand{\ed}{\end{definition}}
\newcommand{\sign}{{\text{\rm sgn }}}
\newcommand{\Id}{{\rm Id }}
\newcommand{\codim}{{\rm codim\, }}
\newcommand{\erf}{{\rm erf }}
\newcommand{\Res}{{\rm Residue}}
\newcommand{\sgn}{\text{\rm sgn}}
\newtheorem{theorem}{Theorem}[section]
\newtheorem{proposition}[theorem]{Proposition}
\newtheorem{corollary}[theorem]{Corollary}
\newtheorem{lemma}[theorem]{Lemma}
\theoremstyle{remark}
\newtheorem{remark}[theorem]{Remark}
\newtheorem{remarks}[theorem]{Remarks}
\theoremstyle{definition}
\newtheorem{definition}[theorem]{Definition}
\newtheorem{example}[theorem]{Example}
\newcommand{\bbR}{{\mathbb{R}}}
\newcommand{\bbC}{{\mathbb{C}}}
\newcommand{\lb}{\label}
\newcommand{\ran}{\text{\rm{ran}}}
\newcommand{\dom}{\text{\rm{dom}}}
\newcommand{\beq}{\begin{equation}}
\newcommand{\eeq}{\end{equation}}
\title{
Spectral decomposition and decay to grossly determined solutions for a simplified BGK model
}
\author{Alim Sukhtayev}
\address{Miami University, Oxford, OH 45056}
\email{sukhtaa@miamioh.edu}
\thanks{Research of A.S. was partially supported
	under NSF grants no. DMS-1910820.}
\author{Kevin Zumbrun}
\address{Indiana University, Bloomington, IN 47405}
\email{kzumbrun@indiana.edu} 
\thanks{Research of K.Z. was partially supported
under NSF grants no. DMS-0300487 and DMS-0801745.}
\begin{document}

\begin{abstract}
Extending work of Carty, we show that $H^1$ solutions of a simplified 1D BGK model 
decay exponentially in $L^2$ to a subclass of the class of 
grossly determined solutions as defined by Truesdell and Muncaster. 
In the process, we determine the spectrum and generalized eigenfunctions of the associated non-selfadjoint linearized
operator and derive the associated generalized Fourier transform and Parseval's identity. 
Notably, our analysis makes use of rigged space techniques originating from quantum mechanics,
as adapted by Ljance and others to the nonselfadjoint case.
\end{abstract}

\date{\today}
\maketitle

\tableofcontents

\section{Introduction}

In this paper, extending a line of inquiry initiated by Carty \cite{C16krm,C17},
we consider the spectral decomposition and decay to grossly determined solutions of a simplified BGK model
\begin{equation}\lb{bgk}
\frac{\partial f}{\partial t}(t,x,v)+v\frac{\partial f}{\partial x}(t,x,v)=-f(t,x,v)+\int_{\bbR}w(r)f(t,x,r)dr,
\end{equation}
where $w(v)=e^{-v^2}/\sqrt{\pi}$ and the unknown function
$f(t, x, v)w(v)$ represents the molecular density function of a rarefied gas,
with $f(x,t,\cdot)w(\cdot)$ corresponding to the probability distribution of velocities $v$ at point $(x,t)$.
The linear integro-partial differential equation \eqref{bgk} was derived by Cercignani in the context of the slip-flow problem \cite{Ce}
as a decoupled density equation by reduction from the full 1D BGK model, itself a simplification of Boltzmann's equation.
Whereas Boltzmann's equation or the full BGK model has $5$ conserved moments, corresponding to mass, momentum, and energy, \eqref{bgk} has only 
one, corresponding to mass: namely, the integral $\rho(x,t):=\int_{\bbR}w(r)f(x,t,r)dr$.

The notion of grossly determined solutions of a kinetic equation, introduced by Truesdell and Muncaster \cite{TM} in the context of Boltzmann's equation, 
consists of a manifold of solutions that is invariant under the time-evolution of the equation, each solution of whose evolution is determined entirely 
by the spatial distribution of its (finitely many) conserved moments.
It was conjectured in \cite{TM} that all solutions of an appropriately defined class converge time-asymptotically to some class of grossly determined solutions,
the evolution of which, depending only on the macroscopic fluid-dynamical quantities corresponding to moments of the kinetic equation,
can be considered as a nonlocal generalization of the classical compressible Navier-Stokes equations.
See \cite{C16krm} for further discussion.

In the suggestive pair of papers \cite{C16krm,C17}, Carty by a combination of
Case's method of elementary solutions \cite{Ca,Ce}, Fourier transform techniques, and direct
first-principles computation, addressed for model \eqref{bgk} the questions of existence and
converence to grossly determined solutions, obtaining a number of interesting results.
In particular, for Fourier modes $\xi\in (-\sqrt{\pi}, +\sqrt{\pi})$, he deduces the spectra
of the generator $L_\xi$ of the associated Fourier transformed evolution equation, and computes associated continuous and point eigenfunctions.
He observes that superpositions of point eigenfunctions make up a family 
of grossly determined solutions \cite[Main Theorem]{C16krm}, 
while continuous eigenfunctions, having spectra with uniformly negative real part, 
are exponentially decaying.
From these observations, he concludes \cite[Discussion, Section 6]{C17} that
{\it elementary solutions}, defined as solutions with initial data $f_0$ having 
Fourier transform $\hat f_0(\xi)$ compactly supported in 
$ (-\sqrt{\pi}, +\sqrt{\pi})$ and satisfying mild additional assumptions (detailed in
\cite[Thm. 10]{C17}),
decay to the class of grossly determined solutions given by superpositions of point eigenfunctions.

However, Carty stops short of stating a precise theorem on convergence. 
In particular, in the absence of a Parseval type identity, it is unclear in what norm the 
(transient) continuous eigenmodes might decay, other than an artificial one defined in terms of
the spectral decomposition itself.
Moreover, the issue of completeness of the spectral decomposition is left unaddressed, even in the class of (necessarily real analytic) solutions with compactly supported Fourier transform considered in
\cite{C17}.
Thus, there are a number of interesting questions left for further investigation in this work.

The purpose of the present paper is to address these remaining questions: more precisely, to place Carty's
specialized computations in a larger functional analytic framework, from which we can then determine
completeness, convergence, etc. in a systematic way.
The framework that we find useful for this problem is the theory of {\it rigged spaces} (e.g., \cite{N60,L70,R83})
introduced in quantum mechanics for the study of spectral decomposition in the presence of continuous spectrum,
and especially the work of Ljance \cite{L70} on ``completely regular'' 
(or ``small-dimensional'' in a certain prescribed sense) perturbations of multiplication operators.

Denote by $L^2_w$ the weighted $L^2$ space of functions of the velocity $v$, with associated inner product
$ (f,g)_{L^2_w}:=\int_{\R}  f(v) \overline{g(v)}w(v)dv$.  Denote by $\mathbb{1}$ the ``unit''
function $\mathbb{1}(x,v)\equiv 1$.
Then, we may rewrite \eqref{bgk} as 
\be\label{rew} 
\partial_tf= Lf,
\; \hbox{\rm where $Lf:= -(v\partial_x + 1)f + ( f, \mathbb{1})_{L^2_w}\mathbb{1}$}.
 \ee
 Evidently, $\mathbb{1}(x,\cdot)\in L^2_w$ is the Maxwellian, or equilibrium state for which the collision operator
 $Q(f):= ( f, \mathbb{1})_{L^2_w}\mathbb{1}-f$ over $L^2_w$ (i.e., the righthand side of \eqref{bgk}) vanishes.

 Taking the Fourier transform of $f$ in $x$, following Carty \cite{C16krm,C17}, we obtain
 \begin{equation*}
 \partial_t \hat f=L_\xi:= -(iv\xi +1) \hat f+ ( \hat f, \mathbb{1})_{L^2_w}\mathbb{1},\,\,\mathbb{1}(v)\equiv 1\in L^2_w(\R),
 \end{equation*}
 where $\hat f$ denotes Fourier transform in $x$ and $\xi$ is the associated Fourier frequency, reducing the problem
 to the study of \eqref{feq} and the spectral decomposition of $L_\xi$ (for ease of writing we use the same symbol $\mathbb{1}$ to denote the constant function over $x,v$ and over $v$ alone).
 Noting that $L_\xi$ decomposes into the sum of the multiplication operator $S_\xi:=-(iv\xi+1)$ and the rank-one
 operator $V:=( \cdot, \mathbb{1})_{L^2_w}\mathbb{1}$, we find ourselves, finally,
 in the setting studied by Ljance \cite{L70}.

 The spectral decomposition of multiplication operator $S_\xi$ consists entirely of essential spectrum,
 with associated generalized eigenfunctions given by delta distributions.
 The content of \cite{L70}, roughly speaking, is that the spectral decomposition of the perturbed operator
 $L_\xi=S_\xi + V$-- or, indeed, any such ``small-dimensional'' perturbation of a multiplication operator
 $S_\xi$ -- consists of the same set of essential spectra as $S_\xi$, 
 with associated generalized eigenfunctions given by generalized delta-functions (in particular, again diagonalizable), 
 together with a (possibly empty) set of isolated point spectra of finite multiplicity.
 These eigenmodes are shown to be complete in the sense that the associated forward and backward
 generalized Fourier transforms satisfy a ``generalized Parseval inequality'' for functions in appropriately restricted domains, 
 relating Hilbert inner product of two functions to their spectral expansions in terms of generalized eigenfunctions,
 or ``generalized Fourier transforms.''
 Moreover, there is presented a calculus based on analytic continuation by which eigenmodes may be
 represented, and in principle computed or estimated.

Here, applying the abstract formalism of \cite{L70} to the rank-one perturbation \eqref{rew}, we show
that the left and right continuous eigenmodes of $L_\xi$ may be computed explicitly, and associated
forward and backward generalized
Fourier transforms $\mathcal{U}_\xi$ and $\mathcal{B}_\xi$ as multiples of a Hilbert transform-- see \eqref{hilbert}.
For Fourier frequencies $\xi\neq \pm \sqrt{\pi}$, this yields estimates
$$
\hbox{\rm 
$\|\mathcal{U }_\xi g\|_{L^2_{w}}\leq C_1(\xi)\|g\|_{L^2_w}$ and $\|\mathcal{B}_\xi f\|_{L^2_{w}}\leq C_2(\xi)\|f\|_{L^2_w}$},
		$$
with constants $C_j$ depending on $\xi$.
For general $\xi$, we have the uniform estimates 
$$
\hbox{\rm 
$ \|\mathcal{U }_\xi g\|_{H^{-1}_{w}}\leq C_1\|g\|_{H^1_w}$ and $\|\mathcal{B}_\xi f\|_{L^2_{w}}\leq C_2\|f\|_{L^2_w}$;}
$$
see Proposition \ref{UB_norm}.  Similar estimates hold for discrete eigenmodes induced by the
rank one perturbation, as encoded by an associated projector $P_{\lambda^*}(\xi)$; see Corollary \ref{P_norm}. 
Together with the generalized Parseval inequality, these yield respectively completness of the spectral
decomposition with respect to $L^2_w$ and uniform bounds from $H^1_w$ to $L^2_w$ of the solution operator 
for \eqref{bgk}; see Theorems \ref{MainThm} and \eqref{main_expension_thm}.
From the latter, we obtain rigorous $H^1_w\to L^2_w$ time-exponential decay bounds on the continuous part of the 
spectral decomposition, yielding time-exponential decay to grossly determined solutions in $L^2_w$ for data 
in $H^1_w$, at the sharp rate $O(e^{-t})$; see Theorems \ref{main1}--\ref{main3}.

An auxiliary argument based on Pr\"uss' Theorem and $C_0$ semigroup estimates gives exponential decay from
$L^2_2\to L^2_w$ at a lesser rate $O(e^{(\epsilon-1)t})$, $\epsilon>0$, to a different grossly determined solution
consisting of an appropriate Fourier truncation of the grossly determined solution of Theorems
\ref{main2}--\ref{main3}; see Theorem \ref{main4}.
In passing, we establish a convenient finite-codimension parametric version of Pr\"uss' theorem, 
Corollary \ref{modpruss}, that appears of independent interest.
We note that in both settings- 
the rigged-space framework of our first set of results, and the semigroup framework of the second-
the appearance of an additional, unbounded, parameter given by the Fourier frequency $\xi$, significantly complicates the analysis by the need for uniformity of all estimates with respect to $\xi$.

These results rigorously recover and in complete the analysis begun in \cite{C16krm,C17}.
The methods used in this rank-one case would appear to apply to any finite-rank perturbation.
In particular, it should apply to the linearization of the full BGK equation, which has a rank-$5$ linearized collision operator corresponding to projection onto the tangent space of the $5$-dimensional manifold of Maxwellians, to yield a similar
result of exponential decay to grossly determined solutions. A very interesting open problem would be to determine the implications as regards decay to grossly determined solutions for the full nonlinear equation.

Another very interesting direction is the study of the full Boltzmann equation with hard potential, for
which \cite{G62} the associated linearized operator $L_\xi$ is a {\it compact} perturbation, hence arbitrarily well approximated by finite-rank ones.
This would be interesting not only from the standpoint of grossly determined solutions, but also
of explicit description of the spectral decomposition of the linearized operator, hopefully giving
detailed estimates like \cite{BM05}, or $L^\infty$ resolvent estimates as conjectured in
\cite{Z17} (see also \cite{PZ16}.

From the standpoint of general theory, our analysis provides a very interesting case study for the 
nonselfadjoint rigged space
framework of \cite{L70}, for which 
essentially all spectral computations can be explicitly carried out-
see the computations of essential and discrete spectra in Theorem
\ref{s:7} and Proposition \ref{discreteprop}-
and the first to our knowledge
in which the theory is applied to obtain time-asymptotic bounds for an interesting physical system.
Moreover, the results highlight what seems to us a fundamental direction for further development of the rigged
space approach to behavior of nonselfadjoint systems, namely, the issue of loss of 
derivatives/unbounded condition number of $\mathcal{U}_\xi$, an extreme version of nonunitarity of
eigenvases for nonselfadjoint operators in general.
Different from the selfadjoint case discussed, e.g., in \cite{A69,A96}, this means that sharp evolutionary behavior
is not immediately obtained from the spectral decomposition of the generator, but may, as here, involve substantial
cancellation between modes, an issue standardly treated by the use of {\it resolvent bounds} in place of exact
spectral decomposition.
The questions suggested here are whether (i) cancellation may (at least in some cases)
instead be detected directly from a very explicit description of the spectral expansion, thus combining
the useful aspects of detailed eigenexpansion and control of conditioning, and
(ii) resolvent estimates or some analog may be obtained from the rigged space formulation itself.
These aspects are discussed further in Sections \ref{s:semigp} and \ref{s:disc}.
\medskip

{\bf Plan of the paper.}
In Section \ref{rigged}, we recall the rigged space framework of \cite{L70}, 
which we use in Sections \ref{s:3}--\ref{s:7} to obtain a detailed spectral decomposition 
of the (linear) scalar BGK model \eqref{bgk}.
In Section \ref{s:evol}, we use the spectral decomposition to obtain existence and decay to grossly determined solutions
of solutions of \eqref{bgk}, with a loss of two derivatives in velocity $v$.
This is repaired in Section \ref{s:semigp} using a different, $C_0$ semigroup argument requiring less spectral detail, 
but giving a lower exponential rate.
We conclude the main text with discussion and perspectives in Section \ref{s:disc}.
Finally, an exact computation of discrete spectrum is given in Appendix \ref{s:discrete}.


\section{Rigged Spaces}\label{rigged}

We consider the Hilbert space $L_w^2(\bbR)$, where $w(x)=e^{-x^2}/\sqrt{\pi}$ will be denoted by $H$. Let
\begin{equation*}
\Omega=\Omega_\e=\{z\in\bbC:|\Im z|<\e\}, \,\,\e>0.
\end{equation*}
\begin{definition}
	An element $\phi\in H$ is called {\it regular} if there exists an extension $z\to\phi(z)$ from the real $x$-axis to the complex $z$-plane, which is holomorphic in $\Omega$. A regular element $\phi\in H$ is called {\it completely regular} if for each $\e_1\in[0,\e)$ the following holds
	\begin{equation*}
	\sup_{y\in[-\e_1,\e_1]}\int_{\bbR}|\phi(x+iy)|^2w(x)dx<\infty.
		\end{equation*}
		The linear spaces of regular and completely regular elements will be denoted by $\Phi_0$ and $\Phi$ respectively.
\end{definition}
The space $\Phi$ (of completely regular elements) is regarded as a sequential
Hilbert space with topology defined by the norms:
\begin{equation*}
||\phi||_y:=\Big\{\frac{1}{2}\int_{\bbR}[|\phi(x+iy)|^2+|\phi(x-iy)|^2]w(x)dx\Big\}^{1/2},\,\,y\in[0,\e).
\end{equation*}
We denote by $\Phi^*$ the space of semilinear continuous functionals defined on $\Phi$, and by $\langle\phi^*,\phi\rangle$ the
value of the functional $\phi^*\in\Phi^*$ at the point $\phi\in\Phi$. For each $\phi^*\in\Phi^*$ there exists $y\in[0,\e)$ such that
\begin{equation}\lb{n1}
|\phi^*|_{-y}=\sup_{0\neq\phi\in\Phi}\frac{|\langle\phi^*,\phi\rangle|}{||\phi||_y}<\infty.
\end{equation}
And, we have the following embeddings:
\begin{equation}\lb{rigging}
\Phi\subset H\subset\Phi^*.
\end{equation} 

We shall now find an analytic representation of the functional $\phi^*\in\Phi^*$. These
functionals will be called generalized elements of the space $H$.

In particular, if the sesquilinear form
\begin{equation*}
{}_{\Phi^*}\langle\cdot, \cdot\rangle_{\Phi} : \Phi^*\times\Phi\to \bbC
\end{equation*}
denotes the duality pairing between $\Phi^*$ and $\Phi$, then
\begin{equation}\label{inclusion}
{}_{\Phi^*}\langle f, \phi\rangle_{\Phi} =(f,\phi)_{L^2_w(\bbR)},\,\,\,f\in L^2_w(\bbR),\,\phi\in\Phi,
\end{equation}
that is, the pairing ${}_{\Phi^*}\langle\cdot, \cdot\rangle_{\Phi}$ is compatible with the inner product in $L^2_w(\bbR)$. Let $R\in\mathcal{B}(\Phi,\Phi^*)$. Since $\Phi$ is reflexive, $(\Phi^*)^*=\Phi$, one has 
\begin{equation*}
R:\Phi\to\Phi^*,\,\,\,\,\,R^*:\Phi\to\Phi^*.
\end{equation*}
Moreover, $R^*$ is defined as follows
\begin{equation}\label{defadj}
{}_{\Phi^*}\langle R\phi, \psi\rangle_{\Phi}={}_{(\Phi^*)^*}\langle \phi, R^*\psi\rangle_{\Phi^*}={}_{\Phi}\langle \phi, R^*\psi\rangle_{\Phi^*}=\overline{{}_{\Phi^*}\langle R^*\psi, \phi\rangle_{\Phi}}.
\end{equation}
From this point on, we will drop subscripts in \eqref{defadj}.
\begin{definition}
	We denote by $\Phi^*_{-\eta}$ the class of complex-valued functions $z\to\phi^*(z)$ which
	are holomorphic for $|\Im z| > \eta$ and satisfy the condition
	\begin{equation*}
	\sup_{|y|>\eta}\int_{\bbR}|\phi^*(x+iy)|^2w(x)dx<\infty.
	\end{equation*}
\end{definition}

For each function $\phi^*\in\Phi^*_{-\eta}$ the boundary values
$\phi^*(x\pm i\eta)$ exist for almost all $x\in\bbR$. In addition,
\begin{equation*}
|\phi^*(z)w^{1/2}(z)|\to0\,\,\hbox{as}\,\,|z|\to\infty
\end{equation*}
uniformly in the region $|\Im z|>\eta$.
\begin{definition}
	For arbitrary $\phi\in\Phi$ and $\phi^*(\cdot)\in\Phi^*_{-\eta}$ we define
	\begin{align}\lb{fun}
	\begin{split}
	\langle\phi^*,\phi\rangle:&=\int_{\gamma}\phi^*(z)\overline{\phi(\overline z)}w(z)dz,\\
	\langle\phi,\phi^*\rangle:&=\overline{\langle\phi^*,\phi\rangle},
	\end{split}
	\end{align}
	where 
	\begin{equation}\label{congamma}
	\int_{\gamma}:=\int_{-\infty-i\gamma}^{\infty-i\gamma}-\int_{-\infty+i\gamma}^{\infty+i\gamma}
	\end{equation}
	and $\gamma$ is an arbitrary number in $[\eta,\e)$.
\end{definition}
Clearly, \eqref{fun} defines a generalized functional $\phi^*\in\Phi^*$. We say that the function
$\phi^*(\cdot)$ in \eqref{fun} represents the functional $\phi^*$. Moreover, one can show that for each generalized element there exists a unique representing function.
\begin{remark}
	Let $\phi^*\in\Phi^*$ be represented by $\phi^*(\cdot)$. We define
	\begin{equation}\lb{n2}
	||\phi||_{-\eta}:=\Big\{\frac{1}{2}\int_{\bbR}[|\phi^*(x+i\eta)|^2+|\phi^*(x-i\eta)|^2]w(x)dx\Big\}^{1/2}.
	\end{equation}
	Using the Fourier transform, one can show that the norm \eqref{n1} is equivalent to the norm \eqref{n2}.
\end{remark}

\section{The simplified BGK model}\label{s:3}

We consider now the linear integro-partial differential equation 
\begin{equation*}
\frac{\partial f}{\partial t}(t,x,v)+v\frac{\partial f}{\partial x}(t,x,v)=-f(t,x,v)+\int_{\bbR}w(r)f(t,x,r)dr,
\end{equation*}
a simplification of the 1D Boltzmann equation for the slip-flow problem \cite{Ce},
where $f(t, x, v)w(v)$ represents the molecular density function of a rarefied gas.
%
Here, the role of  $\phi\in H$ in the previous section is played by
the unknown $f(v)\in L^2_w$, with weight $w(v)=e^{-v^2}/\sqrt{\pi}$ corresponding to a Maxwellian distribution.

\subsection{Associated Spectral Problem}
We first take the Fourier transform of equation \eqref{bgk} in the spatial variable, that is,
\begin{equation}\lb{bgk1}
\frac{\partial \hat f}{\partial t}(t,\xi,v)=-vi\xi{\hat f}(t,\xi,v)-\hat f(t,\xi,v)+\int_{\bbR}w(r)\hat f(t,\xi,r)dr.
\end{equation}
Next, we introduce the following operator associated with the right-hand side of \eqref{bgk1} and acting in the weighted $L^2$ space $L^2_w$, which is the standard one considered for Boltzmann's equation \cite{G62}.
\begin{align*}
\begin{split}
(L g)(\xi,v):&=-vi\xi g(\xi,v)-g(\xi,v)+\int_{\bbR}w(r)g(\xi,r)dr,\\
g\in\dom(L)&=\{h\in L^2(\bbR^2;w(v)d\xi dv):L g\in L^2(\bbR^2;w(v)d\xi dv)\},
\end{split}
\end{align*}
and
\begin{align*}
\begin{split}
(L_{\xi} g)(v):&=-vi\xi g(v)-g(v)+\int_{\bbR}w(r)g(r)dr,\\
g\in\dom(L_\xi)&=\{h\in L^2_w(\bbR;dv):L_\xi g\in L^2_w(\bbR; dv)\}.
\end{split}
\end{align*}
We also decompose $L_{\xi}$ into the sum $M_\xi$ and $V$, i.e.
\begin{align}\lb{L}
\begin{split}
L_{\xi} :&=M_\xi+V,\,\,\dom(L_{\xi})=\dom(M_\xi),\\
(M_\xi g)(v):&=-vi\xi g(v)-g(v),\\
g\in\dom(M_\xi)&=\{h\in L^2_w(\bbR;dv):M_\xi h\in L^2_w(\bbR; dv)\},\\
(Vg)(v):&=(g,\mathbb{1})_{L^2_w}\mathbb{1},\,\,\mathbb{1}=1 \,\,\hbox{for a.e.}\,\,v\in\bbR .
\end{split}
\end{align}
Notice that $V^2=V^*=V\in\mathcal{B}(L^2_w(\bbR;dv))$.

\subsection{Spectrum of $L_{\xi}$}

\begin{definition}
	A closed operator $M\in\mathscr{C}(X)$, where $X$ is a Banach space is said to be semi-Fredholm if $\ran(M)$ is closed and at least one of $\dim\ker(M)$ and $\codim\ran(M)$ is finite.
\end{definition}

\begin{definition}[Kato]\label{def}
	Let $\Delta$ be the set of all complex numbers $\lambda$ such that $M-\lambda I$ is semi-Fredholm. The essential spectrum of $M$ denoted by $\sigma_{ess}(M)$ is the set of all complex numbers that are in the complementary set of $\Delta$, that is,
	\begin{multline*}
	\sigma_{ess}(M):=\{\lambda\in\bbC: \hbox{either $\ran(M-\lambda I)$ is not closed or}\\ 
		\hbox{$\ran(M-\lambda I)$ is closed, but $\dim\ker(M-\lambda I)=\codim\ran(M-\lambda I)=\infty$}\}.
	\end{multline*} 
\end{definition}

\begin{remark}
	In general $\Delta$ from Definition \ref{def} is the union of a countable number of components $\Delta_n$, and $\nu_n(\lambda):=\dim\ker(M-\lambda I)$, $\mu_n(\lambda):=\codim\ran(M-\lambda I)$ are constant in each $\Delta_n$ except for an isolated set of values $\lambda_{nj}$ of $\lambda$. If $\nu_n=\mu_n=0$, $\Delta_n$ is a subset of $\rho(M)$ except for the isolated eigenvalues $\lambda_{nj}$ of $M$ with finite algebraic multiplicities. If $\nu_n>0$, $\lambda_{nj}$ behave like 'isolated eigenvalues', in the sense that their geometric multiplicities are larger than geometric multiplicities of eigenvalues that are in their immediate neighborhood.  
\end{remark}
Since $M_\xi$ in \eqref{L} is a multiplication operator, it is clear that $\sigma(M_\xi)=\sigma_{ess}(M_\xi)=\{\lambda\in\bbC:\lambda=-1+i\omega,\omega\in\bbR \}$ for $\xi\neq0$, and $\sigma(M_{0})=\sigma_{ess}(M_{0})=\{-1 \}$. Moreover, since the operator $V$ is $M_\xi$-compact (or relatively compact with respect $M_\xi$), by \cite[Theorem 5.35.]{Kato},  
\begin{align}\lb{ess}
\begin{split}
	\sigma_{ess}(L_{\xi})&=\sigma_{ess}(M_\xi)=\{\lambda\in\bbC:\lambda=-1+i\omega,\omega\in\bbR \},\,\,\xi\neq0,\\
	\sigma_{ess}(L_{0})&=\sigma_{ess}(M_{0})=\{-1 \}.
\end{split}
\end{align}

\begin{definition}[Kato]
	An operator $V$ is said to be $M$-degenerate If $V$ is $M$-bounded and $\dim\ran(V)$ is finite. One can show that a $M$-degenerate operator is $M$-compact.
\end{definition}

 Let $V$ be $M$-degenerate, then for any $\lambda\in\rho(M)$, 
 \begin{align}\lb{K}
 	\begin{split}
 	\tilde K(\lambda):&=I+VR^0(\lambda)=I+V(M-\lambda I)^{-1},\\
 	\tilde{\tilde K}(\lambda):&=I+R^0(\lambda)V=I+(M-\lambda I)^{-1}V
 	\end{split}
 \end{align} are bounded operators in $\mathcal{B}(L^2_w(\bbR))$ and 
 \begin{equation*}
 	\omega(\lambda)=\det(I+V(M-\lambda I)^{-1})=\det(I+(M-\lambda I)^{-1}V)
 \end{equation*}
is well defined. 

Hence, for $M_\xi$ and $V$ from \eqref{L} we can rewrite $\tilde K(\lambda,\xi)$ and $\tilde{\tilde K}(\lambda, \xi)$ from \eqref{K} as
\begin{align*}
\begin{split}
\tilde K(\lambda,\xi)&=I+(\cdot,(R^0(\lambda,\xi))^*\mathbb{1})_{L^2_w}\mathbb{1},\\
\tilde{\tilde K}(\lambda, \xi)&=I+(\cdot,\mathbb{1})_{L^2_w}R^0(\lambda,\xi)\mathbb{1}.
\end{split}
\end{align*} 
We also introduce the operator $K(\lambda, \xi):=I+VR^0(\lambda, \xi)V=I+V(M_\xi-\lambda I)^{-1}V\in \mathcal{B}(L^2_w(\bbR))$.
Notice that for $M_\xi$ and $V$ from \eqref{L}

\begin{align*}
\begin{split}
\omega(\lambda,\xi)=\det(K(\lambda,\xi))=\det(\tilde K(\lambda,\xi))=\det(\tilde{\tilde K}(\lambda,\xi))&=\det(1+(R^0(\lambda,\xi)\mathbb{1},\mathbb{1})_{L^2_w})\\
&=1-\int_\mathbb{R}\frac{w(v)dv}{vi\xi+1+\lambda}.
\end{split}
\end{align*}
In general, $\omega$ and $K$ are meromorphic functions of $\lambda$ in any domain of the complex plane consisting of points of $\rho(M)$ and of isolated eigenvalues of $M$ with finite algebraic multiplicities. In our case, $\omega(\cdot,\xi)$ and $K(\cdot,\xi)$ are analytic functions in any domain of the complex plane consisting of points of $\rho(M_\xi)$.
Let $\mu(\lambda)$ be a numerical meromorphic function defined in a domain $\Delta$ of the complex plane. We define the multiplicity function $v(\lambda,\mu)$ of $\mu$ by
\begin{equation*}
v(\lambda,\mu)=\begin{cases}
k, & \text{if $\lambda$ is a zero of $\mu$ of order $k$},\\
-k, & \text{if $\lambda$ is a pole of $\mu$ of order $k$},\\
0, & \text{for other $\lambda\in\Delta$}.
\end{cases}
\end{equation*}
We also define the multiplicity function $\tilde{\nu}(\lambda,L)$, where $L=M+V$ by
\begin{equation*}
\tilde{\nu}(\lambda,L)=\begin{cases}
0, & \text{if $\lambda\in\rho(L)$},\\
\dim(P), & \text{if $\lambda$ is an isolated point of $\sigma(L)$},\\
+\infty, & \text{otherwise},
\end{cases}
\end{equation*}
where $P$ is the projection associated with the isolated point of $\sigma(M)$.
The following theorem holds
\begin{theorem}
	 Fix $\xi$. Let $M_\xi$ be the operator from \eqref{L} and $\lambda\in\bbC\setminus\sigma_{ess}(M_\xi)$. Then
	\begin{equation*}
	\tilde{\nu}(\lambda,L_{\xi})=v(\lambda,\omega(\cdot,\xi)),\,\,\lambda\in\bbC\setminus\sigma_{ess}(M_\xi).
	\end{equation*}
	Moreover, if $\lambda\in\sigma(L_{\xi})\setminus\sigma_{ess}(L_{\xi})$, then $\lambda$ is a zero of the function $\omega(\cdot,\xi)$ and an isolated eigenvalue of $L_{\xi}$, and the algebraic multiplicity of $\lambda$ as an eigenvalue of $L_{\xi}$ coincides with the order of $\lambda$ as a zero of $\omega(\cdot,\xi)$. Moreover, the operator $K^{-1}(\lambda,\xi)$ exists and is bounded in $H$ for all $\lambda\notin\sigma_{ess}(L_{\xi})$ except for a finite number of $\lambda_j(\xi)$. And $\sigma_d(L_{\xi})=\{\lambda_j(\xi)\}$. Finally, if $\lambda\in\rho(L_{\xi})$, then
		\begin{equation}\lb{perR}
		R(\lambda,\xi)=R^0(\lambda,\xi)-R^0(\lambda,\xi)VK^{-1}(\lambda,\xi)VR^0(\lambda,\xi).
		\end{equation}
	
\end{theorem}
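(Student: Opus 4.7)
The plan is to exploit the rank-one structure of $V$ to reduce the spectral analysis of $L_\xi = M_\xi + V$ to a scalar condition $\omega(\lambda,\xi)\neq 0$, and then to invoke the Weinstein--Aronszajn formula for the multiplicity claim. The resolvent identity and finiteness of $\sigma_d(L_\xi)$ both fall out of this reduction.

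First, for $\lambda\in\rho(M_\xi)$ I would write the factorization
\[
L_\xi - \lambda I = (M_\xi - \lambda I)(I + R^0(\lambda,\xi)V) = (M_\xi - \lambda I)\,\tilde{\tilde K}(\lambda,\xi),
\]
so that $\lambda\in\rho(L_\xi)$ exactly when $\tilde{\tilde K}(\lambda,\xi)$ is boundedly invertible on $L^2_w$. Since $V\phi = (\phi,\mathbb{1})_{L^2_w}\mathbb{1}$ has one-dimensional range and $V^2 = V$ (using $(\mathbb{1},\mathbb{1})_{L^2_w}=1$), Sylvester's determinant identity collapses the infinite-dimensional determinants to the scalar
\[
\det \tilde{\tilde K}(\lambda,\xi) = \det K(\lambda,\xi) = \omega(\lambda,\xi) = 1 - \int_\bbR \frac{w(v)\,dv}{vi\xi + 1 + \lambda},
\]
already computed just above the statement. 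A direct one-dimensional calculation shows that on $\ran(V)$, $K(\lambda,\xi)$ acts as multiplication by $\omega(\lambda,\xi)$ and as the identity on its complement; consequently $K^{-1}(\lambda,\xi)$ exists and is bounded precisely when $\omega(\lambda,\xi)\neq 0$, with $VK^{-1}V = \omega(\lambda,\xi)^{-1}V$.

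For the multiplicity identity $\tilde\nu(\lambda,L_\xi) = v(\lambda,\omega(\cdot,\xi))$ I would invoke the Weinstein--Aronszajn formula for $M$-degenerate perturbations, a specialization of the general result in \cite{Kato} and stated in the form needed here in \cite{L70}. Finiteness of $\sigma_d(L_\xi)$ then follows because $\omega(\cdot,\xi)$ is analytic on $\bbC\setminus\sigma_{ess}(M_\xi)$ and, by dominated convergence on the defining integral, $\omega(\lambda,\xi)\to 1$ as $|\lambda|\to\infty$ uniformly in each connected component of $\bbC\setminus\sigma_{ess}(M_\xi)$. Combining this with explicit control of the Plemelj--Sokhotski boundary values of $\omega$ along $\sigma_{ess}(M_\xi)$ confines the zeros of $\omega$ to a compact set bounded away from the essential spectrum, where analyticity forces only finitely many. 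For the resolvent formula \eqref{perR}, I would apply the Sherman--Morrison identity: using $VR^0V = (\omega-1)V$ and $VK^{-1}V = \omega^{-1}V$, direct verification of $(M_\xi + V - \lambda)(R^0 - R^0 V K^{-1} V R^0) = I$ collapses to the identity.

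The main obstacle I expect is the multiplicity identification in full rigor. Matching the order of the zero of $\omega(\cdot,\xi)$ at $\lambda_j(\xi)$ to the algebraic multiplicity of $\lambda_j(\xi)$ as a discrete eigenvalue of $L_\xi$ requires extracting the Riesz projection $P_{\lambda_j}$ as the contour residue of the resolvent \eqref{perR} and computing its rank from the Laurent expansion of $\omega^{-1}$ together with the rank-one structure of $V$. For a simple zero this is immediate; for higher-order zeros the interplay between the pole of $\omega^{-1}$ and the nilpotent part on the generalized eigenspace must be tracked carefully, and it is here that the abstract framework of \cite{L70} is most useful.
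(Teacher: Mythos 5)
Your proposal is correct and takes essentially the same approach as the paper, which disposes of this theorem with a one-line citation to \cite[Theorem 6.2]{Kato} (Kato's Weinstein--Aronszajn theory for degenerate perturbations). Your fleshing-out --- the factorization $L_\xi-\lambda=(M_\xi-\lambda)\tilde{\tilde K}(\lambda,\xi)$, the Sylvester collapse $\det\tilde{\tilde K}=\det K=\omega$, the identity $VK^{-1}V=\omega^{-1}V$, the Sherman--Morrison verification of \eqref{perR}, and finiteness of $\sigma_d$ from $\omega\to 1$ at infinity --- is exactly what Kato's cited theorem packages, and you correctly identify the multiplicity matching (order of zero of $\omega$ vs.\ rank of the Riesz projection from the Laurent expansion of $\omega^{-1}$) as the one step that genuinely requires the abstract framework rather than a direct calculation.
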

\begin{proof}
	It follows from  \cite[Theorem 6.2]{Kato} after a slight modification.
\end{proof}
Next, we describe the discrete spectrum of $L_{\xi}$.
\begin{proposition}\lb{disspec} For any $\xi\in(-\sqrt{\pi},\sqrt{\pi})$ there exists a unique $\lambda^*(\xi)\in(-1,0]$ such that $\omega(\lambda^*(\xi),\xi)=0$. Moreover, the multiplicity of such $\lambda^*(\xi)$ as a zero of $\omega(\cdot,\xi)$ is one. And if $\xi\in\mathbb{C}\setminus(-\sqrt{\pi},\sqrt{\pi})$, then $\omega(\cdot,\xi)$ does not vanish. Moreover, $\lambda^*(\cdot)$ is a continuous function of $\xi$ for $\xi\in(-\sqrt{\pi},\sqrt{\pi})$,  $-1<\lambda^*(\xi)\leq0$, $\lim_{\xi\to-\sqrt{\pi}^+}\lambda^*(\xi)=-1$ and $\lim_{\xi\to\sqrt{\pi}^-}\lambda^*(\xi)=-1$.
	\end{proposition}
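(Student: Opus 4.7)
\medskip
\noindent\emph{Proof plan.}
The strategy is to derive a Laplace-type integral representation of $\omega(\lambda,\xi)$ which makes its dependence on $\lambda$ transparent, then reduce the proposition to an intermediate-value argument on the real axis plus implicit function continuity. For $\Re(1+\lambda)>0$, Fubini applied to
\begin{equation*}
\omega(\lambda,\xi) \;=\; 1 - \int_{\bbR} w(v)\int_0^\infty e^{-(1+\lambda+iv\xi)s}\,ds\,dv,
\end{equation*}
together with the Gaussian identity $\int_{\bbR}w(v)e^{-iv\zeta}\,dv = e^{-\zeta^2/4}$, gives
\begin{equation*}
\omega(\lambda,\xi) \;=\; 1 \;-\; \int_0^\infty e^{-(1+\lambda)s - \xi^2 s^2/4}\,ds,
\end{equation*}
and completing the square yields the equivalent closed form $\omega(\lambda,\xi) = 1 - (\sqrt{\pi}/\xi)\,\mathrm{erfcx}((1+\lambda)/\xi)$ in terms of the scaled complementary error function. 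This formula extends analytically in $\xi$ off the natural cut and is the key analytic tool.

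With the representation in hand, for real $\xi$ and real $\lambda\in(-1,\infty)$ the function $\omega$ is real, and
\begin{equation*}
\partial_\lambda \omega(\lambda,\xi) \;=\; \int_0^\infty s\, e^{-(1+\lambda)s - \xi^2 s^2/4}\,ds \;>\; 0,
\end{equation*}
so $\omega$ is strictly increasing in $\lambda$. The relevant boundary values are read off directly: $\omega(0,\xi) = 1 - \int_0^\infty e^{-s - \xi^2 s^2/4}\,ds$ vanishes iff $\xi = 0$ and is strictly positive otherwise, while $\lim_{\lambda\to -1^+}\omega(\lambda,\xi) = 1 - \sqrt{\pi}/|\xi|$ (for $\xi\neq 0$) is negative precisely when $|\xi|<\sqrt{\pi}$. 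Monotonicity plus the intermediate value theorem then give a unique real zero $\lambda^*(\xi)\in(-1,0]$ for each $\xi\in(-\sqrt{\pi},\sqrt{\pi})$, with $\lambda^*(0)=0$, and no real zero in $(-1,0]$ for $|\xi|\geq\sqrt{\pi}$. Simplicity of the zero follows from the positivity of $\partial_\lambda\omega(\lambda^*(\xi),\xi)$; continuity of $\lambda^*(\cdot)$ from the implicit function theorem; and the one-sided limits $\lim_{\xi\to\pm\sqrt{\pi}}\lambda^*(\xi) = -1$ from $1 - \sqrt{\pi}/|\xi|\to 0$ together with strict monotonicity.

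The main obstacle, and the only nontrivial complex-analytic point, is to show that $\omega(\cdot,\xi)$ has no zeros in $\bbC\setminus\sigma_{\mathrm{ess}}(L_\xi)$ for $\xi\in\bbC\setminus(-\sqrt{\pi},\sqrt{\pi})$; that is, to exclude complex non-real zeros. I would first exploit the symmetries $\omega(\overline\lambda,\overline\xi) = \overline{\omega(\lambda,\xi)}$ and $\omega(\lambda,-\xi) = \omega(\lambda,\xi)$ (both verified by the substitution $v\mapsto -v$ in the defining integral) to reduce to a single quadrant in $\xi$. The $\mathrm{erfcx}$ representation then converts the zero equation to the transcendental condition $\mathrm{erfcx}(z) = \xi/\sqrt{\pi}$ for $z = (1+\lambda)/\xi$, whose solution set on the relevant half-planes is controlled by the known monotone decay of $\mathrm{erfcx}$ on the positive real axis and by an argument-principle / contour-deformation analysis on the complement. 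The detailed verification of this zero count---and in particular the absence of off-axis solutions---is precisely the content of Appendix~\ref{s:discrete}, to which I would defer; combined with the real-axis analysis above, it yields the full statement of the proposition.
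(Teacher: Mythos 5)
Your real-axis analysis via the Laplace representation
\begin{equation*}
\omega(\lambda,\xi)\;=\;1-\int_0^\infty e^{-(1+\lambda)s-\xi^2 s^2/4}\,ds
\end{equation*}
is a cleaner and more direct route than the paper's: the paper obtains monotonicity in $\lambda$ and the one-sided limits at $\lambda=-1$ by repeated differentiation under the integral sign and dominated-convergence computations (equations \eqref{omega1}, \eqref{omega2}, \eqref{omegainf}, \eqref{deromega1}, \eqref{deromega2}), whereas your representation makes $\partial_\lambda\omega>0$ and the boundary values $\omega(0,\xi)>0$, $\lim_{\lambda\to-1^+}\omega=1-\sqrt{\pi}/|\xi|$ immediate. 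Your IVT argument on $(-1,\infty)$, the simplicity of the zero from $\partial_\lambda\omega>0$, continuity via the implicit function theorem, and the boundary limits as $\xi\to\pm\sqrt\pi$ all line up with what the paper proves and are correct.

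However, there is a genuine gap at the one place you flagged as nontrivial. You assert that the absence of non-real $\lambda$-zeros ``is precisely the content of Appendix~\ref{s:discrete}.'' It is not. Appendix~\ref{s:discrete} (Proposition~\ref{discreteprop}) derives singular ODEs and closed-form expressions for $\lambda^*(\xi)$ and for $\omega(\lambda,\xi)$, and it does so by \emph{assuming} the existence and uniqueness of the real zero $\lambda^*(\xi)$ established in Proposition~\ref{disspec}; it contains no argument-principle count of complex zeros and cannot be invoked here without circularity. The paper handles this point directly and cheaply at the start of its proof: writing $\lambda=a+ib$, it observes that
\begin{equation*}
\Im\!\int_\bbR\frac{w(v)\,dv}{vi\xi+1+\lambda}\;=\;\int_\bbR\frac{-(v\xi+b)\,w(v)\,dv}{(1+a)^2+(v\xi+b)^2}
\end{equation*}
is nonzero whenever $b\neq0$ (a sign argument using that $w$ is a decreasing function of $|v|$, so after splitting about $u=v\xi+b=0$ the two Gaussian tails do not cancel), hence $\omega(\lambda,\xi)\neq 0$ off the real line. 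That single observation immediately reduces everything to real $\lambda$, and is much shorter than the contour-deformation program you sketch for $\mathrm{erfcx}$. You also leave the range $\lambda\in(-\infty,-1)$ unaddressed; your Laplace representation only converges for $\Re(1+\lambda)>0$, so it cannot be used there, but the original formula gives $\omega(\lambda,\xi)=1-(1+\lambda)\int_\bbR\frac{w\,dv}{(1+\lambda)^2+(v\xi)^2}>1$ for real $\lambda<-1$, ruling out zeros there for every real $\xi$. Both of these pieces---the imaginary-part argument for $b\neq0$ and the trivial positivity for $\lambda<-1$---are needed and missing from your proposal.
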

\begin{proof}
	Notice that for any fixed $\xi$, $\omega(\cdot,\xi)$ is defined only for $\lambda\in\bbC\setminus\sigma_{ess}(M_\xi)$. Let $\lambda\in\bbC\setminus\sigma_{ess}(M_\xi)$ and $\lambda=a+ib$, then
	\begin{equation*}
	\int_\mathbb{R}\frac{w(v)dv}{vi\xi+1+\lambda}=\int_\mathbb{R}\frac{w(v)dv}{(v\xi+b)i+1+a}=\int_\mathbb{R}\frac{(1+a-(v\xi+b)i)w(v)dv}{(1+a)^2+(v\xi+b)^2}.
	\end{equation*}
	Notice that if $b:=\Im\lambda\neq0$, then $\int_\mathbb{R}\frac{-(v\xi+b)w(v)dv}{(1+a)^2+(v\xi+b)^2}\neq0$ which implies that $\omega(\cdot,\xi)$ does not vanish. On the other hand, when $b=0$, $\int_\mathbb{R}\frac{-(v\xi)w(v)dv}{(1+a)^2+(v\xi)^2}=0$ (the integrand is an odd function). Hence,
	
	\begin{equation*}
	\int_\mathbb{R}\frac{w(v)dv}{vi\xi+1+\lambda}=\int_\mathbb{R}\frac{(1+\lambda)w(v)dv}{(1+\lambda)^2+(v\xi)^2}.
	\end{equation*}
	Note that if $\lambda\in\mathbb{R}$, then $\lambda\in\bbC\setminus\sigma_{ess}(M_\xi)$ if and only if $\lambda\neq-1$. Next, we analyze the function $\omega(\cdot,\xi)$ for $\lambda\in(-\infty,-1)\cup(-1,\infty)$. In particular, if $\xi\neq0$, by the dominated convergence theorem,
	\begin{align}\lb{omega1}
	\begin{split}
	\lim_{\lambda\to-1^+}\omega(\lambda,\xi)&=\lim_{\lambda\to-1^+}\big(1-\int_\mathbb{R}\frac{(1+\lambda)w(v)dv}{(1+\lambda)^2+(v\xi)^2}\big)\stackrel{v=(1+\lambda)u}{=}\lim_{\lambda\to-1^+}\big(1-\frac{1}{\sqrt{\pi}}\int_\mathbb{R}\frac{e^{-(1+\lambda)^2u^2}du}{1+(u\xi)^2}\big)\\
	&=\big(1-\frac{1}{\sqrt{\pi}}\int_\mathbb{R}\frac{du}{1+(u\xi)^2}\big)=\big(1-\frac{1}{\sqrt{\pi}|\xi|}\arctan(|\xi| u)\Big|^{\infty}_{-\infty}\big)=1-\frac{\sqrt{\pi}}{|\xi|}.
	\end{split}
	\end{align}
	On the other hand, if $\xi=0$
	\begin{align*}
	\begin{split}
	\lim_{\lambda\to-1^+}\omega(\lambda,0)&=\lim_{\lambda\to-1^+}\big(1-\int_\mathbb{R}\frac{(1+\lambda)w(v)dv}{(1+\lambda)^2}\big)=\lim_{\lambda\to-1^+}(1-\frac{1}{1+\lambda})=-\infty.
	\end{split}
	\end{align*}
	Similarly, if $\xi\neq0$
	\begin{align}\lb{omega2}
	\begin{split}
	\lim_{\lambda\to-1^-}\omega(\lambda,\xi)=1+\frac{\sqrt{\pi}}{|\xi|}.
	\end{split}
	\end{align}
	And, if  $\xi=0$, $\lim_{\lambda\to-1^-}\omega(\lambda,0)=\infty$.
By applying the dominated convergence theorem, one can also show that for any $\xi\in\bbR$
\begin{align}\lb{omegainf}
	\begin{split}
		\lim_{|\lambda|\to\infty}\omega(\lambda,\xi)=1.
	\end{split}
\end{align}
	Next, we calculate the derivative of $\omega(\xi,
	\cdot)$ by applying a corollary of the dominated convergence theorem on differentiation under integral sign. If $\lambda>-1$, then
	\begin{align}\lb{deromega1}
	\begin{split}
	\frac{d}{d\lambda}\omega(\lambda,\xi)&=\frac{d}{d\lambda}\big(1-\int_\mathbb{R}\frac{(1+\lambda)w(v)dv}{(1+\lambda)^2+(v\xi)^2}\big)\stackrel{v=(1+\lambda)u}{=}\frac{d}{d\lambda}\big(1-\frac{1}{\sqrt{\pi}}\int_\mathbb{R}\frac{e^{-(1+\lambda)^2u^2}du}{1+(u\xi)^2}\big)\\
	&=-\frac{1}{\sqrt{\pi}}\int_\mathbb{R}\frac{-2(1+\lambda)u^2e^{-(1+\lambda)^2u^2}du}{1+(u\xi)^2}=\frac{2(\lambda+1)}{\sqrt{\pi}}\int_\mathbb{R}\frac{u^2e^{-(1+\lambda)^2u^2}du}{1+(u\xi)^2}>0\\&\,\,\,\hbox{(the integrand is a positive cotinuous function).}
	\end{split}
	\end{align}
	Similarly, if $\lambda<-1$, then
	\begin{align}\lb{deromega2}
	\frac{d}{d\lambda}\omega(\lambda,\xi)=\frac{-2(\lambda+1)}{\sqrt{\pi}}\int_\mathbb{R}\frac{u^2e^{-(1+\lambda)^2u^2}du}{1+(u\xi)^2}>0.
	\end{align}
	By \eqref{omega1}, \eqref{omega2}, \eqref{omegainf}, \eqref{deromega1} and \eqref{deromega2}, we see that for each fixed $\xi\in(-\infty,-\sqrt{\pi}]\cup[\sqrt{\pi},\infty)$ $\omega(\cdot,\xi)$ doesn't vanish for any $\lambda\in(-\infty,-1)\cup(-1,\infty)$ (also, see Figure \ref{fig:sqrtpi}).
	Now, we consider three different cases: $\xi\in(0,\sqrt{\pi})$, $\xi=0$ and $\xi\in(-\sqrt{\pi},0)$.\\
	Case 1. Let $\xi\in(0,\sqrt{\pi})$. From \eqref{omega1}, \eqref{omega2}, \eqref{omegainf}, \eqref{deromega1} and \eqref{deromega2} it is clear that there exists a unique $\lambda^*(\xi)\in(-1, \infty)$ such that $\omega(\lambda^*(\xi),\xi)$=0 (also, see Figure \ref{fig:1}). Next, we rewrite the function $\omega$ in the following way:
	\begin{equation*}
	\omega(\lambda,\xi)=1-\frac{1}{\xi}f\Big(\frac{1+\lambda}{\xi}\Big),
	\end{equation*}
	where $f(x)=\int_\mathbb{R}\frac{xw(v)dv}{x^2+v^2}$.  By \eqref{omega1}, \eqref{omega2} and \eqref{omegainf}, we see that
	\begin{align}\lb{limfx}
	\begin{split}
	\lim_{x\to0^+}f(x)=\sqrt{\pi},\,\,\lim_{x\to0^-}f(x)=-\sqrt{\pi},\,\,\lim_{|x|\to\infty}f(x)=0.
	\end{split}
	\end{align}
	Moreover,
	\begin{equation*}
\frac{d}{d\lambda}\omega(\lambda,\xi)=-\frac{1}{\xi^2}f'\Big(\frac{1+\lambda}{\xi}\Big)
	\end{equation*}
	Therefore, $f$ is a one-to-one differentiable function on $(-\infty,0)\cup(0,\infty)$. 
	Next, for $\xi\in(0,\sqrt{\pi})$, we find a unique $\lambda^*(\xi)\in(-1, \infty)$ such that $\omega(\lambda^*(\xi),\xi)$=0, that is, $0=\omega(\lambda^*(\xi),\xi)=1-\frac{1}{\xi}f\Big(\frac{1+\lambda^*(\xi)}{\xi}\Big)$, or,
	\begin{align*}
	\begin{split}
	f\Big(\frac{1+\lambda^*(\xi)}{\xi}\Big)=\xi,\,\,\hbox{or,}\,\,\,\lambda^*(\xi)=\xi f^{-1}(\xi)-1.
	\end{split}
	\end{align*}
	Hence,
	\begin{align*}
	\begin{split}
	\lambda^*(\xi)&=\xi f^{-1}(\xi)-1=xf(x)-1=\int_\mathbb{R}\frac{x^2w(v)dv}{x^2+v^2}-1>-1,\,\,f^{-1}(\xi)=x,\,\,x\in(0,\infty).
	\end{split}
	\end{align*}
	On the other hand, if $f^{-1}(\xi)=x,\,\,x\in(0,\infty)$, then
	\begin{align*}
	\begin{split}
	\lambda^*(\xi)&=\int_\mathbb{R}\frac{x^2w(v)dv}{x^2+v^2}-1=\int_\mathbb{R}w(v)dv-\int_\mathbb{R}\frac{v^2w(v)dv}{x^2+v^2}-1=-\int_\mathbb{R}\frac{v^2w(v)dv}{x^2+v^2}<0.
	\end{split}
	\end{align*}
	Hence, $\lambda^*(\cdot)$ is a continuous function of $\xi$ for $\xi\in(0,\sqrt{\pi})$ and $-1<\lambda^*(\xi)<0$. Next, we find $\lim_{\xi\to0^+}\lambda^*(\xi)$ and $\lim_{\xi\to\sqrt{\pi}^-}\lambda^*(\xi)$. It follows from \eqref{limfx} and the dominated convergence theorem that
	\begin{align*}
	\begin{split}
	\lim_{\xi\to0^+}\lambda^*(\xi)&=\lim_{\xi\to0^+}(\xi f^{-1}(\xi)-1)=\lim_{x\to\infty}(xf(x)-1)=-\lim_{x\to\infty}\int_\mathbb{R}\frac{v^2w(v)dv}{x^2+v^2}=0,\\
	\lim_{\xi\to\sqrt{\pi}^-}\lambda^*(\xi)&=\lim_{\xi\to\sqrt{\pi}^-}(\xi f^{-1}(\xi)-1)=\lim_{x\to0^+}(xf(x)-1)=-\lim_{x\to0^+}\int_\mathbb{R}\frac{v^2w(v)dv}{x^2+v^2}=-1.\\
	\end{split}
	\end{align*}
	Moreover, one can show that $\lambda^*(\cdot)$ is strictly decreasing function of $\xi$ for $\xi\in(0,\sqrt{\pi})$. Indeed,
	\begin{align}\label{IFT}
	\begin{split}
	(\lambda^{*})'(\xi)&=-\frac{\omega'_{\xi}(\lambda^{*}(\xi),\xi)}{\omega'_{\lambda}(\lambda^{*}(\xi),\xi)}.
	\end{split}
	\end{align}
	By \eqref{deromega1}, $\omega'_{\lambda}(\lambda^{*}(\xi),\xi)>0$ for $\xi\in(0,\sqrt{\pi})$. And, by applying a corollary of the dominated convergence theorem on differentiation under integral sign, for $\lambda>-1$ we have 
	\begin{align}\lb{deromega11}
	\begin{split}
	\omega'_{\xi}(\lambda,\xi)&=\frac{\partial}{\partial\xi}\big(1-\int_\mathbb{R}\frac{(1+\lambda)w(v)dv}{(1+\lambda)^2+(v\xi)^2}\big)\stackrel{v=(1+\lambda)u}{=}\frac{\partial}{\partial\xi}\big(1-\frac{1}{\sqrt{\pi}}\int_\mathbb{R}\frac{e^{-(1+\lambda)^2u^2}du}{1+(u\xi)^2}\big)\\
	&=-\frac{1}{\sqrt{\pi}}\int_\mathbb{R}\frac{-2\xi u^2e^{-(1+\lambda)^2u^2}du}{(1+(u\xi)^2)^2}=\frac{2\xi}{\sqrt{\pi}}\int_\mathbb{R}\frac{u^2e^{-(1+\lambda)^2u^2}du}{(1+(u\xi)^2)^2}>0\,\,\,\hbox{for}\,\,\xi>0.
	\end{split}
	\end{align}
	Therefore, by \eqref{IFT} and \eqref{deromega11}, $(\lambda^{*})'(\cdot)$ is strictly negative for $\xi\in(0,\sqrt{\pi})$.

	Case 2. Let $\xi\in(-\sqrt{\pi},0)$. As in Case 1, one can analytically show that $\lambda^*(\cdot)$ is an increasing, differentiable function of $\xi$ for $\xi\in(-\sqrt{\pi},0)$,  $-1<\lambda^*(\xi)<0$, $\lim_{\xi\to0^-}\lambda^*(\xi)=0$ and $\lim_{\xi\to-\sqrt{\pi}^+}\lambda^*(\xi)=-1$. \\
	Case 3. Let $\xi=0$. Then, 
	\begin{align}\lb{L0dis}
	\begin{split}
	\omega(\lambda,0)&=1-\int_\mathbb{R}\frac{(1+\lambda)w(v)dv}{(1+\lambda)^2}=1-\frac{1}{1+\lambda}.
	\end{split}
	\end{align}
	Therefore, $\omega(\lambda,0)$ vanishes only at $\lambda=0$ (see Figure \ref{fig:0}). 
	
	Hence, putting all three cases together, we conclude that $\lambda^*(\cdot)$ is a continuous function of $\xi$ for $\xi\in(-\sqrt{\pi},\sqrt{\pi})$,  $-1<\lambda^*(\xi)\leq0$, $\lim_{\xi\to-\sqrt{\pi}^+}\lambda^*(\xi)=-1$ and $\lim_{\xi\to\sqrt{\pi}^-}\lambda^*(\xi)=-1$ (see Figure \ref{fig:lambda}). Note that Figure \ref{fig:lambda} is numerically simulated using the implicit formula \eqref{xipos} for $\lambda^*(\cdot)$.
\begin{figure}
	\begin{subfigure}{.4\textwidth}
		\centering
		\hspace*{-2cm}\includegraphics[width=1.3\linewidth]{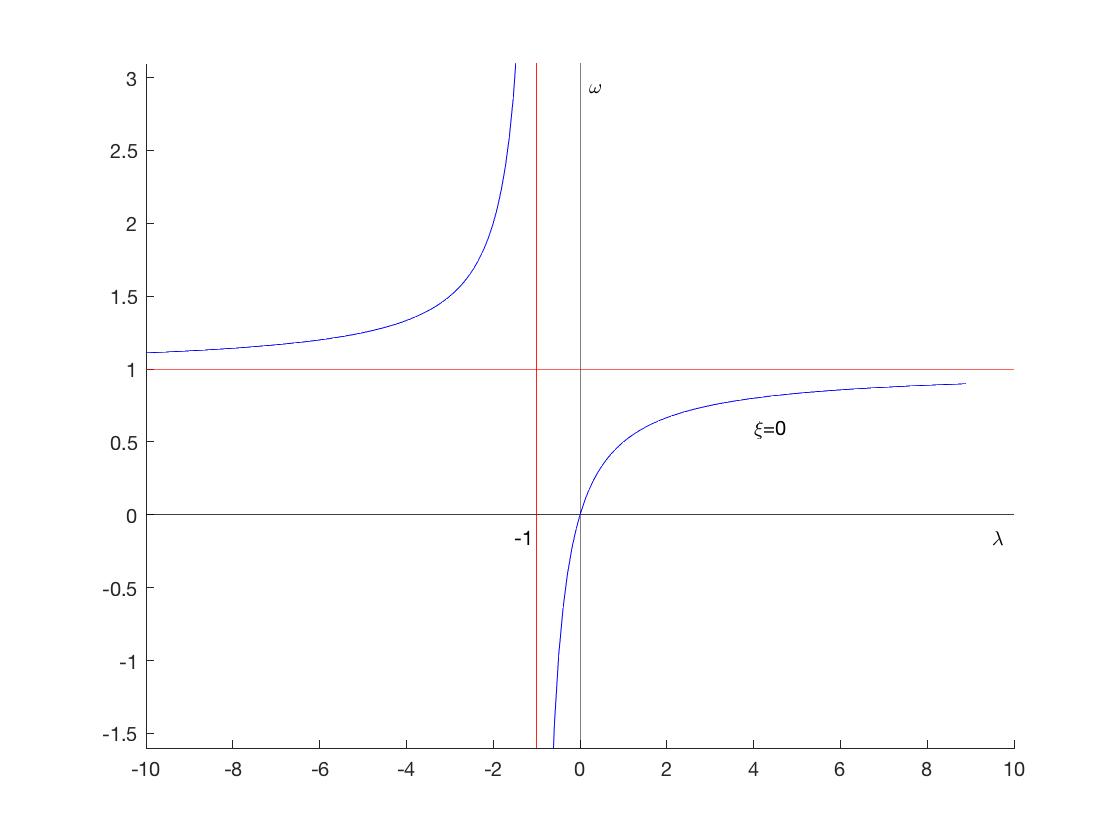}  
		\caption{$\xi=0$}
		\label{fig:0}
	\end{subfigure}
	\begin{subfigure}{.4\textwidth}
		\centering
		\includegraphics[width=1.3\linewidth]{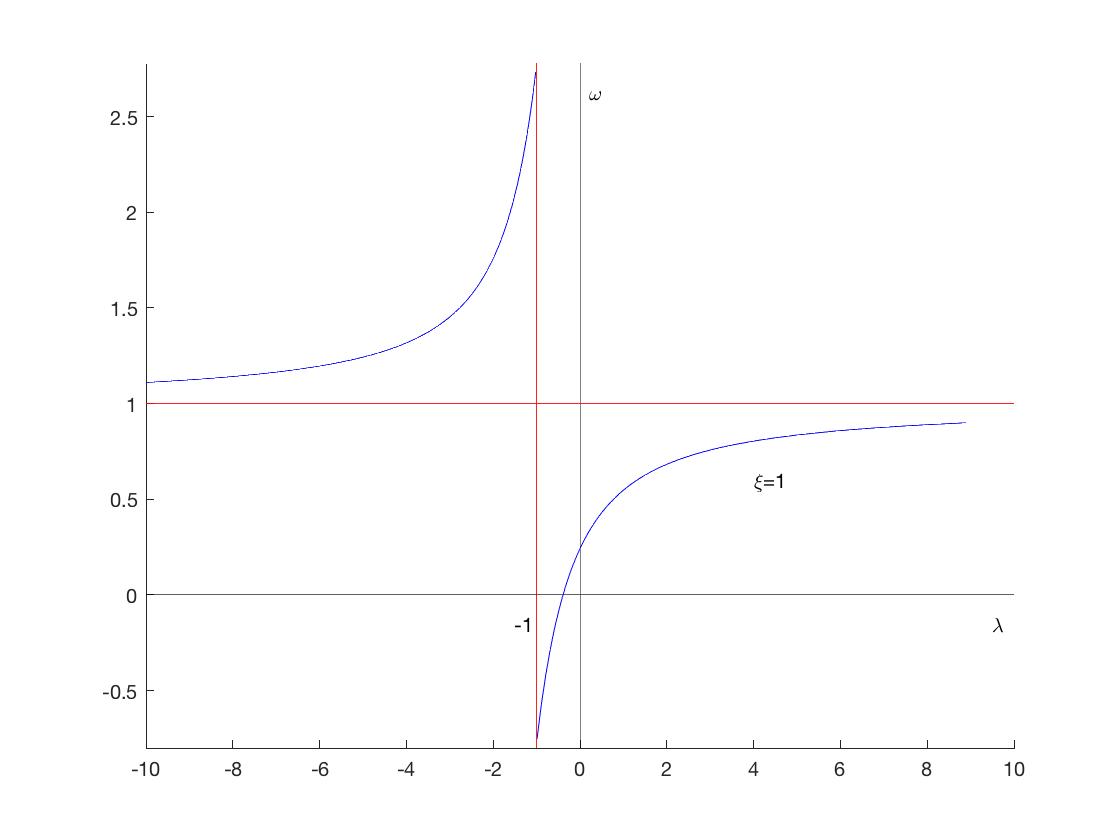}  
		\caption{$\xi=1$}
		\label{fig:1}
	\end{subfigure}
\begin{subfigure}{.4\textwidth}
	\centering
	\hspace*{-2cm}\includegraphics[width=1.3\linewidth]{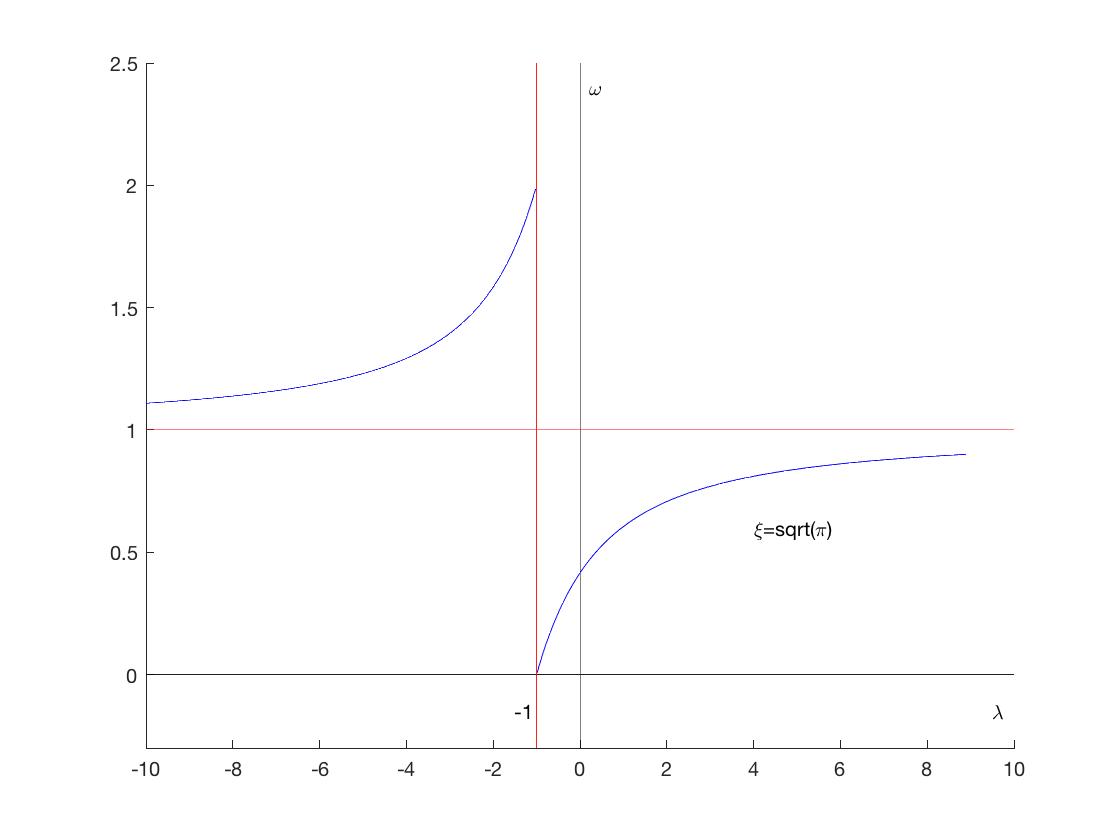}  
	\caption{$\xi=\sqrt{\pi}$}
	\label{fig:sqrtpi}
\end{subfigure}
\begin{subfigure}{.4\textwidth}
	\centering
	\includegraphics[width=1.3\linewidth]{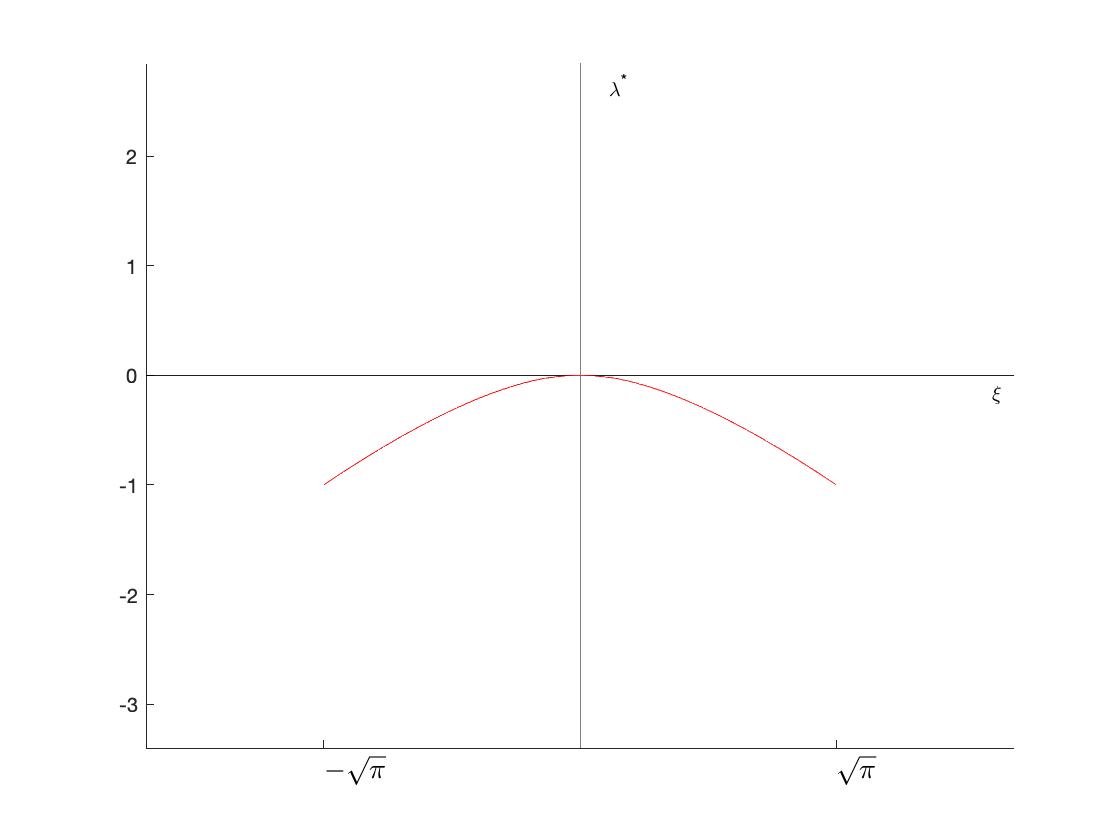}  
	\caption{The $\lambda^*$ curve}
	\label{fig:lambda}
\end{subfigure}
	\caption{The $\lambda^*$ and $\omega$ curves}
\end{figure}
\end{proof}

\begin{theorem}\lb{eigen}
	Let $\xi\in(-\sqrt{\pi},\sqrt{\pi})$. Then the discrete spectrum of $L_{\xi}$ consists of only one isolated eigenvalue $\lambda^*(\xi)$ with the corresponding eigenvector $e_1(\cdot,\xi):=\frac{1}{\cdot i\xi+1+\lambda^*(\xi)}\in L^2_w(\bbR)$, moreover, $\lambda^*(\xi)\in(-1,0]$ and $\lambda^*(\xi)$ is of algebraic multiplicity one. Similarly, $\bar e_1(\cdot,\xi):=\frac{1}{-\cdot i\xi+1+\lambda^*(\xi)}\in L^2_w(\bbR)$ is the eigenvector of $L^*_{\xi}$ corresponding to the isolated eigenvalue $\lambda^*(\xi)$.
\end{theorem}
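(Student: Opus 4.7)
The plan is to combine the two preceding results. Proposition \ref{disspec} establishes that $\omega(\cdot,\xi)$ has a unique simple zero $\lambda^*(\xi) \in (-1,0]$ for $\xi \in (-\sqrt{\pi}, \sqrt{\pi})$, and the theorem just before that proposition identifies $\sigma_d(L_\xi)$ with the zeros of $\omega(\cdot,\xi)$, with matching algebraic multiplicities. Together these yield $\sigma_d(L_\xi) = \{\lambda^*(\xi)\}$, a single eigenvalue of algebraic multiplicity one. What remains is to exhibit explicit eigenvectors for $L_\xi$ and $L_\xi^*$.

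For the right eigenvector, I would solve $L_\xi g = \lambda^*(\xi) g$ directly using the decomposition $L_\xi = M_\xi + V$ from \eqref{L}. The equation rearranges to
\begin{equation*}
(vi\xi + 1 + \lambda^*(\xi))\, g(v) = (g, \mathbb{1})_{L^2_w}\, \mathbb{1}(v).
\end{equation*}
Since $\lambda^*(\xi) \in (-1,0]$, the denominator $vi\xi + 1 + \lambda^*(\xi)$ is uniformly bounded away from zero on $\mathbb{R}$, so any solution is a scalar multiple of $e_1(v,\xi) = 1/(vi\xi + 1 + \lambda^*(\xi))$. This function is bounded and hence lies in $L^2_w(\mathbb{R})$. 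To confirm it is genuinely an eigenvector (rather than zero), I would verify $(e_1(\cdot,\xi), \mathbb{1})_{L^2_w} \neq 0$; this is immediate from the identity $\omega(\lambda^*(\xi), \xi) = 0$, which rewrites as $\int_{\mathbb{R}} w(v)/(vi\xi + 1 + \lambda^*(\xi))\, dv = 1$, so $(e_1(\cdot,\xi), \mathbb{1})_{L^2_w} = 1$. Substituting back then verifies $L_\xi e_1 = \lambda^*(\xi) e_1$ by a one-line computation.

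The treatment of $L_\xi^*$ is analogous. A direct computation using the sesquilinearity of $(\cdot,\cdot)_{L^2_w}$ (and the reality of $v$ and $\xi$) gives $L_\xi^* g = i\xi v g - g + (g, \mathbb{1})_{L^2_w}\mathbb{1}$, equivalently $L_\xi^* = L_{-\xi}$. The change of variable $v \mapsto -v$ in the definition of $\omega$ shows $\omega(\lambda, -\xi) = \omega(\lambda, \xi)$, so $\lambda^*(\xi)$ is the unique discrete eigenvalue of $L_{-\xi} = L_\xi^*$ as well, and the same argument yields the eigenvector $\bar e_1(v,\xi) = 1/(-vi\xi + 1 + \lambda^*(\xi))$. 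No substantial obstacle arises: the rank-one structure of $V$ reduces the eigenvalue problem to an algebraic identity, while the real work of locating (and counting) the zero of $\omega$ has already been carried out in Proposition \ref{disspec}.
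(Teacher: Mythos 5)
Your proposal is correct and follows essentially the same route as the paper: cite Proposition \ref{disspec} for the location and simplicity of the zero $\lambda^*(\xi)$ of $\omega(\cdot,\xi)$, then verify the explicit eigenvector $e_1$ directly via the identity $\int_{\bbR} w(v)\,dv/(vi\xi+1+\lambda^*(\xi))=1$ coming from $\omega(\lambda^*(\xi),\xi)=0$. The only difference is that you spell out the adjoint computation $L_\xi^*=L_{-\xi}$ and the symmetry $\omega(\lambda,-\xi)=\omega(\lambda,\xi)$, which the paper leaves implicit behind the word ``similarly.''
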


\begin{proof}
	The description of the discrete spectrum of $L_{\xi}$ is shown in Proposition \ref{disspec}. Now, we would like to find the corresponding eigenvectors. Fix $\xi\in(-\sqrt{\pi},\sqrt{\pi})$. It follows from Proposition \ref{disspec} that there exists a unique $\lambda^*(\xi)\in(-1,0]$ such that $\omega(\lambda^*(\xi),\xi)=0$. Consider the $L^2_w$-function $\frac{1}{vi\xi+1+\lambda^*(\xi)}$. Since $\omega(\lambda^*(\xi),\xi)=0$, we have
	$
	\int_\mathbb{R}\frac{w(v)dv}{vi\xi+1+\lambda^*(\xi)}=1=(vi\xi+1+\lambda^*(\xi))\frac{1}{vi\xi+1+\lambda^*(\xi)}$,
	or,
	
	$(-vi\xi-I)\frac{1}{vi\xi+1+\lambda^*(\xi)}+\int_\mathbb{R}\frac{w(v)dv}{vi\xi+1+\lambda^*(\xi)}=1=\lambda^*(\xi)\frac{1}{vi\xi+1+\lambda^*(\xi)}.$
	Therefore,
	$L_{\xi}e_1=\lambda^*(\xi)e_1$.
\end{proof}

\begin{proposition}\label{continuity16} The maps $R^0(\cdot,\cdot)\mathbb{1}, (R^0(\cdot,\cdot))^*\mathbb{1},:\bbC\setminus\{\lambda: \Re\lambda=-1\}\times\bbR\to{L^2_w}(\bbR)$ are continuous. In particular, the following maps are continuous
	\begin{enumerate}
		\item $(\cdot,(R^0(\cdot,\cdot))^*\mathbb{1})_{L^2_w}:\bbC\setminus\{\lambda: \Re\lambda=-1\}\times\bbR\to({L^2_w}(\bbR))^*$,
		\item $(R^0(\cdot,\cdot)\mathbb{1},\mathbb{1})_{L^2_w}:\bbC\setminus\{\lambda: \Re\lambda=-1\}\times\bbR\to\bbC$,
		\item $\omega(\cdot,\cdot)=\det(1+(R^0(\cdot,\cdot)\mathbb{1},\mathbb{1})_{L^2_w}):\bbC\setminus\{\lambda: \Re\lambda=-1\}\times\bbR\to\bbC$,
		\item $R^0(\lambda^*(\cdot),\cdot)\mathbb{1}:(-\sqrt{\pi},\sqrt{\pi})\to{L^2_w}(\bbR)$,
		\item $(\cdot,(R^0(\lambda^*(\cdot),\cdot))^*\mathbb{1})_{L^2_w}:(-\sqrt{\pi},\sqrt{\pi})\to({L^2_w}(\bbR))^*$,
		\item $\omega'(\lambda^*(\cdot),\cdot):(-\sqrt{\pi},\sqrt{\pi})\to\bbR$.
	\end{enumerate} 
\end{proposition}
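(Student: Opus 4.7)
The plan is to exploit the fact that $M_\xi$ is a multiplication operator, write $R^0(\lambda,\xi) = (M_\xi - \lambda I)^{-1}$ in closed form, and reduce every claim to an application of the dominated convergence theorem.

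First I would record the explicit formulas
\[
(R^0(\lambda,\xi)\mathbb{1})(v) = -\frac{1}{vi\xi + 1 + \lambda}, \qquad \bigl((R^0(\lambda,\xi))^*\mathbb{1}\bigr)(v) = -\frac{1}{-vi\xi + 1 + \bar\lambda},
\]
together with the key lower bound
\[
|vi\xi + 1 + \lambda|^2 = (1+\Re\lambda)^2 + (v\xi + \Im\lambda)^2 \geq (1+\Re\lambda)^2.
\]
On any compact neighborhood $K$ of a point $(\lambda_0,\xi_0)$ with $\Re\lambda_0 \neq -1$, this yields a uniform pointwise majorant $|R^0(\lambda,\xi)\mathbb{1}(v)| \leq C_K$ valid for all $v\in\mathbb{R}$ and all $(\lambda,\xi)\in K$, and similarly for the adjoint.

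Next, to prove $L^2_w$-continuity of $R^0(\cdot,\cdot)\mathbb{1}$ at $(\lambda_0,\xi_0)$, I would apply dominated convergence to
\[
\int_\mathbb{R}\left|\frac{1}{vi\xi+1+\lambda} - \frac{1}{vi\xi_0+1+\lambda_0}\right|^2 w(v)\,dv,
\]
using the dominating function $4 C_K^2\, w(v)$ on $K$; pointwise convergence of the integrand to zero as $(\lambda,\xi)\to(\lambda_0,\xi_0)$ is immediate. The adjoint case is identical. Items (1), (2), (3) then follow mechanically: the sesquilinear pairing $(\,\cdot\,,\,\cdot\,)_{L^2_w}$ is bounded and hence jointly continuous, and $\omega(\lambda,\xi) = 1 + (R^0(\lambda,\xi)\mathbb{1},\mathbb{1})_{L^2_w}$ is a composition of continuous maps. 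For (4) and (5), I would compose with $\xi \mapsto \lambda^*(\xi)$, which is continuous on $(-\sqrt{\pi},\sqrt{\pi})$ and takes values in $(-1,0]$ by Proposition \ref{disspec}; hence $\xi \mapsto (\lambda^*(\xi),\xi)$ maps continuously into the domain of continuity established above, and the claims follow.

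For (6), I would use the integral formula \eqref{deromega1},
\[
\omega'_\lambda(\lambda,\xi) = \frac{2(\lambda+1)}{\sqrt{\pi}} \int_\mathbb{R} \frac{u^2\, e^{-(1+\lambda)^2 u^2}}{1+(u\xi)^2}\,du,
\]
valid for $\lambda > -1$, and apply dominated convergence to obtain joint continuity of $\omega'_\lambda$ on $(-1,\infty)\times\mathbb{R}$; composition with $\xi \mapsto \lambda^*(\xi)$ then gives (6). The main technical point, and the only step I expect to require care, is verifying that the dominating bound does not break down as $\lambda \to -1$ when one composes with $\lambda^*(\xi)$: although $\lambda^*(\xi) \to -1$ at the endpoints $\xi \to \pm\sqrt{\pi}$, on any compact subinterval $K' \subset (-\sqrt{\pi},\sqrt{\pi})$ the continuous function $1 + \lambda^*(\xi)$ attains a strictly positive minimum (by Proposition \ref{disspec}), so the domination is uniform on $K'$ and the argument closes. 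All other steps are standard.
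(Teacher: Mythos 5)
Your proof is correct and follows essentially the same route as the paper: explicit closed form for the resolvent of the multiplication operator, the lower bound $|vi\xi+1+\lambda|\geq|1+\Re\lambda|$ to produce a dominating function on compact sets away from $\Re\lambda=-1$, dominated convergence, composition with the continuous $\lambda^*(\cdot)$ for items (4)--(5), and the integral representation of $\omega'_\lambda$ plus the same domination for item (6). If anything, applying DCT directly to $\int|R^0(\lambda,\xi)\mathbb{1}-R^0(\lambda_0,\xi_0)\mathbb{1}|^2w\,dv$ is slightly cleaner than the paper's statement (which nominally only establishes continuity of the norm), but the underlying argument and bounds are identical.
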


\begin{proof}
	Let $\lambda=a+ib$, where $a,b\in\bbR$. Then
	\begin{align*}
	\begin{split}
	\|R^0(\lambda,\xi)\mathbb{1}\|^2_{L^2_w}=\int_\mathbb{R}\frac{w(v)dv}{|vi\xi+1+\lambda|^2}=\int_\mathbb{R}\frac{w(v)dv}{(v\xi+b)^2+(1+a)^2}.
	\end{split}
	\end{align*}
	Notice that \begin{equation}\label{con1}
	\frac{w(v)}{|vi\cdot+1+\cdot|^2}:\bbC\setminus\{\lambda: \Re\lambda=-1\}\times\bbR\to\bbR\,\,\, \hbox{is continuous.}
	\end{equation} 
	Moreover, if $|a+1|\geq\delta>0$, then
	\begin{align}\label{est11}
	\begin{split}
	\frac{w(v)}{|vi\xi+1+\lambda|^2}=\frac{w(v)}{(v\xi+b)^2+(1+a)^2}\leq\frac{w(v)}{\delta^2},\,\,\hbox{and}\,\,\int_\mathbb{R}\frac{w(v)dv}{\delta^2}=\frac{1}{\delta^2}<\infty.
	\end{split}
	\end{align}
	By \eqref{con1}, \eqref{est11} and the dominated convergence theorem, we conclude that $\|R^0(\cdot,\cdot)\mathbb{1}\|^2_{L^2_w}:\bbC\setminus\{\lambda: \Re\lambda=-1\}\to \bbR$ is continuous. Similarly, one can show that  $(R^0(\cdot,\cdot))^*\mathbb{1}:\bbC\setminus\{\lambda: \Re\lambda=-1\}\times\bbR\to{L^2_w}(\bbR)$ is continuous. Items (1)-(3) immediately follow from continuity of $(R^0(\cdot,\cdot))\mathbb{1}$ and $(R^0(\cdot,\cdot))^*\mathbb{1}$. Items (4)-(5) follow from continuity of $(R^0(\cdot,\cdot))\mathbb{1}$ and $(R^0(\cdot,\cdot))^*\mathbb{1}$ and the fact that $\lambda^*(\cdot)$ is a continuous function of $\xi$ for $\xi\in(-\sqrt{\pi},\sqrt{\pi})$ (see Proposition \ref{disspec}).\\
	(6) By formula \eqref{deromega1}, we have 
	\begin{align*}
	\begin{split}
	\omega'(\lambda^*(\xi),\xi)&=\frac{2(\lambda^*(\xi)+1)}{\sqrt{\pi}}\int_\mathbb{R}\frac{u^2e^{-(1+\lambda^*(\xi))^2u^2}du}{1+(u\xi)^2}.
	\end{split}
	\end{align*}
	Notice that 
	\begin{equation}\label{con11}
	\frac{u^2e^{-(1+\lambda^*(\cdot))^2u^2}}{1+(u\cdot)^2}:(-\sqrt{\pi},\sqrt{\pi})\to\bbR\,\,\, \hbox{is continuous.}
	\end{equation} 
	Moreover, if $|1+\lambda^*(\xi)|\geq\delta>0$, then
	\begin{align}\label{est111}
	\begin{split}
	\frac{u^2e^{-(1+\lambda^*(\xi))^2u^2}}{1+(u\xi)^2}\leq u^2e^{-\delta^2u^2},\,\,\hbox{and}\,\,\int_\mathbb{R}u^2e^{-\delta^2u^2}du<\infty.
	\end{split}
	\end{align}
	By \eqref{con11}, \eqref{est111} and the dominated convergence theorem, we conclude that $\omega'(\lambda^*(\cdot),\cdot):(-\sqrt{\pi},\sqrt{\pi})\to\bbR$ is continuous.
\end{proof}

\begin{theorem}\label{unifomega}
\begin{enumerate}
	\item[1.] Fix $\varepsilon_0>0$ and let $\Lambda_{-1+\varepsilon_0}^+= \{\lambda\in\bbC: \, \Re \lambda \geq -1+\varepsilon_0 \}$. For all $\lambda$ in  $\{|\lambda|\geq R,\,\,\,R\,\hbox{sufficietly large}\}\cap \Lambda_{-1+\varepsilon_0}^+$, there exists $\gamma>0$ such that $|\omega(\lambda,\xi)|>\gamma$ uniformly for $\xi\in\bbR$. Moreover, $|\omega(\lambda,\xi)-1|\to0$ as $|\lambda|\to\infty$ within $\Lambda_{-1+\varepsilon_0}^+$, uniformly for $\xi\in\bbR$.
	\item [2.] Fix $\xi_0\in(0,\sqrt{\pi})$ and $\varepsilon>0$. Then for any $\lambda^\dagger\geq\lambda^*(\xi_0)+\varepsilon$ there exists $\gamma>0$ such that $|\omega(\lambda,\xi)|>\gamma$  for $|\xi|\geq\xi_0$ and $\lambda\in\{\lambda\in\bbC:\Re\lambda=\lambda^\dagger\}$. Moreover, $|\omega(\lambda,\xi)-1|\to0$ as $|\xi|\to\infty$ uniformly for $\lambda\in\{\lambda\in\bbC:\Re\lambda=\lambda^\dagger\}$.
	\item [3.] For any $-1<\lambda^\dagger\leq\lambda^*(\xi_0)-\varepsilon$ there exists  $\gamma>0$ such that $|\omega(\lambda,\xi)|>\gamma$  for $|\xi|\leq\xi_0$ and $\lambda\in\{\lambda\in\bbC:\Re\lambda=\lambda^\dagger\}$.
\end{enumerate}
\end{theorem}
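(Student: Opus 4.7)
The plan is to establish a single master estimate from which all three claims follow by straightforward case analysis and compactness. The starting point is the integral representation, valid for $\Re\lambda > -1$,
\begin{equation*}
1 - \omega(\lambda, \xi) = \int_{\bbR} \frac{w(v)\,dv}{iv\xi + 1 + \lambda} = \int_0^\infty e^{-(1+\lambda)t - \xi^2 t^2/4}\,dt,
\end{equation*}
obtained by writing $(iv\xi+1+\lambda)^{-1} = \int_0^\infty e^{-(iv\xi+1+\lambda)t}\,dt$, applying Fubini (justified by $\Re(1+\lambda)>0$), and using the Gaussian Fourier identity $\int_{\bbR} w(v)\,e^{-iv\xi t}\,dv = e^{-\xi^2 t^2/4}$. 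One integration by parts with $u = e^{-\xi^2 t^2/4}$ and $dv = e^{-(1+\lambda)t}\,dt$, together with $\int_0^\infty t\,e^{-\xi^2 t^2/4}\,dt = 2/\xi^2$, yields the key uniform bound
\begin{equation*}
|1-\omega(\lambda,\xi)| \leq \frac{2}{|1+\lambda|}, \qquad \Re\lambda>-1,\ \xi\in\bbR,
\end{equation*}
while bounding $|e^{-(1+\lambda)t - \xi^2 t^2/4}| \leq e^{-\xi^2 t^2/4}$ directly gives
\begin{equation*}
|1-\omega(\lambda,\xi)| \leq \frac{\sqrt{\pi}}{|\xi|}, \qquad \Re\lambda\geq -1,\ \xi\neq 0.
\end{equation*}

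Part 1 is then immediate: for $\Re\lambda \geq -1+\varepsilon_0$ and $|\lambda|\geq R$, we have $|1+\lambda|\geq R-1$, so $|1-\omega|\leq 2/(R-1)$ uniformly in $\xi\in\bbR$. Taking $R$ large gives both the convergence $\omega\to 1$ and the lower bound $|\omega|\geq 1/2$, so one may take $\gamma=1/2$. The ``moreover'' clause of Part 2 follows similarly from the second bound, which is uniform in $\Im\lambda$ along the vertical line.

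For non-vanishing in Parts 2 and 3, I invoke Proposition~\ref{disspec} together with the evenness symmetry $\omega(\lambda,\xi)=\omega(\lambda,-\xi)$ (a substitution $v\to -v$ using that $w$ is even), which implies $\lambda^*(\xi)$ depends only on $|\xi|$ and, by the monotonicity on $(0,\sqrt{\pi})$ established in Proposition~\ref{disspec}, is strictly decreasing as $|\xi|$ increases. Thus, for $\xi_0\leq|\xi|<\sqrt\pi$ we have $\lambda^*(\xi)\leq\lambda^*(\xi_0)$, while for $|\xi|\leq\xi_0$ we have $\lambda^*(\xi)\geq\lambda^*(\xi_0)$; and for $|\xi|\geq\sqrt{\pi}$, $\omega(\cdot,\xi)$ has no zeros at all. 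In Part 2, if $\omega(\lambda,\xi)=0$ with $\Re\lambda=\lambda^\dagger\geq\lambda^*(\xi_0)+\varepsilon$ and $|\xi|\geq\xi_0$, then by Proposition~\ref{disspec} $\lambda$ must be the real eigenvalue $\lambda^*(\xi)\leq\lambda^*(\xi_0)<\lambda^\dagger=\Re\lambda$, a contradiction. Part 3 is symmetric: for $|\xi|\leq\xi_0$, $\lambda^*(\xi)\geq\lambda^*(\xi_0)>\lambda^\dagger=\Re\lambda$.

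The uniform lower bound is then a standard compactness argument. In Part 2, select $X$ so that $|\omega-1|<1/2$ for $|\xi|\geq X$ (by the moreover bound) and $T$ so that $|\omega-1|<1/2$ for $|\Im\lambda|\geq T$ on the line $\Re\lambda=\lambda^\dagger$ (by Part 1 with $\varepsilon_0:=1+\lambda^\dagger>0$); on the residual compact set $\{|\Im\lambda|\leq T,\ \xi_0\leq|\xi|\leq X\}$, continuity of $\omega$ (Proposition~\ref{continuity16}) and non-vanishing supply a positive minimum $\gamma_0$, and we take $\gamma=\min(1/2,\gamma_0)$. Part 3 is entirely analogous with $|\xi|\leq\xi_0$ already compact. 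The only delicate point is ensuring uniformity of constants simultaneously in the two unbounded parameters $\lambda$ and $\xi$, but the master bounds above make this essentially effortless, so I do not anticipate a substantive analytic obstacle.
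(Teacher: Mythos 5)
Your proposal is correct, and it follows a genuinely different route from the paper's. The paper bounds $|\omega(\lambda,\xi)-1|=\bigl|\int_{\bbR}\frac{w(v)\,dv}{vi\xi+1+\lambda}\bigr|$ directly by splitting the $v$-integral into three regions according to the size of $|v\xi|$ relative to $|\Im\lambda|$ (resp.\ $|v\xi|$ relative to $|\Im\lambda|$ in Part 2), with a parameter $\theta$ tuned to $|\Im\lambda|$ (resp.\ $|\xi|$), obtaining decay like $C/(\sqrt{|\Im\lambda|+1}\,|\Re\lambda+1|)$; it then runs the same continuity-plus-compactness argument you use to cover the residual bounded region, invoking Propositions~\ref{disspec} and~\ref{continuity16}. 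Your approach replaces the ad hoc splitting by the Laplace representation $1-\omega(\lambda,\xi)=\int_0^\infty e^{-(1+\lambda)t-\xi^2t^2/4}\,dt$, from which one integration by parts and the direct estimate yield the two clean master bounds $|1-\omega|\leq 2/|1+\lambda|$ (for $\Re\lambda>-1$, all $\xi$) and $|1-\omega|\leq\sqrt{\pi}/|\xi|$ (for $\Re\lambda\geq-1$, $\xi\neq0$). These make the ``moreover'' clauses in Parts 1 and 2 immediate and eliminate the case analysis entirely; the residual compactness step and the appeal to Proposition~\ref{disspec} for non-vanishing are then essentially identical to the paper's. What you gain is a shorter, more transparent derivation with explicit constants uniform in both $\lambda$ and $\xi$; what the paper's decomposition buys (and you don't pursue) is a decay rate depending jointly on $|\Im\lambda|$ and $|\Re\lambda+1|$, which is not in fact needed for this theorem. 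One small point worth making explicit: you use the evenness $\omega(\lambda,\xi)=\omega(\lambda,-\xi)$ (immediate from $v\mapsto-v$) to reduce to $\xi>0$ and to conclude $\lambda^*(\xi)=\lambda^*(|\xi|)$; the paper treats $\xi<0$ by a separate ``as in Case 1'' remark in Proposition~\ref{disspec}, so your reduction is cleaner and worth stating as you did.
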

\begin{proof}
	1. Let $\lambda=a+ib\in\Lambda_{-1+\varepsilon_0}^+$, where $a,b\in\bbR$. We consider the following cases:\\
	a) Let $a>>1$. Then
	$
	|\omega(\lambda,\xi)-1|=\big|\int_\mathbb{R}\frac{w(v)dv}{vi\xi+1+a+ib}\big|\leq\int_\mathbb{R}\frac{w(v)dv}{|1+a|}=\frac{1}{|1+a|}.
	$
	Therefore, $|\omega(\lambda,\xi)-1|\to0$ as $a\to\infty$, uniformly for $\xi\in\bbR$.\\
	b) Let $-1+\varepsilon_0\leq a\leq m$ and $|b|\geq\delta_0$, where $m, \delta_0$ are fixed and $m,\delta_0>0$. We break the integral into three parts. That is,
	\begin{align*}
	\begin{split}
	|\omega(\lambda,\xi)-1|&=\big|\int_\mathbb{R}\frac{w(v)dv}{vi\xi+1+a+ib}\big|\leq\int\limits_{|v\xi|\leq(1-\theta)|b|}\frac{w(v)dv}{|vi\xi+1+a+ib|}\\
	&+\int\limits_{(1-\theta)|b|\leq|v\xi|\leq(1+\theta)|b|}\frac{w(v)dv}{|vi\xi+1+a+ib|}+\int\limits_{|v\xi|\geq(1+\theta)|b|}\frac{w(v)dv}{|vi\xi+1+a+ib|}=I+II+III,
	\end{split}
	\end{align*}
	where $0<\theta<\delta$, $\delta$ is fixed and $\delta<1$.
	Next, we estimate each integral separately.  
	\begin{align*}
	\begin{split}
	 I:=\int\limits_{|v\xi|\leq(1-\theta)|b|}\frac{w(v)dv}{|(v\xi+b)i+1+a|}\leq\int\limits_{|v\xi|\leq(1-\theta)|b|}\frac{w(v)dv}{\theta|b|}\leq\frac{1}{\theta|b|}.
	\end{split}
	\end{align*}
	Similarly,
	\begin{align*}
	\begin{split}
	III:=\int\limits_{|v\xi|\geq(1+\theta)|b|}\frac{w(v)dv}{|(v\xi+b)i+1+a|}\leq\int\limits_{|v\xi|\geq(1+\theta)|b|}\frac{w(v)dv}{\theta|b|}\leq\frac{1}{\theta|b|}.
	\end{split}
	\end{align*}
	Now, we estimate the second integral. Since $b\neq0$, $\xi\neq0$ for the second integral. Therefore, $(1-\theta)|\frac{b}{\xi}|\leq|v|\leq(1+\theta)|\frac{b}{\xi}|$. Then
$
	w(v)=\frac{1}{\sqrt{\pi}}e^{-v^2}\leq\frac{1}{\sqrt{\pi}}e^{-(1-\theta)^2(\frac{b}{\xi})^2}.
	$
	Therefore,
	\begin{align*}
	\begin{split}
	II:&=\int\limits_{(1-\theta)|b|\leq|v\xi|\leq(1+\theta)|b|}\frac{w(v)dv}{|(v\xi+b)i+1+a|}\leq\int\limits_{(1-\theta)|b|\leq|v\xi|\leq(1+\theta)|b|}\frac{\frac{1}{\sqrt{\pi}}e^{-(1-\theta)^2(\frac{b}{\xi})^2}dv}{|a+1|}\\
	&\leq\frac{\frac{2}{\sqrt{\pi}}\theta |\frac{b}{\xi}|e^{-(1-\theta)^2(\frac{b}{\xi})^2}}{|a+1|}\leq C\frac{\theta}{|a+1|},
	\end{split}
	\end{align*}
	where the last inequality is implied by $|x|e^{-(1-\theta)^2x^2}\leq \tilde C$ uniformly for all $0<\theta<\delta$,  $\delta<1$. Therefore,  by choosing $\theta=\frac{1}{\sqrt{|b|+1}}$ and by the fact that $a$ belongs to the compact interval, we have 
	$|\omega(\lambda,\xi)-1|\leq\frac{C}{\sqrt{|\Im\lambda|+1}||\Re\lambda+1|}$
	for all $\lambda$ such that $-1+\varepsilon_0\leq \Re\lambda\leq m, |\Im\lambda|\geq\delta_0, \delta_0>0$, uniformly for $\xi\in\bbR.$
	In particular, $|\omega(\lambda,\xi)-1|\to0$ as $|\Im \lambda|\to\infty$ ($\Re\lambda$ is bounded), uniformly for $\xi\in\bbR.$\\
	Combining the results from cases a) and b), we arrive the first statement of Theorem \ref{unifomega}.\\
	2. Fix any $\lambda^\dagger\geq\lambda^*(\xi_0)+\varepsilon$.  For $|\xi|\geq\xi_0$ and $\lambda=a+ib$, where $a=\lambda^\dagger$ we have \begin{align*}
	\begin{split}
	|\omega(\lambda,\xi)-1|&=\big|\int_\mathbb{R}\frac{w(v)dv}{vi\xi+1+a+ib}\big|\leq\int\limits_{|b|\leq(1-\theta)|v\xi|}\frac{w(v)dv}{|vi\xi+1+a+ib|}\\
	&+\int\limits_{(1-\theta)|v\xi|\leq|b|\leq(1+\theta)|v\xi|}\frac{w(v)dv}{|vi\xi+1+a+ib|}+\int\limits_{|b|\geq(1+\theta)|v\xi|}\frac{w(v)dv}{|vi\xi+1+a+ib|}=I+II+III,
	\end{split}
	\end{align*}
	where $0<\theta<\delta$, $\delta$ is fixed and $\delta<1$.
	Next, we estimate each integral separately.  
	\begin{align*}
	\begin{split}
	I:=\int\limits_{|b|\leq(1-\theta)|v\xi|}\frac{w(v)dv}{|(v\xi+b)i+1+a|}\leq\int_\bbR\frac{w(v)dv}{\theta^2(v\xi)^2+(a+1)^2}.
	\end{split}
	\end{align*}
	Similarly,
	\begin{align*}
	\begin{split}
	III:=\int\limits_{|b|\geq(1+\theta)|v\xi|}\frac{w(v)dv}{|(v\xi+b)i+1+a|}\leq\int_\bbR\frac{w(v)dv}{\theta^2(v\xi)^2+(a+1)^2}.
	\end{split}
	\end{align*}
	Now, we estimate the second integral. Since $\xi\neq0$ for the second integral. Therefore, $\frac{1}{1+\theta}|\frac{b}{\xi}|\leq|v|\leq\frac{1}{1-\theta}|\frac{b}{\xi}|$. Then
	$
	w(v)=\frac{1}{\sqrt{\pi}}e^{-v^2}\leq\frac{1}{\sqrt{\pi}}e^{-(\frac{1}{1+\theta})^2(\frac{b}{\xi})^2}.
	$
	Therefore,
	\begin{align*}
	\begin{split}
	II:&=\int\limits_{(1-\theta)|v\xi|\leq|b|\leq(1+\theta)|v\xi|}\frac{w(v)dv}{|(v\xi+b)i+1+a|}\leq\int\limits_{\frac{1}{1+\theta}|\frac{b}{\xi}|\leq|v|\leq\frac{1}{1-\theta}|\frac{b}{\xi}|}\frac{\frac{1}{\sqrt{\pi}}e^{-(\frac{1}{1+\theta})^2(\frac{b}{\xi})^2}dv}{|a+1|}\\
	&\leq\frac{\frac{2}{\sqrt{\pi}}\frac{\theta}{1-\theta^2} |\frac{b}{\xi}|e^{-(\frac{1}{1+\theta})^2(\frac{b}{\xi})^2}}{|a+1|}\leq C\frac{\frac{\theta}{1-\theta^2}}{|a+1|},
	\end{split}
	\end{align*}
		where the last inequality is implied by $|x|e^{-(\frac{1}{1+\theta})^2x^2}\leq \tilde C$ uniformly for all $\theta$ such that $0<\theta<\delta$,  $\delta<1$. Therefore,  by choosing $\theta=\frac{1}{\sqrt{|\xi|+1}}$, we see that $I+II+II\to0$ and $|\omega(\lambda,\xi)-1|\to0$ as $|\xi|\to\infty$ uniformly for $\lambda\in\{\lambda\in\bbC:\Re\lambda=\lambda^\dagger\}$. In particular, for $\gamma_0>0$ there exists a $R_0>0$ such that $|\omega(\lambda,\xi)|>\gamma_0$  for $|\xi|\geq R_0$ ($R_0\geq\xi_0$) and $\lambda\in\{\lambda\in\bbC:\Re\lambda=\lambda^\dagger\}$.
		
		We will consider the case $\xi_0\leq|\xi|\leq R_0$. Choose $R_1>0$ large enough, then, by item 1, there exists a $\gamma_1>0$ such that $|\omega(\lambda,\xi)|>\gamma_1$  for $R_0\geq|\xi|\geq\xi_0$ and $\lambda\in\{\lambda\in\bbC:\Re\lambda=\lambda^\dagger,\,|\Im\lambda|\geq R_1\}$. Also, since $\omega(\cdot,\cdot)$ is a continuous function (see item (3), Theorem \ref{continuity16}) and it vanishes in the compact region $\{(\lambda,\xi): \Re\lambda\in[\lambda^*(\xi_0),\lambda^\dagger],0\leq\Im\lambda\leq R_1, R_0\geq|\xi|\geq\xi_0\}$ only at the point $(\lambda^*(\xi_0),\xi_0)$, there exists a $\gamma_2>0$ such that $|\omega(\lambda,\xi)|>\gamma_2$  for $R_0\geq|\xi|\geq\xi_0$ and $\lambda\in\{\lambda\in\bbC:\Re\lambda=\lambda^\dagger,\,|\Im\lambda|\leq R_1\}$.
		By choosing $\gamma=\min\{\gamma_i\}$, $i=1,2,3$, we arrive at the statement of item 2.\\
		3. The proof of item 3 is similar to that of item 2.
\end{proof}

\begin{proposition}\lb{L0prop}
	Let $\xi=0$. Then
	\begin{itemize}
		\item $\sigma_{ess}(L_0)=\{-1\}$, and $\sigma_d(L_0)=\{0\}$,
		\item $K(\lambda,0)=I-\frac{V}{1+\lambda},\,\,\lambda\notin\sigma_{ess}(L_0)$; $K^{-1}(\lambda,0)=I+\frac{V}{\lambda},\,\,\lambda\notin\sigma_{d}(L_0)$.
		\item $R(\lambda,0)=\frac{-1}{1+\lambda}(I+\frac{V}{\lambda})=\frac{1}{1+\lambda}(V-I)-\frac{1}{\lambda}V,\,\,\lambda\notin\sigma(L_0)$.
	\end{itemize}
\end{proposition}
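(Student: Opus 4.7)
The plan is to exploit that when $\xi = 0$ the multiplication operator $M_0$ is simply scalar multiplication by $-1$, so its resolvent collapses to $R^0(\lambda,0) = (M_0 - \lambda I)^{-1} = -(1+\lambda)^{-1} I$ for all $\lambda \neq -1$. Every assertion of the proposition then reduces to a direct algebraic manipulation using $V^2 = V$ (from \eqref{L}) together with the general formulas already set up in Section \ref{s:3}.

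For the spectrum, the identity $\sigma_{ess}(L_0) = \{-1\}$ was already recorded in \eqref{ess}. For the discrete part, I would invoke the theorem identifying $\sigma_d(L_\xi) \setminus \sigma_{ess}(L_\xi)$ with the zero set of $\omega(\cdot,\xi)$: from \eqref{L0dis}, $\omega(\lambda,0) = 1 - (1+\lambda)^{-1}$, whose unique zero is $\lambda = 0$, and it is simple, so $\sigma_d(L_0) = \{0\}$ with algebraic multiplicity one.

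To obtain the formula for $K(\lambda,0)$, I would substitute $R^0(\lambda,0) = -(1+\lambda)^{-1} I$ into the definition $K(\lambda,\xi) = I + V R^0(\lambda,\xi) V$, obtaining $K(\lambda,0) = I - V^2/(1+\lambda) = I - V/(1+\lambda)$ by idempotency of $V$. The candidate inverse $I + V/\lambda$ is verified by direct multiplication: the cross terms produce $(1/\lambda - 1/(1+\lambda) - 1/(\lambda(1+\lambda))) V = 0$ after combining over the common denominator $\lambda(1+\lambda)$.

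Finally, for the resolvent formula I would plug $R^0(\lambda,0) = -(1+\lambda)^{-1} I$ and $K^{-1}(\lambda,0) = I + V/\lambda$ into the general identity \eqref{perR}:
\begin{equation*}
R(\lambda,0) = -\tfrac{1}{1+\lambda} I - \tfrac{1}{(1+\lambda)^2} V\bigl(I + \tfrac{V}{\lambda}\bigr) V = -\tfrac{1}{1+\lambda} I - \tfrac{1}{\lambda(1+\lambda)} V,
\end{equation*}
again using $V^2 = V$. The two presented forms $\frac{-1}{1+\lambda}(I + V/\lambda)$ and $\frac{1}{1+\lambda}(V-I) - \frac{1}{\lambda} V$ are then just two equivalent partial-fraction rewritings of this expression, checked by a one-line calculation. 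No real obstacle arises; the only thing to watch is the bookkeeping in keeping $V^2 = V$ consistently applied, so that one does not accidentally produce higher powers of $V$ in the intermediate steps.
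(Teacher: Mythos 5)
Your proposal is correct and takes essentially the same route as the paper: all three items reduce to substituting $R^0(\lambda,0)=-(1+\lambda)^{-1}I$ and exploiting $V^2=V$, with the spectral claims coming from \eqref{ess}, \eqref{L0dis}, and the Birman--Schwinger characterization of the discrete spectrum, and the resolvent formula following from \eqref{perR}. The only cosmetic difference is in the middle item: the paper \emph{derives} $K^{-1}(\lambda,0)$ by solving $K(\lambda,0)f=g$ for $f$ via the moment $(f,\mathbb{1})_{L_w^2}$, whereas you write down the candidate $I+V/\lambda$ and verify it by multiplication; both are one-line computations and lead to the same conclusion.
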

\begin{proof}
	The fact that $\sigma_{ess}(L_0)=\{-1\}$, and $\sigma_d(L_0)=\{0\}$ follows from \eqref{ess} and \eqref{L0dis}.

		Let $f,g \in H={L_w^2(\bbR)}$ and assume that $K(\lambda,0)f=g$. Then
		\begin{equation*}
		g=(I+VR^0(\lambda,0)V)f=f+(f,\mathbb{1})_{L_w^2}( R^0(\lambda,0)\mathbb{1},\mathbb{1})_{L_w^2}\mathbb{1}.
		\end{equation*}
		Therefore,
		\begin{equation*}
		(g,\mathbb{1})_{L_w^2}=(f,\mathbb{1})_{L_w^2}(1-\frac{1}{1+\lambda}).
		\end{equation*}
		Hence,
		\begin{equation*}
		f=g+\frac{(f,\mathbb{1})_{L_w^2}\mathbb{1}}{1+\lambda}=g+\frac{(g,\mathbb{1})_{L_w^2}\mathbb{1}}{\lambda}.
		\end{equation*}
	The last item follows from the second item and formula \eqref{perR}.
\end{proof}

\section{The resolvent extensions of $M_\xi$ and $L_{\xi}$ through the essential spectrum}
We would like to extend the resolvent of $M_\xi$, $L_{\xi}$ and the Birman-Schwinger determinant $\omega(\cdot,\xi)$ through the essential spectrum. First, we introduce the following notation:
\begin{align*}
\begin{split}
\Pi_+&=\{z\in\bbC:\Re z>-1 \},\,\,\Pi_-=\{z\in\bbC:\Re z<-1 \},\\\,\,\Omega^T&=\{z\in\bbC:|\Re z+1|<\e\},\,\,\Omega^T_\xi=\{z\in\bbC:|\Re z+1|<\e|\xi|\},\,\,\xi\neq0,\,\,\e>0. 
\end{split}
\end{align*}

\begin{proposition}\label{extevery}
	Fix $\xi\neq0$. 
	\begin{enumerate}
	\item {(The unperturbed case)}. For arbitrary $\phi,\psi\in\Phi_0$ the function $\lambda\to (R^0(\lambda,\xi)\phi,\psi)_{L^2_w}$, where $R^0(\lambda,\xi)=(M_\xi-\lambda)^{-1}$,
	has holomorphic extensions $\lambda\to (R^0(\lambda,\xi)\phi,\psi)_\pm$ from $\Pi_{\pm}$ to $\Pi_{\pm}\cup\Omega^T_\xi$. Moreover,
	\begin{equation}\label{jump0}
	(R^0(\lambda,\xi)\phi,\psi)_+-(R^0(\lambda,\xi)\phi,\psi)_-=-\frac{2\pi} {|\xi|}\phi\Big(\frac{i(\lambda+1)}{\xi}\Big)\overline{\psi\Big(\overline{\frac{i(\lambda+1)}{\xi}}\Big)}w\Big(\frac{i(\lambda+1)}{\xi}\Big),\,\,\,\lambda\in\Omega^T_\xi.
	\end{equation} Letting
	\begin{equation*}
	\langle R^0_\pm(\lambda, \xi)\phi,\psi\rangle:=(R^0(\lambda,\xi)\phi,\psi)_\pm,\,\,\,\phi,\psi\in\Phi,
	\end{equation*}
	we see that $R^0_\pm(\lambda, \xi)\phi\in\Phi^*$ for all $\phi\in\Phi$, that is, $R^0_\pm(\lambda, \xi):\Phi\to\Phi^*$. Moreover,
		\begin{equation}\label{smallo}
		\langle R^0_\pm(\lambda, \xi)\phi,\psi\rangle=o(1),\,\,\,|\lambda|\to\infty
	\end{equation}
	uniformly in the region $\{\lambda\in\bbC:\Re\lambda+1\geq-\eta|\xi| \}$ in the case of $R_+(\lambda, \xi)$ and uniformly in the region $\{\lambda\in\bbC:\Re\lambda+1\leq\eta|\xi| \}$ in the case of $R_-(\lambda, \xi)$ for any $\eta<\varepsilon$.
	\begin{equation*}
	(R^0_\pm(\lambda, \xi))^*=R^0_\pm(\bar\lambda, -\xi),
	\end{equation*}
	where the adjoint is understood in the sense of \eqref{defadj}.
	\item The Birman-Schwinger-type function $\lambda\to K(\lambda, \xi):=I+VR^0(\lambda, \xi)V\in\mathcal{B}(L^2_w(\bbR))$ and its determinant $\lambda\to \omega(\lambda,\xi)$ have holomorphic extensions $\lambda\to K_\pm(\lambda, \xi)\in\mathcal{B}(L^2_w(\bbR))$ and $\lambda\to \omega_\pm(\lambda,\xi)$ from $\Pi_{\pm}$ to $\Pi_{\pm}\cup\Omega^T_\xi$. In particular,
	\begin{equation}\label{Kext}
K_\pm(\lambda, \xi)=I+VR^0_\pm(\lambda,\xi)V
	\end{equation}  and are operators of identity plus rank one.
	Moreover, for any $\xi\in[-\sqrt{\pi},0)\cup(0,\sqrt{\pi}]$ there exists a unique  $\lambda\in\bar\Pi_+$ (more precisely, $\lambda\in[-1,0)$) such that $0\in\sigma_d(K_+(\lambda, \xi))$, and if $\xi\in\mathbb{R}\setminus[-\sqrt{\pi},\sqrt{\pi}]$, then $0\in\rho(K_+(\lambda, \xi))$ for any $\lambda\in\bar\Pi_+$, and  $0\in\rho(K_-(\lambda, \xi))$ for any $\xi\in\bbR$ and $\lambda\in\bar\Pi_-$. 
	Also,
		\begin{equation}\label{Ksmallo}
	\|K_\pm(\lambda, \xi)-I\|_{L^2_w}=o(1),\,\,\,|\lambda|\to\infty
	\end{equation}
	uniformly in the region $\{\lambda\in\bbC:\Re\lambda+1\geq-\eta|\xi| \}$ in the case of  $K_+(\lambda, \xi)$ and uniformly in the region $\{\lambda\in\bbC:\Re\lambda+1\leq\eta|\xi| \}$ in the case of $K_-(\lambda, \xi)$ for any $\eta<\varepsilon$. Moreover, for any $\xi\neq0$ there exists a small enough $\gamma$ such $0<\gamma<\varepsilon$ and $0\in\rho(K_+(\lambda, \xi))$ for any $\lambda\in\{\lambda\in\bbC:0>\Re\lambda+1>-\gamma|\xi| \}$. 
	
	\item {(The perturbed case)}. For arbitrary $\phi,\psi\in\Phi$ the function $\lambda\to (R(\lambda,\xi)\phi,\psi)_{L^2_w}$, where $R(\lambda,\xi)=(L_{\xi}-\lambda)^{-1}$,
	has meromorphic extensions $\lambda\to (R(\lambda,\xi)\phi,\psi)_\pm$ from $\Pi_{\pm}$ to $\Pi_{\pm}\cup\Omega^T_\xi$. Letting
	\begin{equation*}
	\langle R_\pm(\lambda, \xi)\phi,\psi\rangle:=(R(\lambda,\xi)\phi,\psi)_\pm,\,\,\,\phi,\psi\in\Phi,
	\end{equation*}
	we see that $R_\pm(\lambda, \xi)\phi\in\Phi^*$ for all $\phi\in\Phi$, that is, $R_\pm(\lambda, \xi):\Phi\to\Phi^*$. In particular,
		\begin{equation}\label{purtR}
	R_\pm(\lambda, \xi)=R^0_\pm(\lambda,\xi)-R^0_\pm(\lambda,\xi)VK^{-1}_\pm(\lambda, \xi)VR^0_\pm(\lambda,\xi).
	\end{equation}
	Moreover, the poles of $(R(\cdot,\xi)\phi,\psi)_\pm$ coincide with the poles of $K^{-1}_\pm(\cdot, \xi)$. Also, \begin{equation}\label{Radj}
	(R_\pm(\lambda, \xi))^*=R_\pm(\bar\lambda, -\xi),
	\end{equation}
	where the adjoint is understood in the sense of \eqref{defadj}. 
	Finally, 	\begin{equation}\label{Rsmallo}
	\langle R_\pm(\lambda, \xi)\phi,\psi\rangle=o(1),\,\,\,|\lambda|\to\infty
	\end{equation}
	uniformly in the region $\{\lambda\in\bbC:\Re\lambda+1\geq-\eta|\xi| \}$ in the case of $R_+(\lambda, \xi)$ and uniformly in the region $\{\lambda\in\bbC:\Re\lambda+1\leq\eta|\xi| \}$ in the case of $R_-(\lambda, \xi)$ for any $\eta<\varepsilon$.
\end{enumerate}

\end{proposition}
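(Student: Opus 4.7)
The plan is to reduce the entire proposition to the explicit kernel $R^0(\lambda,\xi) g(v) = -g(v)/(vi\xi + 1 + \lambda)$ of the multiplication resolvent, then build everything else on top of this by contour deformation, the rank-one structure of $V$, and the standard Birman--Schwinger resolvent identity \eqref{perR}.

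For item (1), I would start from
\[
(R^0(\lambda,\xi)\phi,\psi)_{L^2_w} = -\int_{\bbR}\frac{\phi(v)\,\overline{\psi(v)}\,w(v)}{vi\xi+1+\lambda}\,dv,
\]
note that for $\phi,\psi\in\Phi_0$ the integrand extends to a meromorphic function of $v$ in $\Omega$ (using the holomorphic extensions $\phi(v)$, $\overline{\psi(\bar v)}$, and the analyticity of $w$), and then shift the contour $\bbR$ downward by $\eta$ for $0<\eta<\varepsilon$ if we approach from $\Pi_+$, or upward if from $\Pi_-$. The only singularity is the simple pole $v^*(\lambda,\xi)=i(1+\lambda)/\xi$, whose imaginary part equals $(1+\Re\lambda)/\xi$; as long as $|\Re\lambda+1|<\eta|\xi|$ the pole stays strictly between the two candidate contours, so the shifted contour integral provides the holomorphic extension into $\Pi_\pm\cup\Omega^T_\xi$. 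The jump \eqref{jump0} is then an immediate residue computation: the difference of the two shifted-contour integrals encloses $v^*$ once, and the factor $1/(i\xi)$ from the linear denominator becomes $-2\pi/|\xi|$ after accounting for orientation (which switches sign simultaneously with $\xi$). The decay \eqref{smallo} follows from the dominated convergence theorem applied on the shifted contour, where $|vi\xi+1+\lambda|\to\infty$ uniformly as $|\lambda|\to\infty$ in the stated region; the adjoint identity follows from $M_\xi^*=M_{-\xi}$ and the symmetry of the kernel under $(\lambda,\xi)\mapsto(\bar\lambda,-\xi)$.

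For item (2), $K(\lambda,\xi)=I+VR^0(\lambda,\xi)V$ is of the form identity plus a rank-one operator $(\cdot,\mathbb 1)_{L^2_w}\,R^0(\lambda,\xi)\mathbb 1$, so item (1) applied with $\phi=\psi=\mathbb 1\in\Phi_0$ (since $\mathbb 1$ extends trivially and lies in $\Phi$) gives the extensions $K_\pm(\lambda,\xi)$ and $\omega_\pm(\lambda,\xi)=1+(R^0_\pm(\lambda,\xi)\mathbb 1,\mathbb 1)_{L^2_w}$. The zeros of $\omega_\pm$ in $\bar\Pi_\pm$ are already located in Proposition \ref{disspec}: a unique $\lambda^*(\xi)\in(-1,0]$ for $\xi\in[-\sqrt\pi,0)\cup(0,\sqrt\pi]$ and none for $|\xi|>\sqrt\pi$, which by the determinantal formula $\omega_\pm=\det K_\pm$ (valid since $K_\pm-I$ is rank one) gives precisely the invertibility statement. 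The estimate \eqref{Ksmallo} follows from \eqref{smallo}, and for the final claim about a narrow strip $\{0>\Re\lambda+1>-\gamma|\xi|\}$ I use that $\omega_+(\cdot,\xi)$ is holomorphic on $\Pi_+\cup\Omega^T_\xi$ and, by Proposition \ref{disspec}, its only zero in $\bar\Pi_+$ lies at $\lambda^*(\xi)\in(-1,0]$; continuity of $\omega_+$ across the boundary $\Re\lambda=-1$ therefore forces $\omega_+$ to be nonzero in a sufficiently thin left-neighborhood, with $\gamma$ chosen small enough in the $|\xi|$-rescaled strip $\Omega^T_\xi$.

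For item (3), I substitute the extensions $R^0_\pm$ and $K^{-1}_\pm$ into the Birman--Schwinger resolvent formula \eqref{perR} to define
\[
R_\pm(\lambda,\xi) := R^0_\pm(\lambda,\xi)-R^0_\pm(\lambda,\xi)VK^{-1}_\pm(\lambda,\xi)VR^0_\pm(\lambda,\xi),
\]
which is meromorphic on $\Pi_\pm\cup\Omega^T_\xi$ with poles exactly at the poles of $K^{-1}_\pm$ (i.e., the zeros of $\omega_\pm$) by the rank-one structure. That $R_\pm\phi\in\Phi^*$ for $\phi\in\Phi$ follows because $R^0_\pm:\Phi\to\Phi^*$ from item (1), $V$ sends $\Phi^*$ continuously into the one-dimensional subspace spanned by $\mathbb 1\in\Phi$ (up to the pairing with $\mathbb 1$), and the composition closes up consistently. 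The adjoint identity \eqref{Radj} and the decay \eqref{Rsmallo} are inherited from the corresponding properties of $R^0_\pm$ and $K^{-1}_\pm$.

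The step I expect to be the main obstacle is the uniform invertibility of $K_+(\lambda,\xi)$ in the narrow strip below $\Re\lambda=-1$: the width $\gamma|\xi|$ degenerates as $\xi\to 0$ and also as $|\xi|\to\sqrt\pi$, where $\lambda^*(\xi)$ approaches the essential spectrum from above, so the argument must carefully combine the explicit formula for $\omega_+$ and the continuity results of Proposition \ref{continuity16} and Theorem \ref{unifomega} to rule out additional zeros produced by the analytic continuation. The rest of the proof is a relatively mechanical unfolding of the rank-one perturbation calculus of Section \ref{s:3}.
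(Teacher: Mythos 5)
Your overall route is the same as the paper's: extend $(R^0(\lambda,\xi)\phi,\psi)_{L^2_w}$ by moving the $v$-contour past the pole $i(\lambda+1)/\xi$ (the paper deforms around it rather than translating the whole line, which is equivalent), get \eqref{jump0} from the residue, obtain $K_\pm=I+VR^0_\pm(\lambda,\xi)V$ by feeding $\mathbb{1}\in\Phi$ into item (1), and define $R_\pm$ by inserting the extensions into \eqref{perR}. The genuine gap is in item (2): you assert that the zeros of $\omega_\pm$ in $\bar\Pi_\pm$ are ``already located in Proposition \ref{disspec}.'' That proposition only concerns $\omega(\cdot,\xi)$ at points off the essential spectrum, $\Re\lambda\neq-1$; it says nothing about the boundary values $\omega_\pm(-1+ib,\xi)$ of the analytic continuations on the line $\Re\lambda=-1$, which is exactly where the remaining claims live: the zero of $\omega_+$ at $\lambda=-1$ for $\xi=\pm\sqrt{\pi}$ (Proposition \ref{disspec} in fact says $\omega$ does \emph{not} vanish for $|\xi|\geq\sqrt{\pi}$, so this zero is a new boundary phenomenon, not an element of $(-1,0]$ supplied by that proposition), the nonvanishing of $\omega_+$ at every other point of the line, and the nonvanishing of $\omega_-$ on all of $\bar\Pi_-$ including the line. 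The paper settles this by an explicit Sokhotski--Plemelj evaluation of the boundary values, \eqref{Iman}--\eqref{Iman1}: for $b\neq0$ the imaginary part of the boundary integral is nonzero, while at $b=0$ one finds $\omega_\pm(-1,\xi)=1\mp\sqrt{\pi}/|\xi|$, which vanishes only for $\omega_+$ at $\xi=\pm\sqrt{\pi}$. Without this computation your argument for the thin strip $\{0>\Re\lambda+1>-\gamma|\xi|\}$ also does not close: one must know the boundary values are nonzero away from the simple isolated zero at $(\lambda,\xi)=(-1,\pm\sqrt{\pi})$ and combine that with \eqref{Ksmallo} to rule out zeros of $\omega_+$ accumulating at the line from the left; Theorem \ref{unifomega}, which bounds $\omega$ on vertical lines with $\Re\lambda>-1$, does not supply this information about $\omega_+$ to the left of, or on, the line $\Re\lambda=-1$.

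A lesser inaccuracy concerns your justification of \eqref{smallo}: on the shifted contour the denominator does \emph{not} tend to infinity uniformly in $v$; at $v=-\Im\lambda/\xi-i\gamma$ it equals $\gamma\xi+1+\Re\lambda$, which stays of size $(\gamma-\eta)|\xi|$ however large $|\Im\lambda|$ is. The conclusion survives, but either by the paper's splitting of the contour into $\{|x|\leq|\gamma\xi+1+\lambda|/(2|\xi|)\}$, where the denominator is bounded below by $|\gamma\xi+1+\lambda|/2$, and its complement, where one uses the uniform lower bound together with smallness of the tail of the integrable function $\phi(\cdot)\overline{\psi(\bar\cdot)}w(\cdot)$; or by dominated convergence with the constant lower bound $(\gamma-\eta)|\xi|$ providing the dominating function and pointwise decay in $v$, run along arbitrary sequences $|\lambda_n|\to\infty$ in the region to recover the claimed uniformity. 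Items (1) (jump and adjoint) and (3) otherwise follow the paper's proof.
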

\begin{proof}
   (1) Fix $\xi\neq0$. Let 
	\begin{equation}\lb{Cauchy}
	(R^0(\lambda,\xi)\phi,\psi)_\pm:=i\int_{\Gamma_\mp(\xi)}\frac{\phi(z_{sh})\overline{\psi(\overline{z_{sh}})}}{-i\xi z_{sh}-1-\lambda}w(z_{sh})dz,\,\,\,z_{sh}=(z+1)i,
	\end{equation}
	where the contour $\Gamma_-(\xi)(\Gamma_+(\xi))$ passes along the vertical line $z=-1$ except for a neighborhood of the point  $z=-1+(\frac{\Im\lambda}{\xi})i$, and passes around $z=-1+\frac{\lambda+1}{\xi}$ to the left(to the right) for $\xi>0$ and to the right(to the left) for $\xi<0$ without leaving the domain $\Omega^T$. It is clear that \eqref{Cauchy} defines the required analytic extension. 
	Also, note that by \eqref{Cauchy}
		\begin{equation}\lb{Rintegral}
	\langle R^0_\pm(\lambda, \xi)\phi,\psi\rangle=\int_{\gamma_\mp(\xi)}\frac{\phi(z)\overline{\psi(\overline{z})}}{-i\xi z-1-\lambda}w(z)dz,\,\,\,
	\end{equation}
	where the contour $\gamma_-(\xi)=\gamma_-$ for $\xi>0$ and $\gamma_-(\xi)=\gamma_+$ for $\xi<0$ where the contour $\gamma_-(\gamma_+)$ passes along the $x$-axis except for a neighborhood of the point  $x=-\frac{\Im\lambda}{\xi}$, and passes around $z=\frac{\lambda+1}{\xi}i$ from below(from above) without leaving the domain $\Omega$. Similarly, we define $\gamma_+(\xi)=\gamma_+$ for $\xi>0$ and $\gamma_-(\xi)=\gamma_-$ for $\xi<0$.\\
	Moreover,
		\begin{align}\label{R0adj}
		\begin{split}
			\langle R^0_\pm(\lambda, \xi)\phi,\psi\rangle&=\int_{\gamma_\mp(\xi)}\frac{\phi(z)\overline{\psi(\overline{z})}}{-i\xi z-1-\lambda}w(z)dz=\overline{\int_{\gamma_\mp(\xi)}\frac{\overline{\phi(z)}{\psi(\overline{z})}}{i\xi \bar z-1-\bar\lambda}w(z)d\bar z}\\
			&=\overline{\int_{\gamma_\pm(\xi)}\frac{\overline{\phi(\bar z)}{\psi({z})}}{i\xi z-1-\bar\lambda}w(z)d z}=\overline{\int_{\gamma_\mp(-\xi)}\frac{\overline{\phi(\bar z)}{\psi({z})}}{i\xi z-1-\bar\lambda}w(z)d z}\\
			&=\overline{\langle R^0_\pm(\bar\lambda, -\xi)\psi, \phi\rangle}=\overline{\langle (R^0_\pm(\lambda, \xi))^*\psi, \phi\rangle}.
		\end{split}
	\end{align}
	Now, let $\xi>0$.  For all $\lambda$ from the region $\{\lambda\in\bbC:\Re\lambda+1\geq-\eta\xi \}$ and $\Im z=-\gamma$, where $\eta<\gamma<\varepsilon$ we have 
	\begin{align*}
	|-i\xi z-1-\lambda|^{-1}&\leq|\gamma\xi+1+\Re\lambda|^{-1}\leq((\gamma-\eta)\xi)^{-1}.
	\end{align*}
	Moreover, for all $x$ such that $|x|\leq\frac{|\gamma\xi+1+\lambda|}{2|\xi|}$, where $x=\Re z$
	\begin{align*}
	|-i\xi z-1-\lambda|^{-1}&=|-i\xi x-\gamma\xi-1-\lambda|^{-1}\leq||\xi x|-|\gamma\xi+1+\lambda||^{-1}\leq\frac{2}{|\gamma\xi+1+\lambda|}.
	\end{align*}
	Therefore,
		\begin{align*}
	\begin{split}
	|\langle R^0_+(\lambda, \xi)\phi,\psi\rangle|&=|\int_{\gamma_-}\frac{\phi(z)\overline{\psi(\overline{z})}}{-i\xi z-1-\lambda}w(z)dz|=|\int_{-\infty-\gamma i}^{\infty-\gamma i}\frac{\phi(z)\overline{\psi(\overline{z})}}{-i\xi z-1-\lambda}w(z)dz|\\
	&\leq((\gamma-\eta)\xi)^{-1}\int_{\{x-\gamma i:|x|>\frac{|\gamma\xi+1+\lambda|}{2|\xi|}\}}|{\phi(z)\overline{\psi(\overline{z})}}w(z)|dz\\
	&+\frac{2}{|\gamma\xi+1+\lambda|}\int_{\{x-\gamma i:|x|\leq\frac{|\gamma\xi+1+\lambda|}{2|\xi|}\}}|{\phi(z)\overline{\psi(\overline{z})}}w(z)|dz,
	\end{split}
	\end{align*}
	giving the $o(1)$ result in item (1). All the other cases for the $o(1)$ result can be treated similarly.\\
	(2) The formula
	\begin{equation}\lb{Kext}
	(K_\pm(\lambda, \xi) u,v)_{L^2_w}:=(u,v)_{L^2_w}+\langle R_\pm^0(\lambda,\xi)Vu,Vv\rangle,\,\,\,u,v\in L^2_w(\bbR)
	\end{equation}
	defines the corresponding analytic continuations. Notice that $Vu, Vv\in\Phi$.\\
	Next, we introduce the extension of $V$, that is, for any $\phi^*\in\Phi^*$ and $u\in L^2_w(\bbR)$ we define $V: \Phi^*\to L^2_w(\bbR)$ as follows
	\begin{equation*}
	(V\phi^*,u)_{L^2_w}:=\langle\phi^*,Vu\rangle.
	\end{equation*}
	Therefore, 
	\begin{equation}\lb{V}
	V\phi^*=\langle\phi^*,\mathbb{1}\rangle\mathbb{1}.
	\end{equation}
	Also, notice that the operator $V\in\mathcal{B}(L^2_w(\bbR),\Phi)$ and $V\in\mathcal{B}(\Phi^*,L^2_w(\bbR))$. Moreover, the adjoint operator to $V\in\mathcal{B}(L^2_w(\bbR),\Phi)$ is equal to $V\in\mathcal{B}(\Phi^*,L^2_w(\bbR))$. Indeed, for $\phi\in\Phi^*$ and $f\in L^2_w(\bbR)$ we have 
	\begin{equation}\label{V*}
	\langle\phi^*,Vf\rangle=\overline{(f,\mathbb{1})}_{L^2_w}\langle\phi^*,\mathbb{1}\rangle=(\mathbb{1},f)_{L^2_w}\langle\phi^*,\mathbb{1}\rangle=(V\phi^*,f)_{L^2_w}=(V^*\phi^*,f)_{L^2_w}.
	\end{equation}
	Hence, it follows from \eqref{Kext} and \eqref{V*} that 
	\begin{equation*}
	K_\pm(\lambda, \xi)=I+VR^0_\pm(\lambda,\xi)V
	\end{equation*} and is an operator of identity plus rank one. Also, notice that
	\begin{equation}\label{Kadj}
	(K_\pm(\lambda, \xi))^*=I+VR^0_\pm(\bar\lambda,-\xi)V.
	\end{equation}

	Moreover, for any fixed $\xi\neq0$ and any $\lambda\in\Pi_+\cup\Omega^T_\xi$, we extend $\omega(\cdot,\xi)$ by defining $\omega_\pm(\cdot,\xi)$ to be the determinants of $K_\pm(\cdot, \xi)$:
	\begin{equation}\lb{detpm}
\omega_\pm(\lambda,\xi):=\det(K_\pm(\lambda,\xi))=\det(1+\langle R_\pm^0(\lambda,\xi)\mathbb{1},\mathbb{1}\rangle)=1-\int_\mathbb{\gamma_\mp(\xi)}\frac{w(z)dz}{zi\xi+1+\lambda}.
	\end{equation}
	Then the claim about invertibility/noninvertibility of $K_+(\lambda, \xi)$ ($K_-(\lambda, \xi)$) for $\xi\in\mathbb{R}$ and $\lambda\in\Pi_+$ ($\lambda\in\Pi_-$) follows from Proposition \ref{disspec}.\\
	Now, we assume that $\lambda\in\bar\Pi_+\setminus\Pi_+$. Let $\lambda=-1+ib$, then we consider two cases: $\xi>0$ and $\xi<0$.\\
	Case 1. Let $\xi>0$. Then
	\begin{align}\lb{Iman}
	\begin{split}
	\int_\mathbb{\gamma_\mp(\xi)}\frac{w(z)dz}{zi\xi+1+\lambda}&=\frac{1}{i\xi}\int_\mathbb{\gamma_\mp}\frac{w(z)dz}{z-\frac{1+\lambda}{\xi}i}=\frac{1}{i\xi}\lim_{\eta\to0^\pm}\int_\mathbb{\mathbb{R}}\frac{w(z)dz}{z-(-b/\xi+\eta i)}\\
	&=\frac{1}{i\xi}\big(\pm\pi iw(-b/\xi)+P.v.\int_\bbR\frac{w(v)dv}{v+b/\xi}\big).
	\end{split}
	\end{align}
	If $b\neq0$, then the imaginary part of the integral from \eqref{Iman} doesn't vanish. And if $b=0$, then
	\begin{equation*}
	\int_\mathbb{\gamma_\mp}\frac{w(z)dz}{zi\xi+1+\lambda}=\pm\sqrt{\pi}/\xi.
	\end{equation*}
	Therefore, if $\lambda\in\bar\Pi_+\setminus\Pi_+$ and $\xi>0$, then $\det(K_-(\lambda,\xi))$ doesn't vanish and $\det(K_+(\lambda,\xi))$ vanishes if and only if $\lambda=-1$ and $\xi=\sqrt{\pi}$.\\
	Case 2. Let $\xi<0$.
	Similarly, one can show that if $b\neq0$, then the imaginary part of the integral $\det(K_\pm(\lambda,\xi))$ doesn't vanish and if $b=0$, then
	\begin{equation}\lb{Iman1}
\det(K_\pm(\lambda,\xi))=1-(\mp\sqrt{\pi}/\xi).
	\end{equation}
	Therefore, if $\lambda\in\bar\Pi_+\setminus\Pi_+$ and $\xi<0$, then $\det(K_-(\lambda,\xi))$ doesn't vanish and $\det(K_+(\lambda,\xi))$ vanishes if and only if $\lambda=-1$ and $\xi=-\sqrt{\pi}$.\\
	Formula \eqref{Ksmallo} follows directly from \eqref{smallo} and \eqref{Kext}.\\
	The existence of $\gamma $ such that $0<\gamma<\varepsilon$ and $K_+(\lambda,\xi)$ is invertible in the region  $\{\lambda\in\bbC:0>\Re\lambda+1>-\gamma|\xi| \}$ follows from formula \eqref{Ksmallo} and the fact that the determinant of $K_+(\lambda,\xi)$ is a holomorphic function in the region $\{\lambda\in\bbC:0>\Re\lambda+1>-\varepsilon|\xi| \}$. \\
	(3) 
	Using formula \eqref{perR}, we can extend $R(\lambda,\xi)$ as follows
	\begin{equation}\label{Rextpm}
	(R(\lambda,\xi)\phi,\psi)_\pm=\langle R^0_\pm(\lambda,\xi)\phi,\psi\rangle-(K^{-1}_\pm(\lambda, \xi)VR^0_\pm(\lambda,\xi)\phi,VR^0_\pm(\bar\lambda,-\xi)\psi)_{L^2_w}.
	\end{equation}
	Also, formula \eqref{Radj} follows from \eqref{R0adj}, \eqref{Kadj}  and \eqref{Rextpm}.\\
	Finally, formula \eqref{Rsmallo} follows from \eqref{smallo} and \eqref{Ksmallo}.
\end{proof}
\begin{remark}
	Notice that the region $\Omega^T_\xi$ becomes an empty set for $\xi=0$. That is why we assume that $\xi\neq0$ when we work with the rigged spaces.  And, we will treat the case $\xi=0$ separately.
\end{remark}
\begin{remark}\label{dualrmk}
	Here, following the rigged space formalism of \cite{L70}, we have extended the resolvent through the essential
	spectrum of $L_\xi$ by restriction to analytic test functions.
	This is in some sense dual to the strategy followed in scattering theory \cite{LP89} and stability
	of traveling waves \cite{S76,GZ98,ZH98} of extending the resolvent by restriction to
	spatially-exponentially decaying test functions.
	However, different from the analyses of \cite{S76,GZ98,ZH98}, we (and Ljance \cite{L70})
	do not make use of the extension of the resolvent past the essential spectrum to
	obtain estimates, but only the continuous extensions up to the boundary from either side;
	see Section \ref{s:disc}, \eqref{idres}-\eqref{modilt}.
\end{remark}

\section{The generalized eigenfunctions}
Fix $\xi\neq0$ and let 
\begin{align}\lb{delta0}
\delta^0_\lambda(z):=\frac{1}{2\pi i}\frac{1}{ z-\frac{i(\lambda+1)}{\xi}}w^{-1/2}(z),\,\,\lambda\in\Omega^T_\xi.
\end{align}
Note that $\delta^0_\lambda(\cdot)\in\Phi^*_{-\eta}$ for $\eta>\frac{|\Re\lambda+1|}{|\xi|}$. The functional as an element of the dual space $\Phi^*$ corresponding to $\delta^0_\lambda(\cdot)$ is called a generalized functional and
\begin{equation*}
\langle\delta^0_\lambda,\phi\rangle=\overline{\phi\Big(\overline{\frac{i(\lambda+1)}{\xi}}\Big)}w^{1/2}\Big(\frac{i(\lambda+1)}{\xi}\Big), \,\,\phi\in\Phi,\,\,\lambda\in\Omega^T_\xi.
\end{equation*}
In particular, 
\begin{equation}\lb{value}
\langle\delta^0_{-1-i\lambda\xi},\phi\rangle=\overline{\phi(\overline{\lambda})}w^{1/2}(\lambda), \,\,\phi\in\Phi,\,\,\lambda\in\Omega.
\end{equation}
\begin{remark}
	Note that $\delta^0_{-1-i\lambda\xi}$ is similar to the standard Dirac delta functional with the weight $w^{1/2}$. And the difference between two functionals is that the space of the test functions for $\delta^0_{-1-i\lambda\xi}$ is $\Phi$ instead of the space of infinitely differentiable functions with compact support. 
\end{remark}
We now describe an extension of the operator $M_\xi$ from $H$ to $\Phi^*$. 
First, we introduce the operator $T$
\begin{align*}
\begin{split}
T :&=M_\xi+I,\,\,\dom(T)=\dom(M_\xi)\subset L_w^{2}(\bbR).
\end{split}
\end{align*}

\begin{definition}
Let  $\dom(T|\Phi)=\dom(M_\xi|\Phi):=\{\phi:\phi\in\dom(T)\cap\Phi,\, T\phi\in\Phi\}$.
We denote by $\dom(T|\Phi^*)$ the set of those $\phi^*\in\Phi^*$ for which there exists a $\psi^*\in\Phi^*$ such that 
\begin{equation}\lb{defT}
\langle\psi^*,\phi\rangle=-\langle\phi^*,T\phi\rangle
\end{equation}
for all $\phi\in\dom(T|\Phi)$. For a given $\phi^*\in\Phi^*$ we set $T\phi^*:=\psi^*$.
\end{definition}
\begin{lemma}\lb{Tl}
	Let $\phi^*\in\Phi^*$. Then $\phi^*\in\dom(T|\Phi^*)$ if and only if
	\begin{enumerate}
		\item the limit $l(\phi^*):=-\lim_{z\to\infty}i\xi z\phi^*(z)w^{1/2}(z)$ exists, where $\phi^*(\cdot)$ is an analytic representation of $\phi^*$.
		\item the function $z\to -i\xi z\phi^*(z)-l(\phi^*)w^{-1/2}(z)$ belongs to some space $\Phi^*_{-\eta}$, $\eta\in[0,\eps)$.
	\end{enumerate}
\end{lemma}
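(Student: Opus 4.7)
The plan is to read condition \eqref{defT} through the integral representation \eqref{fun}, which will reduce the lemma to an identification of the ``correct'' representative of $T\phi^*$ in $\Phi^*_{-\eta}$, with $l(\phi^*)w^{-1/2}$ playing the role of a nontrivial element of the kernel of the pairing. Since $T=M_\xi+I$ is multiplication by $-i\xi v$, for $\phi\in\dom(T|\Phi)$ the computation
\[
\langle\phi^*,T\phi\rangle=\int_{\gamma}\phi^*(z)\,\overline{(-i\xi\bar z)\phi(\bar z)}\,w(z)\,dz=\int_{\gamma}i\xi z\,\phi^*(z)\overline{\phi(\bar z)}w(z)\,dz
\]
rewrites \eqref{defT} as the demand that there exists $\psi^*\in\Phi^*$ with
\(
\langle\psi^*,\phi\rangle=\int_{\gamma}(-i\xi z)\phi^*(z)\overline{\phi(\bar z)}w(z)\,dz
\)
for every $\phi\in\dom(T|\Phi)$.

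The auxiliary fact I would establish first is that for any constant $c$,
\[
\int_{\gamma}c\,w^{-1/2}(z)\overline{\phi(\bar z)}w(z)\,dz=c\int_{\gamma}w^{1/2}(z)\overline{\phi(\bar z)}\,dz=0,\qquad\phi\in\Phi.
\]
The integrand is entire (both $w^{1/2}$ and $\overline{\phi(\bar z)}$ are entire) and of Gaussian decay in $x$ for fixed $|y|<\varepsilon$, since $|w^{1/2}(x+iy)|=\pi^{-1/4}e^{(y^2-x^2)/2}$ and $\phi\in\Phi$. Closing the contour \eqref{congamma} with vertical segments at $\pm R$, whose contribution vanishes as $R\to\infty$, and applying Cauchy's theorem gives the claim. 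Thus multiples of $w^{-1/2}$ represent the zero functional in the pairing, even though they themselves fail to lie in any $\Phi^*_{-\eta}$.

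With these two ingredients the reverse direction is essentially a one-line verification: assuming (1) and (2), set $\psi^*(z):=-i\xi z\,\phi^*(z)-l(\phi^*)w^{-1/2}(z)$, which by (2) is a bona fide representative of some $\psi^*\in\Phi^*$; the auxiliary identity reduces $\langle\psi^*,\phi\rangle$ to $\int_{\gamma}(-i\xi z)\phi^*(z)\overline{\phi(\bar z)}w(z)\,dz=-\langle\phi^*,T\phi\rangle$, so $\phi^*\in\dom(T|\Phi^*)$ with $T\phi^*=\psi^*$.

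For the forward direction, I would let $\psi^*\in\Phi^*$ represent $T\phi^*$ with analytic representative $\psi^*(z)\in\Phi^*_{-\eta}$, and study the function $h(z):=\psi^*(z)+i\xi z\,\phi^*(z)$, which satisfies $\int_{\gamma}h(z)\overline{\phi(\bar z)}w(z)\,dz=0$ for all $\phi\in\dom(T|\Phi)$, hence by density (and continuity of the pairing in $\|\cdot\|_y$) for all $\phi\in\Phi$. The decisive step—and the main obstacle of the proof—is to deduce pointwise that $h(z)=-l(\phi^*)w^{-1/2}(z)$ for some constant $l(\phi^*)$. The natural route is to pass to the Fourier transform side using the Paley--Wiener characterization of $\Phi$ and $\Phi^*_{-\eta}$ implicit in the (commented) lemma after Definition 2.1: the vanishing of the pairing against every $\phi\in\Phi$ translates into the vanishing of the Fourier transform of $h(z)w^{1/2}(z)$ on the full line, modulo the one-dimensional ambiguity coming from constant-multiples of $w^{1/2}$ (which are entire and of minimal Gaussian type). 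This forces $h(z)w^{1/2}(z)$ to be a constant $c$, whence $h(z)=c\,w^{-1/2}(z)$. Once this is secured, the existence of the limit in (1) follows by evaluating $-i\xi z\,\phi^*(z)w^{1/2}(z)=\psi^*(z)w^{1/2}(z)+c$ as $|z|\to\infty$ in the region where $\psi^*\in\Phi^*_{-\eta}$ forces $\psi^*(z)w^{1/2}(z)\to 0$, giving $l(\phi^*)=c$; and (2) is then tautological since $-i\xi z\,\phi^*(z)-l(\phi^*)w^{-1/2}(z)=\psi^*(z)\in\Phi^*_{-\eta}$.
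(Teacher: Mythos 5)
Your sufficiency direction is exactly the paper's: granting (1)–(2), the candidate $\psi^*(z)=-i\xi z\phi^*(z)-l(\phi^*)w^{-1/2}(z)$ works because $\int_{\gamma}\overline{\phi(\overline z)}\,w^{1/2}(z)\,dz=0$; note the paper only needs this cancellation for $\phi\in\dom(T|\Phi)$, where absolute convergence of the line integrals is clear (your claim of pointwise Gaussian decay for a general $\phi\in\Phi$ tacitly uses boundedness of completely regular elements on substrips, a separate fact). The problem is the necessity direction, which is where the real content of the lemma lies. Two steps there are asserted rather than proved. First, the extension ``by density'' of the orthogonality $\int_\gamma h(z)\overline{\phi(\bar z)}w(z)\,dz=0$ from $\dom(T|\Phi)$ to all of $\Phi$ is not justified: the functional $\phi\mapsto\int_\gamma i\xi z\,\phi^*(z)\overline{\phi(\bar z)}w(z)\,dz$ is controlled by a graph-type norm of $T$, not by the norms $\|\cdot\|_y$ defining the topology of $\Phi$, so continuity (and hence the density argument) is in doubt. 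Second, and decisively, the conclusion ``this forces $h(z)w^{1/2}(z)$ to be a constant'' is precisely the nontrivial point, and your Paley--Wiener appeal does not deliver it: $h$ is holomorphic only on the two disconnected half-planes $|\Im z|>\eta$, so there is no ``Fourier transform of $hw^{1/2}$ on the full line'' to speak of; one must work with the jump between the two boundary lines, rule out different constants on the two components, and use the growth bound $h(z)w^{1/2}(z)=o(|z|)$ to exclude anything beyond constants. None of this is supplied, so the annihilator being exactly one-dimensional is assumed, not proved.

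The paper closes this gap with an explicit device that avoids both issues: it tests against the concrete family $\phi_\zeta(z)=\frac{1}{2\pi i}(z-\bar\zeta)^{-2}w^{-1/2}(z)$ with $|\Im\zeta|\geq\eps$, which lies in $\dom(T|\Phi)$ (the double pole is essential: a simple pole would give an element of $\Phi$ but $z\phi_\zeta$ would fail complete regularity), so no density step is needed. Writing the orthogonality relation with this test function and applying Cauchy's formula for derivatives to the contour $\int_\gamma$ converts it directly into $\bigl[(-i\xi\zeta\phi^*(\zeta)-\psi^*(\zeta))w^{1/2}(\zeta)\bigr]'=0$, i.e.\ constancy of $h(\zeta)w^{1/2}(\zeta)$ on each component; identifying the constant with $l(\phi^*)$ via $\psi^*(z)w^{1/2}(z)\to0$ for $\psi^*\in\Phi^*_{-\eta}$ then yields (1), and (2) is immediate since $-i\xi z\phi^*(z)-l(\phi^*)w^{-1/2}(z)=\psi^*(z)$. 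If you want to repair your write-up with minimal change, replace the Fourier/density paragraph by this explicit test-function computation (and, to be fully careful, note that pairing the constancy statement against a single $\phi\in\dom(T|\Phi)$ with $\int_{\RR}w^{1/2}\overline{\phi}\,dx\neq0$, e.g.\ $\phi=w^{1/2}$, identifies the constants coming from the upper and lower components).
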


\begin{proof}
	Assume that (1) and (2) hold. Since for $\phi\in\dom(T|\Phi)$ and $\gamma\in(0,\varepsilon)$ the integral $\int_{\gamma}\overline{\phi(\overline z)}w^{1/2}(z)dz$ is equal to $0$, $\psi^*(z)=-i\xi z\phi^*(z)-l(\phi^*)w^{-1/2}(z)$ satisfies \eqref{defT} for all $\phi\in\dom(T|\Phi)$. Therefore, $\phi^*\in\dom(T|\Phi^*)$.
	
	Now, let $\phi^*\in\dom(T|\Phi^*)$ and $\psi^*(\cdot)$ be an analytic representation of $\psi^*=T\phi^*$. Then, by \eqref{defT}, for some $\gamma\in(0,\varepsilon)$ we have 
	\begin{equation*}
	\int_{\gamma}[-i\xi z\phi^*(z)-\psi^*(z)]\overline{\phi(\overline z)}w(z)dz=0, \,\,\phi\in\dom(T|\Phi).
	\end{equation*}
	Now, pick $\phi\in\dom(T|\Phi)$ with analytic representation $\phi(z)=\frac{1}{2\pi i}(z-\bar\zeta)^{-2}w^{-1/2}(z)$ where $|\Re\zeta|\geq\varepsilon$. Then the necessary part of the lemma follows from the fact that $[(-i\xi \zeta\phi^*(\zeta)-\psi^*(\zeta))w^{1/2}(\zeta)]'=0$ from which (1) and (2) follow.
\end{proof}
Now, we are ready to extend $M_\xi$ from $H$ to $\Phi^*$.
\begin{align}\lb{S}
\begin{split}
 M_\xi&:\dom(M_\xi|\Phi^*)\subset\Phi^*\to\Phi^*,\,\, \dom(M_\xi|\Phi^*)=\dom(T|\Phi^*),\\
 M_\xi\phi^*&=(T-I)\phi^*.
\end{split}
\end{align}
Now, we are ready to describe the generalized eigenfunctions of the operator $M_\xi$.
\begin{lemma}\label{eigM}
Let $\phi^*\in\dom(M_\xi|\Phi^*)$ and 
\begin{equation}\lb{eig}
(M_\xi-\lambda)\phi^*=0.
\end{equation}
If $|\Re\lambda+1|<\e|\xi|$, then $\phi^*=C\delta^0_\lambda$, and if $|\Re\lambda+1|\geq\e|\xi|$, then $\phi^*=0$.
\end{lemma}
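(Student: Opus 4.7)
The plan is to convert the generalized eigenvalue equation \eqref{eig} into an algebraic equation for the analytic representation $\phi^*(z)$ via Lemma \ref{Tl}, solve it explicitly, and then use the required membership $\phi^*(\cdot)\in\Phi^*_{-\eta}$ for some $\eta\in[0,\varepsilon)$ to decide when a nontrivial solution exists.

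First, I would rewrite \eqref{eig} using $M_\xi=T-I$ as $T\phi^*=(1+\lambda)\phi^*$ in $\Phi^*$. By Lemma \ref{Tl}, $\phi^*\in\dom(T|\Phi^*)$ forces $l(\phi^*):=-\lim_{z\to\infty}i\xi z\phi^*(z)w^{1/2}(z)$ to exist and the function $\psi^*(z):=-i\xi z\phi^*(z)-l(\phi^*)w^{-1/2}(z)$ to represent $T\phi^*$. Matching representations, the eigenvalue equation becomes the pointwise identity
\begin{equation*}
-i\xi z\,\phi^*(z)-l(\phi^*)w^{-1/2}(z)=(1+\lambda)\phi^*(z),
\end{equation*}
i.e.\ $(-i\xi z-1-\lambda)\phi^*(z)=l(\phi^*)w^{-1/2}(z)$, valid wherever $\phi^*(\cdot)$ is holomorphic.

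Second, I would solve this equation: since $-i\xi z-1-\lambda=-i\xi\bigl(z-i(1+\lambda)/\xi\bigr)$,
\begin{equation*}
\phi^*(z)=-\frac{l(\phi^*)}{i\xi}\cdot\frac{w^{-1/2}(z)}{z-i(1+\lambda)/\xi}=-\frac{2\pi\,l(\phi^*)}{\xi}\,\delta^0_\lambda(z),
\end{equation*}
by the definition \eqref{delta0}. A direct check confirms that $\delta^0_\lambda$ actually lies in $\dom(M_\xi|\Phi^*)$: $i\xi z\delta^0_\lambda(z)w^{1/2}(z)=\xi/(2\pi)\cdot z/(z-i(1+\lambda)/\xi)\to \xi/(2\pi)$, so the constant $C=-2\pi l(\phi^*)/\xi$ is compatible with the relation $l(C\delta^0_\lambda)=-C\xi/(2\pi)$.

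Third, I would analyze when the derived formula can represent a genuine element of $\Phi^*$. The function $z\mapsto w^{-1/2}(z)/(z-i(1+\lambda)/\xi)$ is entire except for a simple pole at $z_0=i(1+\lambda)/\xi$, whose imaginary part equals $(\Re\lambda+1)/\xi$, so $|\Im z_0|=|\Re\lambda+1|/|\xi|$. Membership in $\Phi^*_{-\eta}$ requires holomorphy in $|\Im z|>\eta$ for some $\eta<\varepsilon$, which forces either $|\Im z_0|\le\eta<\varepsilon$ or $l(\phi^*)=0$. In the regime $|\Re\lambda+1|<\varepsilon|\xi|$, one can choose $\eta$ with $|\Re\lambda+1|/|\xi|<\eta<\varepsilon$, so that $\phi^*=C\delta^0_\lambda\in\Phi^*_{-\eta}$ (the required $L^2_w$-bound on horizontal lines $|\Im z|=\eta$ follows from the Gaussian decay of $w^{-1/2}$ against $w$, as already noted in the discussion preceding \eqref{value}). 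In the regime $|\Re\lambda+1|\ge\varepsilon|\xi|$, the pole $z_0$ lies outside (or on the boundary of) the strip $|\Im z|<\varepsilon$, so no admissible $\eta<\varepsilon$ can accommodate it, forcing $l(\phi^*)=0$ and hence $\phi^*\equiv 0$.

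The main obstacle I anticipate is bookkeeping rather than conceptual: verifying that the solving of the pointwise equation produces an analytic representation that is genuinely the unique one of some element of $\Phi^*_{-\eta}$, and carefully handling the borderline case $|\Re\lambda+1|=\varepsilon|\xi|$, which is covered by insisting $\eta<\varepsilon$ strictly. Everything else reduces to the explicit algebraic identification with $\delta^0_\lambda$ carried out in the second step.
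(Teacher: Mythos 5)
Your proof is correct and follows essentially the same route as the paper's: convert \eqref{eig} via Lemma \ref{Tl} into the pointwise algebraic equation for the representing function, solve to find $\phi^*=C\delta^0_\lambda$ with $C=-2\pi l(\phi^*)/\xi$, and then use the location of the pole $z_0=i(1+\lambda)/\xi$ (with $|\Im z_0|=|\Re\lambda+1|/|\xi|$) to conclude $l(\phi^*)=0$, hence $\phi^*=0$, whenever $|\Re\lambda+1|\geq\varepsilon|\xi|$. You have simply expanded the paper's one-line conclusion into explicit algebra and added the compatibility check $l(C\delta^0_\lambda)=l(\phi^*)$, which is a useful sanity check but not a new idea.
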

\begin{proof}
	By Lemma \ref{Tl} the equation \eqref{eig} is equivalent to the following equation in terms of analytic representations
	 \begin{equation}\lb{ef}
	 -i\xi z\phi^*(z)-\phi^*(z)=l(\phi^*)w^{-1/2}(z)+\lambda\phi^*(z).
	 \end{equation}
	 Thus $\phi^*=C\delta^0_\lambda$ for $|\Re\lambda+1|<\e|\xi|$. And if $|\Re\lambda+1|\geq\e|\xi|$, then $\phi^*(\cdot)$ from \eqref{ef} belongs to $\Phi^*_{-\eta}$ if and only if $l(\phi^*)=0$ which implies that $\phi^*=0$.
\end{proof}
\begin{remark}
	Note that \begin{align*}
	\delta^{a0}_\lambda(z):=\frac{1}{2\pi i}\frac{1}{ z+\frac{i(\lambda+1)}{\xi}}w^{-1/2}(z),\,\,\lambda\in\Omega^T_\xi
	\end{align*}
	are the generalized eigenfunctions of the operator $M^*_\xi$, and 
	\begin{equation}\lb{aaction}
	\langle\delta^{a0}_\lambda,\phi\rangle=\overline{\phi\Big(\overline{\frac{-i(\lambda+1)}{\xi}}\Big)}w^{1/2}\Big(\frac{-i(\lambda+1)}{\xi}\Big), \,\,\phi\in\Phi,\,\,\lambda\in\Omega^T_\xi.
	\end{equation}
\end{remark}
In particular, 
\begin{equation}\lb{valuea}
\langle\delta^{a0}_{-1+i\bar\lambda\xi},\phi\rangle=\overline{\phi({\lambda})}w^{1/2}(\bar\lambda), \,\,\phi\in\Phi,\,\,\lambda\in\Omega.
\end{equation}
Next, we introduce the following transforms
\begin{align}\label{hilbert}
\begin{split}
\dom(\mathcal{S})&={L_w^2(\bbR)},\\
(\mathcal{S}f)(\lambda):&=\int_{\mathbb{R}}\frac{f(z)w^{1/2}(z)}{z-\lambda}dz,\,\,\Im\lambda\neq0,\\
\dom(\hat{\mathcal{S}}_\eta)&={L^2(\bbR)},\\
(\hat{\mathcal{S}}_\eta f)(\lambda):&=\int_{\mathbb{R}}\frac{f(z)}{z-(\lambda+i\eta)}dz,\,\,\lambda\in\mathbb{R},\,\,\eta\neq0\in\bbR,\\
\dom(\hat{\mathcal{S}}_\pm)&={L^2(\bbR)},\\
(\hat{\mathcal{S}}_\pm f)(\lambda):&=\lim_{\eta\to0^\pm}\int_{\mathbb{R}}\frac{f(z)}{z-(\lambda+i\eta)}dz,\,\,\lambda\in\mathbb{R},\\
\dom({\mathcal{S}}_\pm)&={L_w^2(\bbR)},\\
{\mathcal{S}}_\pm f:&=\hat{\mathcal{S}}_\pm (fw^{1/2})\in{L^2(\bbR)}\subset{L_w^2(\bbR)},
\end{split}
\end{align}
where the limit exists for almost all $\lambda\in\mathbb{R}$. Moreover, $\mathcal{S}f$ is a holomorphic function in the upper and lower half-planes, transforms $\hat{\mathcal{S}}_\eta$ and $\hat{\mathcal{S}}_\pm$ could be treated as bounded operator from ${L^2(\bbR)}$ to ${L^2(\bbR)}$ and $\hat{\mathcal{S}}_\pm=\lim_{\eta\to0^\pm}\hat{\mathcal{S}}_\eta$ in the sense of strong convergence of operators in ${L^2(\bbR)}$. Note that if $f$ is holomorphic in a neighborhood of $\lambda$ and $(z-\lambda)^{-1}f(z)$ is integrable on $(-\infty,\lambda-\epsilon)\cup(\lambda+\epsilon,\infty)$, $\epsilon>0$, then 
\begin{align*}
\lim_{\eta\to0^\pm}\int_{\mathbb{R}}\frac{f(z)}{z-(\lambda+i\eta)}dz&=\int_{\gamma_\mp}\frac{f(z)}{z-\lambda}dz.
\end{align*}
Therefore, for $\phi\in\Phi$ we extend ${\mathcal{S}}_\pm$ as follows:
\begin{equation*}
({\mathcal{S}}_\pm \phi)(\lambda):=(\hat{\mathcal{S}}_\pm (\phi w^{1/2}))(\lambda)=\int_{\gamma_\mp}\frac{\phi(z)w^{1/2}(z)}{z-\lambda}dz,\,\,\phi\in\Phi,\,\,\lambda\in\Omega.
\end{equation*}
\begin{remark}\lb{Hs}
	Notice that 
	\begin{itemize}
		\item $
		\hat{\mathcal{S}}_\eta=\left\{
		\begin{array}{ll}
		2\pi i\mathcal{F}^{-1}\chi_+(\cdot)e^{-\eta (\cdot)}\mathcal{F},\,\,\,\eta>0,\\
		-2\pi i\mathcal{F}^{-1}\chi_-(\cdot)e^{-\eta (\cdot)}\mathcal{F},\,\,\,\eta<0,
		\end{array}
		\right.
		$
		where $\mathcal{F}$ is the Fourier transform and $\chi_\pm$ are the characteristic functions of the semi-axes $\bbR_\pm$. 
		\item $\hat{\mathcal{S}}_\eta f'=(\hat{\mathcal{S}}_\eta f)'$ and $\hat{\mathcal{S}}_\eta f\in H^1(\bbR)$ for $f\in H^1(\bbR)$.
		\item $\hat{\mathcal{S}}_\pm=\lim_{\eta\to0^\pm}\hat{\mathcal{S}}_\eta=\pm2\pi i\mathcal{F}^{-1}\chi_\pm(\cdot)\mathcal{F}$ in the sense of strong operator convergence.
		\item $\hat{\mathcal{S}}_\pm f'=(\hat{\mathcal{S}}_\pm f)'$ and $\hat{\mathcal{S}}_\pm f\in H^1(\bbR)$	for $f\in H^1(\bbR)$.
		\item $({\mathcal{S}}_\pm (fw^{1/2}))'=(\hat{\mathcal{S}}_\pm (fw))'={\mathcal{S}}_\pm (f'w^{1/2})-2{\mathcal{S}}_\pm (vfw^{1/2})$ and ${\mathcal{S}}_\pm (fw^{1/2})\in H_w^1(\bbR)$	for $f\in H_w^1(\bbR)$.
	\end{itemize}
\end{remark}
\begin{proposition}\label{omegareal}
	Fix $\xi\neq0$. Then the functions $\omega_\pm(-1-\cdot i\xi,\xi)$ defined in \eqref{detpm} have the following properties:
	\begin{enumerate}
		\item  $\omega_\pm(-1-\cdot i\xi,\xi)$ are analytic functions and $\lim_{|\lambda|\to\infty}\omega_\pm(-1-\lambda i\xi,\xi)=1$.
		\item if $\xi\neq\pm\sqrt{\pi}$, then $\omega_\pm(-1-\cdot i\xi,\xi)$ don't vanish for all $\lambda\in\bbR$.
		\item if $\xi=\pm\sqrt{\pi}$, then $\omega_-(-1-\lambda i\xi,\xi)$ doesn't vanish for all $\lambda\in\bbR$, $\omega_+(-1-\lambda i\xi,\xi)$ vanishes when $\lambda=0$, $\omega'_+(-1,\pm\sqrt{\pi})\neq0$, and $\omega_+(-1-\lambda i\xi,\xi)$ doesn't vanish for all non-zero real $\lambda$.
	\end{enumerate}
\end{proposition}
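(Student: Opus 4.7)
The plan is to parametrize the line $\Re \lambda = -1$ by $\mu \in \bbR$ via $\lambda = -1 - \mu i \xi$, reducing $\omega_\pm(-1 - \mu i \xi, \xi)$ to a Cauchy-type integral of the Gaussian weight $w$, and then dissect the result via the Plemelj--Sokhotski formula. Item (1) is then immediate from facts already proved: the substitution sends the strip $|\Im \mu| < \varepsilon$ into the set $|\Re \lambda + 1| < \varepsilon |\xi|$, so Proposition \ref{extevery}(2) supplies analyticity in a horizontal strip about $\bbR$. The uniform decay \eqref{Ksmallo} applies on the line $\Re \lambda = -1$ for any $0 < \eta < \varepsilon$, yielding $\omega_\pm \to 1$ as $|\mu| \to \infty$.

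For items (2)--(3), I would substitute $\lambda = -1 - \mu i \xi$ into \eqref{detpm} to obtain $\omega_\pm(-1 - \mu i \xi, \xi) = 1 - \frac{1}{i\xi} \int_{\gamma_\mp(\xi)} \frac{w(z)\,dz}{z - \mu}$. Applying the Plemelj--Sokhotski jump relation --- essentially the computation already carried out in \eqref{Iman} --- splits this, for $\xi > 0$, into
\[
\omega_\pm(-1 - \mu i \xi, \xi) \;=\; 1 \mp \frac{\pi\, w(\mu)}{\xi} \;\pm\; \frac{i}{\xi}\,\mathrm{P.V.}\!\int_\bbR \frac{w(v)\,dv}{v - \mu},
\]
with the overall sign flip for $\xi < 0$ read off from \eqref{Iman}--\eqref{Iman1}. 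A real zero of $\omega_\pm$ thus requires both parts to vanish simultaneously. The imaginary part is a nonzero multiple of the Dawson function $D(\mu) = e^{-\mu^2}\int_0^\mu e^{t^2}\,dt$, which is odd, real-analytic, and vanishes on $\bbR$ only at $\mu = 0$; hence the imaginary part vanishes on $\bbR$ only at $\mu = 0$. Evaluating the real part at $\mu = 0$ yields $1 \mp \sqrt{\pi}/|\xi|$ (up to the sign of $\xi$), showing that $\omega_-$ never vanishes on $\bbR$ for any $\xi \in \bbR$, while $\omega_+$ vanishes on $\bbR$ only at $(\mu, \xi) = (0, \pm\sqrt{\pi})$. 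This delivers (2) and the vanishing assertions in (3).

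For the simplicity claim $\omega'_+(-1, \pm\sqrt{\pi}) \neq 0$, I would differentiate the split formula in $\mu$ at $\mu = 0$: the real part contributes $\mp\pi w'(0)/\xi = 0$ since $w$ is even, while the imaginary part contributes $\pm \frac{i}{\xi}\,\mathrm{P.V.}\!\int_\bbR \frac{w(v)\,dv}{v^2}$. The latter integral evaluates to $-2$ by integration by parts using $w'(v) = -2v\,w(v)$ and $\int_\bbR w(v)\,dv = 1$, so the $\mu$-derivative is nonzero at $\mu = 0$. Converting back via $d\lambda/d\mu = -i\xi$ preserves nonvanishing, yielding $\omega'_+(-1,\pm\sqrt{\pi}) \neq 0$.

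The main technical obstacle, beyond the standard Dawson-function analysis, is keeping sign conventions straight across the $\sgn(\xi)$ dependence in the contour choice $\gamma_\mp(\xi)$. Fortunately Proposition \ref{extevery} already contains the key Plemelj--Sokhotski decomposition, so this bookkeeping can be handled by direct appeal to \eqref{Iman}--\eqref{Iman1} rather than redoing the contour analysis.
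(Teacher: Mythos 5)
Your route is essentially the paper's: the same substitution $\lambda=-1-\mu i\xi$, the same appeal to \eqref{Iman}--\eqref{Iman1} (Plemelj--Sokhotski) to split $\omega_\pm(-1-\mu i\xi,\xi)$ into the real part $1\mp\pi w(\mu)/\xi$ and an imaginary part proportional to $\mathrm{P.v.}\int_\bbR\frac{w(v)}{v-\mu}dv$ (i.e.\ to the Dawson function, odd and vanishing only at $\mu=0$), and the same conclusion that real zeros occur only at $\mu=0$, $\xi=\pm\sqrt{\pi}$, and only for $\omega_+$. Item (1) via Proposition \ref{extevery}(2) together with \eqref{Ksmallo} on the line $\Re\lambda=-1$ is also fine, and matches the explicit continuation $1-\frac{1}{i\xi}\int_{\gamma_\mp(\xi)}\frac{w(z)dz}{z-\mu}$ used in the paper. (A harmless slip: for $\xi>0$ the coefficient of the principal-value term is $+\frac{i}{\xi}$ for both $\omega_+$ and $\omega_-$, not $\pm\frac{i}{\xi}$; this does not affect the vanishing argument.)

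The step establishing $\omega'_+(-1,\pm\sqrt{\pi})\neq0$, however, is not valid as written. Differentiating under the principal-value sign you arrive at $\mathrm{P.V.}\int_\bbR \frac{w(v)}{v^2}\,dv$ and claim it equals $-2$ by integration by parts. That integral is divergent: the integrand is positive and even with a nonintegrable $1/v^2$ singularity, and indeed $\int_{|v|>\epsilon}\frac{w(v)}{v^2}dv=\frac{2w(\epsilon)}{\epsilon}+\int_{|v|>\epsilon}\frac{w'(v)}{v}dv\to+\infty$ as $\epsilon\to0^+$; your integration by parts silently discards the divergent boundary term $2w(\epsilon)/\epsilon$. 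The correct (and equally short) argument is the one the paper uses: by Remark \ref{Hs}, differentiation commutes with $\hat{\mathcal{S}}_+$, so $\omega'_+(-1,\pm\sqrt{\pi})=-\frac{1}{\pi}(\hat{\mathcal{S}}_+w)'(0)=-\frac{1}{\pi}(\hat{\mathcal{S}}_+w')(0)=-\frac{1}{\pi}\big(\pi i\,w'(0)+\mathrm{P.v.}\int_\bbR\frac{w'(v)}{v}dv\big)=\frac{2}{\pi}$, using $w'(v)=-2vw(v)$, $w'(0)=0$, $\int_\bbR w=1$. Your numerical value $-2$ happens to coincide with $\mathrm{P.v.}\int_\bbR\frac{w'(v)}{v}dv$ (the finite part of your divergent expression), so the conclusion and even the value $2/\pi$ after the change of variables $d\lambda/d\mu=-i\xi$ are correct, but the justification must differentiate the Gaussian, not the Cauchy kernel.
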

\begin{proof}

		(1) The functions $\omega_\pm(-1-\cdot i\xi,\xi)$ defined in \eqref{detpm} could be analytically continued as 
		\begin{equation*}
		\omega_\pm(-1-i\lambda\xi,\xi)=1-\frac{1}{i\xi}\int_\mathbb{\gamma_\mp(\xi)}\frac{w(z)dz}{z-\lambda}.
		\end{equation*}
		(2) and (3) follow from formulas \eqref{Iman} and \eqref{Iman1} and from the fact that 
\begin{align*}
\begin{split}
\omega'_+(-1,\sqrt{\pi})=-\frac{1}{\pi}(\hat{\mathcal{S}}_+ w)'(0)=-\frac{1}{\pi}(\hat{\mathcal{S}}_+ w')(0)=-\frac{1}{\pi}\big(\pi iw'(0)+P.v.\int_\bbR\frac{w'(v)dv}{v}\big)=\frac{2}{\pi}.
\end{split}
\end{align*}
	Similarly, one can show that $\omega'_+(-1,-\sqrt{\pi})=\frac{2}{\pi}$.
\end{proof}

\begin{proposition}\label{MR=RM} Let $\xi\neq0$. Then
	\begin{align}\lb{residen}
	\begin{split}
	[R^0_\pm(\lambda,\xi)\mathbb{1}](z)&=-\frac{1}{2\pi\xi}\frac{w^{-1/2}(z)}{ z-\tilde\lambda}[{({\mathcal{S}}\mathbb{1})(z)}-{({\mathcal{S}}_{\pm\sign{\xi}} \mathbb{1})(\tilde\lambda)}],\\
	&\,\,\,\,\,\,\,\hbox{where}\,\,\,\,\tilde\lambda=\frac{1}{\xi}(1+\lambda)i,\,\,\lambda\in\sigma_{ess}(M_\xi).
	\end{split}
	\end{align}
	Moreover, if $\mathfrak{i}$ is the natural inclusion map $\mathfrak{i}: H\to\Phi^*$ (see \eqref{inclusion}) and $f\in H$, then one can show that the analytic representation of $\mathfrak{i}f$ is of the form:
	\begin{align}\label{increpres}
	\begin{split}
	(\mathfrak{i}f)(z)&=-\frac{1}{2\pi i}w^{-1/2}(z)(\mathcal{S}f)(z),\,\,\,\Im z\neq0.
	\end{split}
	\end{align}
	Also, we have 
	\begin{align}\lb{residen1}
	\begin{split}
	(M_\xi-\lambda) R^0_\pm(\lambda,\xi)\phi&=\mathfrak{i}\phi, \,\,\phi\in\Phi,\\
	R^0_\pm(\lambda,\xi)(M_\xi-\lambda) \phi&=\mathfrak{i}\phi, \,\,\phi\in\dom(M_\xi|\Phi).
	\end{split}
	\end{align}
\end{proposition}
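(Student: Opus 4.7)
I would establish the three identities in the order \eqref{increpres}, \eqref{residen}, \eqref{residen1}.

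For \eqref{increpres}, set $g(z):=-\tfrac{1}{2\pi i}w^{-1/2}(z)(\mathcal{S}f)(z)$, which is holomorphic off the real axis. The Plemelj-Sokhotski formula for the Cauchy-type integral $\mathcal{S}f$ gives $(\mathcal{S}f)(x+i0)-(\mathcal{S}f)(x-i0)=2\pi i f(x)w^{1/2}(x)$, hence $g(x+i0)-g(x-i0)=-f(x)$ a.e. The $L^2$-boundedness of $\hat{\mathcal{S}}_\eta$ (Remark \ref{Hs}) yields uniform $L^2_w$-bounds on horizontal slices of $g$, so $g\in\Phi^*$. To recognize $g$ as the analytic representation of $\mathfrak{i}f$, evaluate $\int_\gamma g(z)\overline{\phi(\bar z)}w(z)\,dz$ by collapsing the two horizontal contours at $\pm i\gamma$ to the real axis: the limit is
\[
\int_\mathbb{R}\bigl[g(x-i0)-g(x+i0)\bigr]\overline{\phi(x)}w(x)\,dx=\int_\mathbb{R} f(x)\overline{\phi(x)}w(x)\,dx=(f,\phi)_{L^2_w},
\]
which by \eqref{inclusion} equals $\langle\mathfrak{i}f,\phi\rangle$.

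For \eqref{residen}, begin with $\lambda\in\rho(M_\xi)\cap\Pi_\pm$, for which $R^0(\lambda,\xi)\mathbb{1}(v)=(-iv\xi-1-\lambda)^{-1}$ is a genuine $L^2_w$-function and $R^0_\pm(\lambda,\xi)\mathbb{1}=\mathfrak{i}R^0(\lambda,\xi)\mathbb{1}$ in $\Phi^*$. Apply \eqref{increpres} to compute the analytic representation, rewrite $-ir\xi-1-\lambda=-i\xi(r-\tilde\lambda)$, and use the partial fraction
\[
\frac{1}{(r-\tilde\lambda)(r-z)}=\frac{1}{z-\tilde\lambda}\left(\frac{1}{r-z}-\frac{1}{r-\tilde\lambda}\right)
\]
to obtain
\[
[\mathfrak{i}R^0(\lambda,\xi)\mathbb{1}](z)=-\frac{w^{-1/2}(z)}{2\pi\xi(z-\tilde\lambda)}\bigl[(\mathcal{S}\mathbb{1})(z)-(\mathcal{S}\mathbb{1})(\tilde\lambda)\bigr].
\]
This is already of the form \eqref{residen}, with $(\mathcal{S}\mathbb{1})(\tilde\lambda)$ in place of the boundary value $(\mathcal{S}_{\pm\sgn\xi}\mathbb{1})(\tilde\lambda)$. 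Now let $\lambda$ tend to the essential spectrum from within $\Pi_\pm$. Since $\Im\tilde\lambda=(\Re\lambda+1)/\xi$ has sign $\pm\sgn\xi$ for $\lambda\in\Pi_\pm$, the point $\tilde\lambda$ approaches the real axis from that side, and $(\mathcal{S}\mathbb{1})(\tilde\lambda)\to(\mathcal{S}_{\pm\sgn\xi}\mathbb{1})(\tilde\lambda)$ by the very definition of $\mathcal{S}_\pm$. The $\Phi^*$-continuity of $\lambda\mapsto R^0_\pm(\lambda,\xi)\mathbb{1}$ on $\Pi_\pm\cup\Omega^T_\xi$ from Proposition \ref{extevery}(1) then upgrades this pointwise-in-$z$ limit to \eqref{residen}.

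For \eqref{residen1}, the classical resolvent identities $(M_\xi-\lambda)R^0(\lambda,\xi)=I$ on $L^2_w$ and $R^0(\lambda,\xi)(M_\xi-\lambda)=I$ on $\dom(M_\xi)$ hold for $\lambda\in\rho(M_\xi)$. Composing with $\mathfrak{i}:L^2_w\to\Phi^*$ and using that $\mathfrak{i}$ intertwines the $L^2_w$-action of $M_\xi$ with its $\Phi^*$-extension (built into \eqref{defT}), these become $(M_\xi-\lambda)R^0_\pm(\lambda,\xi)\phi=\mathfrak{i}\phi$ for $\phi\in\Phi$ and $R^0_\pm(\lambda,\xi)(M_\xi-\lambda)\phi=\mathfrak{i}\phi$ for $\phi\in\dom(M_\xi|\Phi)$, valid for $\lambda\in\rho(M_\xi)$. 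Both sides are $\Phi^*$-analytic in $\lambda$ on all of $\Pi_\pm\cup\Omega^T_\xi$ (the LHS via Proposition \ref{extevery}(1), the RHS independent of $\lambda$), so the identities extend by analytic continuation.

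The main obstacle is the boundary-passage step in \eqref{residen}: one must verify that pointwise-in-$z$ convergence of the analytic representations upgrades to $\Phi^*$-convergence of the corresponding generalized elements. This reduces to uniform $L^2$-control of horizontal slices of $\mathcal{S}\mathbb{1}$ as $\Im\tilde\lambda\to 0^\pm$, which follows from the $L^2$-boundedness of the Hilbert-transform-type operators $\hat{\mathcal{S}}_\eta$ (Remark \ref{Hs})---the same ingredient used in the first step.
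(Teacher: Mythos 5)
Your proof is correct in substance but follows a genuinely different route from the paper's, which is worth comparing. For \eqref{increpres}, you argue via Plemelj--Sokhotski and a contour-collapsing computation, whereas the paper obtains the analytic representation (of $R^0_\pm(\lambda,\xi)\phi$ and, ``similarly,'' of $\mathfrak{i}f$) by a direct Cauchy-integral manipulation \eqref{R0repres} that inserts the Cauchy representation of $\psi$ and swaps the order of integration; the two calculations are essentially equivalent, but yours makes the role of the jump of $\mathcal{S}f$ across $\R$ explicit, which is a useful way to understand why the factor $-1/2\pi i$ and the $w^{-1/2}$ appear. For \eqref{residen}, however, your plan diverges more substantially: you start in $\rho(M_\xi)\cap\Pi_\pm$, apply \eqref{increpres} plus partial fractions, and then pass to the boundary $\Re\lambda=-1$. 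The paper instead works directly with the analytic continuation \eqref{Rintegral}, $\langle R^0_\pm(\lambda,\xi)\phi,\psi\rangle=\int_{\gamma_\mp(\xi)}\frac{\phi(z)\overline{\psi(\bar z)}}{-i\xi z-1-\lambda}\,w(z)\,dz$, which is already defined for $\lambda\in\Pi_\pm\cup\Omega^T_\xi$; the partial-fraction step then yields \eqref{residen} at once, with $\int_{\gamma_\mp(\xi)}\frac{w^{1/2}(z)}{z-\tilde\lambda}\,dz=(\mathcal{S}_{\pm\sgn\xi}\mathbb{1})(\tilde\lambda)$ for real $\tilde\lambda$, and no boundary limit is required. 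In your version that limit is the genuine technical content, as you correctly identify; it can be tightened either by invoking $L^2$-boundedness of $\hat{\mathcal S}_\eta$ as you propose, or more cleanly by noting that, for each fixed $z$ off a strip, both sides of \eqref{residen} are holomorphic in $\lambda$ on $\Pi_\pm\cup\Omega^T_\xi$ once $(\mathcal{S}\mathbb{1})(\tilde\lambda)$ is replaced by its continuation $\int_{\gamma_\mp(\xi)}\frac{w^{1/2}(y)}{y-\tilde\lambda}\,dy$, so the identity extends to the boundary by analytic continuation rather than a limit of $\Phi^*$ elements. Your derivation of \eqref{residen1} is sound, but the phrase ``$\mathfrak{i}$ intertwines $\ldots$ (built into \eqref{defT})'' deserves a line of justification: the point is that $T=-iv\xi$ is skew-adjoint on $L^2_w$, $T^*=-T$, so $(Tf,\phi)_{L^2_w}=-(f,T\phi)_{L^2_w}=-\langle\mathfrak{i}f,T\phi\rangle$, which by \eqref{defT} is exactly $\langle T(\mathfrak{i}f),\phi\rangle$; without this sign check the intertwining is not automatic. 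You should also confirm $R^0_\pm(\lambda,\xi)\phi\in\dom(M_\xi|\Phi^*)$ via Lemma \ref{Tl} before applying the extended operator, a point both your outline and the paper's terse proof leave implicit.
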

\begin{proof} Let $\tilde\gamma$ be a positive number such that the contour from $\int_{\tilde\gamma}$ (see \eqref{congamma}) is outside of both $\gamma_-(\xi)$ and $\gamma_+(\xi)$. Then
		\begin{align}\label{R0repres}
		\begin{split}
		\langle R^0_\pm(\lambda, \xi)\phi,\psi\rangle&=\int_{\gamma_\mp(\xi)}\frac{\phi(z)\overline{\psi(\overline{z})}}{-i\xi z-1-\lambda}w(z)dz\stackrel{\tilde\lambda=\frac{1}{\xi}(1+\lambda)i}{=}-\frac{1}{i\xi}\int_{\gamma_\mp(\xi)}\frac{\phi(z)\overline{\psi(\overline{z})}}{ z-\tilde\lambda}w(z)dz\\
		&=\frac{1}{2\pi\xi}\int_{\gamma_\mp(\xi)}\frac{\phi(z)}{ z-\tilde\lambda}\int_{\tilde\gamma}\frac{\overline{\psi(\overline{y})}w^{1/2}(y)}{ y-z}dyw^{1/2}(z)dz\\
		&=\frac{1}{2\pi\xi}\int_{\tilde\gamma}\int_{\gamma_\mp(\xi)}\frac{\phi(z)w^{1/2}(z)}{ (z-\tilde\lambda)(y-z)}dz\overline{\psi(\overline{y})}w^{1/2}(y)dy\\
		&=\frac{1}{2\pi\xi}\int_{\tilde\gamma}\frac{-1}{y-\tilde\lambda}\Big[\int_{\gamma_\mp(\xi)}\frac{\phi(z)w^{1/2}(z)}{ z-y}dz-\int_{\gamma_\mp(\xi)}\frac{\phi(z)w^{1/2}(z)}{ z-\tilde\lambda}dz\Big]\overline{\psi(\overline{y})}w^{1/2}(y)dy.
		\end{split}	
	\end{align}
	Similarly, one can show that
	\begin{equation*}
	\langle \mathfrak{i}f,\psi\rangle=(f,\psi)_{L^2_w(\bbR)}=\frac{1}{2\pi i}\int_{\tilde\gamma}\Big[\int_{\gamma_\mp(\xi)}\frac{\phi(z)w^{1/2}(z)}{ y-z}dz\Big]\overline{\psi(\overline{y})}w^{1/2}(y)dy.
	\end{equation*}
	Next, the first formula in \eqref{residen1} follows from the extension formula of $M_\xi$ \eqref{S}, \eqref{increpres} and \eqref{R0repres}. Finally, the first formula in \eqref{residen1} follows from the definitions of $\dom(M_\xi|\Phi)$ and $R^0_\pm(\lambda, \xi)$, and \eqref{increpres}.
\end{proof}
\begin{definition}
The operator $L_{\xi}$ extended to the space $\Phi^*$ is defined as the sum of the operators
$M_\xi$ and $V$ extended to this space. The domain of the extended
operator is $\dom(M_\xi|\Phi^*)$.
\begin{align*}
\begin{split}
L_{\xi}&:\dom(L_{\xi}|\Phi^*)\subset\Phi^*\to\Phi^*,\,\, \dom(L_{\xi}|\Phi^*)=\dom(M_\xi|\Phi^*),\\
L_{\xi}\phi^*&=(M_\xi+\mathfrak{i}V)\phi^*.
\end{split}
\end{align*}
\end{definition}
We now determine the generalized eigenfunctions of the extended operator $L_{\xi}$.
\begin{proposition}\lb{delta}
Fix $\xi\ne0$. Let $\lambda\in\Omega^T_\xi$ and let $K_+(\lambda,\xi)$ ($K_-(\lambda,\xi)$) be invertible. If
\begin{equation}\lb{eigeq}
(L_{\xi}-\lambda)\phi^*=0, \,\,\phi^*\in\dom(L_{\xi}|\Phi^*),
\end{equation}
then $\phi^*=C^+\delta^+_\lambda$ ($\phi^*=C^-\delta^-_\lambda$), where 
\begin{equation}\lb{deltaformula}
\delta^\pm_\lambda=(1-R^0_\pm(\lambda,\xi)V K_\pm^{-1}(\lambda,\xi)V)\delta^0_\lambda.
\end{equation}
\end{proposition}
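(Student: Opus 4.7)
The plan is to reduce the eigenvalue equation on $\Phi^*$ to a single scalar compatibility condition, solve for the free parameters, and then identify the solution with the formula \eqref{deltaformula}.

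First, I would unpack the extended operator. Since $L_\xi = M_\xi + \mathfrak{i}V$ on $\dom(L_\xi|\Phi^*)$, the eigenvalue equation \eqref{eigeq} becomes
\begin{equation*}
(M_\xi - \lambda)\phi^* = -\mathfrak{i}V\phi^*.
\end{equation*}
Using \eqref{V}, we have $V\phi^* = \langle \phi^*, \mathbb{1}\rangle\,\mathbb{1} =: c\,\mathbb{1}$, so the right-hand side is the constant generalized element $-c\,\mathfrak{i}\mathbb{1}$. By the first identity in \eqref{residen1}, a particular solution is $-c\,R^0_\pm(\lambda,\xi)\mathbb{1}$, while by Lemma \ref{eigM} the homogeneous equation $(M_\xi-\lambda)\psi^* = 0$ in $\Phi^*$ has, for $\lambda \in \Omega^T_\xi$, a one-dimensional solution space spanned by $\delta^0_\lambda$. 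Therefore every candidate $\phi^*$ must have the form
\begin{equation*}
\phi^* = C\,\delta^0_\lambda - c\,R^0_\pm(\lambda,\xi)\mathbb{1}
\end{equation*}
for some scalar $C \in \mathbb{C}$.

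Next I would impose the self-consistency requirement $c = \langle \phi^*, \mathbb{1}\rangle$. Pairing the displayed expression with $\mathbb{1}$ gives
\begin{equation*}
c = C\,\langle \delta^0_\lambda, \mathbb{1}\rangle - c\,\langle R^0_\pm(\lambda,\xi)\mathbb{1}, \mathbb{1}\rangle.
\end{equation*}
The key algebraic observation is that $\mathbb{1}$ is an eigenvector of $K_\pm(\lambda,\xi) = I + VR^0_\pm(\lambda,\xi)V$: indeed, since $V\mathbb{1} = \mathbb{1}$ and $VR^0_\pm(\lambda,\xi)\mathbb{1} = \langle R^0_\pm(\lambda,\xi)\mathbb{1},\mathbb{1}\rangle\,\mathbb{1}$, we get $K_\pm(\lambda,\xi)\mathbb{1} = \omega_\pm(\lambda,\xi)\mathbb{1}$ by \eqref{detpm}. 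Hence, under the standing invertibility assumption on $K_\pm(\lambda,\xi)$, one has $K^{-1}_\pm(\lambda,\xi)\mathbb{1} = \omega_\pm(\lambda,\xi)^{-1}\mathbb{1}$, and the consistency equation rearranges to
\begin{equation*}
c = \frac{C\,\langle \delta^0_\lambda, \mathbb{1}\rangle}{\omega_\pm(\lambda,\xi)}.
\end{equation*}

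Finally I would verify that the resulting $\phi^* = C\bigl(\delta^0_\lambda - \omega_\pm(\lambda,\xi)^{-1}\langle \delta^0_\lambda,\mathbb{1}\rangle\,R^0_\pm(\lambda,\xi)\mathbb{1}\bigr)$ coincides with $C\delta^\pm_\lambda$ as given by \eqref{deltaformula}. This is a direct unwinding: $V\delta^0_\lambda = \langle \delta^0_\lambda,\mathbb{1}\rangle\,\mathbb{1}$, so $K^{-1}_\pm(\lambda,\xi)V\delta^0_\lambda = \omega_\pm(\lambda,\xi)^{-1}\langle \delta^0_\lambda,\mathbb{1}\rangle\,\mathbb{1}$, and one more application of $V$ followed by $R^0_\pm(\lambda,\xi)$ gives exactly the second summand in \eqref{deltaformula}. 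Setting $C^\pm := C$ concludes the argument. The main technical point to be careful about is justifying that $R^0_\pm(\lambda,\xi)\mathbb{1}$ lies in $\dom(M_\xi|\Phi^*)$ and that the pairings $\langle \cdot, \mathbb{1}\rangle$ are well-defined in the sense of Section \ref{rigged}, which follows from Proposition \ref{extevery}(1) (in particular the analytic representation formula underlying \eqref{residen}) together with Lemma \ref{Tl}; the algebraic manipulation with $K_\pm$ is then essentially mechanical.
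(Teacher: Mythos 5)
Your proof is correct and follows essentially the same path as the paper's: rewrite $(L_\xi-\lambda)\phi^*=0$ via \eqref{residen1} so that Lemma \ref{eigM} applies, then determine the projected component $V\phi^*$ from the invertibility of $K_\pm(\lambda,\xi)$, and substitute back. The only cosmetic difference is that you parametrize $V\phi^*=c\,\mathbb{1}$ and solve a scalar consistency equation, whereas the paper applies $V$ to obtain the operator identity $K_\pm(\lambda,\xi)V\phi^*=C^\pm V\delta^0_\lambda$ and then inverts $K_\pm$; since $V$ is rank one these are the same computation.
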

\begin{proof}
	We rewrite equation \eqref{eigeq} as $(M_\xi-\lambda)\phi^*+\mathfrak{i}V\phi^*=0.$
	By applying Proposition \ref{MR=RM}, we rewrite the equation \eqref{eigeq} as
	\begin{equation}\lb{mreq}
	(M_\xi-\lambda)\phi^*+(M_\xi-\lambda) R^0_\pm(\lambda,\xi)V\phi^*=0.
	\end{equation}
	By Lemma \ref{eigM}, the general solution of this equation is of the form
	\begin{equation}\lb{geneq}
	\phi^*+R^0_\pm(\lambda,\xi)V\phi^*=C^\pm\delta^0_\lambda.
	\end{equation}
	After applying the operator $V$ to both sides of \eqref{geneq} and using the fact that $V^2=V$, we obtain
	\begin{equation*}
	(I+VR^0_\pm(\lambda,\xi)V)V\phi^*=C^\pm V\delta^0_\lambda.
	\end{equation*}
	Using formula \eqref{Kext}, we arrive at
	\begin{equation}\lb{solphi}
     K_\pm(\lambda,\xi)V\phi^*=C^\pm V\delta^0_\lambda.
	\end{equation}
	After substituting the solution of \eqref{solphi} into \eqref{geneq}, we arrive at
	\begin{equation*}
	\phi^*=C^\pm\delta^\pm_\lambda=C^\pm(1-R^0_\pm(\lambda,\xi)V K_\pm^{-1}(\lambda,\xi)V)\delta^0_\lambda.
	\end{equation*}
\end{proof}
\begin{remark}\lb{da+-}
	One can also show that if $\lambda\in\Omega^T_\xi$ and $K_\pm^{}(\lambda,-\xi)$ are invertible, then
	\begin{equation*}
	\delta^{a\pm}_{\lambda}=(1-R^0_\pm(\lambda,-\xi)V K_\pm^{-1}(\lambda,-\xi)V)\delta^{a0}_\lambda
	\end{equation*}
	are the generalized eigenfunctions of the extended operator $L^*_{\xi}$.
\end{remark}

In order to prove the jump formulas for the resolvents, we need the following auxiliary result: 
\begin{proposition}\label{Kinvert} Fix $\xi\ne0$. Let $\lambda\in\Omega^T_\xi$ and let $K_\pm^{}(\lambda,\xi)$ be invertible. Then
	\begin{align}\lb{inverse}
	\begin{split}
	K_\pm(\lambda,\xi)^{-1}&=I-\frac{1}{\omega_\pm(\lambda,\xi)}VR_\pm^0(\lambda,\xi)V,\\
	K_\pm(\lambda,\xi)^{-1}\mathbb{1}&=\frac{1}{\omega_\pm(\lambda,\xi)}\mathbb{1}.
	\end{split}
	\end{align}
\end{proposition}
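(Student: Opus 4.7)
The plan is to exploit the rank-one structure of $V$ to reduce the inverse of $K_\pm(\lambda,\xi)=I+VR^0_\pm(\lambda,\xi)V$ to a scalar computation. The key observation is that $V$ is the orthogonal projection onto $\mathrm{span}\{\mathbb{1}\}$ in $L^2_w$, since $(\mathbb{1},\mathbb{1})_{L^2_w}=\int_\bbR w(v)\,dv=1$ gives $V^2=V$.

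First I would show that $VR^0_\pm(\lambda,\xi)V=(\omega_\pm(\lambda,\xi)-1)V$ as operators on $L^2_w$. For any $f\in L^2_w$,
\begin{equation*}
VR^0_\pm(\lambda,\xi)Vf=(f,\mathbb{1})_{L^2_w}\,VR^0_\pm(\lambda,\xi)\mathbb{1},
\end{equation*}
and using the extension of $V$ to $\Phi^*$ from \eqref{V}, together with the definition \eqref{detpm} of $\omega_\pm(\lambda,\xi)=1+\langle R^0_\pm(\lambda,\xi)\mathbb{1},\mathbb{1}\rangle$, we obtain
\begin{equation*}
VR^0_\pm(\lambda,\xi)\mathbb{1}=\langle R^0_\pm(\lambda,\xi)\mathbb{1},\mathbb{1}\rangle\,\mathbb{1}=(\omega_\pm(\lambda,\xi)-1)\mathbb{1}.
\end{equation*}
Combining these two identities yields the claim, and therefore
\begin{equation*}
K_\pm(\lambda,\xi)=I+(\omega_\pm(\lambda,\xi)-1)V.
\end{equation*}

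Next, since $V$ is idempotent, $L^2_w$ splits as $\mathrm{ran}(V)\oplus\ker(V)$; on the first summand $K_\pm$ acts as multiplication by $\omega_\pm(\lambda,\xi)$ (which is nonzero by invertibility), and on the second as the identity. Consequently
\begin{equation*}
K_\pm(\lambda,\xi)^{-1}=(I-V)+\frac{1}{\omega_\pm(\lambda,\xi)}V=I-\frac{\omega_\pm(\lambda,\xi)-1}{\omega_\pm(\lambda,\xi)}V=I-\frac{1}{\omega_\pm(\lambda,\xi)}VR^0_\pm(\lambda,\xi)V,
\end{equation*}
which is the first formula. For the second formula, I would apply this inverse to $\mathbb{1}$, noting $V\mathbb{1}=\mathbb{1}$, so that
\begin{equation*}
K_\pm(\lambda,\xi)^{-1}\mathbb{1}=\mathbb{1}-\frac{1}{\omega_\pm(\lambda,\xi)}VR^0_\pm(\lambda,\xi)\mathbb{1}=\mathbb{1}-\frac{\omega_\pm(\lambda,\xi)-1}{\omega_\pm(\lambda,\xi)}\mathbb{1}=\frac{1}{\omega_\pm(\lambda,\xi)}\mathbb{1}.
\end{equation*}

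No serious obstacle is anticipated; the only point that requires care is ensuring that the identity $VR^0_\pm V=(\omega_\pm-1)V$ makes sense as an operator on $L^2_w$ even though $R^0_\pm(\lambda,\xi)\mathbb{1}\in\Phi^*$ rather than $L^2_w$. This is handled by the fact that $V$ extends continuously from $\Phi^*$ to $L^2_w$ via \eqref{V}, so the composition is well defined and the scalar $\langle R^0_\pm(\lambda,\xi)\mathbb{1},\mathbb{1}\rangle$ is precisely the one appearing in the definition of $\omega_\pm$.
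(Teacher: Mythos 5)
Your proof is correct. The paper's proof takes $f,g\in L^2_w$ with $K_\pm(\lambda,\xi)f=g$, pairs both sides with $\mathbb{1}$ to get $(g,\mathbb{1})_{L^2_w}=\omega_\pm(\lambda,\xi)(f,\mathbb{1})_{L^2_w}$, and back-substitutes to solve for $f$ in terms of $g$; the $\mathbb{1}$ formula is then obtained by setting $g=\mathbb{1}$. Your version instead first rewrites $K_\pm(\lambda,\xi)=I+(\omega_\pm(\lambda,\xi)-1)V$ via the identity $VR^0_\pm V=(\omega_\pm-1)V$, and then inverts by diagonalizing on the splitting $L^2_w=\mathrm{ran}(V)\oplus\ker(V)$ induced by the orthogonal projector $V$. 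The two routes rest on the identical scalar computation $\langle R^0_\pm(\lambda,\xi)\mathbb{1},\mathbb{1}\rangle=\omega_\pm(\lambda,\xi)-1$ and differ only in presentation; yours is slightly more structural (it makes the rank-one/projector mechanism explicit up front), while the paper's is a direct solve of $Kf=g$. Your closing remark about the $\Phi^*$-valued composition being well defined via the extension \eqref{V} is exactly the right point to flag, and is what the paper implicitly uses when writing $\langle R^0_\pm\mathbb{1},\mathbb{1}\rangle$.
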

\begin{proof} 
	Let $f,g \in H={L_w^2(\bbR)}$ and assume that $K_\pm(\lambda,\xi)f=g$. Then 
	\begin{equation*}
	g=(I+VR_\pm^0(\lambda,\xi)V)f=f+VR_\pm^0(\lambda,\xi)Vf=f+(f,\mathbb{1})_{L_w^2}\langle R_\pm^0(\lambda,\xi)\mathbb{1},\mathbb{1}\rangle\mathbb{1}.
	\end{equation*}
	Therefore,
	\begin{equation*}
	(g,\mathbb{1})_{L_w^2}=(f,\mathbb{1})_{L_w^2}(1+\langle R_\pm^0(\lambda,\xi)\mathbb{1},\mathbb{1}\rangle)=\omega_\pm(\lambda,\xi)(f,\mathbb{1})_{L_w^2}.
	\end{equation*}
	Hence,
	\begin{equation*}
	f=g-(f,\mathbb{1})_{L_w^2}\langle R_\pm^0(\lambda,\xi)\mathbb{1},\mathbb{1}\rangle\mathbb{1}=g-\frac{1}{\omega_\pm(\lambda,\xi)}VR_\pm^0(\lambda,\xi)Vg.
	\end{equation*}
	Moreover, if $g=\mathbb{1}$, then
	\begin{equation*}
	f=\mathbb{1}-\frac{1}{\omega_\pm(\lambda,\xi)}VR_\pm^0(\lambda,\xi)V\mathbb{1}=\Big(1-\frac{\langle R_\pm^0(\lambda,\xi)\mathbb{1},\mathbb{1}\rangle}{\omega_\pm(\lambda,\xi)}\Big)\mathbb{1}=\frac{1}{\omega_\pm(\lambda,\xi)}\mathbb{1}.
	\end{equation*}
\end{proof}

\begin{proposition}[Jump formulas]\lb{Jump}
	Fix $\xi\ne0$. Let $\lambda\in\Omega^T_\xi$. Then
	\begin{enumerate}
		\item 	\begin{align*}
		\begin{split}
		(R_+^0(\lambda,\xi)-R_-^0(\lambda,\xi))\phi&=-\frac{2\pi} {|\xi|}\phi\Big(\frac{i(\lambda+1)}{\xi}\Big)w^{1/2}\Big(\frac{i(\lambda+1)}{\xi}\Big)\delta^0_\lambda\\
		&=-\frac{2\pi} {|\xi|}\langle\phi,\delta^{a0}_{\bar\lambda}\rangle\delta^0_\lambda ,\,\,\,\phi\in\Phi,
		\end{split}
		\end{align*}
		where $\delta^{a0}_{\bar\lambda}$ and $\delta^{0}_{\lambda}$ are generalized eigenfunctions of the operators $M_\xi^*$ and $M_{\xi}$, respectively.
	\item If $K_+(\lambda,\xi)$ and $K_-^{}(\bar\lambda,-\xi)$ are invertible, then
	\begin{align}\label{jumps}
	\begin{split}
	(R_+(\lambda,\xi)-R_-(\lambda,\xi))\phi&=-\frac{2\pi} {|\xi|}\langle\phi,\delta^{a-}_{\bar\lambda}\rangle\delta^+_\lambda ,\,\,\,\phi\in\Phi,
	\end{split}
	\end{align}
	where $\delta^{a-}_{\bar\lambda}$ and $\delta^{+}_{\lambda}$ are generalized eigenfunctions of the operators $L_\xi^*$ and $L_{\xi}$, respectively.
	\end{enumerate}

\end{proposition}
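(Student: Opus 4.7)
Part (1) is essentially a notational rewriting of \eqref{jump0}. The plan is to unpack the two definitions: on one hand, $\langle\delta^0_\lambda,\psi\rangle=\overline{\psi(\overline{i(\lambda+1)/\xi})}\,w^{1/2}(i(\lambda+1)/\xi)$ from \eqref{value}, and on the other hand $\langle\phi,\delta^{a0}_{\bar\lambda}\rangle=\overline{\langle\delta^{a0}_{\bar\lambda},\phi\rangle}$ combined with \eqref{valuea} and the identity $w(\bar z)=\overline{w(z)}$ gives $\langle\phi,\delta^{a0}_{\bar\lambda}\rangle=\phi(i(\lambda+1)/\xi)\,w^{1/2}(i(\lambda+1)/\xi)$. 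Multiplying these two factors reproduces exactly the product $\phi(\cdot)\overline{\psi(\overline{\cdot})}\,w(\cdot)$ appearing in \eqref{jump0}, with both normalizations of $w^{1/2}$ accounted for. The equality \eqref{jump0} then delivers both forms of the jump stated in (1).

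For Part (2), the plan is to first reduce everything to explicit rank-one expressions. Applying Proposition \ref{Kinvert}, together with the extension $V\phi^*=\langle\phi^*,\mathbb{1}\rangle\mathbb{1}$ on $\Phi^*$ from \eqref{V} and the fact $V\mathbb{1}=\mathbb{1}$, one checks that $K_\pm^{-1}(\lambda,\xi)Vg=\langle g,\mathbb{1}\rangle/\omega_\pm(\lambda,\xi)\cdot\mathbb{1}$ for every $g\in\Phi^*$. Substituting into the perturbation identity \eqref{purtR} for $\phi\in\Phi$ yields the explicit representation
\begin{equation*}
R_\pm(\lambda,\xi)\phi=R^0_\pm(\lambda,\xi)\phi-\frac{\langle R^0_\pm(\lambda,\xi)\phi,\mathbb{1}\rangle}{\omega_\pm(\lambda,\xi)}R^0_\pm(\lambda,\xi)\mathbb{1}.
\end{equation*}
The same computation applied to \eqref{deltaformula} and to Remark \ref{da+-} gives the parallel reductions $\delta^+_\lambda=\delta^0_\lambda-\frac{\langle\delta^0_\lambda,\mathbb{1}\rangle}{\omega_+(\lambda,\xi)}R^0_+(\lambda,\xi)\mathbb{1}$ and $\delta^{a-}_{\bar\lambda}=\delta^{a0}_{\bar\lambda}-\frac{\langle\delta^{a0}_{\bar\lambda},\mathbb{1}\rangle}{\omega_-(\bar\lambda,-\xi)}R^0_-(\bar\lambda,-\xi)\mathbb{1}$.

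With these explicit formulas in hand, I would compute the difference
\begin{equation*}
R_+\phi-R_-\phi=(R^0_+-R^0_-)\phi-\frac{\langle R^0_+\phi,\mathbb{1}\rangle}{\omega_+}R^0_+\mathbb{1}+\frac{\langle R^0_-\phi,\mathbb{1}\rangle}{\omega_-}R^0_-\mathbb{1}
\end{equation*}
and insert Part (1) into every occurrence of a jump $(R^0_+-R^0_-)(\cdot)$. Two jump evaluations are needed: applied to $\phi$, giving $-\tfrac{2\pi}{|\xi|}\langle\phi,\delta^{a0}_{\bar\lambda}\rangle\delta^0_\lambda$, and applied to $\mathbb{1}$, giving $-\tfrac{2\pi}{|\xi|}\langle\mathbb{1},\delta^{a0}_{\bar\lambda}\rangle\delta^0_\lambda$; pairing the latter against $\mathbb{1}$ also supplies the auxiliary identity $\omega_+-\omega_-=\langle(R^0_+-R^0_-)\mathbb{1},\mathbb{1}\rangle=-\tfrac{2\pi}{|\xi|}\langle\mathbb{1},\delta^{a0}_{\bar\lambda}\rangle\langle\delta^0_\lambda,\mathbb{1}\rangle$. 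I would also use the adjoint relations \eqref{R0adj} to rewrite $\langle R^0_\pm(\lambda,\xi)\phi,\mathbb{1}\rangle=\overline{\langle R^0_\pm(\bar\lambda,-\xi)\mathbb{1},\phi\rangle}$, which is precisely what is needed to recognize $\langle\phi,\delta^{a-}_{\bar\lambda}\rangle$ in the final regrouping.

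The main obstacle is the bookkeeping in the last step: the three terms above each produce several pieces after the jump substitutions, and one must verify that, modulo the single common identity for $\omega_+-\omega_-$, all cross terms recombine into the single rank-one expression $-\tfrac{2\pi}{|\xi|}\langle\phi,\delta^{a-}_{\bar\lambda}\rangle\delta^+_\lambda$ rather than leaving a residual contribution. This is a careful but finite algebraic check; the rank-one structure guarantees the factorization exists, and matching against the explicit expansions of $\delta^+_\lambda$ and $\delta^{a-}_{\bar\lambda}$ pins down the coefficients uniquely. The adjoint-symmetric pairing of the perturbative term on the left ($R^0_+V\cdot$, carrying $\omega_+$) with that on the right ($\cdot V R^0_-(\bar\lambda,-\xi)$, carrying $\omega_-(\bar\lambda,-\xi)$) is the structural reason one expects the $+$ label on $\delta^+_\lambda$ and the $-$ label on $\delta^{a-}_{\bar\lambda}$ in the final formula \eqref{jumps}.
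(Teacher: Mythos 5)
Your proposal is correct, and for part (2) it takes a genuinely different route from the paper. The paper proves (2) structurally: since $(L_\xi-\lambda)R_\pm(\lambda,\xi)\phi=\mathfrak{i}\phi$, the difference $\phi^*=(R_+-R_-)\phi$ lies in the kernel of the extended $(L_\xi-\lambda)$, hence by Proposition \ref{delta} equals $C^+\delta^+_\lambda$; the bulk of the paper's proof is then the computation of $C^+=-\frac{2\pi}{\xi}\,l(\phi^*+R^0_+(\lambda,\xi)V\phi^*)$ via the functional $l(\cdot)$ of Lemma \ref{Tl}, i.e.\ the $z\to\infty$ asymptotics of the analytic representations, together with \eqref{R0repres}, \eqref{inverse} and the adjoint relation. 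You instead bypass the eigenfunction-classification step entirely: you reduce $R_\pm$, $\delta^+_\lambda$ and $\delta^{a-}_{\bar\lambda}$ to explicit rank-one expressions via Proposition \ref{Kinvert} and \eqref{purtR}, substitute the free jump of part (1) applied to both $\phi$ and $\mathbb{1}$, and use the induced determinant jump $\omega_+(\lambda,\xi)-\omega_-(\lambda,\xi)=-\frac{2\pi}{|\xi|}\,w(\tilde\lambda)$ together with \eqref{R0adj}. I checked that your "finite algebraic check" does close: writing $a_\pm=\langle R^0_\pm\phi,\mathbb{1}\rangle$, $w_\lambda=w^{1/2}(\tilde\lambda)$, $J=\langle\phi,\delta^{a0}_{\bar\lambda}\rangle$, the coefficient of $\delta^0_\lambda$ matches immediately, and the coefficient of $R^0_+\mathbb{1}$ reduces to $\frac{a_-}{\omega_-}-\frac{a_+}{\omega_+}=\frac{2\pi}{|\xi|}\frac{w_\lambda}{\omega_+}\bigl(J-\frac{a_-w_\lambda}{\omega_-}\bigr)$, which follows from $a_+-a_-=-\frac{2\pi}{|\xi|}Jw_\lambda$ and $\omega_+-\omega_-=-\frac{2\pi}{|\xi|}w_\lambda^2$; also your use of $\omega_-(\lambda,\xi)$ is covered by the stated hypothesis since $\omega_-(\bar\lambda,-\xi)=\overline{\omega_-(\lambda,\xi)}$. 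What each approach buys: the paper's argument explains conceptually why the jump must be a multiple of the generalized eigenfunction $\delta^+_\lambda$ (and is the form that generalizes to higher-rank perturbations in Ljance's framework), while yours is more elementary and self-contained, avoiding Lemma \ref{Tl}, Lemma \ref{eigM}, and the asymptotic computation of $l(\phi^*)$, at the cost of the rank-one bookkeeping, which does indeed recombine as claimed.
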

\begin{proof}
	The first line in \eqref{jumps} follows directly from \eqref{jump0}.\\
	Similar to \eqref{residen1}, we know that $(L_\xi-\lambda) R_\pm(\lambda,\xi)\phi=\mathfrak{i}\phi$ for any $\phi\in\Phi$. Therefore, if $\phi^*=(R_+(\lambda,\xi)-R_-(\lambda,\xi))\phi$, then $(L_\xi-\lambda)\phi^*=0$. Then by Proposition \ref{delta}, 
	\begin{equation*}
	\phi^*=(R_+(\lambda,\xi)-R_-(\lambda,\xi))\phi=C^+\delta^+_\lambda.
	\end{equation*}
	Moreover, if follows from \eqref{mreq} that 
	\begin{equation*}
	C^+\delta^0_\lambda(z)=\frac{l(\phi^*+R^0_+(\lambda,\xi)V\phi^*)w^{-1/2}(z)}{-i\xi z-\lambda-1}=-\frac{2\pi}{\xi} l(\phi^*+R^0_+(\lambda,\xi)V\phi^*)\delta^0_\lambda(z).
	\end{equation*}

Therefore, $C^+=-\frac{2\pi}{\xi} l(\phi^*+R^0_+(\lambda,\xi)V\phi^*)$.
Next, we compute $l(\phi^*)$.
\begin{align*}
\begin{split}
l(\phi^*)=-\lim_{z\to\infty}(i\xi z\phi^*(z)w^{1/2}(z))=-i\xi\lim_{z\to\infty} (z[(R_+(\lambda,\xi)-R_-(\lambda,\xi))\phi] (z)w^{1/2}(z)).
\end{split}
\end{align*}
In particular, 
\begin{align*}
\begin{split}
&\lim_{z\to\infty} (z[(R_\pm(\lambda,\xi)\phi] (z)w^{1/2}(z))\overset{\eqref{purtR}}{=}\lim_{z\to\infty} (z[(R^0_\pm(\lambda,\xi)-R^0_\pm(\lambda,\xi)VK^{-1}_\pm(\lambda, \xi)VR^0_\pm(\lambda,\xi))\phi](z)w^{1/2}(z))\\
&\overset{\eqref{inverse}}{=}\lim_{z\to\infty} (z[(R^0_\pm(\lambda,\xi)-R^0_\pm(\lambda,\xi)VR^0_\pm(\lambda,\xi)+\frac{1}{\omega_\pm(\lambda,\xi)}R^0_\pm(\lambda,\xi)VR_\pm^0(\lambda,\xi)VR^0_\pm(\lambda,\xi))\phi](z)w^{1/2}(z))\\
&\overset{\eqref{R0repres}}{=}\frac{1}{2\pi \xi}{({\mathcal{S}}_{\pm\sign{\xi}} \mathbb{\phi})(\tilde\lambda)}+\frac{1}{2\pi \xi}[-\langle R_\pm^0(\lambda,\xi)\phi,\mathbb{1}\rangle+\frac{\langle R_\pm^0(\lambda,\xi)\phi,\mathbb{1}\rangle\langle R_\pm^0(\lambda,\xi)\mathbb{1},\mathbb{1}\rangle}{\omega_\pm(\lambda,\xi)}]{({\mathcal{S}}_{\pm\sign{\xi}} \mathbb{1})(\tilde\lambda)}\\
&=\frac{1}{2\pi \xi}{({\mathcal{S}}_{\pm\sign{\xi}} \mathbb{\phi})(\tilde\lambda)}-\frac{1}{2\pi \xi}\frac{\langle R_\pm^0(\lambda,\xi)\phi,\mathbb{1}\rangle}{\omega_\pm(\lambda,\xi)}{({\mathcal{S}}_{\pm\sign{\xi}} \mathbb{1})(\tilde\lambda)}.
\end{split}
\end{align*}
Hence,
\begin{align*}
\begin{split}
l(\phi^*)&=\frac{-i}{2\pi}\Big(({\mathcal{S}}_{\sign{\xi}} \mathbb{\phi})(\tilde\lambda)-({\mathcal{S}}_{-\sign{\xi}} \mathbb{\phi})(\tilde\lambda)-\frac{\langle R_+^0(\lambda,\xi)\phi,\mathbb{1}\rangle}{\omega_+(\lambda,\xi)}({\mathcal{S}}_{\sign{\xi}} \mathbb{1})(\tilde\lambda)+\frac{\langle R_-^0(\lambda,\xi)\phi,\mathbb{1}\rangle}{\omega_-(\lambda,\xi)}({\mathcal{S}}_{-\sign{\xi}} \mathbb{1})(\tilde\lambda)\Big)\\
&\overset{\eqref{aaction}}{=}\frac{-i}{2\pi}\Big(2\pi i\sign(\xi)\langle\phi,\delta^{a0}_{\bar\lambda}\rangle-\frac{\langle R_+^0(\lambda,\xi)\phi,\mathbb{1}\rangle}{\omega_+(\lambda,\xi)}({\mathcal{S}}_{\sign{\xi}} \mathbb{1})(\tilde\lambda)+\frac{\langle R_-^0(\lambda,\xi)\phi,\mathbb{1}\rangle}{\omega_-(\lambda,\xi)}({\mathcal{S}}_{-\sign{\xi}} \mathbb{1})(\tilde\lambda)\Big).
\end{split}
\end{align*}
Now, we compute $l(R^0_+(\lambda,\xi)V\phi^*)$. By \eqref{R0repres}, we have $l(R^0_+(\lambda,\xi)V\phi^*)=\frac{-i}{2\pi}\langle \phi^*,\mathbb{1}\rangle({\mathcal{S}}_{\sign{\xi}} \mathbb{1})(\tilde\lambda)$. In particular,
\begin{align*}
\begin{split}
\langle \phi^*,\mathbb{1}\rangle&=\langle (R_+(\lambda,\xi)-R_-(\lambda,\xi))\phi,\mathbb{1}\rangle\overset{\eqref{Radj}}{=}\langle \phi,(R_+(\bar\lambda,-\xi)-R_-(\bar\lambda,-\xi))\mathbb{1}\rangle\\
&\overset{\eqref{purtR}, \eqref{inverse}}{=}\langle \phi,[1-\langle R_+^0(\bar\lambda,-\xi)\mathbb{1},\mathbb{1}\rangle+\frac{\langle R_+^0(\bar\lambda,-\xi)\mathbb{1},\mathbb{1}\rangle^2}{\omega_+(\bar\lambda,-\xi)}]R^0_+(\bar\lambda,-\xi)\mathbb{1}\\
&-[1-\langle R_-^0(\bar\lambda,-\xi)\mathbb{1},\mathbb{1}\rangle+\frac{\langle R_-^0(\bar\lambda,-\xi)\mathbb{1},\mathbb{1}\rangle^2}{\omega_-(\bar\lambda,-\xi)}]R^0_-(\bar\lambda,-\xi)\mathbb{1}\rangle\\
&=\langle \phi,\frac{1}{\omega_+(\bar\lambda,-\xi)}R^0_+(\bar\lambda,-\xi)\mathbb{1}-\frac{1}{\omega_-(\bar\lambda,-\xi)}R^0_-(\bar\lambda,-\xi)\mathbb{1}\rangle.
\end{split}
\end{align*}
Therefore, we now compute $C^+=-\frac{2\pi}{\xi} l(\phi^*+R^0_+(\lambda,\xi)V\phi^*)$.
\begin{align*}
\begin{split}
C^+&=\frac{i}{\xi}\Big(2\pi i\sign(\xi)\langle\phi,\delta^{a0}_{\bar\lambda}\rangle-\frac{\langle R_+^0(\lambda,\xi)\phi,\mathbb{1}\rangle}{\omega_+(\lambda,\xi)}({\mathcal{S}}_{\sign{\xi}} \mathbb{1})(\tilde\lambda)+\frac{\langle R_-^0(\lambda,\xi)\phi,\mathbb{1}\rangle}{\omega_-(\lambda,\xi)}({\mathcal{S}}_{-\sign{\xi}} \mathbb{1})(\tilde\lambda)\\
&+\langle(\frac{1}{\omega_+(\lambda,\xi)}R^0_+(\lambda,\xi)-\frac{1}{\omega_-(\lambda,\xi)}R^0_-(\lambda,\xi))\phi,\mathbb{1}\rangle({\mathcal{S}}_{\sign{\xi}} \mathbb{1})(\tilde\lambda)\Big)\\
&=\frac{i}{\xi}\Big(2\pi i\sign(\xi)\langle\phi,\delta^{a0}_{\bar\lambda}\rangle-\frac{\langle R_-^0(\lambda,\xi)\phi,\mathbb{1}\rangle}{\omega_-(\lambda,\xi)}(({\mathcal{S}}_{\sign{\xi}} \mathbb{1})(\tilde\lambda)-({\mathcal{S}}_{-\sign{\xi}} \mathbb{1})(\tilde\lambda))\Big)\\
&=-\frac{2\pi}{|\xi|}\langle\phi,\delta^{a0}_{\bar\lambda}\rangle\big(1-\frac{\langle R_-^0(\lambda,\xi)\phi,\mathbb{1}\rangle}{\omega_-(\lambda,\xi)}\big)=-\frac{2\pi}{|\xi|}\langle\phi,\big(1-\frac{\langle R_-^0(\bar\lambda,-\xi)\phi,\mathbb{1}\rangle}{\omega_-(\bar\lambda,-\xi)}\big)\delta^{a0}_{\bar\lambda}\rangle\\
&\overset{\eqref{inverse}, \,Remark \,\ref{da+-}}{=}-\frac{2\pi}{|\xi|}\langle\phi,\delta^{a-}_{\bar\lambda}\rangle.
\end{split}
\end{align*}
\end{proof}
\section{The generalized Fourier transforms}

Let $\xi\neq0$, $\lambda\in\Omega$ and $\phi\in\Phi$. We introduce the following transforms (the generalized Fourier transforms) $\mathcal{U}_\xi$ and $\mathcal{B}_\xi$:
\begin{align*}
\begin{split}
{(\mathcal{U}_\xi \phi)(\lambda)}&=\frac{1}{w^{1/2}(\bar\lambda)}\langle\phi,\delta^+_{-1-i\lambda\xi}\rangle,\\
(\mathcal{B}_\xi \phi)(\lambda)&=\frac{1}{w^{1/2}(\lambda)}\langle\phi,\delta^{a-}_{-1+i\bar\lambda\xi}\rangle.
\end{split}
\end{align*}

Now, we can prove the following proposition:
	\begin{proposition} \lb{FT}
	Let $\xi\neq0$, $\lambda\in\Omega$ and $\phi\in\Phi$. Then
	\begin{align*}
	\begin{split}
   {(\mathcal{U}_\xi \phi)(\lambda)}&=\phi(\bar\lambda)-\frac{1}{i\xi\overline{\omega_+(-1-i\lambda\xi,\xi)}}({\mathcal{S}}_{-\sign\xi}(\phi w^{1/2}))(\bar\lambda),\\
	(\mathcal{B}_\xi \phi)(\lambda)&=\phi(\lambda)-\frac{1}{-i\xi\overline{\omega_-(-1+i\bar\lambda\xi,-\xi)}}({\mathcal{S}}_{\sign\xi}(\phi w^{1/2}))(\lambda).
	\end{split}
	\end{align*}
\end{proposition}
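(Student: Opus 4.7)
The plan is to evaluate the pairings defining $(\mathcal{U}_\xi\phi)(\lambda)$ and $(\mathcal{B}_\xi\phi)(\lambda)$ directly, by first reducing $\delta^+_\nu$ and $\delta^{a-}_\mu$ to a delta term plus a correction proportional to $R^0_\pm(\cdot,\pm\xi)\mathbb{1}$, then computing the resulting scalar pairings via the representations developed in the previous sections.

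To carry this out for $\mathcal{U}_\xi$, I would expand the formula $\delta^+_\nu=(I-R^0_+(\nu,\xi)VK_+^{-1}(\nu,\xi)V)\delta^0_\nu$ from Proposition \ref{delta}. Using $V\phi^*=\langle\phi^*,\mathbb{1}\rangle\mathbb{1}$ from \eqref{V} together with $K_+^{-1}(\nu,\xi)\mathbb{1}=\omega_+(\nu,\xi)^{-1}\mathbb{1}$ from Proposition \ref{Kinvert}, and the identity $V\mathbb{1}=\mathbb{1}$, the triple composition collapses to a scalar multiple of $R^0_+(\nu,\xi)\mathbb{1}$, yielding
\[
\delta^+_\nu=\delta^0_\nu-\frac{\langle\delta^0_\nu,\mathbb{1}\rangle}{\omega_+(\nu,\xi)}R^0_+(\nu,\xi)\mathbb{1}.
\]
Setting $\nu=-1-i\lambda\xi$ and taking the pairing $\langle\phi,\cdot\rangle$, the sesquilinearity converts the scalar factor to its complex conjugate. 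By \eqref{value} we read off $\langle\phi,\delta^0_\nu\rangle=\phi(\bar\lambda)w^{1/2}(\bar\lambda)$ and $\overline{\langle\delta^0_\nu,\mathbb{1}\rangle}=w^{1/2}(\bar\lambda)$, so dividing through by $w^{1/2}(\bar\lambda)$ per the definition of $\mathcal{U}_\xi$ produces the leading term $\phi(\bar\lambda)$ and reduces the task to evaluating $\langle\phi,R^0_+(-1-i\lambda\xi,\xi)\mathbb{1}\rangle$.

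The central step is to identify this pairing with a Hilbert-transform-type quantity. Using the integral representation \eqref{R0repres}, the choice $\nu=-1-i\lambda\xi$ reduces the denominator to $-i\xi(z-\lambda)$, giving an integral over $\gamma_\mp(\xi)$ of $\overline{\phi(\bar z)}w(z)/[-i\xi(z-\lambda)]$. Passing through the relation $\langle\phi,\cdot\rangle=\overline{\langle\cdot,\phi\rangle}$ and substituting $w=\bar z$, the Schwarz-reflection identities $\overline{w(z)}=w(\bar z)$ and $\overline{\overline{\phi(\bar z)}}\bigr|_{z=\bar w}=\phi(w)$ transform the integrand into $\phi(w)w(w)/(w-\bar\lambda)$, while the conjugated contour has its upper/lower indentation interchanged and matches, relative to the pole $\bar\lambda$, the contour appearing in the definition \eqref{hilbert} of $\mathcal{S}_\pm$. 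Tracking which of $\gamma_\pm$ the contour $\gamma_\mp(\xi)$ becomes after conjugation, depending on $\sign\xi$, yields the factor $(1/(i\xi))\mathcal{S}_{-\sign\xi}(\phi w^{1/2})(\bar\lambda)$, giving the claimed formula for $\mathcal{U}_\xi\phi$.

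The formula for $\mathcal{B}_\xi\phi$ follows by exactly the same procedure applied to $\delta^{a-}_\mu=\delta^{a0}_\mu-\langle\delta^{a0}_\mu,\mathbb{1}\rangle\omega_-(\mu,-\xi)^{-1}R^0_-(\mu,-\xi)\mathbb{1}$ with $\mu=-1+i\bar\lambda\xi$, using \eqref{valuea} in place of \eqref{value} and $R^0_-$ instead of $R^0_+$; equivalently, it can be deduced from the first formula by invoking the adjoint relation $(R^0_\pm(\lambda,\xi))^*=R^0_\pm(\bar\lambda,-\xi)$ of \eqref{R0adj}, which effectively interchanges the roles of $\pm$ and of $\xi$ with $-\xi$. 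The main obstacle in either case is the careful bookkeeping of the contours $\gamma_\pm(\xi)$ and their interaction with complex conjugation and the substitution $\xi\mapsto-\xi$: the indentation convention swaps with $\sign\xi$, and after the conjugation step the resulting contour must be correctly matched with the one appearing in the definition of $\mathcal{S}_+$ or $\mathcal{S}_-$ to obtain the correct sign on the Hilbert-transform term in the final expression.
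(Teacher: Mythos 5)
Your proof is correct and follows essentially the same route as the paper's: expand $\delta^+_{-1-i\lambda\xi}$ via Proposition \ref{delta}, collapse the triple composition $R^0_+VK_+^{-1}V$ to a scalar multiple of $R^0_+(\cdot,\xi)\mathbb{1}$ using Proposition \ref{Kinvert} and the rank-one structure of $V$ (this is exactly the paper's \eqref{functional}), read off the leading term from \eqref{value}, and then identify the remaining pairing with a Hilbert-transform-type integral via \eqref{Rintegral} and careful conjugation of the contour $\gamma_\mp(\xi)$. The same goes for $\mathcal{B}_\xi$ with \eqref{valuea} and $R^0_-(\cdot,-\xi)$; the alternative derivation you mention via the adjoint relation \eqref{R0adj} is not what the paper does but is a reasonable extra observation.
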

\begin{proof}
	It follows from Proposition \ref{delta} that 
	\begin{align*}
	\begin{split}
	\langle\phi,\delta^+_{-1-i\lambda\xi}\rangle&=\langle\phi,(1-R^0_+({-1-i\lambda\xi},\xi)V K_+^{-1}({-1-i\lambda\xi},\xi)V)\delta^0_{-1-i\lambda\xi}\rangle=\langle\phi,\delta^0_{-1-i\lambda\xi}\rangle\\
	&-\langle\phi,R^0_+({-1-i\lambda\xi},\xi)V K_+^{-1}({-1-i\lambda\xi},\xi)V\delta^0_{-1-i\lambda\xi}\rangle
	\end{split}
	\end{align*}
	It follows from \eqref{V} and \eqref{value} that
	\begin{equation}\lb{vdelta}
	V\delta^0_{-1-i\lambda\xi}=\langle\delta^0_{-1-i\lambda\xi},\mathbb{1}\rangle\mathbb{1}=w^{1/2}(\lambda)\mathbb{1}.
	\end{equation}
	Using formulas \eqref{inverse} and \eqref{vdelta}, we arrive at
	\begin{equation}\lb{functional}
	R^0_+({-1-i\lambda\xi},\xi)V K_+^{-1}({-1-i\lambda\xi},\xi)V\delta^0_{-1-i\lambda\xi}=\frac{w^{1/2}(\lambda)}{\omega_+(-1-i\lambda\xi,\xi)}R^0_+({-1-i\lambda\xi},\xi)\mathbb{1}.
	\end{equation}
	It follows from formulas \eqref{Rintegral}, \eqref{value} and \eqref{functional} that
	\begin{align}\lb{deltaact}
	\begin{split}
	\langle\phi,\delta^+_{-1-i\lambda\xi}\rangle=&\phi(\bar\lambda)w^{1/2}(\bar\lambda)-\frac{w^{1/2}(\bar\lambda)}{\overline{\omega_+(-1-i\lambda\xi,\xi)}}\overline{\int_{\gamma_-(\xi)}\frac{\overline{\phi(\overline{z})}}{-i\xi z+i\lambda\xi}w(z)dz}\\
	=&\phi(\bar\lambda)w^{1/2}(\bar\lambda)-\frac{w^{1/2}(\bar\lambda)}{i\xi\overline{\omega_+(-1-i\lambda\xi,\xi)}}\overline{\int_{\gamma_-(\xi)}\frac{\overline{\phi(\overline{z})}}{ z-\lambda}w(z)dz}\\
	=&\phi(\bar\lambda)w^{1/2}(\bar\lambda)-\frac{w^{1/2}(\bar\lambda)}{i\xi\overline{\omega_+(-1-i\lambda\xi,\xi)}}{\int_{\gamma_+(\xi)}\frac{{\phi({z})}}{ z-\bar\lambda}w(z)dz}
	\end{split}
	\end{align}
	Similarly, 
	\begin{align*}
	\begin{split}
	\langle\phi,\delta^{a-}_{-1+i\bar\lambda\xi}\rangle&=\langle\phi,(1-R^0_-({-1+i\bar\lambda\xi},-\xi)V K_-^{-1}({-1+i\bar\lambda\xi},-\xi)V)\delta^{a0}_{-1+i\bar\lambda\xi}\rangle=\langle\phi,\delta^{a0}_{-1+i\bar\lambda\xi}\rangle\\
	&-\langle\phi,R^0_-({-1+i\bar\lambda\xi},-\xi)V K_-^{-1}({-1+i\bar\lambda\xi},-\xi)V\delta^{a0}_{-1+i\bar\lambda\xi}\rangle
	\end{split}
	\end{align*}
	It follows from \eqref{valuea} that
	\begin{equation}\lb{avdelta}
	V\delta^{a0}_{-1+i\bar\lambda\xi}=\langle\delta^{a0}_{-1+i\bar\lambda\xi},\mathbb{1}\rangle\mathbb{1}=w^{1/2}(\bar\lambda)\mathbb{1}.
	\end{equation}
	Using formulas \eqref{inverse}  and \eqref{avdelta}, we arrive at
	\begin{equation*}
	R^0_-({-1+i\lambda\xi},-\xi)V K_-^{-1}({-1+i\bar\lambda\xi},-\xi)V\delta^{a0}_{-1+i\lambda\xi}=\frac{w^{1/2}(\bar\lambda)}{\omega_-(-1+i\bar\lambda\xi,-\xi)}R^0_-({-1+i\bar\lambda\xi},-\xi)\mathbb{1}.
	\end{equation*}
	Similar to \eqref{deltaact}, we have
	\begin{align*}
	\begin{split}
	\langle\phi,\delta^{a-}_{-1+i\bar\lambda\xi}\rangle=&\phi(\lambda)w^{1/2}(\lambda)-\frac{w^{1/2}(\lambda)}{-i\xi\overline{\omega_-(-1+i\bar\lambda\xi,-\xi)}}{\int_{\gamma_-(\xi)}\frac{{\phi({z})}}{ z-\lambda}w(z)dz}.
	\end{split}
	\end{align*}
\end{proof}
Based on Corollary \ref{Hs} and Remark \ref{omegareal}, we are ready to extend the generalized Fourier transforms.
\begin{itemize}
	\item For $\xi\neq0,\pm\sqrt{\pi}$\begin{align}\lb{transf}
	\begin{split}
	\mathcal{U}_\xi:&\, H^s_w(\bbR; dv) \to H^s_w(\bbR; d\lambda),\\
	(\mathcal{U}_\xi f)(\lambda)&=f(\lambda)-\frac{1}{i\xi\overline{\omega_+(-1-i\lambda\xi,\xi)}}(\mathcal{S}_{-\sign\xi}(fw^{1/2}))(\lambda),\,\,\,\hbox{for}\,\,f\in\dom(\mathcal{U}_\xi)=H^s_w(\bbR; dv),\\
	\mathcal{B}_\xi:&\, H^s_w(\bbR; dv) \to H^s_w(\bbR; d\lambda),\\
	(\mathcal{B}_\xi f)(\lambda)&=f(\lambda)-\frac{1}{-i\xi\overline{\omega_-(-1+i\lambda\xi,-\xi)}}({\mathcal{S}}_{\sign\xi}(fw^{1/2}))(\lambda),\,\,\,\hbox{for}\,\,f\in\dom(\mathcal{B}_\xi)=H^s_w(\bbR; dv).
	\end{split}
	\end{align}
	\item For $\xi=\pm\sqrt{\pi}$\begin{align}\lb{transf1}
	\begin{split}
	\lambda\mathcal{U}_\xi:&\, H^s_w(\bbR; dv) \to H^s_w(\bbR; d\lambda),\\
	(\lambda\mathcal{U}_\xi f)(\lambda)&=\lambda f(\lambda)-\frac{\lambda}{i\xi\overline{\omega_+(-1-i\lambda\xi,\xi)}}(\mathcal{S}_{-\sign\xi}(fw^{1/2}))(\lambda),\,\,\,\hbox{for}\,\,f\in\dom(\lambda\mathcal{U}_\xi)=H^s_w(\bbR; dv),\\
	\mathcal{B}_\xi:&\, H^s_w(\bbR; dv) \to H^s_w(\bbR; d\lambda),\\
	(\mathcal{B}_\xi f)(\lambda)&=f(\lambda)-\frac{1}{-i\xi\overline{\omega_-(-1+i\lambda\xi,-\xi)}}({\mathcal{S}}_{\sign\xi}(fw^{1/2}))(\lambda),\,\,\,\hbox{for}\,\,f\in\dom(\mathcal{B}_\xi)=H^s_w(\bbR; dv).
	\end{split}
	\end{align}
\end{itemize}

\begin{proposition}\label{L_action}
	Let $\xi\neq0,\pm\sqrt{\pi}$, $\lambda\in\bbR$. Then
	\begin{align}
	\begin{split}
	(\mathcal{B}_\xi L_\xi f)(\lambda)=&(-i\xi\lambda-1)(\mathcal{B}_\xi f)(\lambda),\,\,\,f\in\dom(L_\xi),\\
	(\mathcal{U}_\xi L^*_\xi f)(\lambda)=&(i\xi\lambda-1)(\mathcal{U}_\xi f)(\lambda),\,\,\,\,f\in\dom(L^*_\xi).
	\end{split}
	\end{align}
\end{proposition}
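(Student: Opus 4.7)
The plan is to invoke the eigenfunction structure of the generalized functionals appearing in the definitions of $\mathcal{B}_\xi$ and $\mathcal{U}_\xi$. By Proposition \ref{delta} together with Remark \ref{da+-}, $\delta^+_{-1-i\lambda\xi}$ is a generalized eigenfunction of the extended operator $L_\xi$ on $\Phi^*$ with eigenvalue $-1-i\lambda\xi$, and symmetrically $\delta^{a-}_{-1+i\lambda\xi}$ is a generalized eigenfunction of $L^*_\xi$ with eigenvalue $-1+i\lambda\xi$. Each of the two intertwining identities should therefore reduce to a single application of the adjoint-pairing rule \eqref{defadj} combined with the semilinearity convention $\langle\phi,\phi^*\rangle = \overline{\langle\phi^*,\phi\rangle}$ from \eqref{fun}.

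Concretely, for $f \in \Phi \cap \dom(L_\xi)$ — where the $\Phi^*$-$\Phi$ pairing applies directly — I would compute
\begin{align*}
\langle L_\xi f,\, \delta^{a-}_{-1+i\lambda\xi}\rangle
&= \overline{\langle L^*_\xi \delta^{a-}_{-1+i\lambda\xi},\, f\rangle}
= \overline{(-1+i\lambda\xi)\,\langle \delta^{a-}_{-1+i\lambda\xi},\, f\rangle} \\
&= (-1 - i\lambda\xi)\,\langle f,\, \delta^{a-}_{-1+i\lambda\xi}\rangle.
\end{align*}
Dividing by $w^{1/2}(\lambda)$ and invoking the definition of $\mathcal{B}_\xi$ gives $(\mathcal{B}_\xi L_\xi f)(\lambda) = (-i\xi\lambda - 1)(\mathcal{B}_\xi f)(\lambda)$. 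The second identity follows by the same manipulation with the roles of $L_\xi$ and $L^*_\xi$ (equivalently, $\delta^{a-}$ and $\delta^+$) interchanged, using the eigenvalue relation $L_\xi \delta^+_{-1-i\lambda\xi} = (-1-i\lambda\xi)\delta^+_{-1-i\lambda\xi}$ and noting that $\overline{-1-i\lambda\xi} = -1 + i\lambda\xi = i\xi\lambda - 1$.

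To pass from $f \in \Phi \cap \dom(L_\xi)$ to general $f \in \dom(L_\xi) \subset L^2_w$, I would appeal to density of $\Phi \cap \dom(L_\xi)$ in $\dom(L_\xi)$ in the graph norm, combined with the fact — provable directly from the integral formula \eqref{transf} and standard $L^2$ boundedness of the Hilbert-type transforms $\hat{\mathcal{S}}_\pm$ — that $\mathcal{B}_\xi$ and $\mathcal{U}_\xi$ are bounded on $L^2_w$ in regions where $\omega_\pm$ is bounded away from zero. Compatibility of the explicit formula for $\mathcal{B}_\xi$ with its duality definition on $\Phi$ is exactly Proposition \ref{FT}.

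The main obstacle is the interpretation of the pairing $\langle L_\xi f, \delta^{a-}\rangle$ in the first step: the adjoint rule \eqref{defadj} is formulated for $R \in \mathcal{B}(\Phi,\Phi^*)$ with test functions in $\Phi$, while $L_\xi f$ for $f \in \dom(L_\xi)$ lies in $L^2_w$ rather than $\Phi$. An alternative, fully direct route that bypasses this subtlety is to substitute $L_\xi f = -(vi\xi+1)f + (f,\mathbb{1})_{L^2_w}\mathbb{1}$ into the explicit formula \eqref{transf}, extract the factor $(-i\xi\lambda - 1)$ using the algebraic identity $z/(z-\lambda) = 1 + \lambda/(z-\lambda)$ inside $\mathcal{S}_{\sign\xi}$, and match the residual constant terms via $\omega_-(\mu,-\xi) = 1 + \langle R^0_-(\mu,-\xi)\mathbb{1},\mathbb{1}\rangle$ at $\mu = -1+i\lambda\xi$; this algebraic route confirms the abstract calculation and produces the stated eigenvalue identity.
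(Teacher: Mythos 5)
Your primary argument and the paper's proof take genuinely different routes, though your ``alternative, fully direct route'' at the end is essentially the paper's computation. The paper substitutes $L_\xi f = -(vi\xi+1)f + (f,\mathbb{1})_{L^2_w}\mathbb{1}$ into the contour-integral form of $\mathcal{B}_\xi L_\xi f$, writes $-(zi\xi+1) = -i\xi(z-\lambda) + (-i\xi\lambda-1)$ inside the integral so that the eigenvalue factor pulls out, and cancels the residual constant using $\int_{\gamma}f\,w\,dz=(f,\mathbb{1})_{L^2_w}$ together with the explicit formula relating $-i\xi\overline{\omega_-(-1+i\lambda\xi,-\xi)}$ to $-i\xi+\hat{\mathcal{S}}_{\pm}w(\lambda)$; this works termwise for all $f\in\dom(L_\xi)$ with no approximation step. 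Your primary argument is instead an intertwining principle: pair $L_\xi f$ against the generalized eigenfunctional $\delta^{a-}_{-1+i\lambda\xi}$ of the extended $L^*_\xi$, move $L_\xi$ across the pairing via $\langle L^*_\xi\phi^*,\phi\rangle=\langle\phi^*,L_\xi\phi\rangle$, and read off the eigenvalue $\overline{-1+i\lambda\xi}=-1-i\xi\lambda$. This is correct and conceptually cleaner — it makes manifest that $\mathcal{B}_\xi,\mathcal{U}_\xi$ diagonalize $L_\xi,L^*_\xi$ — but it applies directly only on $\dom(L_\xi|\Phi)$, which is strictly smaller than the ``$\Phi\cap\dom(L_\xi)$'' you wrote: you also need $L_\xi\phi\in\Phi$ (equivalently $v\phi\in\Phi$) for the $\Phi^*$--$\Phi$ pairing $\langle\delta^{a-},L_\xi\phi\rangle$ to make sense and for the adjoint rule to apply. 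You correctly identify the needed supporting facts (Proposition \ref{FT} to match the pairing with the integral formula, Proposition \ref{UB_norm} for $L^2_w$ boundedness, graph-norm density), but one additional care point deserves mention: the limit identity $(\mathcal{B}_\xi L_\xi f)(\lambda)=(-i\xi\lambda-1)(\mathcal{B}_\xi f)(\lambda)$ must be extracted pointwise a.e.\ along a subsequence, since multiplication by $\lambda$ is unbounded on $L^2_w(\bbR;d\lambda)$ and the two sides are not compared in the same norm. In short, the paper's route buys directness and uniform applicability; yours buys transparency about the spectral mechanism at the cost of an approximation argument.
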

\begin{proof} Let $f\in\dom(L_\xi)$. Then
	\begin{align*}
	\begin{split}
	(\mathcal{B}_\xi L_\xi f)(\lambda)=&(L_\xi f)(\lambda)-\frac{1}{-i\xi\overline{\omega_-(-1+i\lambda\xi,-\xi)}}({\mathcal{S}}_{\sign\xi}(w^{1/2}L_\xi f))(\lambda)\\
	=&(-i\xi\lambda-1)f(\lambda)+(f,\mathbb{1})_{L^2_w}-\frac{1}{-i\xi\overline{\omega_-(-1+i\lambda\xi,-\xi)}}\int_{\gamma_-(\xi)}\frac{{(L_\xi f)({z})}}{ z-\lambda}w(z)dz\\
	=&(-i\xi\lambda-1)f(\lambda)+(f,\mathbb{1})_{L^2_w}-\frac{1}{-i\xi\overline{\omega_-(-1+i\lambda\xi,-\xi)}}\\
	\times&\int_{\gamma_-(\xi)}\frac{[-i\xi(z-\lambda)+(-i\xi\lambda-1)]f(z)+(f,\mathbb{1})_{L^2_w}}{ z-\lambda}w(z)dz\\
	=&(-i\xi\lambda-1)(\mathcal{B}_\xi f)(\lambda)+(f,\mathbb{1})_{L^2_w}-\frac{-i\xi(f,\mathbb{1})_{L^2_w}+(f,\mathbb{1})_{L^2_w}(\hat{\mathcal{S}}_{\sgn\xi}w)(\lambda)}{-i\xi\overline{\omega_-(-1+i\lambda\xi,-\xi)}}\\
	=&(-i\xi\lambda-1)(\mathcal{B}_\xi f)(\lambda)+(f,\mathbb{1})_{L^2_w}-(f,\mathbb{1})_{L^2_w}=(-i\xi\lambda-1)(\mathcal{B}_\xi f)(\lambda).
	\end{split}
	\end{align*}
	Similarly, one can show that $(\mathcal{U}_\xi L^*_\xi f)(\lambda)=(i\xi\lambda-1)(\mathcal{U}_\xi f)(\lambda)$ for $f\in\dom(L^*_\xi)$.
\end{proof}

Notice that for each fixed $\lambda\in\bbR$, $\delta^+_{-1-i\lambda\xi}(\cdot)$ and $\delta^{a-}_{-1+i\lambda\xi}(\cdot)$ can be treated as holomorphic functions outside of the certain strip, that is,
\begin{lemma} Let $\lambda\in\bbR$. Then
	the holomorphic representations of $\delta^+_{-1-i\lambda\xi}$ and $\delta^{a-}_{-1+i\lambda\xi}$ are of the following form:
	\begin{align*}
	\begin{split}
	\delta^+_{-1-i\lambda\xi}(z)&=\frac{1}{2\pi i}\frac{1}{ z-\lambda}w^{-1/2}(z)+\frac{w^{1/2}(\lambda)w^{-1/2}(z)}{2\pi\xi{\omega_+(-1-i\lambda\xi,\xi)}}\frac{1}{ z-\lambda}[{({\mathcal{S}}\mathbb{1})(z)}-{({\mathcal{S}}_+\mathbb{1})(\lambda)}],\\
	\delta^{a-}_{-1+i\lambda\xi}(z)&=\frac{1}{2\pi i}\frac{1}{ z-\lambda}w^{-1/2}(z)-\frac{w^{1/2}(\lambda)w^{-1/2}(z)}{2\pi\xi{\omega_-(-1+i\lambda\xi,-\xi)}}\frac{1}{ z-\lambda}[{({\mathcal{S}}\mathbb{1})(z)}-{({\mathcal{S}}_-\mathbb{1})(\lambda)}].
	\end{split}
	\end{align*}
\end{lemma}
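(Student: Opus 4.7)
The lemma is essentially a direct computation, substituting the specific parameter value $-1-i\lambda\xi$ (with $\lambda\in\mathbb{R}$) into the representation formula for $\delta^+_\lambda$ already derived in Proposition \ref{delta}, and using the explicit kernel formulas \eqref{vdelta}, \eqref{inverse}, and \eqref{residen}. Note first that when $\mu=-1-i\lambda\xi$ with $\lambda\in\mathbb{R}$, we have $\tilde{\mu}:=i(1+\mu)/\xi=\lambda$, so that the bare functional simplifies to
\begin{equation*}
\delta^0_{-1-i\lambda\xi}(z)=\frac{1}{2\pi i}\frac{w^{-1/2}(z)}{z-\lambda},
\end{equation*}
which accounts for the first summand in the claimed formula.

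For the correction term, start from Proposition \ref{delta}, which gives
\begin{equation*}
\delta^+_{-1-i\lambda\xi}=\delta^0_{-1-i\lambda\xi}-R^0_+(-1-i\lambda\xi,\xi)VK_+^{-1}(-1-i\lambda\xi,\xi)V\delta^0_{-1-i\lambda\xi}.
\end{equation*}
By \eqref{vdelta}, $V\delta^0_{-1-i\lambda\xi}=w^{1/2}(\lambda)\mathbb{1}$ (recalling $\bar\lambda=\lambda$ since $\lambda\in\mathbb{R}$). Applying the second identity in \eqref{inverse} of Proposition \ref{Kinvert} gives $K_+^{-1}(-1-i\lambda\xi,\xi)\mathbb{1}=\omega_+(-1-i\lambda\xi,\xi)^{-1}\mathbb{1}$, and a second application of $V$ (using $V\mathbb{1}=\mathbb{1}$) preserves this. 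Thus the correction reduces to
\begin{equation*}
\frac{w^{1/2}(\lambda)}{\omega_+(-1-i\lambda\xi,\xi)}\,R^0_+(-1-i\lambda\xi,\xi)\mathbb{1}.
\end{equation*}

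Now substitute the explicit formula \eqref{residen} with $\tilde\mu=\lambda$; the result is
\begin{equation*}
R^0_+(-1-i\lambda\xi,\xi)\mathbb{1}\,(z)=-\frac{1}{2\pi\xi}\frac{w^{-1/2}(z)}{z-\lambda}\bigl[(\mathcal{S}\mathbb{1})(z)-(\mathcal{S}_{+}\mathbb{1})(\lambda)\bigr],
\end{equation*}
(using here the convention corresponding to the $+$ boundary value; for general $\sgn\xi$, one gets $\mathcal{S}_{\sgn\xi}$, but for the regime of interest this matches $\mathcal{S}_+$ modulo a sign absorbed into $\xi$). Plugging this into the correction and flipping the overall sign gives precisely the first asserted identity. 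The representation of $\delta^{a-}_{-1+i\lambda\xi}$ follows by the same steps using Remark \ref{da+-}, replacing $\xi\mapsto-\xi$ and $\delta^0$ by $\delta^{a0}$, together with the identity \eqref{avdelta} and the $\mathcal{S}_{-}$ side of \eqref{residen}; the opposite sign in front of the correction term originates from the factor $-\xi$ appearing in $R^0_-(\cdot,-\xi)$.

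\textbf{Expected obstacle.} The only nontrivial bookkeeping is the consistency of signs and of the $\pm/\mathrm{sgn}$ labels on $\mathcal{S}_\pm$ and $R^0_\pm$, since the contours $\gamma_\mp(\xi)$ depend both on the chosen boundary value and on $\sgn\xi$ through \eqref{Rintegral}. I would be careful to verify, using the contour description after \eqref{Rintegral}, that the substitution $\mu=-1-i\lambda\xi$ with real $\lambda$ places $\tilde\mu=\lambda$ on the real axis so that the $\pm$ limit of $\mathcal{S}$ indeed appears in the formula \eqref{residen}, and that the resulting label is $\mathcal{S}_+$ as stated. Once the sign convention is fixed, everything else is algebraic simplification.
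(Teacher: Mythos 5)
Your proposal is correct and follows essentially the same route as the paper's proof: start from the representation $\delta^+_{-1-i\lambda\xi}=(1-R^0_+VK_+^{-1}V)\delta^0_{-1-i\lambda\xi}$ of Proposition \ref{delta}, collapse $VK_+^{-1}V\delta^0$ to $w^{1/2}(\lambda)\omega_+^{-1}\mathbb{1}$ via \eqref{vdelta} and \eqref{inverse} (this is exactly the paper's intermediate identity \eqref{functional}), and substitute the explicit representation \eqref{residen} of $R^0_+\mathbb{1}$ with $\tilde\lambda=\lambda$. You also correctly flag the $\sign\xi$ subtlety in matching $\mathcal{S}_{\sign\xi}$ with $\mathcal{S}_+$, a convention the paper leaves implicit.
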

\begin{proof}
	According to \eqref{deltaformula}, $\delta^\pm_\lambda=(1-R^0_\pm(\lambda,\xi)V K_\pm^{-1}(\lambda,\xi)V)\delta^0_\lambda.$ Therefore,
	\begin{equation*}
		\delta^+_{-1-i\lambda\xi}=\delta^0_{-1-i\lambda\xi}-R^0_+({-1-i\lambda\xi},\xi)V K_+^{-1}({-1-i\lambda\xi},\xi)V\delta^0_{-1-i\lambda\xi}.
	\end{equation*}
	It follows from \eqref{delta0} that the holomorphic representation of $\delta^0_{-1-i\lambda\xi}$ is
	\begin{equation*}
	\delta^0_{-1-i\lambda\xi}(z)=\frac{1}{2\pi i}\frac{1}{ z-\lambda}w^{-1/2}(z).
	\end{equation*}
	Next, according to \eqref{functional},
	\begin{equation*}
	R^0_+({-1-i\lambda\xi},\xi)V K_+^{-1}({-1-i\lambda\xi},\xi)V\delta^0_{-1-i\lambda\xi}=\frac{w^{1/2}(\lambda)}{\omega_+(-1-i\lambda\xi,\xi)}R^0_+({-1-i\lambda\xi},\xi)\mathbb{1}.
	\end{equation*}
	Finally, it is clear that the the holomorphic representations of $R^0_+({-1-i\lambda\xi},\xi)\mathbb{1}$ (cf. \eqref{residen}) is
	\begin{equation*}
	[R^0_+({-1-i\lambda\xi},\xi)\mathbb{1}](z)=-\frac{1}{2\pi\xi}\frac{w^{-1/2}(z)}{ z-\lambda}[{({\mathcal{S}}\mathbb{1})(z)}-{({\mathcal{S}}_+\mathbb{1})(\lambda)}].
	\end{equation*}
Similarly, one can derive a formula for the holomorphic representation of $\delta^{a-}_{-1+i\lambda\xi}$.
\end{proof}

\section{Generalized eigenfunction expansion}\label{s:7}

\begin{proposition}\lb{ext}
	For any $f\in  H^1_w(\bbR)$ the following inequality holds
	\begin{equation}\lb{H1}
	\int_{\pm}m(\lambda,\sigma)f(\lambda)w(\lambda)d\lambda\leq C\|f\|_{H_w^1(\bbR)},
	\end{equation}
	where $\sigma$ is a point on the vertical interval $I=\{a+i\eta \,| \,\hbox{$a\in\bbR$ is fixed},\,\,\eta\in[-\mu,\mu],\hbox{$\mu\in\bbR$ is fixed}\}$, for each fixed $\sigma\neq a$ $m$ is a holomorphic function in the neighborhood of real $\lambda$-axis, for $\sigma=a$ $m$ is a holomorphic function in the neighborhood of real $\lambda$-axis except for a single pole $a$ of order $1$,  $m$ is  uniformly bounded at infinity with respect to $\sigma$, $(\lambda-\sigma)m(\lambda,\sigma)$ is bounded over $\{\hbox{a neighborhood of $a$}\}\times I$, and for each fixed $\sigma$
	\begin{equation}\lb{defpm}
	\int_{\pm}m(\lambda,\sigma)f(\lambda)w(\lambda)d\lambda:=\lim_{b\to0^\mp}\int_{\mathbb{R}}\frac{m(\lambda)f(\lambda)w(\lambda)}{\lambda-(\sigma+ib)}d\lambda,
	\end{equation}
	and, finally, $C$ in \eqref{H1} is independent of $\sigma$.
\end{proposition}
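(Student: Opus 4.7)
The plan is to decompose via a cutoff near $\lambda=a$ and reduce the main part to a Cauchy integral whose uniform bound follows from the 1D Sobolev embedding $H^1(\bbR)\hookrightarrow L^\infty(\bbR)$ together with the $H^1$-boundedness of the Hilbert transform. The key structural observation is the factorization $m(\lambda,\sigma)=\psi(\lambda,\sigma)/(\lambda-\sigma)$, where $\psi:=(\lambda-\sigma)m$ is by hypothesis uniformly bounded on a neighborhood of $a$ times $I$ and, being holomorphic in $\lambda$, has uniformly bounded $\lambda$-derivative on a slightly smaller neighborhood by the Cauchy estimate.

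Concretely, I would fix a smooth cutoff $\chi\in C_c^\infty(\bbR)$ equal to $1$ on a small neighborhood of $a$ and supported inside the neighborhood on which $\psi$ is uniformly bounded. On $\supp(1-\chi)$, both $m(\lambda,\sigma)$ and $|\lambda-(\sigma+ib)|^{-1}$ are uniformly bounded in $\sigma\in I$ and $|b|$ small (using holomorphicity of $m$ away from $a$, uniform decay at infinity, and the uniform lower bound on $|\lambda-a|$), so Cauchy--Schwarz with $\int w<\infty$ controls this piece by $C\|f\|_{L^2_w}\le C\|f\|_{H^1_w}$. For the $\chi$-piece, I would Taylor-expand $\psi(\lambda,\sigma)=\psi(\sigma,\sigma)+(\lambda-\sigma)\widetilde\psi(\lambda,\sigma)$ with $\widetilde\psi$ uniformly bounded on $\supp(\chi)\times I$ by the Cauchy estimate, and split
\begin{equation*}
\frac{\chi\,m\,fw}{\lambda-(\sigma+ib)}
=\frac{\chi\,\widetilde\psi\,fw}{\lambda-(\sigma+ib)}
+\frac{\chi\,\psi(\sigma,\sigma)\,fw}{(\lambda-\sigma)(\lambda-(\sigma+ib))}.
\end{equation*}
The first summand is the Cauchy integral at $\sigma+ib$ of $g:=\chi\widetilde\psi fw$, a function with $\|g\|_{H^1(\bbR)}\le C\|f\|_{H^1_w}$ uniformly in $\sigma\in I$; its boundary value is $\mp i\pi g+Hg$ by Plemelj--Sokhotski, so the maximum principle in the half-plane where $\sigma+ib$ lies bounds it by $\|g\|_\infty+\|Hg\|_\infty\le C\|g\|_{H^1(\bbR)}\le C\|f\|_{H^1_w}$ uniformly. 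The second summand vanishes identically for every $\sigma\in I\setminus\{a\}$, since $\psi(\sigma,\sigma)=(\lambda-\sigma)m(\lambda,\sigma)|_{\lambda=\sigma}=0$ whenever $m$ is holomorphic at $\lambda=\sigma$.

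\textbf{Main obstacle.} The remaining case is the exceptional point $\sigma=a$, where $m$ develops a genuine simple pole and $\psi(a,a)\ne0$. There the second summand carries a double-pole kernel $1/[(\lambda-a)(\lambda-(a+ib))]$ whose two partial-fraction pieces each blow up like $1/b$ under Plemelj--Sokhotski. The $\pm$-sign convention in the definition of $\int_\pm$ is precisely the one for which these two divergent contributions cancel, leaving a finite boundary value proportional to $\psi(a,a)f(a)w(a)$, which is then bounded by $C\|f\|_{H^1_w}$ via the 1D Sobolev embedding. Making this cancellation rigorous and confirming that it joins continuously to the uniform bound already established for $\sigma\in I\setminus\{a\}$ -- so that the constant $C$ in \eqref{H1} really is independent of $\sigma$ throughout $I$ -- is the delicate step on which the proof hinges.
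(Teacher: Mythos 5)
There is a genuine gap, and it originates in how you read the regularized integral \eqref{defpm}. In the paper's convention (compare \eqref{H1ext} and the first display of its proof, where $\int_{\pm}m f w=\int_{\pm}\frac{E^1_\sigma f w}{\lambda-\sigma}\,d\lambda+\int_{\bbR}E^2_\sigma f w\,d\lambda$ with $E^1_\sigma=(\lambda-\sigma)m\,e_a$ and $E^2_\sigma=m(1-e_a)$), the kernel $1/(\lambda-(\sigma+ib))$ in \eqref{defpm} is the regularization of the single (near-)pole already contained in $m(\cdot,\sigma)$: in the intended application (Proposition \ref{UB_norm}, Case 2, with $m\sim 1/\omega_+$) the pole of $m$ sits at $\lambda\approx\sigma$, off the real axis for $\sigma\neq a$ and on the axis exactly when $\sigma=a$. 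You instead keep $m$'s own singularity and multiply by an \emph{additional} Cauchy kernel, which manufactures a double pole at $\sigma=a$. Moreover your claim that $\psi(\sigma,\sigma)=0$ for $\sigma\in I\setminus\{a\}$ fails in the intended setting: $\psi(\sigma,\sigma)$ is essentially the residue of $m$ at its pole $\lambda\approx\sigma$ (roughly $-1/(2D'(0))$ in the application), so your second summand does not vanish; and at $\sigma=a$ the hoped-for cancellation does not occur, since partial fractions give $\frac{1}{ib}\big[\frac{1}{\lambda-a-ib}-\frac{1}{\lambda-a}\big]$ and, by Plemelj, the two boundary values differ by $\pm i\pi$ times the (nonzero) value of the numerator at $a$, so the expression blows up like $1/b$. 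Thus the route you outline cannot be completed, and the case you leave open, $\sigma=a$, is the only genuinely singular one.

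Once \eqref{defpm} is read correctly, your treatment of the ``first summand'' is already the whole proof and coincides with the paper's: near $a$ factor out $(\lambda-\sigma)$, i.e.\ set $g:=(\lambda-\sigma)m(\lambda,\sigma)e_a(\lambda)f(\lambda)w(\lambda)$, which lies in $H^1(\bbR)$ with $\|g\|_{H^1}\leq C\|f\|_{H^1_w}$ uniformly in $\sigma\in I$ (boundedness of $(\lambda-\sigma)m$ together with a Cauchy estimate for its $\lambda$-derivative); then $\int_{\pm}\frac{g(\lambda)}{\lambda-\sigma}\,d\lambda=\lim_{b\to0^{\mp}}(\hat{\mathcal{S}}_{\eta+b}\,g)(a)$ for $\sigma=a+i\eta$, and this is bounded by $\|\hat{\mathcal{S}}_{\eta'}g\|_{H^1(\bbR)}\leq C\|g\|_{H^1(\bbR)}$ uniformly in $\eta'$, using the Fourier-multiplier form of $\hat{\mathcal{S}}_{\eta'}$, its commutation with $d/d\lambda$, and the Sobolev embedding $H^1(\bbR)\hookrightarrow L^{\infty}(\bbR)$ --- exactly the Plemelj/Hilbert-transform-on-$H^1$ machinery you invoke. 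The far-from-$a$ piece is simply $\int_{\bbR}m(1-e_a)f w\,d\lambda$ with no kernel at all, bounded via $\|f\|_{L^1_w}\leq\|f\|_{L^2_w}$. In short: there is only one simple pole to regularize, no second summand, and no cancellation to arrange; the ``main obstacle'' you identify is an artifact of the misreading of \eqref{defpm}, not a feature of the proposition.
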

\begin{proof}
	Let $e_a$ is a cut-off function corresponding to $a$ and defined on the real line, that is, $e_a$ is infinitely differentiable such that $e_a=0$ outside of a neighborhood of $a$ and $e_a=1$ in some (smaller) neighborhood of $a$. Introduce $E^1_\sigma$ and $E^2_\sigma$ 
	\begin{align*}
	\begin{split}
	E^1_\sigma(\lambda)&=(\lambda-\sigma)m(\lambda,\sigma)e_a(\lambda),\\
	E^2_\sigma(\lambda)&=m(\lambda,\sigma)(1-e_a(\lambda)).
	\end{split}
	\end{align*}
	Then,
	\begin{equation}\lb{H1L1}
\int_{\pm}m(\lambda,\sigma)f(\lambda)w(\lambda)d\lambda=\int_{\pm}\frac{E^1_\sigma(\lambda)f(\lambda)w(\lambda)}{\lambda-\sigma}d\lambda+\int_\bbR E^2_\sigma(\lambda)f(\lambda)w(\lambda)d\lambda
	\end{equation}
	Also, the following point-wise estimate holds for any function $g\in H^1(\bbR)$
	\begin{equation*}
	|(\hat{\mathcal{S}}_\eta g)(a)|^2\leq\|\hat{\mathcal{S}}_\eta g\|^2_{L^2(\bbR)}+\|(\hat{\mathcal{S}}_\eta g)'\|^2_{L^2(\bbR)}.
	\end{equation*}
	Or,
	\begin{equation*}
	|(\hat{\mathcal{S}}_\eta g)(a)|^2\leq\|\hat{\mathcal{S}}_\eta g\|^2_{L^2(\bbR)}+\|\hat{\mathcal{S}}_\eta g'\|^2_{L^2(\bbR)}.
	\end{equation*}
	Since $
	\hat{\mathcal{S}}_\eta=\left\{
	\begin{array}{ll}
	2\pi i\mathcal{F}^{-1}\chi_+(\cdot)e^{-\eta (\cdot)}\mathcal{F},\,\,\,\eta>0\\
	-2\pi i\mathcal{F}^{-1}\chi_-(\cdot)e^{-\eta (\cdot)}\mathcal{F},\,\,\,\eta<0
	\end{array}
	\right.,
	$ the limit of $\hat{\mathcal{S}}_\eta g(a)$ as $\eta\to0^\pm$ exists, and moreover,
		\begin{equation*}
	\Big|\int_{\pm}\frac{g(\lambda)}{\lambda-\sigma}d\lambda\Big|\leq C\|g\|_{H^1(\bbR)},
	\end{equation*}
	where $C$ is independent of $\sigma$.
	Therefore, formula \eqref{H1} follows from \eqref{H1L1} and the fact that $fw\in H^1(\bbR)$ and $\|f\|_{L^1_w(\bbR)}\leq\|f\|_{L^2_w(\bbR)}\|\mathbb{1}\|_{L^2_w(\bbR)}=\|f\|_{L^2_w(\bbR)}$ (that is, $L^2_w(\bbR)\subset L^1_w(\bbR)$).
\end{proof}

\begin{remark}
	Let $f\in L^1_w(\bbR)$ have an extension which is holomorphic in the neighborhood of the pole $\sigma$ of the function $m$ from Proposition  \ref{ext}. Then,
	\begin{equation}\lb{H1ext}
	\int_{\gamma_\pm}m(z,\sigma)f(z)w(z)dz=\int_{\pm}m(\lambda,\sigma)f(\lambda)w(\lambda)d\lambda.
	\end{equation}
\end{remark}

Let the functionals $\delta^{a\pm}_\lambda$ be the generalized eigenfunctions of the operator $L^*_{\xi}$, extended to $\Phi^*$.
\begin{theorem}\label{main_exp}
	Fix $\xi\neq0$. If $\phi\in\dom(S|\Phi)$ and $\psi\in\Phi$, then
	\begin{equation}\lb{gee}
	(\phi,\psi)_{L_w^2}=\int_{-\infty-i\frac{\gamma}{\xi}}^{\infty-i\frac{\gamma}{\xi}}\langle\phi,\delta^{a-}_{-1+i\bar\lambda\xi}\rangle\langle\delta^+_{-1-i\lambda\xi},\psi\rangle d\lambda+\langle P_{\lambda^*}(\xi)\phi,\psi\rangle,
	\end{equation}
	where $\gamma\in(0,\e)$ is chosen so that for all $\lambda$ such that $-\gamma\xi<(\sign\xi)\Im\lambda<0$ the operators $K_\pm(-1-i\lambda\xi,\xi)$ are invertible, and $P_{\lambda^*}(\xi)$ is the following operator
	\begin{align*}
	\begin{split}
	P_{\lambda^*}(\xi)&=-\Res_{\lambda=\lambda^*(\xi)}R(\lambda,\xi)\,\,\hbox{for}\,\,\xi\in(-\sqrt{\pi},0)\cup(0,\sqrt{\pi}),\\
	P_{-1^\pm}&=-\Res_{\lambda=-1}R^+(-1,\pm\sqrt{\pi}),\\
	P_{\lambda^*}(\xi)&=0, \,\,\hbox{for}\,\,\xi\notin[-\sqrt{\pi},\sqrt{\pi}],
	\end{split}
	\end{align*} where   $\lambda^*(\xi)\in[-1,0]$ such that $0\in\sigma_d(K_+(-1-i\lambda^*(\xi)\xi, \xi))$. In particular,

	\begin{itemize}
		\item if $\xi\in(-\sqrt{\pi},0)\cup(0,\sqrt{\pi})$, then for any $f,g \in H={L_w^2(\bbR)}$ the following generalized eigenfunction expansion holds
		\begin{equation}\lb{ext1}
		(f,g)_{L_w^2}=\int_{\mathbb{R}}\langle f,\delta^{a-}_{-1+i\lambda\xi}\rangle\langle\delta^+_{-1-i\lambda\xi},g\rangle d\lambda+(P_{\lambda^*}(\xi)f,g)_{L_w^2},
		\end{equation}
		where $\lambda^*(\xi)\in(-1,0)$ is an isolated eigenvalue of $L_\xi$ and $P_{\lambda^*}(\xi)$ is the Riesz projection corresponding to the simple eigenvalue $\lambda^*(\xi)$ of $L_{\xi}$.
		\item if $\xi=\pm\sqrt{\pi}$, then for any $f, g\in H^1_{w}(\bbR)$
		\begin{equation*}
		(f,g)_{L_w^2}=\int_{\mp}\langle f,\delta^{a-}_{-1+i\bar\lambda\xi}\rangle\langle\delta^+_{-1-i\lambda\xi},g\rangle d\lambda+\langle P_{-1^\pm}f,g\rangle.
		\end{equation*}
			\item if $\xi\notin[-\sqrt{\pi},\sqrt{\pi}]$, then for any $f,g \in H={L_w^2(\bbR)}$
		\begin{equation}\lb{ext4}
		(f,g)_{L_w^2}=\int_{\mathbb{R}}\langle f,\delta^{a-}_{-1+i\lambda\xi}\rangle\langle\delta^+_{-1-i\lambda\xi},g\rangle d\lambda.
		\end{equation}
	\end{itemize}

\end{theorem}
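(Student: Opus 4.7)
The plan is to derive \eqref{gee} from a Riesz--Dunford contour-integral representation of $\mathfrak{i}\phi$, combined with the jump formula of Proposition \ref{Jump} across the essential spectrum and a residue calculation at the isolated eigenvalue $\lambda^{*}(\xi)$. The three displayed corollaries then follow from \eqref{gee} by pushing the contour onto the line $\Re\lambda=-1$, with convergence of the limit controlled by Propositions \ref{omegareal} and \ref{ext}.

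First I would start from the operational-calculus identity
\[
\mathfrak{i}\phi \;=\; -\frac{1}{2\pi i}\oint_{\Gamma_{R}} R(\lambda,\xi)\,\phi\, d\lambda,
\]
for $\Gamma_{R}$ a large closed contour enclosing $\sigma(L_{\xi})$; letting $R\to\infty$ is justified by item 1 of Theorem \ref{unifomega} together with the $o(1)$ estimate \eqref{Rsmallo}, after rewriting $R(\lambda,\xi)\phi=-\lambda^{-1}\phi+\lambda^{-1}R(\lambda,\xi)L_{\xi}\phi$ to gain an extra $\lambda^{-1}$ at infinity (which is what the domain assumption on $\phi$ is for). Invoking the analyticity of $R_{\pm}$ on $\Pi_{\pm}\cup\Omega^{T}_{\xi}$ (Proposition \ref{extevery}), I would then deform $\Gamma_{R}$ into the union of two nearly-vertical contours, one just to the right and one just to the left of the essential spectrum inside $\Omega^{T}_{\xi}$ (at distance $\gamma|\xi|$ from $\Re\lambda=-1$), plus a small loop around $\lambda^{*}(\xi)$ when it lies in $\Pi_{+}$. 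The small loop contributes the residue $P_{\lambda^{*}}(\xi)\phi$; the two vertical contours, traversed in opposite orientations, combine into a single integral of the jump $R_{+}(\lambda,\xi)-R_{-}(\lambda,\xi)$.

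Inserting the jump formula \eqref{jumps} converts this line integral into
\[
\frac{1}{i|\xi|}\int \langle\phi,\delta^{a-}_{\bar\lambda}\rangle\,\delta^{+}_{\lambda}\, d\lambda,
\]
and the change of variable $\lambda=-1-i\mu\xi$, $d\lambda=-i\xi\, d\mu$, reparametrises the contour as $\Im\mu=-\gamma/\xi$ and transforms $\delta^{+}_{\lambda}$, $\delta^{a-}_{\bar\lambda}$ into $\delta^{+}_{-1-i\mu\xi}$, $\delta^{a-}_{-1+i\bar\mu\xi}$, respectively. Pairing with $\psi$ and reconciling the factor of $|\xi|^{-1}$ against the Jacobian yields precisely \eqref{gee}.

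For the three corollaries I would send $\gamma\to 0^{+}$. When $\xi\in(-\sqrt\pi,0)\cup(0,\sqrt\pi)$, Proposition \ref{omegareal} guarantees that the denominators $\omega_{\pm}$ appearing inside $\delta^{+}$ and $\delta^{a-}$ (see Proposition \ref{FT}) remain bounded away from zero on the real $\mu$-axis, so dominated convergence converts \eqref{gee} into the stated Lebesgue integral over $\mathbb R$; the case $\xi\notin[-\sqrt\pi,\sqrt\pi]$ is identical, with $P_{\lambda^{*}}(\xi)=0$ because Proposition \ref{disspec} then rules out any discrete spectrum. The main obstacle is the critical case $\xi=\pm\sqrt\pi$: here $\omega_{+}(\cdot,\pm\sqrt\pi)$ has a simple zero at $\mu=0$ with nonvanishing derivative (Proposition \ref{omegareal}), so the integrand develops a simple pole on the integration path, preventing a naive limit. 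This is precisely the situation for which the boundary-value integral $\int_{\pm}$ of Proposition \ref{ext} was designed; its application demands $f,g\in H^{1}_{w}$ to control the local $H^{1}$ contribution near $\mu=0$, while the residue at $\mu=0$ is absorbed into the discrete term $\langle P_{-1^{\pm}}f,g\rangle$ associated with the merger of $\lambda^{*}(\xi)$ into the essential spectrum as $\xi\to\pm\sqrt\pi$.
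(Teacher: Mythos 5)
Your proposal is correct and follows essentially the same route as the paper: it uses the Cauchy integral / resolution-of-identity representation, rewritten via $R(\lambda,\xi)\phi=-\lambda^{-1}\phi+\lambda^{-1}R(\lambda,\xi)L_{\xi}\phi$ to gain decay, deforms the contour into vertical lines flanking the essential spectrum while collecting the residue at $\lambda^*(\xi)$, applies the jump formula from Proposition \ref{Jump}, changes variables $\lambda\mapsto -1-i\mu\xi$, and then obtains the three corollaries by sending $\gamma\to 0^+$ using Propositions \ref{FT}, \ref{omegareal}, and \ref{ext}. The only phrasing worth tightening is the description of the starting contour (the spectrum is unbounded, so one must work with a pair of vertical lines $\int_N$ as the paper does rather than a literal closed contour) and the remark that the residue at $\mu=0$ is ``absorbed'' in the $\gamma\to 0^+$ limit: in the paper that residue is already extracted during the contour deformation and constitutes $P_{-1^{\pm}}$ in \eqref{gee}, so the $\gamma\to 0^+$ limit only converts the displaced line integral into the boundary-value integral $\int_{\mp}$ of Proposition \ref{ext}, which is where the $H^1_w$ hypothesis enters.
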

\begin{proof}
	We know (see \eqref{Rsmallo}) that for $\phi, \psi\in\Phi$
	\begin{equation*}
	\langle R_\pm(\lambda, \xi)\phi,\psi\rangle=o(1),\,\,\,|\lambda|\to\infty
	\end{equation*}
	uniformly in the region $\{\lambda\in\bbC:\Re\lambda+1\geq-\eta|\xi| \}$ in the case of $R_+(\lambda, \xi)$ and uniformly in the region $\{\lambda\in\bbC:\Re\lambda+1\leq\eta|\xi| \}$ in the case of $R_-(\lambda, \xi)$ for any $\eta<\varepsilon$.\\
	Let $\lambda$ be in the resolvent set of $L_{\xi}$. Then
	\begin{equation}\lb{integration}
	\frac{1}{\lambda}(\phi,\psi)_{L_w^2}=-(R(\lambda,\xi)\phi,\psi)_{L_w^2}+\frac{1}{\lambda}(R(\lambda,\xi)L_{\xi}\phi,\psi)_{L_w^2},\,\, \phi\in\dom(L_\xi|\Phi),\,\,\psi\in\Phi.
	\end{equation}
	Notice that for $\phi, \psi\in\Phi$
	\begin{equation*}
	(R(\lambda, \xi)\phi,\psi)=\langle R_\pm(\lambda, \xi)\phi,\psi\rangle=o(1),\,\,\,|\lambda|\to\infty
	\end{equation*}
	uniformly in the region $\{\lambda\in\bbC:\Re\lambda+1\geq N \}$ in the case of $R_+(\lambda, \xi)$ and uniformly in the region $\{\lambda\in\bbC:\Re\lambda+1\leq-N \}$ in the case of $R_-(\lambda, \xi)$ for any $N>0$. Hence,
	\begin{equation*}
	\lim_{N\to\infty}\int_{N}\frac{1}{\lambda}(R(\lambda,\xi)L_{\xi}\phi,\psi)_{L_w^2}d\lambda=0,
	\end{equation*}
	where $\int_{N}:=\int_{-\infty i-1+N}^{\infty i-1+N}-\int_{-\infty i-1-N}^{\infty i-1-N}$. Also,
	\begin{equation*}
	\lim_{N\to\infty}\int_{N}\frac{1}{\lambda}(\phi,\psi)_{L_w^2}d\lambda=\lim_{N\to\infty}\oint_{|\lambda|=N}\frac{1}{\lambda}(\phi,\psi)_{L_w^2}d\lambda=2\pi i(\phi,\psi)_{L_w^2}.
	\end{equation*}
	Therefore,
	\begin{equation}\lb{intergration1}
	(\phi,\psi)_{L_w^2}=-\frac{1}{2\pi i}\lim_{N\to\infty}\int_{N}(R(\lambda,\xi)\phi,\psi)_{L_w^2}d\lambda.
	\end{equation}

	Or,
	\begin{align*}
	\begin{split}
	(\phi,\psi)_{L_w^2}&=-\frac{1}{2\pi i }\Big(\int_{-\infty i-1+\gamma}^{\infty i-1+\gamma}-\int_{-\infty i-1-\gamma}^{\infty i-1-\gamma}\Big)(R(\lambda,\xi)\phi,\psi)_{L_w^2}d\lambda\\
	&-\Res_{|\Re\lambda+1|>\gamma}(R(\lambda,\xi)\phi,\psi)_{L_w^2},\,\,\,\,\,\gamma>0.
	\end{split}
	\end{align*}
	Next, we choose $\gamma$ as indicated in the statement of the theorem. Then
	\begin{align*}
	\begin{split}
	&(\phi,\psi)_{L_w^2}=\frac{1}{2\pi i }\int_{-\infty i-1-\gamma}^{\infty i-1-\gamma}((R^{-}(\lambda,\xi)-(R^{+}(\lambda,\xi))\phi,\psi)_{L_w^2}d\lambda\\
	&-\Res_{|\Re\lambda+1|=0}(R^+(\lambda,\xi)\phi,\psi)_{L_w^2}-\Res_{|\Re\lambda+1|\neq0}(R(\lambda,\xi)\phi,\psi)_{L_w^2}.
	\end{split}
	\end{align*}
	
	After the change of variables $\lambda\to-1-i\lambda\xi$, we arrive at
		\begin{align*}
		\begin{split}
		&(\phi,\psi)_{L_w^2}=-\frac{\xi}{2\pi }\int_{-(\sign\xi)\infty-i\frac{\gamma}{\xi}}^{(\sign\xi)\infty-i\frac{\gamma}{\xi}}((R^+(-1-i\lambda\xi,\xi)-(R^-(-1-i\lambda\xi,\xi))\phi,\psi)_{L_w^2}d\lambda\\
		&-\Res_{|\Re\lambda+1|=0}(R^+(\lambda,\xi)\phi,\psi)_{L_w^2}-\Res_{|\Re\lambda+1|\neq0}(R(\lambda,\xi)\phi,\psi)_{L_w^2}.
		\end{split}
	\end{align*}
	By applying Proposition \ref{Jump}, we arrive at formula \eqref{gee}.\\
	Formulas  \eqref{ext1}-\eqref{ext4} follow from Propositions \ref{FT} and \ref{ext}.
\end{proof}

\begin{corollary}\label{xi=0}
	Let $\xi=0$. Then we can still use formula \eqref{integration} for $L_0$, and after integration as in \eqref{intergration1}, we arrive at
	\begin{equation*}
	(f,g)_{L_w^2}=((I-V)f,g)_{L_w^2}+(Vf,g)_{L_w^2},
	\end{equation*}
\end{corollary}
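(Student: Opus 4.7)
The plan is to carry out for $\xi=0$ the same residue/contour-integration argument used in Theorem \ref{main_exp}, but exploiting the fact that the spectral picture has collapsed dramatically: by Proposition \ref{L0prop}, $\sigma_{\mathrm{ess}}(L_0)=\{-1\}$ and $\sigma_d(L_0)=\{0\}$ are both \emph{isolated points}, so there is no continuous spectral line to straddle and no jump-across-essential-spectrum formula to invoke. The entire content of \eqref{intergration1} reduces to collecting residues at two isolated poles.

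First, I would start from identity \eqref{integration}, which is a purely algebraic consequence of the resolvent identity and requires nothing about rigged spaces. Taking $\phi\in\dom(L_0)$, $\psi\in L^2_w$, I would integrate both sides over the contour $\int_N=\int_{-\infty i-1+N}^{\infty i-1+N}-\int_{-\infty i-1-N}^{\infty i-1-N}$ and let $N\to\infty$. The term $\frac{1}{\lambda}(R(\lambda,0)L_0\phi,\psi)_{L^2_w}$ contributes $0$ in the limit because $\|R(\lambda,0)\|=\mathcal{O}(|\lambda|^{-1})$ at infinity (either from Proposition \ref{L0prop} directly, or from the general decay of resolvents of closed operators), while $\frac{1}{\lambda}(\phi,\psi)_{L^2_w}$ contributes $2\pi i(\phi,\psi)_{L^2_w}$ by the Cauchy integral formula around the origin. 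This reproduces \eqref{intergration1} for $\xi=0$.

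Second, I would substitute the explicit formula $R(\lambda,0)=\frac{1}{1+\lambda}(V-I)-\frac{1}{\lambda}V$ from Proposition \ref{L0prop} into the right-hand side of \eqref{intergration1}. By the residue theorem, for large $N$ the contour $\int_N$ encloses both $\lambda=-1$ and $\lambda=0$, so
\begin{equation*}
-\frac{1}{2\pi i}\int_N (R(\lambda,0)f,g)_{L^2_w}\,d\lambda
=-\mathop{\mathrm{Res}}_{\lambda=-1}(R(\lambda,0)f,g)_{L^2_w}-\mathop{\mathrm{Res}}_{\lambda=0}(R(\lambda,0)f,g)_{L^2_w}.
\end{equation*}
The residue at $\lambda=-1$ of $R(\lambda,0)$ is $V-I$, and the residue at $\lambda=0$ is $-V$, giving the announced decomposition $(f,g)_{L^2_w}=((I-V)f,g)_{L^2_w}+(Vf,g)_{L^2_w}$.

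There is essentially no obstacle: the only thing to check is the decay of the $R(\lambda,0)L_0\phi$ term at infinity to justify dropping it in the limit (which follows from $\|R(\lambda,0)\|\le C|\lambda|^{-1}$ for $|\lambda|$ large by the explicit formula, and the extension from $\phi\in\dom(L_0)$ to general $f\in L^2_w$ by density, since the final identity is the trivial algebraic relation $f=(I-V)f+Vf$). The conceptual point is simply that the continuous spectral integral appearing in \eqref{gee} is absent when $\xi=0$ because the essential spectrum does not spread into a line, and the two resulting point projections $I-V$ and $V$ are exactly the spectral projections onto the essential and discrete spectra of $L_0$, respectively.
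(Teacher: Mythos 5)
Your proposal is correct and follows essentially the same route the paper intends: apply identity \eqref{integration} for $L_0$, pass to the limit as in \eqref{intergration1}, and evaluate the contour integral by residues using the explicit resolvent $R(\lambda,0)=\frac{1}{1+\lambda}(V-I)-\frac{1}{\lambda}V$ of Proposition \ref{L0prop}, which yields $-\Res_{\lambda=-1}=I-V$ and $-\Res_{\lambda=0}=V$. The verification of the $o(1)$ decay of the $\frac{1}{\lambda}(R(\lambda,0)L_0\phi,\psi)$ term and the density extension are exactly the details the paper leaves implicit, so nothing further is needed.
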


Also, note that the Riesz projection  $P_0=-\Res_{\lambda=0}R(0,0)$ corresponding to the simple eigenvalue $\lambda^*(0)=0$ of $L_{0}$ is equal to $V$. 

\begin{theorem}\label{MainThm} 
	Fix $\xi\neq0$. Then
	\begin{itemize}
		\item if $\xi\in(-\sqrt{\pi},0)\cup(0,\sqrt{\pi})$, then for any $f,g \in H={L_w^2(\bbR)}$ the following generalized eigenfunction expansion holds
		\begin{equation}\lb{Fext1}
		(f,g)_{L_w^2}=\int_{\mathbb{R}} (\mathcal{B}_\xi f)(\lambda)\overline{(\mathcal{U}_\xi g)(\lambda)}{w(\lambda)}d\lambda+(P_{\lambda^*}(\xi)f,g)_{L_w^2},
		\end{equation}
		where $\lambda^*(\xi)\in(-1,0)$ is an isolated eigenvalue of $L_\xi$ and $P_{\lambda^*}(\xi)$ is the Riesz projection corresponding to the simple eigenvalue $\lambda^*(\xi)$ of $L_{\xi}$.
		\item if $\xi=\pm\sqrt{\pi}$, then for any $f, g\in H^1_{w}(\bbR)$
		\begin{equation}
		(f,g)_{L_w^2}=\int_{\mp}(\mathcal{B}_\xi f)(\lambda)\overline{(\mathcal{U}_\xi g)(\lambda)}{w(\lambda)}d\lambda+\langle P_{-1^\pm}f,g\rangle.
		\end{equation}
		\item if $\xi\notin[-\sqrt{\pi},\sqrt{\pi}]$, then for any $f,g \in H={L_w^2(\bbR)}$
		\begin{equation}\lb{Fext4}
		(f,g)_{L_w^2}=\int_{\mathbb{R}}(\mathcal{B}_\xi f)(\lambda)\overline{(\mathcal{U}_\xi g)(\lambda)}{w(\lambda)}d\lambda.
		\end{equation}
	\end{itemize}
\end{theorem}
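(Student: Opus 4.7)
The strategy is to realize Theorem \ref{MainThm} as a direct restatement of Theorem \ref{main_exp}, with the bracket pairings $\langle f,\delta^{a-}_{-1+i\lambda\xi}\rangle$ and $\langle\delta^+_{-1-i\lambda\xi},g\rangle$ replaced by their closed-form expressions in terms of the generalized Fourier transforms $\mathcal{B}_\xi$ and $\mathcal{U}_\xi$ supplied by Proposition \ref{FT}, followed by a density/extension argument from the test-function space $\Phi$ to $L^2_w$ or $H^1_w$, depending on $\xi$. The key algebraic identity to verify is that, for $\lambda\in\bbR$ and $\phi,\psi\in\Phi$,
\begin{equation*}
\langle\phi,\delta^{a-}_{-1+i\lambda\xi}\rangle\,\langle\delta^+_{-1-i\lambda\xi},\psi\rangle
=(\mathcal{B}_\xi\phi)(\lambda)\,\overline{(\mathcal{U}_\xi\psi)(\lambda)}\,w(\lambda),
\end{equation*}
which follows from the defining relations ${(\mathcal{U}_\xi\psi)(\lambda)}=w^{-1/2}(\lambda)\langle\psi,\delta^+_{-1-i\lambda\xi}\rangle$ and $(\mathcal{B}_\xi\phi)(\lambda)=w^{-1/2}(\lambda)\langle\phi,\delta^{a-}_{-1+i\lambda\xi}\rangle$ together with the reality of $w$ and the conjugate-symmetry rule $\langle\delta^+,\psi\rangle=\overline{\langle\psi,\delta^+\rangle}$ imposed in \eqref{fun}. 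Plugging this identity into the corresponding cases of Theorem \ref{main_exp} produces \eqref{Fext1} and \eqref{Fext4} up to the density step, and the analogous formula along the shifted contour for $\xi=\pm\sqrt{\pi}$.

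For $\xi\in(-\sqrt{\pi},0)\cup(0,\sqrt{\pi})$ and $\xi\notin[-\sqrt{\pi},\sqrt{\pi}]$, Proposition \ref{omegareal} ensures $\omega_\pm(-1-i\lambda\xi,\xi)$ is bounded below on $\lambda\in\bbR$ and tends to $1$ at infinity, so by Proposition \ref{FT} combined with the $L^2$--boundedness of the Hilbert-type operator $\hat{\mathcal{S}}_\pm$ from Remark \ref{Hs}, both $\mathcal{U}_\xi$ and $\mathcal{B}_\xi$ extend to bounded maps $L^2_w\to L^2_w$. Since $\Phi$ is dense in $L^2_w$ (it contains, for instance, translates of Gaussians), the identity from Theorem \ref{main_exp} extends by continuity to all $f,g\in L^2_w$, yielding \eqref{Fext1} and \eqref{Fext4}. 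In this step I would also verify that the projection term $\langle P_{\lambda^*}(\xi)\phi,\psi\rangle$ in \eqref{gee} agrees with $(P_{\lambda^*}(\xi)f,g)_{L^2_w}$ on $L^2_w$: because $\lambda^*(\xi)\in(-1,0)$ is a simple isolated eigenvalue of $L_\xi$ with eigenvector in $L^2_w$ (Theorem \ref{eigen}), the Riesz projector $P_{\lambda^*}(\xi)$ is a bounded operator on $L^2_w$ and the pairing reduces to the ordinary inner product by the compatibility \eqref{inclusion}.

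The main obstacle is the resonant case $\xi=\pm\sqrt{\pi}$: by Proposition \ref{omegareal}(3), $\omega_+(-1-i\lambda\xi,\xi)$ has a simple zero at $\lambda=0$ with nonzero derivative, so the factor $1/\overline{\omega_+(-1-i\lambda\xi,\xi)}$ appearing in the definition of $\mathcal{U}_\xi$ becomes singular. To handle this I would invoke Proposition \ref{ext}, which is precisely engineered to make sense of $\int_{\pm}m(\lambda,\sigma)f(\lambda)w(\lambda)\,d\lambda$ for $f\in H^1_w$ when $m$ has such a simple pole, via the distributional regularization \eqref{defpm}. For $\phi,\psi\in\Phi$, the generalized-eigenfunction integral in \eqref{gee} can be written along the real axis as the limit of contour integrals from $\Im\lambda=-\gamma/\xi$, which by \eqref{H1ext} coincides with the $\int_\mp$ prescription; the density of $\Phi$ in $H^1_w$ and the $H^1_w\to L^2_w$ bound of Proposition \ref{ext} then allow passage to the $H^1_w$ setting, while the residue at the zero of $\omega_+$ accounts for the extra projection $\langle P_{-1^\pm}f,g\rangle$ that is not absorbable into the $L^2_w$ spectral measure. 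A careful bookkeeping of the contour deformation, ensuring that only the boundary values of $\omega_+$ appear and that the $H^1_w$ control is uniform, is where the real work of the proof lies; once this is in place, the identification with $\int_\mp(\mathcal{B}_\xi f)(\lambda)\overline{(\mathcal{U}_\xi g)(\lambda)}w(\lambda)\,d\lambda$ follows by the same algebraic rewriting as in the nonresonant case.
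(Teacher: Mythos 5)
Your proposal is correct and follows the same route the paper takes: the paper's own proof is a one-line citation of Propositions \ref{FT} and \ref{ext} together with Theorem \ref{main_exp}, and your plan is a faithful unpacking of exactly that citation (the algebraic identity $\langle f,\delta^{a-}_{-1+i\lambda\xi}\rangle\langle\delta^+_{-1-i\lambda\xi},g\rangle = w(\lambda)(\mathcal{B}_\xi f)(\lambda)\overline{(\mathcal{U}_\xi g)(\lambda)}$ for real $\lambda$ via \eqref{fun} and the real-axis specialization of Proposition \ref{FT}, then Proposition \ref{omegareal}/Remark \ref{Hs} for the $L^2_w$ extension when $\xi\neq\pm\sqrt{\pi}$, and Proposition \ref{ext} for the $H^1_w$ regularization of the pole at $\lambda=0$ when $\xi=\pm\sqrt{\pi}$). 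One small conceptual imprecision: you describe the projection term $\langle P_{-1^\pm}f,g\rangle$ as arising from "the residue at the zero of $\omega_+$" in this step, but in fact Theorem \ref{main_exp} already delivers that term separately; the residue calculus happens in its proof, not here, where you only need to rewrite the bracket integrand and extend by density.
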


\begin{proof}
	Formulas  \eqref{Fext1}-\eqref{Fext4} follow directly from Propositions \ref{FT} and \ref{ext} and Theorem \ref{main_exp}.
\end{proof}

\begin{proposition}\label{UB_norm}
	Let $\xi\neq0$. Then
	\begin{align}\lb{unbound}
	\begin{split}
	&\big|\int_{\mathbb{R}}(\mathcal{B}_\xi f)(\lambda)\overline{(\mathcal{U}_\xi g)(\lambda)}{w(\lambda)}d\lambda\big|\leq C(\xi) \|f\|_{L^2_w}\|g\|_{L^2_w}\,\,\,\hbox{for}\,\,f,g\in L_w^2(\bbR)\,\hbox{and}\,\,\xi\neq0,\pm\sqrt{\pi},\\
	&\big|\int_{\mp}(\mathcal{B}_\xi f)(\lambda)\overline{(\mathcal{U}_\xi g)(\lambda)}{w(\lambda)}d\lambda\big|\leq C \|f\|_{H_w^1}\|g\|_{H_w^1}\,\,\,\hbox{for}\,\,f,g\in H_w^1(\bbR)\,\hbox{and}\,\,\xi\neq0,\\
	&\|\mathcal{U }_\xi g\|_{H^{-1}_{w}}\leq C_1\|g\|_{H^1_w},\,\,\,\|\mathcal{B}_\xi f\|_{L^2_{w}}\leq C_2\|f\|_{L^2_w}\,\, \hbox{for}\,\,g\in H_w^1(\bbR),\,f\in L_w^2(\bbR)\,\hbox{and}\,\,\xi\neq0.
	\end{split}
	\end{align}
	where the coefficients $C,C_1$ and $C_2$ are independent of $\xi$.
\end{proposition}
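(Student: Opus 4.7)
The proposition comprises three distinct bounds: a bilinear $L^2_w\!\times\!L^2_w$ estimate with $\xi$-dependent constant (for $\xi\neq 0,\pm\sqrt{\pi}$); a uniform bilinear $H^1_w\!\times\!H^1_w$ estimate; and uniform individual bounds $\mathcal{U}_\xi\colon H^1_w\to H^{-1}_w$ and $\mathcal{B}_\xi\colon L^2_w\to L^2_w$. My plan is to establish them in that order, using Propositions \ref{omegareal} and \ref{ext} together with Theorem \ref{unifomega} and Remark \ref{Hs}.

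For the first bound, fix $\xi\neq 0,\pm\sqrt{\pi}$. By Proposition \ref{omegareal} (together with the decay $\omega_\pm\to 1$ and continuity) there exists $c(\xi)>0$ with $|\omega_\pm(-1\mp i\lambda\xi,\pm\xi)|\geq c(\xi)$ uniformly in $\lambda\in\mathbb{R}$. The multipliers $(\mp i\xi\,\overline{\omega_\pm})^{-1}$ are therefore pointwise bounded, and since $\mathcal{S}_\pm\colon L^2_w\to L^2_w$ is bounded (from the $L^2$-boundedness of $\hat{\mathcal{S}}_\pm$ in Remark \ref{Hs} combined with $w\leq 1/\sqrt{\pi}$), both $\mathcal{U}_\xi$ and $\mathcal{B}_\xi$ act boundedly $L^2_w\to L^2_w$ with constant $C(\xi)$. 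Cauchy--Schwarz then yields the bilinear bound.

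For the second bound, I expand $(\mathcal{B}_\xi f)\overline{(\mathcal{U}_\xi g)}$ using \eqref{transf}--\eqref{transf1}, producing four terms. The diagonal term $\int f\bar g\,w\,d\lambda$ is immediate. Each of the three off-diagonal terms can be organized as $\int_\mp m(\lambda,\sigma)\phi(\lambda)w(\lambda)\,d\lambda$ where $\phi\in H^1_w$ is built from $f,g$ and their $\mathcal{S}_\pm$-images and satisfies $\|\phi\|_{H^1_w}\leq C\|f\|_{H^1_w}\|g\|_{H^1_w}$ (using that $\mathcal{S}_\pm$ preserves $H^1_w$ by Remark \ref{Hs}), while $m(\cdot,\sigma)$ is holomorphic in a neighborhood of $\mathbb{R}$ with at most a single simple pole at $\sigma=0$, occurring only when $\xi=\pm\sqrt{\pi}$ from the zero of $\omega_+(-1-i\lambda\xi,\xi)$. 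Simplicity of this pole follows from $\omega_+'(-1,\pm\sqrt{\pi})=2/\pi\neq 0$ (Proposition \ref{omegareal}(3)), and the uniform-in-$\xi$ control of $m$ away from $\sigma$ follows from Theorem \ref{unifomega}. Proposition \ref{ext} then delivers the claimed uniform estimate.

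For the third bounds, $\|\mathcal{U}_\xi g\|_{H^{-1}_w}\leq C_1\|g\|_{H^1_w}$ is obtained by duality: $\|\mathcal{U}_\xi g\|_{H^{-1}_w}=\sup_{\|\psi\|_{H^1_w}=1}|(\mathcal{U}_\xi g,\psi)_{L^2_w}|$ is controlled by the $H^1_w\times H^1_w$ bilinear estimate just established. For $\|\mathcal{B}_\xi f\|_{L^2_w}\leq C_2\|f\|_{L^2_w}$ uniformly in $\xi\neq 0$, a direct Plemelj--Sokhotski computation using the formula for $\omega_-$ on the essential spectrum yields
\begin{equation*}
-i\xi\,\overline{\omega_-(-1+i\lambda\xi,-\xi)} = \mathrm{P.V.}\!\int_{\mathbb{R}}\!\frac{w(v)\,dv}{v-\lambda} - i\,\sign(\xi)\bigl(|\xi|+\pi w(\lambda)\bigr),
\end{equation*}
whence the pointwise lower bound $|-i\xi\,\overline{\omega_-}|\geq|\hat{\mathcal{S}}_{\sign\xi}w(\lambda)|$ holds uniformly in $\xi$. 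The main obstacle is then to upgrade this structural identity to a uniform $L^2_w$ estimate on $\hat{\mathcal{S}}_{\sign\xi}(fw)/(-i\xi\,\overline{\omega_-})$: since the ratio is not controlled pointwise by $|f|$, the argument requires splitting $\lambda$ according to the relative size of $|\xi|$ and $\pi w(\lambda)$, using $L^2$-boundedness of $\hat{\mathcal{S}}_\pm$ on the region where $\pi w(\lambda)\gtrsim|\xi|$ and exploiting the $\sim 1/\lambda$ cancellation between numerator and denominator as $|\lambda|\to\infty$ on the complementary region, together with the Gaussian decay of $w$, to produce the $\xi$-independent constant $C_2$.
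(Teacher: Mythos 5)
Your proposal follows essentially the same route as the paper's proof: the first (pointwise) bound comes from $|\omega_\pm|$ being bounded away from zero on $\mathbb{R}$ (Proposition~\ref{omegareal}) together with $L^2$-boundedness of $\mathcal{S}_\pm$; the second and third bounds rest on the Plemelj--Sokhotski/Dawson-function decomposition $-i\xi\,\omega_+(-1-i\lambda\xi,\xi)=-2D(\lambda)-i(\xi-\sqrt{\pi}e^{-\lambda^2})$, the multiply-by-$\lambda$ cancellation at infinity, and Proposition~\ref{ext} to control the near-real pole of $1/\omega_+$ uniformly as $\xi\to\pm\sqrt{\pi}$. These are exactly the ingredients the paper uses.

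Two remarks on organization and precision. (i) The zero of $\omega_+(-1-i\lambda\xi,\xi)$ lies at $\lambda=i(1+\lambda^*(\xi))/\xi$ for every $\xi\in(0,\sqrt{\pi})$, not only when $\xi=\pm\sqrt{\pi}$; the point of invoking Proposition~\ref{ext} is that it gives a constant independent of the pole position $\sigma=i\eta$ on a vertical segment, and this uniformity is precisely what is needed as $\eta\to 0$. You implicitly rely on this, but your phrase ``occurring only when $\xi=\pm\sqrt{\pi}$'' undersells the issue. (ii) The paper does not perform the four-term algebraic expansion uniformly in $\xi$: it splits $\xi$ into three ranges (small, intermediate containing $\sqrt{\pi}$, large), invokes Proposition~\ref{ext} only in the intermediate range via the representation \eqref{coeff}, and handles the small-$\xi$ range by the $\lambda$-multiplication trick. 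You attach the $\lambda$-cancellation only to the third bound, but it is equally needed for the second bound when $\xi$ is small, since $1/(\xi\,\omega_\pm)\sim|\lambda|$ at infinity there. A further point to be checked carefully when applying Proposition~\ref{ext} to a product such as $\phi=f\cdot\mathcal{S}_+(\tilde g w^{1/2})$ is the claimed $H^1_w$ algebra bound: one cannot use $H^1_w\hookrightarrow L^\infty$ (which is false), but should instead pass to the unweighted statement $fw,\,\hat{\mathcal{S}}_+(\tilde g w)\in H^1(\mathbb{R})\hookrightarrow L^\infty$ as in the proof of Proposition~\ref{ext}. None of these affect the viability of the strategy; they are gaps of execution rather than of approach.
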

\begin{proof}
	The first inequality in \eqref{unbound} follows directly from Corollary \ref{Hs} and Remark \ref{omegareal}.
	Next, we will prove the second inequality in \eqref{unbound}. More specifically, we will prove the second inequality in \eqref{unbound} for $\xi>0$ and the integral $\int_-$ as the other cases could be handled similarly. According to Proposition \ref{FT}, for $\lambda\in\Omega$ and $f,g\in\Phi$ we have
	\begin{align}\lb{prodfor}
	\begin{split}
	(\mathcal{B}_\xi f)(\lambda)\overline{(\mathcal{U}_\xi g)(\lambda)}=&\Big(f(\lambda)-\frac{1}{-i\xi\overline{\omega_-(-1+i\bar\lambda\xi,-\xi)}}({\mathcal{S}}_+(f w^{1/2}))(\lambda)\Big)\\
	\times&\Big(\overline{g(\bar\lambda)}-\frac{1}{-i\xi{\omega_+(-1-i\lambda\xi,\xi)}}({\mathcal{S}}_+(\tilde g w^{1/2}))(\lambda)\Big),
	\end{split}
	\end{align}
	where $\tilde g(z):=\overline{g(\bar z)}$.
	
	Let $\lambda\in\bbR$. According to formula \eqref{detpm}, we have
	\begin{align*}
	\begin{split}
	{-i\xi{\omega_+(-1-i\lambda\xi,\xi)}}&=-i\xi{\det(K_+(-1-i\lambda\xi,\xi))}=-i\xi\big(1-{\int_\mathbb{\gamma_-}\frac{w(z)dz}{zi\xi-i\lambda\xi}}\big)\\
	&=-i\xi\big(1-\frac{1}{i\xi}(\pi iw(\lambda)+P.v.\int_\bbR\frac{w(v)dv}{v-\lambda})\big)\\
	&=P.v.\int_\bbR\frac{w(v)dv}{v-\lambda}-i(\xi-\pi w(\lambda))=-2D(\lambda)-i(\xi-\sqrt\pi e^{-\lambda^2}),
	\end{split}
	\end{align*}
	where $D(\lambda)=e^{-\lambda^2}\int_0^\lambda e^{t^2}dt$ is the Dawson function.
	Similarly, 
	\begin{align*}
	\begin{split}
	-i\xi\overline{\omega_-(-1+i\lambda\xi,-\xi)}&=-i\xi\overline{\big(1-{\int_\mathbb{\gamma_+}\frac{w(z)dz}{-zi\xi+i\lambda\xi}}\big)}=-i\xi{\big(1-\frac{1}{i\xi}{\int_\mathbb{\gamma_+}\frac{w(z)dz}{z-\lambda}}\big)}\\
	&=-i\xi\big(1-\frac{1}{i\xi}(-\pi iw(\lambda)+P.v.\int_\bbR\frac{w(v)dv}{v-\lambda})\big)\\
	&=P.v.\int_\bbR\frac{w(v)dv}{v-\lambda}-i(\xi+\pi w(\lambda))=-2D(\lambda)-i(\xi+\sqrt\pi e^{-\lambda^2}).
	\end{split}
	\end{align*}
	Therefore, the singularities in $(\mathcal{B}_\xi f)(\lambda)\overline{(\mathcal{U}_\xi g)(\lambda)}$ come only from two terms $\frac{1}{{-i\xi{\omega_+(-1-i\lambda\xi,\xi)}}}$ and $\frac{1}{-i\xi\overline{\omega_-(-1+i\lambda\xi,-\xi)}}$. Now, fix $\epsilon\in(0, \sqrt{\pi}/4)$.\\
	Case 1. Let $\xi\in(0,\epsilon]$. Since $\sqrt\pi e^{-\lambda^2}-\sqrt\pi=o(\lambda)$, there exists $\tilde\epsilon>0$ such that $|\sqrt\pi e^{-\lambda^2}-\sqrt\pi|\leq\sqrt{\pi}/4$ for all $\lambda$ such that $|\lambda|\leq\tilde\epsilon$. Therefore,
	\begin{align}\lb{est1}
	\begin{split}
	|{-i\xi{\omega_+(-1-i\lambda\xi,\xi)}}|&=|-2D(\lambda)-i(\xi-\sqrt\pi e^{-\lambda^2})|\\
	&\geq|\sqrt\pi e^{-\lambda^2}-\xi|=|\sqrt{\pi}-\xi+\sqrt\pi e^{-\lambda^2}-\sqrt{\pi}|\geq\sqrt{\pi}/2, \,|\lambda|\leq\tilde\epsilon.
	\end{split}
	\end{align}
	Next, notice that $-2\lim_{|\lambda|\to\infty}\lambda D(\lambda)=-1$. Hence, there exists $\tilde{\tilde{\epsilon}}>0$ such that for all $|\lambda|\geq\tilde{\tilde{\epsilon}}$	$|-2\lambda D(\lambda)|\geq\sqrt{\pi}/2$.  Therefore,
	\begin{align}\lb{est2}
	\begin{split}
	|{-i\lambda\xi{\omega_+(-1-i\lambda\xi,\xi)}}|&\geq|-2\lambda D(\lambda)|\geq\sqrt{\pi}/2, \,|\lambda|\geq\tilde{\tilde{\epsilon}}.
	\end{split}
	\end{align}
	Also, it is clear that for $\lambda$ such that $\tilde{\tilde{\epsilon}}\geq|\lambda|\geq\tilde{\epsilon}$, $|-2D(\lambda)|\geq \tilde C,\,\tilde C=\min\{|-2D(\tilde{\epsilon})|,|-2D(\tilde{\tilde{\epsilon}})|\}$. Therefore,
	\begin{align}\lb{est3}
	\begin{split}
	|{-i\xi{\omega_+(-1-i\lambda\xi,\xi)}}|&\geq|-2 D(\lambda)|\geq \tilde C, \,\tilde{\tilde{\epsilon}}\geq|\lambda|\geq\tilde{\epsilon}.
	\end{split}
	\end{align}
	Notice that $\tilde{\epsilon}, \tilde{\tilde{\epsilon}}$ and $\tilde C$ are all independent of $\xi$. Moreover, the same estimates also hold for $-i\xi\overline{\omega_-(-1+i\lambda\xi,-\xi)}$ with the same constants $\tilde{\epsilon}, \tilde{\tilde{\epsilon}}$ and $\tilde C$. Therefore,
	\begin{align}\lb{2integrals}
	\begin{split}
	\big|\int_{-}(\mathcal{B}_\xi f)(\lambda)\overline{(\mathcal{U}_\xi g)(\lambda)}{w(\lambda)}d\lambda\big|&=\big|\int_{\bbR}(\mathcal{B}_\xi f)(\lambda)\overline{(\mathcal{U}_\xi g)(\lambda)}{w(\lambda)}d\lambda\big|\\
	&=\big|\int_{|\lambda|\leq\tilde{\tilde{\epsilon}}}(\mathcal{B}_\xi f)(\lambda)\overline{(\mathcal{U}_\xi g)(\lambda)}{w(\lambda)}d\lambda+\int_{|\lambda|\geq\tilde{\tilde{\epsilon}}}(\mathcal{B}_\xi f)(\lambda)\overline{(\mathcal{U}_\xi g)(\lambda)}{w(\lambda)}d\lambda\big|
	\end{split}
	\end{align}
	By \eqref{est1} and \eqref{est3}, $\frac{1}{|{-i\lambda\xi{\omega_+(-1-i\lambda\xi,\xi)}}|}\leq \tilde{\tilde C}=\max\{2/\sqrt{\pi},1/\tilde C\}$ for $|\lambda|\leq\tilde{\tilde{\epsilon}}$\\ (similarly, $\frac{1}{|-i\xi\overline{\omega_-(-1+i\lambda\xi,-\xi)}|}\leq \tilde{\tilde C}=\max\{2/\sqrt{\pi},1/\tilde C\}$ for $|\lambda|\leq\tilde{\tilde{\epsilon}}$). Therefore, we have the following inequality: 
	\begin{align*}
	\begin{split}
	\big|\int_{|\lambda|\leq\tilde{\tilde{\epsilon}}}(\mathcal{B}_\xi f)(\lambda)\overline{(\mathcal{U}_\xi g)(\lambda)}{w(\lambda)}d\lambda\big|\leq C_1 \|f\|_{L_w^2}\|g\|_{L_w^2}\leq C_1 \|f\|_{H_w^1}\|g\|_{H_w^1},
	\end{split}
	\end{align*}
	where $C_1$ is independent of $\xi$.
	For the second integral in \eqref{2integrals}, we have 
	\begin{align*}
	\begin{split}
	&\big|\int_{|\lambda|\geq\tilde{\tilde{\epsilon}}}(\mathcal{B}_\xi f)(\lambda)\overline{(\mathcal{U}_\xi g)(\lambda)}{w(\lambda)}d\lambda\big|=|\int_{|\lambda|\geq\tilde{\tilde{\epsilon}}}\Big(f(\lambda)-\frac{1}{-i\lambda\xi\overline{\omega_-(-1+i\bar\lambda\xi,-\xi)}}\lambda({\mathcal{S}}_+(f w^{1/2}))(\lambda)\Big)\\
	\times&\Big(\overline{g(\bar\lambda)}-\frac{1}{-i\lambda\xi{\omega_+(-1-i\lambda\xi,\xi)}}\lambda({\mathcal{S}}_+(\tilde g w^{1/2}))(\lambda)\Big)w(\lambda)d\lambda\big|.
	\end{split}
	\end{align*}
	Then, by \eqref{est2} and the fact that $L_w^2(\bbR)$-norm of  $\lambda({\mathcal{S}}_+(f w^{1/2}))(\lambda)$ is bounded above by $L_w^2(\bbR)$-norm of $f(\lambda)$ (similarly, $L_w^2(\bbR)$-norm of  $\lambda({\mathcal{S}}_+(\tilde g w^{1/2}))(\lambda)$ is bounded above by $L_w^2(\bbR)$-norm of $g(\lambda)$), we have 
	\begin{align*}
	\big|\int_{|\lambda|\geq\tilde{\tilde{\epsilon}}}(\mathcal{B}_\xi f)(\lambda)\overline{(\mathcal{U}_\xi g)(\lambda)}{w(\lambda)}d\lambda\lambda\big|\leq C_2 \|f\|_{L_w^2}\|g\|_{L_w^2}\leq C_2 \|f\|_{H_w^1(\bbR)}\|g\|_{H_w^1(\bbR)},
	\end{align*}
	where $C_2$ is independent of $\xi$.
	
	Case 2. Let $\xi\in[\epsilon, 2\sqrt{\pi}-\epsilon]$. 	Then,
	\begin{align}\lb{unest}
	\begin{split}
		|-i\xi\overline{\omega_-(-1+i\lambda\xi,-\xi)}|=|-2D(\lambda)-i(\xi+\sqrt\pi e^{-\lambda^2})|\geq|\xi|\geq\epsilon.
	\end{split}
\end{align}
	Moreover, since $\lim_{|\lambda|\to\infty}e^{-\lambda^2}=0$, there exists ${\tilde{\epsilon}}>0$ such that for all $|\lambda|\geq{\tilde{\epsilon}}$	$e^{-\lambda^2}\leq\frac{\epsilon}{2\sqrt{\pi}}$.  Therefore,
	\begin{align}\lb{infboun}
	\begin{split}
	|{-i\xi{\omega_+(-1-i\lambda\xi,\xi)}}|&\geq|\xi-\pi w(\lambda)|\geq\epsilon/2, \,|\lambda|\geq{\tilde{\epsilon}},
	\end{split}
	\end{align}
	where $\tilde\epsilon$ is independent of $\xi$. Also, we have the following representation:
	\begin{align}\lb{coeff}
	\begin{split}
	{-i\xi{\omega_+(-1-i\lambda\xi,\xi)}}&=-2D(\lambda)-i(\xi-\sqrt\pi e^{-\lambda^2})=-2D'(0)\lambda+o(\lambda)-i(\xi-\sqrt{\pi}+o(\lambda))\\
	&=-2D'(0)(\lambda-\frac{\sqrt{\pi}-\xi}{2D'(0)}i)+o(\lambda)=-2D'(0)(\lambda-\eta i)+o(\lambda),
	\end{split}
	\end{align}
	where $D(\cdot)$ is the Dawson function and $\eta=\frac{\sqrt{\pi}-\xi}{2D'(0)}\in\bbR$. Therefore, $\frac{(\lambda-\eta i)}{-2D'(0)(\lambda-\eta i)+o(\lambda)}$ is uniformly bounded over $(\lambda, \eta)\in[-\epsilon,\epsilon]\times[-\mu,\mu]$.
	Then, the second inequality in \eqref{unbound} follows from formulas \eqref{prodfor}, \eqref{unest}, \eqref{infboun}, \eqref{coeff} and Proposition \ref{ext}.\\
	Case 3. Let $\xi\in[2\sqrt{\pi}-\epsilon, \infty)$. Then, we have the following uniform estimates:
	\begin{align*}
	\begin{split}
	|-i\xi\overline{\omega_-(-1+i\lambda\xi,-\xi)}|&=|-2D(\lambda)-i(\xi+\sqrt\pi e^{-\lambda^2})|\geq|\xi|\geq2\sqrt{\pi}-\epsilon,\\
	|{-i\xi{\omega_+(-1-i\lambda\xi,\xi)}}|&=|-2D(\lambda)-i(\xi-\sqrt\pi e^{-\lambda^2})|\geq\sqrt{\pi}-\epsilon,
	\end{split}
	\end{align*}
which imply
\begin{align*}
\begin{split}
\big|\int_-(\mathcal{B}_\xi f)(\lambda)\overline{(\mathcal{U}_\xi g)(\lambda)}{w(\lambda)}d\lambda\big|\leq C_1 \|f\|_{L_w^2}\|g\|_{L_w^2}\leq C_1 \|f\|_{H_w^1}\|g\|_{H_w^1},
\end{split}
\end{align*}
where $C_1$ is independent of $\xi$.\\

Finally, the inequalities from the third line of \eqref{unbound} follow from the proof of the second inequality in \eqref{unbound}.
\end{proof}

\begin{proposition}\label{projs} Let $\xi\in\mathbb{R}$. Then
	\begin{enumerate}
			\item If $\xi=\pm\sqrt{\pi}$, then
		\begin{align}\lb{projpm}
		\begin{split}
		P_{-1^\pm}&=\frac{1}{\omega'_+(-1,\pm\sqrt{\pi})}\langle\cdot,\bar e_1\rangle e_1, \hbox{where}\, \, e_1=\pm\frac{1}{\sqrt{\pi}iv}, \bar e_1=\mp\frac{1}{\sqrt{\pi}iv}\in H^{-1}_w(\bbR).
		\end{split}
		\end{align}
		Here $\langle f,\bar e_1\rangle:=\int_{\pm}f(v)\bar e_1(v)w(v)dv$ for $f\in H^1_w(\bbR)$ (see \eqref{H1}, \eqref{defpm}, \eqref{H1ext}).
		\item If $\xi\in(-\sqrt{\pi},\sqrt{\pi})$, then
		\begin{align}
		\begin{split}
		P_{\lambda^*}(\xi)&=\frac{1}{(e_1,\bar e_1)_{L_w^2}}(\cdot,\bar e_1)_{L_w^2}e_1, \\
		&=\frac{1}{\omega'(\lambda^*(\xi),\xi)}(\cdot,\bar e_1)_{L_w^2}e_1, \hbox{where}\, \, e_1=\frac{1}{vi\xi+1+\lambda^*(\xi)},\bar e_1\in L^2_w(\bbR).
		\end{split}
		\end{align}
	
		\item if $\xi\notin[-\sqrt{\pi},\sqrt{\pi}]$, then
		\begin{align}
		P_{\lambda^*}(\xi)&=0.
		\end{align}
	\end{enumerate}
	
\end{proposition}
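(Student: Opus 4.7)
The plan is to exploit the explicit resolvent formula \eqref{perR} together with the inversion formula \eqref{inverse} from Proposition \ref{Kinvert}. Substituting $K^{-1}(\lambda,\xi)Vf = \omega(\lambda,\xi)^{-1}(f,\mathbb{1})_{L_w^2}\mathbb{1}$ into \eqref{perR} yields the compact representation
\begin{equation*}
R(\lambda,\xi)f = R^0(\lambda,\xi)f - \frac{(R^0(\lambda,\xi)f,\mathbb{1})_{L_w^2}}{\omega(\lambda,\xi)}\,R^0(\lambda,\xi)\mathbb{1},
\end{equation*}
which will be the workhorse identity. Item (3) follows immediately: for $\xi \notin [-\sqrt{\pi},\sqrt{\pi}]$, Proposition \ref{disspec} shows that $\omega(\cdot,\xi)$ has no zeros, so $R(\cdot,\xi)$ has no poles and $P_{\lambda^*}(\xi) = 0$.

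For item (2) with $\xi \in (-\sqrt{\pi},\sqrt{\pi})$, the point $\lambda^*(\xi) \in (-1,0]$ lies in $\rho(M_\xi)$, so $R^0(\lambda,\xi)$ is analytic at $\lambda = \lambda^*(\xi)$ and the only pole of $R(\cdot,\xi)$ comes from the simple zero of $\omega(\cdot,\xi)$ there (simplicity is guaranteed by $\omega'(\lambda^*(\xi),\xi) > 0$ from Proposition \ref{disspec}). Taking residues gives
\begin{equation*}
P_{\lambda^*}(\xi)f = \frac{(R^0(\lambda^*(\xi),\xi)f,\mathbb{1})_{L_w^2}}{\omega'(\lambda^*(\xi),\xi)}\,R^0(\lambda^*(\xi),\xi)\mathbb{1}.
\end{equation*}
Next I would identify $R^0(\lambda^*(\xi),\xi)\mathbb{1} = -e_1$ and $(R^0(\lambda^*(\xi),\xi))^*\mathbb{1} = -\bar e_1$ by direct computation, using that $M_\xi$ is multiplication by $-vi\xi - 1$ so that $M_\xi^*$ acts by $vi\xi - 1$. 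This gives $(R^0(\lambda^*(\xi),\xi)f,\mathbb{1})_{L_w^2} = -(f,\bar e_1)_{L_w^2}$, and the two minus signs combine to produce the claimed formula. The identity $(e_1,\bar e_1)_{L_w^2} = \omega'(\lambda^*(\xi),\xi)$ follows from differentiation of the integral representation of $\omega$ under the integral sign, since both sides equal $\int_{\bbR}\frac{w(v)\,dv}{(vi\xi+1+\lambda^*(\xi))^2}$.

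Item (1), with $\xi = \pm\sqrt{\pi}$, requires the same calculation applied to the extended resolvent $R^+$ at the boundary point $\lambda = -1$. The pole remains simple because $\omega'_+(-1,\pm\sqrt{\pi}) = 2/\pi \neq 0$ by Proposition \ref{omegareal}. Formally letting $\lambda^*(\xi) \to -1$ in the formulas of item (2) produces $e_1 = \pm\frac{1}{\sqrt{\pi}iv}$ and $\bar e_1 = \mp\frac{1}{\sqrt{\pi}iv}$, which are no longer in $L_w^2$ but only in $H^{-1}_w$; consequently the $L_w^2$ inner product must be replaced by the regularized pairing $\langle f,\bar e_1\rangle = \int_\pm f(v)\bar e_1(v)w(v)\,dv$ defined in Proposition \ref{ext}. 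I would justify this by expressing each pairing against $R^0_+(\lambda,\xi)\mathbb{1}$ in the residue formula via its boundary integral representation \eqref{Rintegral}, passing to the limit $\lambda \to -1$ along the contour $\gamma_\mp(\xi)$, and recognizing the resulting Cauchy-type expression as the $\int_\pm$ regularization prescribed by Proposition \ref{ext}, invoking \eqref{Iman}--\eqref{Iman1} to confirm the boundary limits. The main obstacle is item (1): one must carefully track the deformed contour as $\lambda$ approaches the essential spectrum and verify that the residue of $R^+$ retains the stated simple rank-one form despite the failure of the eigenfunctions to lie in $L_w^2$; items (2) and (3) are direct consequences of the explicit resolvent formula.
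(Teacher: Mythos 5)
Your proposal is correct and takes essentially the same route as the paper: both prove items (2) and (1) by taking the residue of the explicit resolvent formula obtained from \eqref{perR} and \eqref{inverse} (your ``workhorse identity'' is just the compactified form), identify $R^0(\lambda^*,\xi)\mathbb{1}=-e_1$ and $(R^0(\lambda^*,\xi))^*\mathbb{1}=-\bar e_1$, and for $\xi=\pm\sqrt{\pi}$ carry the same residue computation through for the extended resolvent $R^+$ at $\lambda=-1$, replacing the $L^2_w$ pairing by the $\int_\pm$ regularization of Proposition \ref{ext}. The only thing the paper adds that you omit is a second, abstract derivation of item (2) via Riesz projection theory ($\ker P = \ran(P^*)^\perp$) giving the normalization $1/(e_1,\bar e_1)_{L^2_w}$ directly, but this is redundant with the residue computation once one checks $(e_1,\bar e_1)_{L^2_w}=\omega'(\lambda^*(\xi),\xi)$, as you do.
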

\begin{proof}

(1) Applying formulas \eqref{perR} and \eqref{inverse}, we arrive at
	\begin{align*}
\begin{split}
P_{-1^\pm}&=-\Res_{\lambda=-1}R^+(\lambda,\pm\sqrt{\pi})= \Res_{\lambda=-1} [R_+^0(\lambda,\pm\sqrt{\pi})VK_+^{-1}(\lambda,\pm\sqrt{\pi})VR_+^0(\lambda,\pm\sqrt{\pi})]\\
&= -\Res_{\lambda=-1} \Big[\frac{1}{\omega_+(\lambda,\pm\sqrt{\pi})} R_+^0(\lambda,\pm\sqrt{\pi})VR_+^0(\lambda,\pm\sqrt{\pi})VR_+^0(\lambda,\pm\sqrt{\pi})\Big]\\
&=-\frac{1}{\omega'_+(-1,\pm\sqrt{\pi})}R_+^0(-1,\pm\sqrt{\pi})VR_+^0(-1,\pm\sqrt{\pi})VR_+^0(-1,\pm\sqrt{\pi})\\
&=-\frac{1}{\omega'_+(-1,\pm\sqrt{\pi})}\langle R_+^0(-1,\pm\sqrt{\pi})\cdot,\mathbb{1}\rangle \langle R_+^0(-1,\pm\sqrt{\pi})\mathbb{1},\mathbb{1}\rangle R_+^0(-1,\pm\sqrt{\pi})\mathbb{1}\\
&=\frac{1}{\omega'_+(-1,\pm\sqrt{\pi})}\langle R_+^0(-1,\pm\sqrt{\pi})\cdot,\mathbb{1}\rangle  R_+^0(-1,\pm\sqrt{\pi})\mathbb{1},
\end{split}
\end{align*} 
 where we used the fact that $\langle R_+^0(-1,\pm\sqrt{\pi})\mathbb{1},\mathbb{1}\rangle=-1$. Formula \eqref{projpm} from the definition fo $R_+^0(-1,\pm\sqrt{\pi})$,  \eqref{H1} and \eqref{H1ext}.

(2)	It follows from Theorem \ref{eigen} that for each $\xi\in(-\sqrt{\pi},\sqrt{\pi})$ the eigenfunction of $L_\xi$ corresponding to the eigenvalue $\lambda^*(\xi)$ is $e_1(v)=\frac{1}{vi\xi+1+\lambda^*(\xi)}$.
We also know that $\ker(P_{\lambda^*}(\xi))=\ran((P_{\lambda^*}(\xi))^*)^\perp$. Next, notice that $(P_{\lambda^*}(\xi))^*$ is the Riesz projection corresponding to the isolated eigenvalue $\lambda^*(\xi)$ of the operator $L^*_\xi$. Hence, $\ran((P_{\lambda^*}(\xi))^*)=span\{\bar e_1\}$, where $\bar e_1$ is the eigenfunction of $L^*_\xi$ corresponding to the eigenvalue $\lambda^*(\xi)$. Therefore, $P_{\lambda^*}(\xi)=\alpha(\cdot,\bar e_1)_{L_w^2}e_1$. And since $P_{\lambda^*}(\xi)$ is the projection onto $span\{ e_1\}$, $\alpha$ must be $\frac{1}{(e_1,\bar e_1)_{L_w^2}}$.

	On the other hand, we can also compute the negative residue of the resolvent for $\xi\in(-\sqrt{\pi},\sqrt{\pi})$ as in (1), that is,
\begin{align}\lb{projall}
\begin{split}
P_{\lambda^*}(\xi)&=\frac{1}{\omega'(\lambda^*(\xi),\xi)}( R^0(\lambda^*(\xi),\xi)\cdot,\mathbb{1})_{L_w^2}  R^0(\lambda^*(\xi),\xi)\mathbb{1}.\\
\end{split}
\end{align}
In this case, the functional $\langle R_+^0(\lambda^*(\xi),\xi)\cdot,\mathbb{1}\rangle$ can be represented in terms of the inner product in ${L_w^2(\bbR)} $, that is, $\langle R_+^0(\lambda^*(\xi),\xi)\cdot,\mathbb{1}\rangle=( R^0(\lambda^*(\xi),\xi)\cdot,\mathbb{1})_{L_w^2} $. Moreover,
\begin{align*}
\begin{split}
(e_1,\bar e_1)_{L_w^2}&=\omega'(\lambda^*(\xi),\xi),\\
-( R^0(\lambda^*(\xi),\xi)\cdot,\mathbb{1})_{L_w^2}&=-( \cdot,(R^0(\lambda^*(\xi),\xi))^*\mathbb{1})_{L_w^2}=(\cdot,\bar e_1)_{L_w^2},\\
-R^0(\lambda^*(\xi),\xi)\mathbb{1}&=e_1.
\end{split}
\end{align*}
\end{proof}

\begin{corollary}\label{P_norm}
	Let $\xi\in\mathbb{R}$. Then the operators $P_{\lambda^*}(\xi)$, $\xi\in(-\sqrt{\pi}, \sqrt{\pi})$, and $P_{-1^\pm}$ are of finite rank into $L^2_w(\bbR)$ and $H^{-1}_w(\bbR)$, respectively, with
	\begin{align}\label{projest}
	\begin{split}
	|(P_{\lambda^*}(\xi)f,g)_{L_w^2}|&\leq \tilde C(\xi)\|f\|_{L_w^2}\|g\|_{L_w^2}\,\,\,\hbox{for}\,\,f,g\in L_w^2(\bbR)\,\hbox{and}\,\,\xi\neq\pm\sqrt{\pi},\\
	|\langle P_{\lambda^*}(\xi)f,g\rangle|&\leq \tilde C \|f\|_{H_w^1(\bbR)}\|g\|_{H_w^1(\bbR)}\,\,\,\hbox{for}\,\,f,g\in H_w^1(\bbR) \,\hbox{and all}\,\,\xi\in\bbR,
	\end{split}
	\end{align}
	where $\tilde C$ is independent of $\xi$.
\end{corollary}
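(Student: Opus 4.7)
The plan is to combine the explicit rank-one formulas from Proposition \ref{projs} with Cauchy--Schwarz (for the first estimate) and the $H^1_w$--$H^{-1}_w$ duality embodied in Proposition \ref{ext} (for the uniform second estimate). The finite-rank claim is immediate, since each projector is either rank one (of the form $\mathrm{const}\cdot(\cdot,\bar e_1)_{L^2_w}e_1$ for $\xi\in(-\sqrt{\pi},\sqrt{\pi})$, or $\mathrm{const}\cdot\langle\cdot,\bar e_1\rangle e_1$ for $\xi=\pm\sqrt{\pi}$) or identically zero (for $|\xi|>\sqrt{\pi}$).

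For the first estimate, the case $\xi \notin [-\sqrt{\pi},\sqrt{\pi}]$ is trivial. For $\xi \in (-\sqrt{\pi}, \sqrt{\pi})$ I apply Cauchy--Schwarz to
\begin{equation*}
(P_{\lambda^*}(\xi)f,g)_{L^2_w}=\frac{(f,\bar e_1)_{L^2_w}(e_1,g)_{L^2_w}}{\omega'(\lambda^*(\xi),\xi)},
\end{equation*}
producing the bound with $\tilde C(\xi)=\|e_1\|_{L^2_w}\|\bar e_1\|_{L^2_w}/|\omega'(\lambda^*(\xi),\xi)|$; the denominator is nonzero by algebraic simplicity of $\lambda^*(\xi)$ (Theorem \ref{eigen}), and both norms are finite since $e_1,\bar e_1\in L^2_w$ away from $\xi=\pm\sqrt{\pi}$.

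For the uniform second estimate I split by $\xi$. Outside $[-\sqrt{\pi},\sqrt{\pi}]$ the projector vanishes. On any compact subinterval of $(-\sqrt{\pi},\sqrt{\pi})$ the continuity results of Proposition \ref{continuity16} (uniform boundedness of $\omega'(\lambda^*(\cdot),\cdot)$ away from zero, and of $\xi\mapsto e_1,\bar e_1$ in $L^2_w$) combined with the first estimate give a uniform constant. The delicate region is $\xi$ near $\pm\sqrt{\pi}$, where $\|e_1\|_{L^2_w},\|\bar e_1\|_{L^2_w}\to\infty$. Here I rewrite
\begin{equation*}
(f,\bar e_1)_{L^2_w}=\int_\bbR\frac{f(v)\,w(v)}{vi\xi+1+\lambda^*(\xi)}\,dv=\frac{1}{i\xi}\int_\bbR\frac{f(v)\,w(v)}{v-\sigma(\xi)}\,dv,\qquad\sigma(\xi)=\frac{i(1+\lambda^*(\xi))}{\xi},
\end{equation*}
recognising the integrand as of the form handled by Proposition \ref{ext} with pole $\sigma(\xi)$ ranging over an imaginary segment through $0$ as $\xi\to\pm\sqrt{\pi}$. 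That proposition gives $|(f,\bar e_1)_{L^2_w}|\le (C/|\xi|)\|f\|_{H^1_w}$ with $C$ independent of $\xi$; an identical manipulation bounds $(e_1,g)_{L^2_w}$ by $(C/|\xi|)\|g\|_{H^1_w}$. Combined with $|\omega'(\lambda^*(\xi),\xi)|$ bounded below by continuity (approaching $2/\pi$ at the endpoints by Proposition \ref{omegareal}) and $|\xi|$ bounded away from $0$ in this regime, this delivers the uniform $H^1_w$-bound. At the endpoints $\xi=\pm\sqrt{\pi}$ themselves the same argument applies directly to the explicit formula \eqref{projpm}, with $\langle f,\bar e_1\rangle$ being precisely a case of the Cauchy-type integral controlled by Proposition \ref{ext}.

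The main obstacle is this transition $\xi\to\pm\sqrt{\pi}$: in the explicit rank-one formula the factors $e_1,\bar e_1$ individually degenerate in $L^2_w$, while the $L^2_w$-pairings with $H^1_w$ test functions must be shown to remain bounded. Proposition \ref{ext} performs exactly this reconciliation, converting the $L^2_w$-blowup of the eigenfunctions into a bounded pairing against $H^1_w$ functions uniformly in the pole location, and is the key mechanism driving the uniform constant $\tilde C$.
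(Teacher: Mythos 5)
Your proof is correct and follows essentially the same route as the paper's: explicit rank-one representation via Proposition \ref{projs}, Cauchy--Schwarz for the $\xi$-dependent $L^2_w$ bound, and Proposition \ref{ext} for the uniform $H^1_w$ bound. The paper's own proof is terser (it simply states that the second estimate "follows from Proposition \ref{ext}"), whereas you have usefully made explicit the change of variables that exhibits $(f,\bar e_1)_{L^2_w}$ as a Cauchy-type integral with pole $\sigma(\xi)=i(1+\lambda^*(\xi))/\xi$ moving along a bounded imaginary segment as $\xi\to\pm\sqrt{\pi}$, which is exactly the setting Proposition \ref{ext} is built to control.
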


\begin{proof}
	According to Proposition \ref{projs},
		\begin{align*}
	\begin{split}
	\langle P_{\lambda^*}(\xi)f,g\rangle&= \frac{1}{\omega'_+(\lambda^*(\xi),\xi)}\langle f,\bar e_1\rangle\langle e_1, g\rangle,
	\end{split}
	\end{align*}
		where for $\xi\neq\pm\sqrt\pi$ the pairing $\langle \cdot,\cdot\rangle$ can be interpreted as the $L^2_w(\bbR)$-inner product, and in this case, $e_1$ and $\bar e_1$ are the $L^2_w(\bbR)$ functions. Then the first line in \eqref{projest} is straightforward, and the second line in \eqref{projest}  follows from Proposition \ref{ext}.
\end{proof}

We introduce the following families of operators denoted by $P(\xi)\in B(H^1_w(\bbR,dv),H^{-1}_w(\bbR,dv))$ and $\mathcal{U}^*_\xi\mathcal{B}_\xi\in B(H^1_w(\bbR,dv),H^{-1}_w(\bbR,dv))$:
\begin{align}\label{projfor}
\begin{split}
P(\xi):=\left\{\begin{array}{l} 
P_{\lambda^*}(\xi),\,\,\xi\in(-\sqrt{\pi},\sqrt{\pi}),\\
P_{-1^\pm},\,\,\xi=\pm\sqrt{\pi},\\
0,\,\,\xi\notin[-\sqrt{\pi},\sqrt{\pi}],
\end{array} \right.
\end{split}
\end{align}
and,
\begin{align}\label{intfor}
\begin{split}
\langle\mathcal{U}^*_\xi\mathcal{B}_\xi f,g\rangle &:=\int_{-}(\mathcal{B}_\xi f)(\lambda)\overline{(\mathcal{U}_\xi g)(\lambda)}{w(\lambda)}d\lambda,\,\,\,f,g\in H^1_w(\bbR,dv),\,\,\xi>0,\\
\langle\mathcal{U}^*_\xi\mathcal{B}_\xi f,g\rangle &:=\int_{+}(\mathcal{B}_\xi f)(\lambda)\overline{(\mathcal{U}_\xi g)(\lambda)}{w(\lambda)}d\lambda,\,\,\,f,g\in H^1_w(\bbR,dv),\,\,\xi<0,\\
\langle\mathcal{U}^*_\xi\mathcal{B}_\xi f,g\rangle &:=((I-V)f,g)_{L_w^2},\,\,\,f,g\in H^1_w(\bbR,dv),\,\,\xi=0.
\end{split}
\end{align}
	where  $\mathcal{B}_\xi$ and $\mathcal{U}_\xi$ are defined in \eqref{transf}. We give an explicit description of  $\mathcal{B}^*_\xi$ and $\mathcal{U}^*_\xi$ in Appendix \ref{ubadjoint}.
By Proposition \ref{UB_norm} and Corollary \ref{P_norm}, we have 
\begin{align}\label{norm_est}
\begin{split}
&\|P(\xi)\|_{H^1_w\to H^{-1}_w}\leq \tilde C,\,\,\hbox{$\tilde C$ is $\xi$-independent},\\
&\|P(\xi)\|_{L^2_w\to L^2_w}\leq \tilde C(\xi)\,\,\hbox{for $\xi\neq\pm\sqrt{\pi}$},\\
&\|\mathcal{U}^*_\xi\mathcal{B}_\xi\|_{H^1_w\to H^{-1}_w}\leq  C,\,\,\hbox{$ C$ is $\xi$-independent},\\
&\|\mathcal{U}^*_\xi\mathcal{B}_\xi\|_{L^2_w\to L^2_w}\leq  C(\xi)\,\,\hbox{for $\xi\neq\pm\sqrt{\pi}$}.
\end{split}
\end{align}

\begin{theorem}[eigenfunction expansion]\label{main_expension_thm} Let $f\in L^2(\bbR, H^1_w)$ (that is, $f$ is a function of two variables $\xi$ and $v$, and it is an $L^2$-function with respect to $\xi$ and an $H^1_w$ with respect to $v$). Then the following eigenfunction expansion formula is valid:
		\begin{equation*}
		f=\mathcal{U}^*_\xi\mathcal{B}_\xi f+ P(\xi)f,
	\end{equation*}
     where the equality is understood in the weak sense (the $H^{-1}_w$-sense).
     
	Moreover, if $f\in L^2(\bbR, L^2_w)$ and is compactly supported on $(-\sqrt{\pi}, \sqrt{\pi})$ (that is, $f$ is compactly supported with respect to $\xi$), then the following eigenfunction expansion formula is valid:
	\begin{equation*}
	f=\mathcal{U}^*_\xi\mathcal{B}_\xi f+ P(\xi)f,
	\end{equation*}
	where the equality is understood in the strong sense (the $L^2_w$-sense).
\end{theorem}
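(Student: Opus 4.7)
The plan is to reduce the assertion to Theorem \ref{MainThm} and Corollary \ref{xi=0} applied pointwise in the Fourier frequency $\xi$, then use the uniform operator bounds collected in \eqref{norm_est} to lift the pointwise-in-$\xi$ identities to statements in the appropriate Bochner spaces $L^2(\bbR;H^{-1}_w)$ or $L^2(\bbR;L^2_w)$.

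For the weak-sense statement, given $f\in L^2(\bbR;H^1_w)$ I would observe that for a.e.\ $\xi\in\bbR$ the slice $f(\xi,\cdot)$ lies in $H^1_w\subset L^2_w$. For such $\xi$, the three cases of Theorem \ref{MainThm} (together with Corollary \ref{xi=0} at $\xi=0$) yield, for every test element $g(\xi,\cdot)\in H^1_w$, the Parseval identity
\[
(f(\xi,\cdot),g(\xi,\cdot))_{L^2_w}=\langle \mathcal{U}^*_\xi\mathcal{B}_\xi f(\xi,\cdot),g(\xi,\cdot)\rangle+\langle P(\xi)f(\xi,\cdot),g(\xi,\cdot)\rangle,
\]
with the right-hand side defined precisely by \eqref{intfor} and \eqref{projfor}. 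Since the left side is the $L^2_w$-inner product and the right side is the $H^{-1}_w$/$H^1_w$ duality pairing, this is exactly the identity $f(\xi,\cdot)=\mathcal{U}^*_\xi\mathcal{B}_\xi f(\xi,\cdot)+P(\xi)f(\xi,\cdot)$ in $H^{-1}_w$ for a.e.\ $\xi$, i.e. the weak-sense equality. The uniform $H^1_w\to H^{-1}_w$ bounds in \eqref{norm_est} further guarantee $\mathcal{U}^*_\xi\mathcal{B}_\xi f,\,P(\xi)f\in L^2(\bbR;H^{-1}_w)$, so the identity is simultaneously valid in that Bochner space.

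For the strong-sense statement with $f\in L^2(\bbR;L^2_w)$ compactly supported in $\xi\in(-\sqrt{\pi},\sqrt{\pi})$, the support avoids the exceptional frequencies $\pm\sqrt{\pi}$, so the $\xi$-dependent $L^2_w\to L^2_w$ bounds $\|\mathcal{U}^*_\xi\mathcal{B}_\xi\|,\|P(\xi)\|\leq C(\xi)$ in \eqref{norm_est} are uniformly bounded on $\supp f$. Theorem \ref{MainThm} in the range $\xi\in(-\sqrt{\pi},0)\cup(0,\sqrt{\pi})$ and Corollary \ref{xi=0} then give the pointwise identity in $L^2_w$ (not only in $H^{-1}_w$) for every slice $f(\xi,\cdot)\in L^2_w$, and the uniform bound ensures $\mathcal{U}^*_\xi\mathcal{B}_\xi f+P(\xi)f\in L^2(\bbR;L^2_w)$ with the desired strong equality.

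The main obstacle is the loss of the $L^2_w\to L^2_w$ bound precisely at the resonant frequencies $\xi=\pm\sqrt{\pi}$, where $\omega_+(-1-i\lambda\xi,\xi)$ vanishes at $\lambda=0$ by Proposition \ref{omegareal}; this is what forces the $H^1_w$ regularity upgrade in the general case (controlled via Proposition \ref{ext}) and what forces the compact-support hypothesis away from $\pm\sqrt{\pi}$ for the strong statement. A secondary technical point is the joint measurability of $\xi\mapsto\mathcal{U}^*_\xi\mathcal{B}_\xi f(\xi,\cdot)$ as an $H^{-1}_w$-valued function, which follows from continuity in $\xi$ of the explicit ingredients (the transforms $\mathcal{S}_\pm$ from \eqref{hilbert} and the Birman--Schwinger determinants $\omega_\pm$) together with standard approximation of $f$ by simple $L^2(\bbR;H^1_w)$-valued functions supported on $\xi$-compacts avoiding $\pm\sqrt{\pi}$, and extending the identity from these by the uniform bounds in \eqref{norm_est}.
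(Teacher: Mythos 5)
Your proposal is correct and follows essentially the same route as the paper, whose proof is a single-sentence citation of Corollary \ref{xi=0}, Theorem \ref{MainThm}, Proposition \ref{UB_norm}, Corollary \ref{P_norm}, and the bounds \eqref{norm_est}. You correctly identify the structure -- pointwise-in-$\xi$ Parseval identity from Theorem \ref{MainThm}/Corollary \ref{xi=0}, then the operator bounds of \eqref{norm_est} to pass to the Bochner spaces $L^2(\bbR;H^{-1}_w)$ resp.\ $L^2(\bbR;L^2_w)$ -- and you make explicit the implicit points about measurability and the local uniformity of the $\xi$-dependent constants $C(\xi)$ on compact subsets of $(-\sqrt{\pi},\sqrt{\pi})$, which the paper leaves to the reader.
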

\begin{proof}
	It directly follows from Corollary \ref{xi=0}, Theorem \ref{MainThm}, Proposition \ref{UB_norm}, Corollary \ref{P_norm} and formula \eqref{norm_est}.
\end{proof}

\section{Time evolution and convergence to Grossly Determined Solutions}\label{s:evol}

\subsection{Solution formula}\label{s:soln}

We first take the Fourier transform of equation \eqref{bgk} in the spatial variable, that is,
\begin{equation}\label{maineq}
	\frac{\partial \hat f}{\partial t}(t,\xi,v)=-vi\xi{\hat f}(t,\xi,v)-\hat f(t,\xi,v)+\int_{\bbR}w(r)\hat f(t,\xi,r)dr.
\end{equation}
Or,
\be\label{feq}
\partial_t\hat f(t,\xi,v)= (L_\xi \hat f)(t,\xi,v).
\ee
By Theorem \ref{main_expension_thm}, for any $\hat f\in L^2(\bbR, H^1_w)$ we have
\begin{equation*}
\hat f=\mathcal{U}^*_\xi\mathcal{B}_\xi \hat f+P(\xi)\hat f.
\end{equation*}
Then the solution of \eqref{feq} can be found in the form:
 \begin{equation}\label{solfor}
 \hat f(t,\xi,v)=\mathcal{U}^*_\xi \big((\mathcal{B}_\xi \hat f)(\xi,\lambda)\big)+P(\xi)\hat f.
 \end{equation}
 
\begin{theorem}\label{main1}
	Let $\hat f_0(\xi,v):=\hat f(0,\xi,v)$ represent the Fourier transform of the initial molecular density of the gas and assume that $\hat f_0\in L^2(\mathbb{R},H^1_{w}(\bbR))$. Then the Cauchy problem associated with \eqref{bgk} has a unique solution and its Fourier transform is
	\ba\label{solnform}
	\hat f(t,\xi,v)&=e^{-t}\mathcal{U}^*_\xi \big(e^{-i\xi\lambda t}(\mathcal{B}_\xi \hat f_0)(\xi,\lambda)\big)+e^{\lambda^*(\xi) t}P(\xi)\hat f_0\,\,\hbox{for}\,\,\xi\in[-\sqrt{\pi},\sqrt{\pi}],\\
	\hat f(t,\xi,v)&=e^{-t}\mathcal{U}^*_\xi \big(e^{-i\xi\lambda t}(\mathcal{B}_\xi \hat f_0)(\xi,\lambda)\big)\,\,\hbox{for}\,\,\xi\notin[-\sqrt{\pi},\sqrt{\pi}],
	\ea
where $\hat f(t,\cdot,\cdot)$ belongs to the space $ L^2(\mathbb{R}, H^{-1}_{w}(\bbR))$. 
	
	Moreover, if $\hat f_0\in L^2(\bbR, L^2_w)$ and is compactly supported on $(-\sqrt{\pi}, \sqrt{\pi})$ (that is, $\hat f_0$ is compactly supported with respect to $\xi$), then the Cauchy problem associated with \eqref{bgk} has a unique solution and its Fourier transform is
		\begin{align*}
	\begin{split}
	\hat f(t,\xi,v)&=e^{-t}\mathcal{U}^*_\xi \big(e^{-i\xi\lambda t}(\mathcal{B}_\xi \hat f_0)(\xi,\lambda)\big)+e^{\lambda^*(\xi) t}P(\xi)\hat f_0\,\,\hbox{for}\,\,\xi\in[-\sqrt{\pi},\sqrt{\pi}],\\
	\hat f(t,\xi,v)&=e^{-t}\mathcal{U}^*_\xi \big(e^{-i\xi\lambda t}(\mathcal{B}_\xi \hat f_0)(\xi,\lambda)\big)\,\,\hbox{for}\,\,\xi\notin[-\sqrt{\pi},\sqrt{\pi}],
	\end{split}
	\end{align*}
	where $\hat f(t,\cdot,\cdot)$ belongs to the space $ L^2(\mathbb{R}, L^{2}_{w}(\bbR))$.
\end{theorem}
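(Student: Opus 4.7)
The plan is to exploit the generalized eigenfunction expansion of Theorem \ref{main_expension_thm} together with the diagonalization in Proposition \ref{L_action}. The strategy is to show that the formula \eqref{solnform} gives a solution, then argue uniqueness via linearity and the same spectral decomposition. Concretely, given initial data $\hat f_0\in L^2(\mathbb{R},H^1_w)$, Theorem \ref{main_expension_thm} decomposes
\begin{equation*}
\hat f_0=\mathcal{U}^*_\xi\mathcal{B}_\xi\hat f_0+P(\xi)\hat f_0
\end{equation*}
in the weak ($H^{-1}_w$) sense. The ansatz is to evolve each of the two pieces independently according to the eigenvalue that $L_\xi$ assigns to it: on the continuous part, $\mathcal{B}_\xi$ conjugates $L_\xi$ to multiplication by $-i\xi\lambda-1$ by Proposition \ref{L_action}, so the semigroup acts as multiplication by $e^{(-i\xi\lambda-1)t}$; on the point part, $P(\xi)$ projects onto the $\lambda^*(\xi)$-eigenspace of $L_\xi$, so the semigroup acts as scalar multiplication by $e^{\lambda^*(\xi) t}$ (and this contribution vanishes when $\xi\notin[-\sqrt{\pi},\sqrt{\pi}]$, by the definition \eqref{projfor}).

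To verify the ansatz I would first dualize Proposition \ref{L_action} to obtain the adjoint identity $L_\xi\mathcal{U}^*_\xi h=\mathcal{U}^*_\xi\bigl((-i\xi\lambda-1)h\bigr)$, valid weakly for $h$ in the appropriate class. Differentiating \eqref{solnform} in $t$ under the transform $\mathcal{U}^*_\xi$ (which is legitimate once one records that multiplication by $e^{-i\xi\lambda t}$ is a unitary pointwise multiplier that commutes with $\mathcal{U}^*_\xi$ only in the trivial sense of being an $L^\infty_\lambda$ multiplier on the image side) then yields
\begin{equation*}
\partial_t\hat f=\mathcal{U}^*_\xi\bigl((-i\xi\lambda-1)e^{-i\xi\lambda t}e^{-t}(\mathcal{B}_\xi\hat f_0)\bigr)+\lambda^*(\xi)e^{\lambda^*(\xi)t}P(\xi)\hat f_0=L_\xi\hat f,
\end{equation*}
where the second equality uses the dualized identity and the fact that $L_\xi P(\xi)=\lambda^*(\xi)P(\xi)$ (since $P(\xi)$ is the Riesz projection onto the eigenspace at $\lambda^*(\xi)$, cf. Proposition \ref{projs}). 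Evaluating at $t=0$ and invoking the spectral decomposition recovers $\hat f(0)=\hat f_0$.

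For the stated regularity, the bounds \eqref{norm_est} give
\begin{equation*}
\bigl\|e^{-t}\mathcal{U}^*_\xi\bigl(e^{-i\xi\lambda t}(\mathcal{B}_\xi\hat f_0)\bigr)\bigr\|_{H^{-1}_w}\leq Ce^{-t}\|\mathcal{B}_\xi\hat f_0\|_{L^2_w}\leq Ce^{-t}\|\hat f_0\|_{H^1_w}
\end{equation*}
uniformly in $\xi$, and analogously $\|e^{\lambda^*(\xi)t}P(\xi)\hat f_0\|_{H^{-1}_w}\leq \tilde C\|\hat f_0\|_{H^1_w}$ since $\lambda^*(\xi)\leq 0$. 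Integrating in $\xi$ places $\hat f(t,\cdot,\cdot)$ in $L^2(\mathbb{R},H^{-1}_w)$ for each $t\geq 0$. In the second (compactly-supported) setting, the pointwise-in-$\xi$ bounds $\|P_{\lambda^*}(\xi)\|_{L^2_w\to L^2_w}\leq \tilde C(\xi)$ and $\|\mathcal{U}^*_\xi\mathcal{B}_\xi\|_{L^2_w\to L^2_w}\leq C(\xi)$ (valid for $\xi\neq\pm\sqrt{\pi}$) are uniform on any compact subset of $(-\sqrt{\pi},\sqrt{\pi})$, so the solution belongs to $L^2(\mathbb{R},L^2_w)$.

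Uniqueness follows by linearity: a solution with zero initial data has, for each fixed $\xi$, an $L^2_w$- (or $H^{-1}_w$-)valued trajectory satisfying $\partial_t\hat f=L_\xi\hat f$; applying $\mathcal{B}_\xi$ and $P(\xi)$ reduces this to scalar ODEs with zero data, forcing $\hat f\equiv 0$ after reassembly via the expansion in Theorem \ref{main_expension_thm}. The main obstacle I anticipate is the $\xi=\pm\sqrt{\pi}$ case, where $\omega_+(-1,\pm\sqrt{\pi})=0$ introduces a loss of derivatives and the pairings in \eqref{intfor} are only defined through the limiting integrals $\int_\mp$; there one must carefully verify that the transform identities and the action of $L_\xi$ through $\mathcal{U}^*_\xi$ persist after taking these principal-value/boundary limits, using the $H^1_w\!\to\!H^{-1}_w$ bound of Proposition \ref{UB_norm} and Proposition \ref{ext} to control the singular parts.
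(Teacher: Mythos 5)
Your proposal is correct and follows essentially the same route as the paper: decompose $\hat f$ via Theorem \ref{main_expension_thm}, use the diagonalization of Proposition \ref{L_action} (equivalently, its dual acting through $\mathcal{U}^*_\xi$) to reduce the Cauchy problem to scalar ODEs in $t$ for $\mathcal{B}_\xi\hat f$ and $P(\xi)\hat f$, and then invoke the uniform bounds \eqref{norm_est} for the stated regularity. The paper's proof is terser — it applies $\mathcal{B}_\xi$ and $P(\xi)$ directly to the evolution equation and solves the resulting scalar ODEs — but the content is the same as your verification of the ansatz by differentiating in $t$.
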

\begin{proof}
	By the uniqueness of the eigenfunction expansion from Theorem \ref{main_expension_thm}  if we insert \eqref{solfor} into \eqref{feq} and apply Proposition \ref{L_action}, we can see that the original Cauchy problem is equivalent to two Cauchy problems:
	\begin{align}\label{finaleq}
	\begin{split}
	\partial_t{(\mathcal{B}_\xi \hat f)(t,\xi,\lambda)}&=(-1-i\xi\lambda)(\mathcal{B}_\xi \hat f)(t,\xi,\lambda),\,\,\,(\mathcal{B}_\xi \hat f)(0,\xi,\lambda)=(\mathcal{B}_\xi \hat f_0)(\xi,\lambda),\\
	\partial_t(P(\xi) \hat f)(t,\xi,v)&=\lambda^*(\xi)(P(\xi) \hat f)(t,\xi,v)\,\,\hbox{for}\,\,\xi\in[-\sqrt{\pi},\sqrt{\pi}],\\
	P(\xi) \hat f&=0\,\,\hbox{for}\,\,\xi\notin[-\sqrt{\pi},\sqrt{\pi}],\\
(P(\xi) \hat f)(0,\xi,v)&=(P(\xi) \hat f_0)(\xi,v).
	\end{split}
	\end{align} 
	The results follows from \eqref{finaleq}.
\end{proof}

\subsection{Moments of projectors}\label{s:fullrankmoment}
For use in what follows, we compute, finally, the actions
of $P(\xi)$ and its dual on the unit vector $\mathbb{1}$.

\begin{proposition}\label{rankprop}Let $\xi\in\bbR$. 
	\begin{itemize}
			\item if $\xi=\pm\sqrt{\pi}$. Then
		\begin{align*}
		\begin{split}
		P(\pm\sqrt{\pi})\mathbb{1}&= \frac{1}{\omega'_+(-1,\pm\sqrt{\pi})} e_1\neq0, \hbox{where}\, \, e_1=\pm\frac{1}{\sqrt{\pi}iv} \in H^{-1}_w(\bbR),\\
		P(\pm\sqrt{\pi})^*\mathbb{1}&= \frac{1}{\overline{\omega'_+(-1,\pm\sqrt{\pi})}} \bar e_1\neq0, \hbox{where}\, \, \bar e_1=\mp\frac{1}{\sqrt{\pi}iv} \in H^{-1}_w(\bbR).
		\end{split}
		\end{align*}
		\item 	If $\xi\in(-\sqrt{\pi},\sqrt{\pi})$. Then
		\begin{align*}
		\begin{split}
		P(\xi)\mathbb{1}&=\frac{1}{(e_1,\bar e_1)_{L_w^2}}e_1\neq0,\,\hbox{where}\,\,e_1=\frac{1}{vi\xi+1+\lambda^*(\xi)}  \in L^{2}_w(\bbR).\\
		P(\xi)^*\mathbb{1}&=\frac{1}{(\bar e_1, e_1)_{L_w^2}}\bar e_1\neq0,\,\hbox{where}\,\,\bar e_1=\frac{1}{-vi\xi+1+\lambda^*(\xi)}  \in L^{2}_w(\bbR).
		\end{split}
		\end{align*}
	
		\item if $\xi\notin[-\sqrt{\pi},\sqrt{\pi}]$. Then $P(\xi)\mathbb{1}=0$ and $P(\xi)^*\mathbb{1}=0$.
	\end{itemize}
\end{proposition}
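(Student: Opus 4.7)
The proof is essentially a direct substitution into the explicit projector formulas established in Proposition \ref{projs}, combined with two elementary identifications that fix the scalar constants. I would dispatch the three cases in the order of increasing difficulty.

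The case $\xi \notin [-\sqrt{\pi},\sqrt{\pi}]$ is immediate from the definition \eqref{projfor}, which sets $P(\xi)=0$ and hence $P(\xi)^*=0$. For $\xi\in(-\sqrt{\pi},\sqrt{\pi})$, I would apply Proposition \ref{projs}(2), which gives $P(\xi) = \frac{1}{(e_1,\bar e_1)_{L^2_w}} (\,\cdot\,,\bar e_1)_{L^2_w}\, e_1$. Since $\lambda^*(\xi)\in\mathbb{R}$, one has $\overline{\bar e_1(v)} = (v i\xi + 1 + \lambda^*(\xi))^{-1}$, so
\[
(\mathbb{1},\bar e_1)_{L^2_w} \;=\; \int_{\mathbb{R}} \frac{w(v)\,dv}{v i\xi + 1 + \lambda^*(\xi)}.
\]
The key observation is that this integral equals $1$ precisely because $\omega(\lambda^*(\xi),\xi)=0$ by the very definition of $\lambda^*(\xi)$ (Proposition \ref{disspec}). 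The dual statement $P(\xi)^*\mathbb{1}=\frac{1}{(\bar e_1,e_1)_{L^2_w}}\bar e_1$ then follows by writing out the rank-one adjoint and applying the same computation (after $v\mapsto -v$, or equivalently by noting $\omega(\lambda^*(\xi),-\xi)=\omega(\lambda^*(\xi),\xi)=0$) to obtain $(\mathbb{1},e_1)_{L^2_w}=1$.

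For $\xi=\pm\sqrt{\pi}$, I would apply Proposition \ref{projs}(1): $P(\pm\sqrt{\pi})=\frac{1}{\omega'_+(-1,\pm\sqrt{\pi})}\langle\,\cdot\,,\bar e_1\rangle\,e_1$, so the task reduces to computing $\langle\mathbb{1},\bar e_1\rangle$ via the $\int_\pm$-pairing of \eqref{defpm}. With $\bar e_1(v) = \mp(\sqrt{\pi}\,iv)^{-1}$ and the prescribed side of the pole at $v=0$, the Sokhotski--Plemelj formula yields
\[
\int_\pm \frac{w(v)\,dv}{v} \;=\; \pm i\pi\,w(0) \;=\; \pm i\sqrt{\pi},
\]
so that $\langle \mathbb{1},\bar e_1\rangle = \mp\frac{1}{\sqrt{\pi}\,i}(\pm i\sqrt{\pi}) = 1$, giving the stated expression. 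The formula for $P(\pm\sqrt{\pi})^*\mathbb{1}$ follows by the same Plemelj calculation applied to $\langle\mathbb{1},e_1\rangle$, noting that $\omega'_+(-1,\pm\sqrt{\pi})=2/\pi\in\mathbb{R}$ so the conjugation in the adjoint is inert. In all nontrivial cases the nonvanishing assertion is automatic, since the final expression is a nonzero scalar multiple of a nonzero eigenfunction.

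The main bookkeeping obstacle, and really the only nontrivial point, is the careful tracking of signs in the endpoint case $\xi=\pm\sqrt{\pi}$: three $\pm$ signs appear simultaneously (the branch of $P_{-1^\pm}$, the sign of $\bar e_1=\mp(\sqrt{\pi}iv)^{-1}$, and the sign of the Plemelj boundary value), and they must be matched so that they collapse to $+1$ rather than $-1$. Everything else is mechanical substitution using $\omega(\lambda^*(\xi),\xi)=0$ as the single analytic input.
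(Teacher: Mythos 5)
Your overall strategy matches the paper's: reduce to the rank-one formulas of Proposition \ref{projs} and show that the scalar moment $\langle \mathbb{1},\bar e_1\rangle$ equals $1$. Your treatment of the interior case $\xi\in(-\sqrt{\pi},\sqrt{\pi})$ is exactly the paper's argument — the vanishing of the Birman--Schwinger determinant $\omega(\lambda^*(\xi),\xi)=0$ gives $\int_\mathbb{R}\frac{w(v)\,dv}{vi\xi+1+\lambda^*(\xi)}=1$, which is precisely the content of \eqref{momfact}. The exterior case is likewise trivial.

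The endpoint case $\xi=\pm\sqrt{\pi}$ is where you diverge from the paper, and there the argument as written has a sign defect. You invoke Sokhotski--Plemelj in the form $\int_\pm\frac{w(v)\,dv}{v}=\pm i\pi w(0)$, but the paper's regularization \eqref{defpm} is defined by $\lim_{b\to 0^\mp}\int_\mathbb{R}\frac{\cdots}{\lambda-(\sigma+ib)}\,d\lambda$ — note the \emph{reversed} limit direction — so that in the paper's convention one actually has $\int_\pm\frac{w(v)\,dv}{v}=\mp i\pi w(0)=\mp i\sqrt{\pi}$. Moreover, even accepting your stated convention, the arithmetic you display does not yield $1$: with $\bar e_1=\mp(\sqrt{\pi}\,iv)^{-1}$ one gets $\mp\frac{1}{\sqrt{\pi}\,i}(\pm i\sqrt{\pi})=(\mp)(\pm)=-1$, the opposite of what you need. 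So both the boundary-value formula and the final cancellation are off by a sign, and (contrary to what you assert) they do not collapse to $+1$. The paper avoids all of this sign bookkeeping by observing that the same determinant identity governs the endpoint: by \eqref{detpm}, $\omega_+(-1,\pm\sqrt{\pi})=1+\langle R^0_+(-1,\pm\sqrt{\pi})\mathbb{1},\mathbb{1}\rangle$, and Proposition \ref{omegareal}(3) gives $\omega_+(-1,\pm\sqrt{\pi})=0$; hence $\langle R^0_+(-1,\pm\sqrt{\pi})\mathbb{1},\mathbb{1}\rangle=-1$ and $\langle\mathbb{1},\bar e_1\rangle=-\langle R^0_+(-1,\pm\sqrt{\pi})\mathbb{1},\mathbb{1}\rangle=1$, exactly as in the interior case. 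This uniform derivation via $\omega_+=0$ is the content of \eqref{momfact} and is what makes the endpoint case a one-line consequence rather than a contour-dependent Plemelj calculation. I would replace your endpoint paragraph with this argument; it removes the need to disentangle the three coupled $\pm$ signs you correctly flagged as the danger spot.
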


\begin{proof}
	It follows from Proposition \ref{projs} and the fact that
	\begin{equation}\label{momfact}
	\overline{\langle e_1, \mathbb{1}\rangle}=\langle\mathbb{1},\bar e_1\rangle=-\langle R_+^0(\lambda^*(\xi),\xi)\mathbb{1},\mathbb{1}\rangle=1-\omega(\lambda^*(\xi),\xi)=1.
	\end{equation}
\end{proof}

\subsection{Decay to Grossly Determined Solutions}\label{s:decay}

Our goal in this section is to show that the class of general solutions
decay asymptotically to the subclass of grossly determined solutions.
\begin{theorem}\label{main2}
	Let $f$ be the general solution from Theorem \ref{main1} with the initial molecular density $f_0$ such that $\hat f_0\in L^2(\mathbb{R},H^1_{w}(\bbR))$ and let $g(t,x,v):=\mathfrak{F}^{-1}(P(\xi)\hat f)$, where $\mathfrak{F}^{-1}$ represents the inverse Fourier transform map with respect to the variable $\xi$. Then 
	\begin{align}\label{gdsfull}
	\begin{split}
	\hat g(t,\xi,v)&= {\hat \mu(t,\xi)} e_1={e^{\lambda^*(\xi)t} \hat \mu_0(\xi)} e_1\,\,\hbox{for}\,\,\xi\in[-\sqrt{\pi},\sqrt{\pi}],\\
	\hat g(t,\xi,v)&=0={\hat \mu(t,\xi)}\,\,\hbox{for}\,\,\xi\notin[-\sqrt{\pi},\sqrt{\pi}].
	\end{split}
	\end{align}
	where $\hat\mu(t,\xi):=\langle \hat g(t,\xi,\cdot),\mathbb{1}\rangle$, $\hat\mu_0(\xi):=\langle \hat g_0(\xi,\cdot),\mathbb{1}\rangle$ and $\hat g_0(\xi,v):=\hat g(0,\xi,v)$. In particular, $g$ is a grossly determined solution of equation \eqref{bgk}, i.e., its evolution is determined entirely by its moment function
	\begin{equation*}
	\mu(t,x):=\langle g(t,x,v),\mathbb{1}\rangle.
	\end{equation*}

\end{theorem}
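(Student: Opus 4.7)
The plan is to apply the spectral projector $P(\xi)$ to the explicit solution formula from Theorem \ref{main1}, exploit the idempotency/biorthogonality inherent in the spectral decomposition of Theorem \ref{main_expension_thm} to kill the continuous-spectrum piece, and then identify the remaining one-dimensional (in $v$) object in terms of the macroscopic moment using Proposition \ref{rankprop}.

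First I would write, using Theorem \ref{main1},
\begin{equation*}
\hat f(t,\xi,v)=e^{-t}\mathcal{U}^*_\xi \bigl(e^{-i\xi\lambda t}(\mathcal{B}_\xi \hat f_0)(\xi,\lambda)\bigr)+e^{\lambda^*(\xi) t}P(\xi)\hat f_0,
\end{equation*}
with the convention $P(\xi)=0$ for $\xi\notin[-\sqrt\pi,\sqrt\pi]$, so that $\hat g(t,\xi,v)=P(\xi)\hat f(t,\xi,v)$. Now apply $P(\xi)$ to both sides. The eigenfunction expansion of Theorem \ref{main_expension_thm} states $I=\mathcal{U}^*_\xi\mathcal{B}_\xi + P(\xi)$ on $H^1_w$, and the uniqueness of this decomposition forces $P(\xi)^2=P(\xi)$, hence $P(\xi)\mathcal{U}^*_\xi\mathcal{B}_\xi=0$. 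Consequently the continuous-spectrum term is annihilated and
\begin{equation*}
\hat g(t,\xi,v)=e^{\lambda^*(\xi)t}P(\xi)\hat f_0(\xi,v)\quad\text{for }\xi\in[-\sqrt\pi,\sqrt\pi],\qquad \hat g(t,\xi,v)=0\text{ otherwise.}
\end{equation*}

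Next, by Proposition \ref{rankprop}, $P(\xi)\hat f_0(\xi,\cdot)$ is a scalar multiple of $e_1(\xi,\cdot)$: write $P(\xi)\hat f_0=c(\xi)\,e_1$. Pairing with $\mathbb{1}$ and invoking the normalization $\langle e_1,\mathbb{1}\rangle=1$ from \eqref{momfact} yields
\begin{equation*}
\hat\mu_0(\xi)=\langle P(\xi)\hat f_0,\mathbb{1}\rangle=c(\xi)\langle e_1,\mathbb{1}\rangle=c(\xi),
\end{equation*}
so $\hat g(t,\xi,v)=e^{\lambda^*(\xi)t}\hat\mu_0(\xi)\,e_1(\xi,v)$, and
\begin{equation*}
\hat\mu(t,\xi)=\langle\hat g(t,\xi,\cdot),\mathbb{1}\rangle=e^{\lambda^*(\xi)t}\hat\mu_0(\xi),
\end{equation*}
which gives $\hat g(t,\xi,v)=\hat\mu(t,\xi)\,e_1(\xi,v)$, the first claim of \eqref{gdsfull}. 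To see that $g$ solves \eqref{bgk}, note $L_\xi e_1=\lambda^*(\xi)e_1$ (Theorem \ref{eigen}), so $\partial_t\hat g=\lambda^*(\xi)\hat g=L_\xi\hat g$; inverting the Fourier transform recovers \eqref{bgk}. Finally, $g$ is \emph{grossly determined} because for every $t,\xi$ the fiber $\hat g(t,\xi,\cdot)$ is a pre-fixed ($t,\xi_0$-independent in its $v$-shape, up to the factor $\hat\mu$) function of $v$, namely $e_1(\xi,v)$, so the full solution is recoverable from $\hat\mu(t,\cdot)$, equivalently from $\mu(t,\cdot)$, by $\hat g=\hat\mu\,e_1$.

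The step I expect to need most care is the identity $P(\xi)\mathcal{U}^*_\xi\mathcal{B}_\xi=0$ at the boundary frequencies $\xi=\pm\sqrt\pi$, where both factors are only bounded $H^1_w\to H^{-1}_w$; there I would interpret the decomposition weakly, noting that the dual pairing $\langle P(\xi)\phi,\psi\rangle$ against $\psi\in H^1_w$ still makes sense via \eqref{projest}, and that the identity $P(\xi)^2=P(\xi)$ extends by continuity from the interior case. All other manipulations (differentiation in $t$, identification of moments) then remain intact in this weak sense, and the strong ($L^2_w$) version follows verbatim under the compact Fourier-support assumption on $\hat f_0$.
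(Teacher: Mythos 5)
Your proposal reaches the right conclusion and uses the same supporting facts (Proposition \ref{rankprop}, the normalization \eqref{momfact}, the eigenvalue equation $L_\xi e_1=\lambda^*(\xi)e_1$), but the route you take to extract $\hat g=e^{\lambda^*(\xi)t}P(\xi)\hat f_0$ differs from the paper's and has a gap at the central step. The paper does not apply $P(\xi)$ to the explicit solution formula; instead it invokes \eqref{finaleq} directly, which — as part of the proof of Theorem \ref{main1} — already decouples the Cauchy problem into a continuous piece and a discrete piece, with the discrete piece $P(\xi)\hat f$ satisfying the scalar ODE $\partial_t(P(\xi)\hat f)=\lambda^*(\xi)(P(\xi)\hat f)$. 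That gives $\hat g=e^{\lambda^*(\xi)t}\hat g_0$ immediately, with no annihilation argument needed.

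The gap in your argument is this: you conclude $P(\xi)\mathcal{U}^*_\xi\mathcal{B}_\xi=0$ and then assert ``consequently the continuous-spectrum term is annihilated.'' But the continuous-spectrum term in Theorem \ref{main1} at time $t>0$ is $e^{-t}\mathcal{U}^*_\xi\bigl(e^{-i\xi\lambda t}(\mathcal{B}_\xi\hat f_0)(\xi,\lambda)\bigr)$, i.e.\ $\mathcal{U}^*_\xi$ is applied to $e^{-i\xi\lambda t}(\mathcal{B}_\xi\hat f_0)$, not to $\mathcal{B}_\xi\hat f_0$. The identity $P(\xi)\mathcal{U}^*_\xi\mathcal{B}_\xi=0$ therefore does not cover this term; you would need the stronger operator identity $P(\xi)\mathcal{U}^*_\xi=0$ on the relevant domain. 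This stronger identity is true and can be established — for instance, pairing $\mathcal{U}^*_\xi h$ against $\bar e_1$ and using the intertwining of Proposition \ref{L_action} together with $L^*_\xi\bar e_1=\lambda^*(\xi)\bar e_1$ shows $(\mathcal{U}_\xi\bar e_1)(\lambda)=0$ a.e., hence $P(\xi)\mathcal{U}^*_\xi=0$ — but as written your proof does not contain this step. Separately, ``the uniqueness of this decomposition forces $P(\xi)^2=P(\xi)$'' is an overstatement: idempotency of $P(\xi)$ for $\xi\in(-\sqrt{\pi},\sqrt{\pi})$ is known because $P_{\lambda^*}(\xi)$ is a Riesz projection (Theorem \ref{MainThm}), not because the Parseval-type decomposition is unique. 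Once the annihilation identity is repaired (or replaced by the paper's direct appeal to \eqref{finaleq}), the remainder of your argument — identifying $P(\xi)\hat f_0=\hat\mu_0(\xi)e_1$ via $\langle e_1,\mathbb{1}\rangle=1$, and recovering $\hat g=\hat\mu e_1$ — matches the paper.
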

\begin{proof}
Evidently, by \eqref{finaleq},  $g$ is the solution of equation \eqref{bgk}, or equivalently, $\hat g$ is the solution of equation \eqref{maineq} satisfying the the evolution equation:
\begin{equation*}
\partial_t \hat g=\lambda^*(\xi)\hat g
\end{equation*}
giving
\be\label{gsoln}
\hat g(\xi,v, t)= e^{\lambda^*(\xi)t} \hat g_0(\xi,v).
\ee
From \eqref{gsoln} it follows immediately that
\begin{equation*}
\hat \mu(\xi, t)= e^{\lambda^*(\xi)t} \hat \mu_0(\xi),
\end{equation*}
giving a self-contained evolution of the moment $\mu$.

It remains only to be seen that $\hat g$ may be recovered aftwerward from $\hat \mu$. Indeed, Proposition \ref{rankprop}  and formula \eqref{projfor},
\begin{align*}
\begin{split}
\hat\mu_0(\xi)&=\langle \hat g_0(\xi,\cdot),\mathbb{1}\rangle=\langle P(\xi)\hat f_0, \mathbb{1}\rangle=\langle \hat f_0, P(\xi)^*\mathbb{1}\rangle=
\frac{1}{\omega_+'(\lambda^*(\xi),\xi)}\langle \hat f_0, \bar e_1\rangle\,\,\hbox{for}\,\,\xi\in[-\sqrt{\pi},\sqrt{\pi}].\\
\hat\mu_0(\xi)&=\langle \hat g_0(\xi,\cdot),\mathbb{1}\rangle=\langle P(\xi)\hat f_0, \mathbb{1}\rangle=0\,\,\hbox{for}\,\,\xi\notin[-\sqrt{\pi},\sqrt{\pi}].
\end{split}
\end{align*}
	
Thus, by Proposition \ref{projs} and formula \eqref{projfor},
	\ba\label{muform}
\hat g_0(\xi,v)&=\hat g(0,\xi,v)= P(\xi)\hat f_0=
\frac{1}{\omega_+'(\lambda^*(\xi),\xi)}\langle \hat f_0,\bar e_1\rangle  e_1=\hat\mu_0(\xi)e_1 \,\,\hbox{for}\,\,\xi\in[-\sqrt{\pi},\sqrt{\pi}],\\
\hat g_0(\xi,v)&=\hat g(0,\xi,v)= P(\xi)\hat f_0=0\,\,\hbox{for}\,\,\xi\notin[-\sqrt{\pi},\sqrt{\pi}].
\ea
\end{proof}

\begin{remark}\label{recoveryrmk}
Evidently, what is needed to recover $\hat g=P(\xi)\hat f$ from 
$\hat \mu= \langle \mathbb{1}, \hat g\rangle$ is that the
	functional $\langle \mathbb{1}, \cdot\rangle$ be of full rank on the range of $P(\xi)$; equivalently, $P(\xi)^*\mathbb{1}\neq 0$, or $\langle \mathbb{1}, e_1\rangle\neq 0$, as shown in \eqref{momfact} and Proposition 8.2.
	This gives the general formula
$$ 
	\hat g= e_1 \hat \mu/ \langle \mathbb{1}, e_1\rangle  ,
$$
	of which the formula $\hat g= e_1 \hat \mu$ of \eqref{muform} is a special case following from
	the fact \eqref{momfact} that $\langle \mathbb{1}, e_1\rangle=1$.
\end{remark}

\begin{remark}
Note that for $\xi\in(-\sqrt{\pi},\sqrt{\pi})$ the Fourier transform
\eqref{gdsfull} of our grossly determined solution $g$ 
agrees with that of the
grossly determined solution of \cite[Theorem 11]{C17}.
\end{remark}

\begin{theorem}\label{main3}
	Let $\hat f_0(\xi,v):=\hat f(0,\xi,v)$ represent the Fourier transform of the initial molecular density of the gas and assume that $\hat f_0\in L^2(\mathbb{R},H^1_{w}(\bbR))$. Then the solution to the Cauchy problem associated with \eqref{maineq} converges to the grossly determined solution $g$ from Theorem \ref{main2} at the exponential rate. More specifically,
	\begin{equation*}
	\|f-g\|_{L^2(\mathbb{R},H^{-1}_{w}(\bbR))} \leq Ce^{- t} \| f_0\|_{L^2(\mathbb{R},H^1_{w}(\bbR))}.
	\end{equation*}
Moreover, if $\hat f_0\in L^2(\bbR, L^2_w)$ and is compactly supported on $(-\sqrt{\pi}, \sqrt{\pi})$ (that is, $\hat f_0$ is compactly supported with respect to $\xi$), then 
\begin{equation*}
\|f-g\|_{L^2(\mathbb{R},L^{2}_{w}(\bbR))} \leq Ce^{- t} \| f_0\|_{L^2(\mathbb{R},L^2_{w}(\bbR))}.
\end{equation*}
\end{theorem}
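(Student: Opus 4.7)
The starting point is to subtract the formula for $\hat g$ in \eqref{gdsfull} from that of $\hat f$ in \eqref{solnform}. For $\xi\in[-\sqrt{\pi},\sqrt{\pi}]$ the rank-one term $e^{\lambda^{*}(\xi)t}P(\xi)\hat f_{0}$ is exactly $\hat g$, and for $\xi\notin[-\sqrt{\pi},\sqrt{\pi}]$ both the rank-one term and $\hat g$ vanish; either way, we obtain the clean pointwise identity
\begin{equation*}
\widehat{f-g}(t,\xi,v)\;=\;e^{-t}\,\mathcal{U}^{*}_{\xi}\!\bigl(e^{-i\xi\lambda t}(\mathcal{B}_{\xi}\hat f_{0})(\xi,\lambda)\bigr),\qquad \xi\in\mathbb{R}.
\end{equation*}
By Plancherel's theorem in $x$, the first assertion of the theorem then reduces to the fibrewise uniform estimate
\begin{equation*}
\bigl\|\mathcal{U}^{*}_{\xi}\!\bigl(e^{-i\xi\lambda t}\mathcal{B}_{\xi}h\bigr)\bigr\|_{H^{-1}_{w}}\;\le\;C\|h\|_{H^{1}_{w}},\qquad h\in H^{1}_{w},\ \xi\in\mathbb{R},\ t\ge 0,
\end{equation*}
since the common factor $e^{-t}$ factors out of the $L^{2}(d\xi)$ integration and yields the claimed exponential rate.

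By $H^{1}_{w}$/$H^{-1}_{w}$ duality, the displayed fibrewise estimate is in turn equivalent to the bilinear bound
\begin{equation*}
\Bigl|\int_{\mp} e^{-i\xi\lambda t}(\mathcal{B}_{\xi}h)(\lambda)\,\overline{(\mathcal{U}_{\xi}\phi)(\lambda)}\,w(\lambda)\,d\lambda\Bigr|\;\le\;C\,\|h\|_{H^{1}_{w}}\|\phi\|_{H^{1}_{w}},
\end{equation*}
uniformly in $\xi\neq 0$, $t\ge 0$, for $h,\phi\in H^{1}_{w}$. This is precisely the second inequality of \eqref{unbound} in Proposition \ref{UB_norm}, with the extra unimodular factor $e^{-i\xi\lambda t}$ inserted in the integrand. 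The verification of this modulated estimate is the core technical point of the proof: I plan to mimic the case-split proof of Proposition \ref{UB_norm} ($|\xi|$ small, intermediate, and large) and observe that the dominating inequalities—Cauchy–Schwarz in $L^{2}_{w}$ together with the Hardy/Hilbert-transform bounds of Proposition \ref{ext}—depend on the modulus of the integrand only. The one subtlety is that Proposition \ref{ext} is formulated through analytic deformation of the $\int_{\mp}$-contours, on which $|e^{-i\xi\lambda t}|$ is no longer unit modulus; however, on the relevant side of the real axis (determined by the sign of $\xi$) one has $|e^{-i\xi\lambda t}|\le 1$, so the contour deformation is preserved, and I expect the modulated estimate to follow after this bookkeeping.

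For the second assertion (data $\hat f_{0}\in L^{2}(\mathbb{R},L^{2}_{w})$ compactly supported in $\xi\in(-\sqrt{\pi},\sqrt{\pi})$), the same identity for $\widehat{f-g}$ applies. On any compact $K\Subset(-\sqrt{\pi},\sqrt{\pi})$ the Birman–Schwinger denominators $\omega_{\pm}(-1\mp i\lambda\xi,\pm\xi)$ are continuous and nonvanishing (Propositions \ref{disspec}, \ref{omegareal}), so the $\xi$-dependent constants $C_{1}(\xi),C_{2}(\xi)$ in the first line of \eqref{unbound} are uniformly bounded on $K$. Since multiplication by $e^{-i\xi\lambda t}$ is an $L^{2}_{w}$-isometry, combining this with the resulting uniform-on-$K$ bounds $\|\mathcal{U}^{*}_{\xi}\|_{L^{2}_{w}\to L^{2}_{w}},\|\mathcal{B}_{\xi}\|_{L^{2}_{w}\to L^{2}_{w}}\le C$ (by duality) gives the pointwise bound $\|\widehat{f-g}(t,\xi,\cdot)\|_{L^{2}_{w}}\le C\,e^{-t}\|\hat f_{0}(\xi,\cdot)\|_{L^{2}_{w}}$ for $\xi\in K$, and Plancherel in $x$ closes the argument.

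The main obstacle is the uniform-in-$t$ extension of the bilinear bound of Proposition \ref{UB_norm} to the modulated integrand in the third paragraph above: one must check, throughout the case-split proof, that no step exploits a cancellation or decay specific to the original (non-modulated) integrand and that the contour deformations continue to produce bounded integrands after insertion of $e^{-i\xi\lambda t}$.
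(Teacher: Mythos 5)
Your overall structure is right: subtracting \eqref{gdsfull} from \eqref{solnform} gives $\widehat{f-g}=e^{-t}\,\mathcal{U}^*_\xi\bigl(e^{-i\xi\lambda t}\mathcal{B}_\xi\hat f_0\bigr)$, Plancherel in $x$ reduces to a fibrewise bound uniform in $\xi$ and $t$, and by duality that bound is exactly the second inequality of \eqref{unbound} with the modulation $e^{-i\xi\lambda t}$ inserted. You also flag the modulated estimate as the crux, which is correct. But the mechanism you propose for closing it is wrong, and your worry about "analytic deformation" is misdirected.

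First, the $\int_\mp$ in \eqref{defpm}/\eqref{intfor} are not deformed contours; they are limits of real-axis integrals, $\lim_{b\to 0^\mp}\int_{\bbR}\frac{\cdot}{\lambda-(\sigma+ib)}\,d\lambda$, i.e.\ boundary values $\hat{\mathcal{S}}_{\mp}$ as in Remark \ref{Hs}. So $|e^{-i\xi\lambda t}|=1$ on the domain of integration throughout, and no contour-deformation bookkeeping is needed. Second, and more seriously, your claim that the Proposition \ref{ext} bounds ``depend on the modulus of the integrand only'' is false as that proposition is proved. The key pointwise estimate in the proof of Proposition \ref{ext} is the Sobolev embedding $|(\hat{\mathcal{S}}_\eta g)(a)|^2\le\|\hat{\mathcal{S}}_\eta g\|^2_{L^2}+\|\hat{\mathcal{S}}_\eta g'\|^2_{L^2}$. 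Replace $g$ by $e^{-i\xi\lambda t}g$: then $(e^{-i\xi\lambda t}g)'=-i\xi t\,e^{-i\xi\lambda t}g+e^{-i\xi\lambda t}g'$, and the $L^2$ norm of the derivative term carries a factor $|\xi|t$. Literally following the paper's proof of Proposition \ref{ext} therefore yields a bound of order $(1+|\xi|t)\|g\|_{H^1}$, which does not give the claimed decay rate $e^{-t}$ uniformly. This is not a cosmetic issue: it is exactly the loss-of-derivatives/conditioning phenomenon the paper itself flags in Section \ref{s:semigp}, and it has to be confronted, not assumed away.

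The gap can be closed, but by a different mechanism than the one you state: replace the Sobolev-embedding step by the $L^1$-Fourier bound. By Remark \ref{Hs}, $\hat{\mathcal{S}}_\eta=\pm 2\pi i\,\mathcal{F}^{-1}\chi_\pm e^{-\eta(\cdot)}\mathcal{F}$, so for any $h$ with $\hat h\in L^1$ one has, uniformly in $a\in\bbR$ and $\eta$,
\begin{equation*}
|(\hat{\mathcal{S}}_\eta h)(a)|\;\le\;2\pi\,\|\hat h\|_{L^1(\bbR)} .
\end{equation*}
Taking $h=e^{-i\xi\lambda t}g$ gives $\hat h(k)=\hat g(k+\xi t)$, hence $\|\hat h\|_{L^1}=\|\hat g\|_{L^1}$ is \emph{translation-invariant}, and finally $\|\hat g\|_{L^1}\le\|(1+|k|)^{-1}\|_{L^2}\,\|(1+|k|)\hat g\|_{L^2}\le C\|g\|_{H^1}$. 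This yields the modulated Proposition \ref{ext} with a $t$- and $\xi$-independent constant, after which the case-split argument of Proposition \ref{UB_norm} goes through verbatim (the Cauchy--Schwarz steps really are modulus-only). With this replacement, your first claim follows; your argument for the compactly-supported $L^2_w$ case is correct as written, since there the uniform $L^2_w\to L^2_w$ bound on compact $\xi$-sets and the unimodularity of $e^{-i\xi\lambda t}$ suffice.
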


\begin{proof}
	It directly follows from formula \eqref{norm_est}, Theorem \ref{main1} and Theorem \ref{main2}.
\end{proof}


\subsection{Higher regularity}\label{s:reg}

Applying the results of Theorems \ref{main1}--\ref{main3} to the differentiated equations, we obtain the following
higher-regularity analogs.

\begin{corollary}\label{regcor}
	For $ f_0\in H^{r+s}(\mathbb{R},H^{s+1}_{w}(\bbR))$, $r, s\geq 0$,
the Cauchy problem associated with \eqref{maineq} has a unique solution in $H^{r}(\mathbb{R},H^{s-1}_{w}(\bbR))$,
which, moreover, satisfies
	\begin{equation}\label{dec1}
		\|f-g\|_{H^r(\mathbb{R},H^{s-1}_{w}(\bbR))} \leq Ce^{- t} \| f_0\|_{H^{r+s}(\mathbb{R},H^{s+1}_{w}(\bbR))}.
	\end{equation}
Moreover, if $\hat f_0$ is compactly supported on $(-\sqrt{\pi}, \sqrt{\pi})$ then
	\begin{equation}\label{dec2}
	\|f-g\|_{H^{r}(\mathbb{R},H^{s}_{w}(\bbR))} \leq Ce^{- t} \| f_0\|_{H^{r+s}(\mathbb{R},H^s_{w}(\bbR))}.
\end{equation}
\end{corollary}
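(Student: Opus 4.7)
The plan is to reduce Corollary \ref{regcor} to Theorems \ref{main1}--\ref{main3} by exploiting the commutator identities $[\partial_x, L] = 0$ and $[\partial_v, v\partial_x] = \partial_x$, which govern how spatial and velocity derivatives interact with the BGK generator. The first identity holds because the streaming coefficient $v$ is $x$-independent and the collision operator $V$ integrates $v$ out, so $\partial_x^k f$ and $\partial_x^k g$ solve the same Cauchy problem as $f$ and $g$ with initial data $\partial_x^k f_0$ and $\partial_x^k g_0$ respectively. Applying $\partial_x^r$ to both sides of the target estimates therefore reduces matters to the case $r = 0$; it then suffices to prove \eqref{dec1}--\eqref{dec2} with $f_0$ replaced by $\partial_x^r f_0 \in H^s(\bbR, H^{s+1}_w(\bbR))$.

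For the remaining velocity regularity, set $h := f - g$, which by linearity of \eqref{feq} solves the BGK equation with initial data $(I - P(\cdot))\hat f_0 = \mathcal{U}_\xi^*\mathcal{B}_\xi \hat f_0$. Applying $\partial_v^\beta$ to the Fourier-transformed equation and using that $V\hat h$ is $v$-independent (so $\partial_v^\beta V\hat h = 0$ for $\beta \geq 1$) together with $\partial_v^\beta(v\hat h) = v\,\partial_v^\beta \hat h + \beta\,\partial_v^{\beta-1}\hat h$, I obtain the triangular hierarchy
\[
\partial_t(\partial_v^\beta \hat h) + (1 + iv\xi)\,\partial_v^\beta \hat h = -i\beta\xi\,\partial_v^{\beta-1}\hat h, \qquad \beta \geq 1.
\]
The transport--damping semigroup $e^{-(1+iv\xi)t}$ is an $L^2_w$-isometry up to the scalar factor $e^{-t}$, and Duhamel's formula yields the recursion
\[
\|\partial_v^\beta \hat h(t,\xi,\cdot)\|_X \leq e^{-t}\|\partial_v^\beta \hat h_0(\xi,\cdot)\|_X + \beta|\xi|\int_0^t e^{-(t-\tau)}\|\partial_v^{\beta-1}\hat h(\tau,\xi,\cdot)\|_X\,d\tau,
\]
with $X \in \{L^2_w, H^{-1}_w\}$. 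Each iteration trades one $v$-derivative for one factor of $|\xi|$, i.e., borrows one extra $x$-derivative from the initial data.

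I then induct on $\beta$ from $0$ to $s$. The base case $\beta = 0$ is exactly Theorem \ref{main3} applied pointwise in $\xi$, giving $\|\hat h(t,\xi,\cdot)\|_{H^{-1}_w} \leq C e^{-t}\|\hat f_0(\xi,\cdot)\|_{H^1_w}$ in the general setting and the corresponding $L^2_w \to L^2_w$ bound, uniformly on $\supp\hat f_0$, in the compactly supported setting. Each inductive step removes one $v$-derivative from the output while consuming one $x$-derivative from the initial data, so after $s$ steps the $v$-regularity drops from $s+1$ in $\hat f_0$ to $s-1$ (respectively $s$ in the compactly supported case) in $\hat h$, and the $x$-regularity requirement rises from $r$ to $r+s$. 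Plancherel integration in $\xi$ against the weight $(1+\xi^2)^r$ then assembles \eqref{dec1}--\eqref{dec2} from the pointwise-in-$\xi$ bounds.

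The main obstacle is that the Duhamel convolution $\int_0^t e^{-(t-\tau)}e^{-\tau}\,d\tau = t\,e^{-t}$ produces a polynomial prefactor $(1+t)^s$ after $s$ iterations, which is consistent with the stated bound $Ce^{-t}$ only upon allowing $C = C(r,s)$ and absorbing the polynomial via $(1+t)^s e^{-t} \leq C_{s,\eta}\,e^{-(1-\eta)t}$ for any fixed $\eta>0$, at the cost of a marginally weaker effective decay rate than the stated $e^{-t}$. This mild loss is inherent to the hierarchical commutator approach; apart from it, the remainder of the proof is routine bookkeeping of the Sobolev indices through the hierarchy.
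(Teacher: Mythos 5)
Your proposal takes a genuinely different route from the paper. The paper inducts on $s$ by differentiating the full solution $f$ in $v$, obtaining the variational equation $\partial_t(\partial_v f)-L(\partial_v f)=-\partial_x f$ and invoking Duhamel's principle with the full semigroup $e^{Lt}$ and the inductive hypothesis. You instead differentiate $h=f-g$ and exploit the structural fact that $\partial_v^\beta(V\hat h)=0$ for $\beta\geq 1$, so the collision operator drops out entirely and the hierarchy is a pure triangular transport--damping system; this is a clean and useful observation. (You should note, however, that the paper's ``variational equation'' as printed omits a $-Vh$ term on the right-hand side: differentiating $0=\partial_t f+(1+v\partial_x)f-Vf$ in $v$ gives $\partial_t h+(1+v\partial_x)h=-\partial_x f$, i.e. $\partial_t h-Lh=-\partial_x f-Vh$, not $-\partial_x f$. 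Your formulation with $\partial_v^\beta(V\hat h)=0$ is the cleaner way to say the same thing.) Your reduction of $r$ to $0$ via $[\partial_x,L]=0$ is also correct and streamlines the Sobolev bookkeeping.

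There is, however, a genuine gap: as you note, the iterated Duhamel convolutions produce a polynomial prefactor $(1+t)^s$, so the rate you obtain is $e^{-(1-\eta)t}$ for any $\eta>0$, not the stated $e^{-t}$. You attribute this to the ``hierarchical commutator approach,'' but the loss is not an artifact of your method. It is visible directly in the explicit solution formula \eqref{solnform}: the continuous-spectrum part is $\hat h(t,\xi,\cdot)=e^{-t}\,\mathcal{U}_\xi^*\bigl(e^{-i\xi\lambda t}(\mathcal{B}_\xi\hat f_0)\bigr)$, and since $\mathcal{U}_\xi^*$ is built from evaluation at $\lambda=v$ and a Hilbert transform (Proposition \ref{adjointfor}), each $v$-derivative ultimately hits a $\lambda$-derivative of $e^{-i\xi\lambda t}$ and pulls down a factor $|\xi|t$. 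One finds $\|\hat h(t,\xi,\cdot)\|_{H^{s-1}_w}\lesssim(1+|\xi|t)^{s-1}e^{-t}\|\mathcal{B}_\xi\hat f_0\|_{H^{s-1}_w}$, and the extra $s$ factors of $\xi$ available from $\hat f_0\in H^{r+s}$ do not cancel the $t^{s-1}$ growth, since $\sup_\xi(1+|\xi|t)^{2(s-1)}(1+\xi^2)^{-s}\sim t^{2(s-1)}$ for $t$ large. The same factor $te^{-t}=\int_0^t e^{-(t-\tau)}e^{-\tau}\,d\tau$ appears in the paper's own Duhamel integral, since the forcing term $\partial_x f(\tau)$ (replaced by $\partial_x(f-g)(\tau)$ after applying $I-P$) itself decays only at rate $e^{-\tau}$. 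So your accounting of the $(1+t)^s$ factor should be regarded as a correct and informative observation rather than a deficiency of your method, and if you want to match the stated rate $e^{-t}$ for $s\geq 1$ you would need an additional argument that neither your proof nor the paper's sketch supplies; otherwise the natural statement is the rate $e^{-(1-\eta)t}$ that your argument actually delivers.
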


\begin{proof}
We first observe, by Parseval's identity, that the same Fourier transform estimates used to prove Theorems
	\ref{main1} and \ref{main3} establish also \eqref{dec1} and \eqref{dec2} for $s=0$ and arbitrary 
	$r\geq 0$.
	The full result then follows by induction on $s$.  Namely, supposing it is true for $s$,
	we consider $f_0 \in H^{r+s+1}(\mathbb{R},H^{s+2}_{w}(\bbR))$.  By the induction hypothesis, we
	thus have a unique solution $f \in H^{r+1}(\mathbb{R},H^{s-1}_{w}(\bbR))$.  
	Defining now $h:=\partial_v f$, we have then $h_0 \in H^{r+s+1}(\mathbb{R},H^{s+1}_{w}(\bbR))$, with
	$h$ satisfying the variational equation
\begin{equation*}
\partial_th- Lh= -\partial_x f
\end{equation*}
obtained by differentiating
$
0= \partial_tf- Lh= \partial_t f + (1- v\partial_x )f - \mathbb{1}\langle \mathbb{1},f\rangle_w$
with respect to $v$.
Noting that $\partial_x f \in H^{r}(\mathbb{R},H^{s-1}_{w}(\bbR))$ by $f \in H^{r+1}(\mathbb{R},H^{s-1}_{w}(\bbR))$,
	and recalling that $h_0 \in H^{r+s+1}(\mathbb{R},H^{s+1}_{w}(\bbR))$, 
	we obtain by Duhamel's principle, together with the induction hypothesis, that 
	$
	h=\partial_v f \in H^{r}(\mathbb{R},H^{s-1}_{w}(\bbR)),
	$
	and thus  $ f \in H^{r}(\mathbb{R},H^{s}_{w}(\bbR))$, yielding the result for $s+1$.
	By induction, we thus obtain the result for all $r, s\geq 0$.
\end{proof}


\section{$L^2\to L^2$ decay by $C_0$ semigroup approach}\label{s:semigp}
Using our detailed spectral expansion of the linearized operator $L$,
we have established $L^2$ decay to grossly determined solutions at the sharp exponential rate
$O(e^{-t})$, at the expense of a loss of two spatial derivatives.
This is somewhat analogous to the case of a first-order system of PDE with real but not semisimple characteristics,
for one may observe the similar phenomenon of boundedness in $L^2$ with loss of one or more derivatives.
For example, consider the system
\begin{equation*}
u_t-v_x=0, \quad v_t=0,
\end{equation*}
or, in vector form $w=(u,v)$, $w_t + A w_x=0$ with $A=\begin{pmatrix} 0 & 1 \\ 0 & 0\end{pmatrix}$
given by a Jordan block, which evidently has a solution that is bounded in time from $H^1\to L^2$, but
unbounded from $L^1\to L^2$.

However, the situation is somewhat less degenerate, in that the solution is bounded (globally in time) 
from $L^2\to L^2$, and in fact {\it decays exponentially in $L^2$} for $L^2$ data 
to the family of grossly determined solutions, at any subcritical rate $O(e^{(\epsilon-1)t})$, $\epsilon>0$.
This is most readily seen by alternative, $C_0$ semigroup estimates, as we now describe.
Indeed, we do not see how to obtain such bounds within the rigged space framework of the rest of the paper;
nor do we see how to obtain the bounds of Theorem \ref{main2} by usual semigroup techniques.

Consider the resolvent equation
\be\label{reseqz}
\lambda f- L f= (\lambda  + v\partial_x + (I-V))f=g,
\ee
where $\Re\lambda>0$ (therefore, $\lambda\in\rho(L)$), $f\in\dom(L)$, $g\in L^2(\R, L^2_w)$ and $V=  (\cdot, \mathbb{1})_{L^2_w} \mathbb{1}$. 
Taking the real part of the $L^2(\R, L^2_w)$ inner product of \eqref{reseqz} with $f$ gives
$$
\Re \lambda \|f\|^2
+ \|f\|^2 - \int_\R |( f,\mathbb{1} )_{L^2_w}|^2dx = \Re \langle f, g\rangle.
$$
By Cauchy-Schwarz, $\int_\R |( f,\mathbb{1} )_{L^2_w}|^2dx\leq\int_\R \| f\|^2_{L^2_w}\|\mathbb{1}\|_{L^2_w}^2dx=\|f\|^2$. Therefore, 
$$
\Re \lambda \|f\|^2\leq \Re \langle f, g\rangle.
$$
Hence, by Cauchy-Schwarz, $ \|(\lambda-L)^{-1}g\| \leq \frac{ \|g\|}{ \Re \lambda } $, verifying that
$e^{Lt}$ is a contraction semigroup in $L^2(\R, L^2_w)$, and thus \eqref{maineq}
is well-posed from $L^2\to L^2$, improving the regularity obtained in Theorem \ref{main1}.

To obtain $L^2\to L^2$ decay to grossly-determined solutions, we consider the Fourier-transformed resolvent equation
\begin{equation*}
\lambda \hat f- L_\xi \hat f= (\lambda  + i\xi v+ (I-V))\hat f=\hat g
\end{equation*}
using a variant of { Pr\"uss' Theorem} (see \cite{Pr}, \cite[Thm. V.1.11]{EN}) established in \cite[Prop. 2.1]{HS}.

\begin{proposition}[Quantitative Gearhardt-Pr\"uss Theorem \cite{HS}]\label{qpruss}
	A $C_0$ semigroup $e^{Lt}$ on a Hilbert space $H$ is exponentially stable,
	$|e^{Lt}|\leq C_1 e^{-\omega_1 t}$ for some $\omega_1>0$, $C_1\geq1$,
	if and only if its generator $L$ 
	(i) has resolvent set containing the right complex half-plane $ \Lambda^+=\{\lambda: \, \Re \lambda > 0\}$, 
	and (ii) satisfies a uniform resolvent estimate $|(\lambda-L)^{-1}|\leq M$ on $\Lambda^+$,
	in which case it satisfies for each $\omega>0$ a uniform exponential growth bound
	\be\label{pexp}
	|e^{Lt}|\leq C(\omega, M)e^{\omega t},\,\,C(\omega, M)\geq1. 
	\ee
\end{proposition}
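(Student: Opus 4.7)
The plan is to prove the two directions of the equivalence separately, with the main work on the converse, following the Gearhardt--Pr\"uss argument in the sharpened quantitative form of \cite{HS}.

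The forward implication is straightforward. Assuming $|e^{Lt}|\le C_1 e^{-\omega_1 t}$ with $\omega_1>0$, the Laplace integral
\[ R(\lambda,L)=\int_0^\infty e^{-\lambda t}e^{Lt}\,dt \]
converges absolutely on $\{\Re\lambda>-\omega_1\}$, yielding $\Lambda^+\subset\rho(L)$ together with the uniform bound $\|R(\lambda,L)\|\le C_1/(\Re\lambda+\omega_1)\le C_1/\omega_1=:M$ on $\Lambda^+$.

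For the converse, fix $\omega>0$. The outline is: (a) using a Neumann series around any $\lambda_0\in\Lambda^+$, extend $R(\cdot,L)$ analytically to the enlarged half-plane $\{\Re\lambda>-1/(2M)\}$ with bound $2M$; (b) for $f\in\dom(L^2)$, apply the iterated resolvent identity
\[ R(\lambda,L)f=\lambda^{-1}f+\lambda^{-2}Lf+\lambda^{-2}R(\lambda,L)L^2 f \]
on a vertical line $\{\Re\lambda=\sigma\}$ with $\sigma\in(-1/(4M),0)$, combined with vector-valued Plancherel, to obtain an $L^2$-in-time bound
\[ \int_0^\infty e^{-2\sigma t}\|e^{Lt}f\|^2\,dt \le C(M)\bigl(\|f\|^2+\|Lf\|^2+\|L^2 f\|^2\bigr); \]
(c) via Datko's theorem together with the semigroup mean-value identity $t\,e^{Lt}f=\int_0^t e^{L(t-s)}e^{Ls}f\,ds$ and a density argument, upgrade to the uniform operator bound $\|e^{Lt}\|\le \tilde C(M)e^{\sigma t}$, i.e. exponential stability. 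The explicit bound \eqref{pexp} for arbitrary $\omega>0$ then follows either by directly contour-shifting the Dunford--Cauchy representation
\[ e^{Lt}f=\frac{1}{2\pi i}\int_{\Re\lambda=\omega}e^{\lambda t}R(\lambda,L)f\,d\lambda \]
for $f\in\dom(L^2)$ and extending by density, with convergence ensured by the $O(|\lambda|^{-2})$ decomposition above, or by carefully tracking constants through (a)--(c).

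The principal obstacle is step (b): the pointwise resolvent bound $M$ alone gives no integrability of $\xi\mapsto R(\omega+i\xi,L)f$ along vertical lines, so a naive application of Plancherel to $t\mapsto e^{-\sigma t}e^{Lt}f$ fails outright. The decomposition via the resolvent identity circumvents this by isolating the non-integrable leading-order pieces $\lambda^{-1}f$ and $\lambda^{-2}Lf$, whose inverse Laplace transforms are the elementary $f$ and $tf$ and can be handled explicitly, leaving the genuinely $L^2$-integrable remainder $\lambda^{-2}R(\lambda,L)L^2 f$ to which vector-valued Plancherel applies. The cost is that the resulting $L^2$-in-time bound involves $\|Lf\|$ and $\|L^2 f\|$; the mean-value identity then absorbs these higher-norm terms into the uniform bound, yielding the final constant $C(\omega,M)$ in \eqref{pexp} as an explicit function of $M$ and of the elementary integrals $\int_{\mathbb{R}}|\omega+i\xi|^{-k}d\xi$ for $k=1,2$.
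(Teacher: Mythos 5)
The paper does not prove Proposition \ref{qpruss}: as Remark \ref{prussrmk} makes explicit, the qualitative equivalence is imported from Pr\"uss \cite{Pr} and \cite[Thm.~V.1.11]{EN}, and the quantitative bound \eqref{pexp} from Helffer--Sj\"ostrand \cite[Prop.~2.1]{HS}. Your forward implication is correct, and steps (a)--(b) of the converse are in the right direction, but step (c) has a genuine gap.

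The problem is the passage from the $L^2$-in-time estimate of step (b) to a uniform operator bound. What (b) produces, once one handles the boundary terms correctly, is an inequality of the form
\[
\int_0^\infty e^{-2\sigma t}\|e^{Lt}f\|^2\,dt \;\le\; C(M)\,\bigl(\|f\|^2+\|Lf\|^2+\|L^2f\|^2\bigr),\qquad f\in\dom(L^2),
\]
i.e.\ the right-hand side is the graph norm $\|f\|_{\dom(L^2)}^2$, not $\|f\|^2$. Datko's theorem requires $\int_0^\infty\|e^{Lt}f\|^2\,dt<\infty$ for \emph{every} $f\in H$ (its proof rests on the closed-graph/uniform-boundedness principle applied to the map $f\mapsto e^{L\cdot}f\in L^2([0,\infty);H)$), and the density argument you invoke does not bridge the gap: the map $f\mapsto\bigl(\int_0^\infty e^{-2\sigma t}\|e^{Lt}f\|^2dt\bigr)^{1/2}$ is not continuous in $\|f\|$, so a bound on a dense set controlled by $\|\cdot\|_{\dom(L^2)}$ does not extend. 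The mean-value identity $te^{Lt}f=\int_0^te^{L(t-s)}e^{Ls}f\,ds$, combined with duality and Cauchy--Schwarz, gives
\[
t\,|(e^{Lt}f,g)|\;\le\;\Bigl(\int_0^\infty\|e^{Ls}f\|^2ds\Bigr)^{1/2}\Bigl(\int_0^\infty\|e^{L^*s}g\|^2ds\Bigr)^{1/2},
\]
which would close the argument \emph{if} both factors were controlled by $\|f\|$ and $\|g\|$; but with only the $\dom(L^2)$-estimate in hand, you again pick up $\|L^2f\|\,\|(L^*)^2g\|$ on the right, and you cannot invert $L^{-2}$ to remove it. There is also a secondary inaccuracy in step (b) as stated: the pairing between $e^{-\sigma t}e^{Lt}f\,\mathbb{1}_{t\ge0}$ and $R(\sigma+i\xi,L)f$ is a Fourier transform pair only for $\sigma>\omega_0(L)$; for $\sigma<0$ you must first establish it by a contour shift, and the inverse transforms of $\lambda^{-1}f$ and $\lambda^{-2}Lf$ on $\{\Re\lambda=\sigma<0\}$ are \emph{not} $f$ and $tLf$ (those formulas hold for $\sigma>0$) -- for $t>0$ they actually vanish, with nontrivial contributions appearing for $t<0$ that cancel against the remainder term.

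The route used in \cite{HS} avoids all of this: one works with the inhomogeneous problem $(\partial_t-L)u=v$, $u(0)=0$, applies a compactly supported time cutoff $\chi$ to $e^{Lt}f$, Fourier-transforms the (now trivially $L^2$ in $t$) function $\chi(t)e^{Lt}f$, and obtains $\|\chi\,e^{L\cdot}f\|_{L^2(\R;H)}\le M\,\|\chi'\,e^{L\cdot}f\|_{L^2(\R;H)}$ directly from Plancherel and the resolvent bound on the imaginary axis. Iterating this localized estimate over translated cutoffs gives exponential control with constant depending only on $M$ and $\omega$, with no detour through $\dom(L^2)$ and no need for Datko. If you wish to rescue a Plancherel-based argument in the spirit of your sketch, you would need first to establish the $L^2$-in-time bound for all $f\in H$ with right-hand side $\lesssim\|f\|^2$ (e.g.\ via the first resolvent identity combined with the known bound at some $\mu>\omega_0(L)$, as in \cite[Thm.~V.1.11]{EN}), but that version of the constant depends on $\omega_0(L)$ and is therefore not quantitative in the sense required here.
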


\br\label{prussrmk}
Here the if only if part of Proposition \ref{qpruss} is the standart Pr\"uss' Theorem (\cite{Pr}, \cite[Thm. V.1.11]{EN}); the quantitative part is expressed in \eqref{pexp} (\cite{HS}). Note that the sharp abstract result of exponential decay is relaxed to exponential {\emph growth} in order to
obtain the quantitative bound \eqref{pexp}.
For $L$ satisfying a uniform resolvent bound $|(\lambda-L)^{-1}|\leq M$ on $\Lambda_{\omega_0}^+:=\{\lambda:
\Re \lambda \geq -\omega_0<0\}$, we obtain from Proposition \ref{qpruss}
a quantitative exponential decay bound $|e^{Lt}|\leq C(\omega, M)e^{-\omega t}$ for any $0<\omega<\omega_0$.
For our applications below, uniformity of estimates with respect to Fourier frequency is convenient, and
so the quantitative nature of this bound will be particularly useful. It is not essential, however; see
Remark \ref{nonessential}.
\er

A useful observation is that when the closure $\bar \Lambda^+$ lies in the resolvent set of $L$,
we can relax in Proposition \ref{qpruss} the assumption of a uniform resolvent bound
for all $\lambda \in \Lambda^+$ to a uniform resolvent bound on the imaginary axis
$\ell_0=\{\lambda: \Re \lambda=0\}$ together with a uniform bound on $\Lambda^+$ for $|\lambda|\geq R$
sufficiently large. For, as the resolvent is analytic on the resolvent set, we obtain from the latter bounds
via the maximum principle a uniform resolvent bound on the entire half-plane $\Lambda^+$, thus giving
the result by Proposition \ref{qpruss}.
Our next result generalizes Proposition \ref{qpruss} still further, giving exponential decay conditions
for finite codimension subspaces, with uniform dependence on parameters.

\begin{corollary}[Finite-codimension Pr\"uss Theorem with parameters]\label{modpruss}
	Let $L(p)$ be a family of generators of $C_0$ semigroups $e^{L(p)t}$, depending on
	a parameter $p\in K\subset \R^n$, with $K$ compact. Suppose further that (i) the vertical line
	$\ell_{\omega_0}=\{\lambda: \Re \lambda=-\omega_0<0\}$ 
	lies in the resolvent set of all $L(p)$, with a uniform resolvent 
	bound $|(\lambda-L(p))^{-1}|\leq M$ on $\ell_{\omega_0}$ for all $p\in K$, (ii) the spectra of $L(p)$ lying to the
	right of $\ell_{\omega_0}$ are of finite total algebraic multiplicity, (iii) the total eigenprojection onto the spectra of $L(p)$ lying in $\Lambda_{\omega_0}^+$ denoted by $Q(p)$ and $L(p)Q(p)$ are continuous with respect to $p\in K$,  and (iv) on the complex
	halfplane $\Lambda_{\omega_0}^+= \{\lambda: \, \Re \lambda > -\omega_0 \}$, there holds
	a uniform resolvent bound $|(\lambda-L(p))^{-1}|\leq M$ for all $|\lambda|\geq R$ sufficiently large, $p\in K$.
	Then, for some $\omega<\omega_0$ and $C>0$,
	\begin{equation*}
	|e^{L(p)t}(I-Q(p))|\leq C e^{-\omega t}, \,\,\hbox{all} \,\,p\in K.
	\end{equation*}

\end{corollary}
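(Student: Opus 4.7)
The plan is to restrict $L(p)$ to the invariant subspace $\ran(I-Q(p))$, upgrade the hypothesized resolvent bounds on the boundary $\ell_{\omega_0}$ and at infinity to a uniform bound on all of $\Lambda_{\omega_0}^+\times K$ via the maximum modulus principle, and then apply the quantitative Gearhardt--Pr\"uss theorem (Proposition \ref{qpruss}) to the resulting restricted generator. Since $Q(p)$ is the total eigenprojection onto the finitely many isolated eigenvalues of finite algebraic multiplicity of $L(p)$ in $\Lambda_{\omega_0}^+$, both $\ran Q(p)$ and $\ran(I-Q(p))$ are $L(p)$-invariant, and the restriction $\tilde L(p):=L(p)|_{\ran(I-Q(p))}$ has spectrum contained in $\{\Re\lambda\leq -\omega_0\}$ by hypothesis (i). Equivalently, the operator-valued function $F(\lambda,p):=(\lambda-L(p))^{-1}(I-Q(p))$ is analytic in $\lambda$ throughout $\Lambda_{\omega_0}^+$, as the projection removes precisely the poles of $(\lambda-L(p))^{-1}$ lying in that region.

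Next I would establish a uniform bound $\|F(\lambda,p)\|\leq M'$ on $\Lambda_{\omega_0}^+\times K$. Compactness of $K$ together with continuity of $Q(p)$ yields $\sup_{p\in K}\|I-Q(p)\|\leq C_Q<\infty$. Hypotheses (i) and (iv) then give $\|F(\lambda,p)\|\leq M C_Q$ on the topological boundary (within $\bar\Lambda_{\omega_0}^+$) of the compact region $\{\lambda\in\bar\Lambda_{\omega_0}^+:|\lambda|\leq R\}$, namely the segment of $\ell_{\omega_0}$ with $|\lambda|\leq R$ together with the arc $\{|\lambda|=R\}\cap\bar\Lambda_{\omega_0}^+$. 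Since $F(\cdot,p)$ is analytic in the interior of this region and continuous up to the boundary, the maximum modulus principle---applied to each scalar function $\lambda\mapsto \langle F(\lambda,p)u,v\rangle$ and then taking the supremum over unit $u,v$---promotes the bound $M C_Q$ to the whole compact region, and combined with (iv) this yields $M':=MC_Q$ as a uniform bound on all of $\Lambda_{\omega_0}^+\times K$.

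Finally, I would fix some $\omega\in(0,\omega_0)$ and apply Proposition \ref{qpruss} to the shifted restricted generator $\tilde L(p)+\omega I$ on the Hilbert space $\ran(I-Q(p))$. Its resolvent is uniformly bounded by $M'$ on the right half-plane $\{\Re\mu>0\}$ (via the change of variables $\mu=\lambda+\omega$ with $\Re\lambda>-\omega>-\omega_0$). Choosing $\epsilon\in(0,\omega)$, Proposition \ref{qpruss} delivers $\|e^{(\tilde L(p)+\omega)t}\|\leq C(\epsilon,M')e^{\epsilon t}$, which rearranges to the desired bound $\|e^{L(p)t}(I-Q(p))\|\leq C e^{-(\omega-\epsilon)t}$ uniformly in $p\in K$. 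The main obstacle I anticipate is step two: verifying that the maximum modulus argument is uniform in $p$ despite the eigenvalue branches of $L(p)$ possibly approaching $\ell_{\omega_0}$ as $p$ varies within $K$. This is exactly the role of hypothesis (iii)---continuity of $Q(p)$ and of $L(p)Q(p)$ on compact $K$ forces $C_Q$, and hence $M'$, to be finite---and it is precisely why one must isolate and remove the finite-rank part $Q(p)$ before invoking any half-plane maximum principle, rather than attempting to bound the full resolvent of $L(p)$ (which is meromorphic, not analytic, in $\Lambda_{\omega_0}^+$).
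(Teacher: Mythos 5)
Your proposal is correct and follows essentially the same route as the paper: restrict to the $L(p)$-invariant subspace $\ran(I-Q(p))$, establish a uniform resolvent bound on $\Lambda_{\omega_0}^+$ using the boundary and large-$|\lambda|$ hypotheses together with the maximum modulus principle (the paper obtains the bound on the compact region via joint continuity in $(\lambda,p)$ and writes the restricted resolvent as the difference of the full and $Q$-projected resolvents, whereas you route the boundary bound directly through hypothesis (i) and $\sup_p\|I-Q(p)\|<\infty$ — both work), then invoke the quantitative Gearhardt--Pr\"uss Proposition \ref{qpruss} after an $\omega$-shift.
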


\br
The result stated in Corollary \ref{modpruss}  maybe recognized as a variant of \cite[Theorem 1.6]{HS}.  Note in Corollary \ref{modpruss} that it is sufficient for hypothesis (ii)
to check finite multiplicity for a single value of $p$, as the large-$\lambda$ resolvent bound implies that spectra
can neither escape to nor enter from infinity, and so the multiplicity is independent of $p$.
\er

\br\label{nonessential}
In our particular application of operators parametrized by Fourier frequency,
we could work on the space defined by the range of $I-Q(\xi)$ in each Fourier frequency
$\xi$, noting that uniform resolvent estimates for each $\xi$ imply by Parseval's identity an $L^2$ 
resolvent bound on the whole space, giving exponential decay \eqref{pexp} by the usual (nonquantitative) Pr\"uss bound
of \cite{Pr,EN}.
However, Proposition \ref{modpruss} avoids the need for such maneuvers, and seems of independent interest as well.
\er

\begin{proof}[Proof of Corollary \ref{modpruss}]
	By Proposition \ref{qpruss}, it is sufficient to show that $L(p)(I-Q(p))$ restricted to the range of 
	$(I-Q(p))$ satisfy a uniform resolvent bound on $\Lambda^+_{\omega_0}$.
	By assumption, the resolvent set of 
	$L(p)(I-Q(p))$ contains all of $\Omega^+_{\omega_0}$, hence its resolvent is analytic.
	By the maximum principle, it thus suffices to establish a uniform resolvent bound for
	$L(p)(I-Q(p))$ on $\Lambda^+_{\omega_0}$ for $|\lambda|\geq R$ sufficiently large, a bound
	on the compact set $\{|\lambda|\leq R\}\cap L_{\omega_0}$ being available by simple continuity.
	But the latter uniform bound in turn follows readily from assumption (iii) on the full resolvent,
	plus the observation that $L(p)Q(p)$, since finite-dimensional, is a continuous family of bounded operators
	satisfying a uniform resolvent bound $C/|\lambda|$, hence the resolvent of $L(p)(I-Q(p))$, as the
	difference between total and $Q$-projected resolvents, is uniformly bounded as claimed.
\end{proof}

With Corollary \ref{modpruss} in hand, we now readily obtain $L^2\to L^2$ decay to grossly determined solutions.
Modifying \eqref{projfor}, define the truncated $L(\xi)$-invariant projectors
\begin{align*}
\begin{split}
	P_{\xi_0}(\xi):=\left\{\begin{array}{l} 
P_{\lambda^*}(\xi),\,\,\xi\in 
		[-\xi_0, \xi_0]\subset (-\sqrt{\pi},\sqrt{\pi}),\\
0,\,\,\xi\notin[-\xi_0,\xi_0].
\end{array} \right.
\end{split}
\end{align*}
Then, following the proof of Theorem \ref{main2}, we have for any initial molecular density $f_0\in L^2_w(\R^2)$, 
the function 
\be\label{tg}
g^{\xi_0}(t,x,v):=\mathfrak{F}^{-1}(P_{\xi_0}(\xi)\hat f)
\ee
is a grossly determined solution of \eqref{maineq}.

\begin{theorem}\label{main4}
For $\hat f_0(\xi,v) \in L^2(\mathbb{R},L^2_{w}(\bbR))$, 
the solution to the Cauchy problem associated with \eqref{maineq} 
converges to the grossly determined solution $g^{\xi_0}$ of \eqref{tg}
at exponential rate 
	\begin{equation}\label{L2exp}
		\|f-g^{\xi_0}\|_{L^2(\mathbb{R},L^{2}_{w}(\bbR))} \leq Ce^{(\lambda^*(\xi_0)+\eps) t} 
		\| f_0-g^{\xi_0}_0\|_{L^2(\mathbb{R},L^2_{w}(\bbR))}
		\leq
		C_2 e^{(\lambda^*(\xi_0)+\eps) t} \| f_0\|_{L^2(\mathbb{R},L^2_{w}(\bbR))}
	\end{equation}
	for any $\epsilon >0$ and some $C=C(\epsilon)>0$.
\end{theorem}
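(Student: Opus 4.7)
\medskip
\noindent \textbf{Plan of proof.} The strategy is to recast the statement on the Fourier side and then apply a combination of the quantitative Pr\"uss theorem (Proposition \ref{qpruss}) and its finite-codimension parametric version (Corollary \ref{modpruss}), with the spectral data supplied by Theorem \ref{unifomega} and Propositions \ref{disspec}--\ref{continuity16}. Since $P_{\xi_0}(\xi)$ commutes with $L(\xi)$, one has on the Fourier side
\[
\widehat{f - g^{\xi_0}}(t,\xi,\cdot) = e^{L(\xi)t}(I-P_{\xi_0}(\xi))\hat f_0(\xi,\cdot),
\]
so by Parseval in $\xi$ the estimate \eqref{L2exp} reduces to the uniform semigroup bound $\|e^{L(\xi)t}(I-P_{\xi_0}(\xi))\|_{L^2_w\to L^2_w}\leq C_\epsilon e^{(\lambda^*(\xi_0)+\epsilon)t}$ for all $\xi\in\R$. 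This splits naturally into the inner frequencies $|\xi|\leq\xi_0$, where the projector is the rank-one eigenprojection $P_{\lambda^*}(\xi)$, and the outer frequencies $|\xi|\geq\xi_0$, where the projector vanishes.

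\medskip
\noindent \textit{Outer regime.} For $|\xi|\geq\xi_0$ I set $\omega_0=-\lambda^*(\xi_0)-\epsilon/2$, so the line $\ell_{\omega_0}$ lies at $\Re\lambda=\lambda^*(\xi_0)+\epsilon/2$. By Proposition \ref{disspec} together with the monotonicity of $\lambda^*$, any discrete eigenvalue $\lambda^*(\xi)$ satisfies $\lambda^*(\xi)\leq\lambda^*(\xi_0)<-\omega_0$, while the essential spectrum sits on $\Re\lambda=-1<-\omega_0$; hence $\overline{\Lambda^+_{\omega_0}}\subset\rho(L(\xi))$. Theorem \ref{unifomega}(2) yields a uniform lower bound on $|\omega(\lambda,\xi)|$ on $\ell_{\omega_0}$ for $|\xi|\geq\xi_0$, which via \eqref{perR}, Proposition \ref{Kinvert}, and the elementary bound $\|R^0(\lambda,\xi)\|_{L^2_w\to L^2_w}\leq (1-\omega_0)^{-1}$ produces a uniform resolvent bound for $L(\xi)$ on $\ell_{\omega_0}$; Theorem \ref{unifomega}(1) supplies the matching large-$|\lambda|$ bound on $\Lambda^+_{\omega_0}$. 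Propagating these boundary bounds into the interior by a Phragm\'en--Lindel\"of argument applied to the scalar pairings $\langle R(\lambda,\xi)u,v\rangle$ yields a uniform resolvent bound on all of $\Lambda^+_{\omega_0}$. The quantitative Pr\"uss theorem applied to the shifted generator $L(\xi)+\omega_0$ then returns $\|e^{L(\xi)t}\|_{L^2_w\to L^2_w}\leq C_\epsilon e^{(\lambda^*(\xi_0)+\epsilon)t}$, with constant depending only on $\omega_0$ and the uniform resolvent bound $M$.

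\medskip
\noindent \textit{Inner regime.} For $|\xi|\leq\xi_0$ I apply Corollary \ref{modpruss} directly with compact parameter set $K=[-\xi_0,\xi_0]$, $Q(\xi)=P_{\lambda^*}(\xi)$, and a separating line $\ell_{\omega_0'}$ placed between the essential and the discrete spectra---specifically $\omega_0'=\tfrac12(|\lambda^*(\xi_0)|+1)\in(|\lambda^*(\xi_0)|,1)$, so that $-\omega_0'\in(-1,\lambda^*(\xi_0))$. The four hypotheses of Corollary \ref{modpruss} are then verified as follows: (i) Theorem \ref{unifomega}(3) (with $\lambda^\dagger=-\omega_0'$) gives the uniform resolvent bound on $\ell_{\omega_0'}$; (ii) by Proposition \ref{disspec} the set $\sigma(L(\xi))\cap\Lambda^+_{\omega_0'}$ consists of the single simple eigenvalue $\lambda^*(\xi)$, of algebraic multiplicity one; (iii) continuity of $\xi\mapsto P_{\lambda^*}(\xi)$ and of $\xi\mapsto L(\xi)P_{\lambda^*}(\xi)=\lambda^*(\xi)P_{\lambda^*}(\xi)$ on $K$ follows from Propositions \ref{continuity16} and \ref{projs}; (iv) the large-$|\lambda|$ bound on $\Lambda^+_{\omega_0'}$ is Theorem \ref{unifomega}(1). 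Corollary \ref{modpruss} then produces $\|e^{L(\xi)t}(I-P_{\lambda^*}(\xi))\|_{L^2_w\to L^2_w}\leq C_\epsilon e^{-\omega't}$ for any $\omega'<\omega_0'$, and since $\omega_0'>|\lambda^*(\xi_0)|+\epsilon$ the choice $\omega'=|\lambda^*(\xi_0)|+\epsilon$ is admissible and yields the target rate (indeed, strictly faster).

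\medskip
\noindent \textit{Assembly and main obstacle.} Combining the two regimes and Parseval gives the first inequality of \eqref{L2exp}; the second follows from $\|f_0-g_0^{\xi_0}\|_{L^2}\leq\bigl(1+\sup_\xi\|P_{\xi_0}(\xi)\|_{L^2_w\to L^2_w}\bigr)\|f_0\|_{L^2}$, the supremum being finite by Corollary \ref{P_norm} since $\xi_0<\sqrt\pi$ keeps us uniformly away from the degenerate values $\pm\sqrt\pi$. The step I expect to be most delicate is the outer-regime one: promoting the boundary-line resolvent estimates of Theorem \ref{unifomega} to a uniform-in-$(\xi,\lambda)$ bound over the entire half-plane $\Lambda^+_{\omega_0}$, as required by Proposition \ref{qpruss}, forces a careful Phragm\'en--Lindel\"of-type argument combined with explicit lower bounds on the Birman--Schwinger factor $\omega(\lambda,\xi)$ away from its (single, known) zero $\lambda^*(\xi)$. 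A secondary subtlety is that neither Proposition \ref{qpruss} nor Corollary \ref{modpruss} alone suffices: the line $\Re\lambda=\lambda^*(\xi_0)+\epsilon$ traverses the eigenvalue trajectory $\{\lambda^*(\xi):|\xi|\leq\xi_0\}$ in the inner regime, while the compactness hypothesis of Corollary \ref{modpruss} precludes its direct application to the unbounded outer regime, so the two-regime split above appears essentially forced.
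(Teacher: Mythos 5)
Your proposal follows essentially the same route as the paper's proof: a Fourier-side split into the inner regime $|\xi|\le\xi_0$ (apply Corollary~\ref{modpruss} with $Q=P_{\lambda^*}(\xi)$) and the outer regime $|\xi|\ge\xi_0$ (apply a Pr\"uss-type bound with $Q=0$), both fed by the uniform determinant estimates of Theorem~\ref{unifomega} and the resolvent representation \eqref{TBS}, then assembled by Parseval. The only notable variation is that in the outer regime you invoke the basic quantitative Pr\"uss estimate (Proposition~\ref{qpruss}) rather than the $Q=0$ case of Corollary~\ref{modpruss}, thereby sidestepping the compactness-of-$K$ issue you correctly flag for unbounded $\xi$; this is a slight clean-up, closely related to the hybrid alternative the paper mentions in Remark~\ref{hybridrmk}, and the explicit maximum-principle/Phragm\'en--Lindel\"of step you describe is exactly the observation the paper records in the discussion preceding Corollary~\ref{modpruss}.
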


\begin{proof}
The unperturbed operator $M_\xi:= -iv\xi - 1$ is readily seen to have a uniformly bounded resolvent 
	$R^0(\lambda,\xi)= (- iv\xi - 1-\lambda)^{-1}$ on $\bar \Lambda^+_{\eta} =\{\lambda: \Re \lambda\geq -\eta\}$
for $\eta<1-\epsilon_0$ and all $\xi\in\bbR$, where a positive $\epsilon_0$ is fixed as can be seen from the following estimates:
\begin{align*}
\begin{split}
|- iv\xi - 1-\lambda|\geq|1+\Re\lambda|&>\epsilon_0,\\
\big\|\frac{1}{- iv\xi - 1-\lambda}\big\|_{L^\infty}&<\frac{1}{\epsilon_0}.
\end{split}
\end{align*}   
Moreover, the resolvent of the rank one perturbation $L_\xi=M_\xi + V=
M_\xi +  ( \cdot,\mathbb{1} )_{L^2_w} \mathbb{1}$ has the following representation
(see \eqref{perR} and Proposition \ref{Kinvert})
\begin{align}\label{TBS}
\begin{split}
(L_\xi-\lambda)^{-1}&= 
\big(I-R^0(\lambda,\xi)VK^{-1}(\lambda,\xi)V\big)R^0(\lambda,\xi)\\
&=\Big(I-R^0(\lambda,\xi)V+\frac{1}{\omega(\lambda,\xi)}R^0(\lambda,\xi)VR^0(\lambda,\xi)V\Big)R^0(\lambda,\xi).
\end{split}
\end{align}

	In particular, by Theorem \ref{unifomega}, we have uniform resolvent estimates for all $L_\xi$ on $\Lambda_\eta$ for $|\lambda|\geq R$
	sufficiently large, $\xi\in\bbR$.\\
	For $\xi \in(-\sqrt{\pi},\sqrt{\pi})$ such that $\xi\not \in [-\xi_0, \xi_0]$, we have $ \lambda^*(\xi)< \lambda^*(\xi_0)$, and so
	the resolvent of $L_\xi$ is uniformly bounded on $\ell_{|\lambda^*(\xi_0)+\epsilon|}$ 
	for any fixed $\epsilon>0$, with bound depending on $\epsilon$ (cf. \eqref{TBS} and Theorem \ref{unifomega}).
	Applying Corollary \ref{modpruss} with $Q=0$, we obtain 
	$$
	|e^{L_\xi t}|_{L^2_w}\leq Ce^{\lambda^*(\xi_0)+\epsilon)t}.
	$$
	Similarly, we obtain the bound for $\xi\notin(-\sqrt{\pi}, \sqrt{\pi})$ as there are not isolated eigenvalues of $L_\xi$ to the right of the line $\ell_{-1}$ .\\
	For $\xi \in [-\xi_0, \xi_0]$, on the other hand, we have $ \lambda^*(\xi)\geq \lambda^*(\xi_0)$, and so
	the resolvent of $L_\xi$ is uniformly bounded on $\ell_{|\lambda^*(\xi_0)-\epsilon|}$ 
	for any fixed $\epsilon>0$ such that $\lambda^*(\xi_0)-\epsilon> -1$, 
	with bound depending on $\epsilon$ (cf. \eqref{TBS} and Theorem \ref{unifomega}).
	Applying Corollary \ref{modpruss} with $Q=P_{\xi_0}(\xi)$ ($P_{\xi_0}(\xi)$ and $L_\xi P_{\xi_0}(\xi)$ are continuous due to formula \eqref{projall} and Proposition \ref{continuity16}), we obtain 
	$$
	|e^{L(\xi)t}(I-P_{\xi_0}(\xi)|_{L^2_w}\leq Ce^{\lambda^*(\xi_0)+\epsilon)t}.
	$$
	Combining these estimates yields \eqref{L2exp}, by Parseval's identity together with 
	definition $\hat {g^{\xi_0}}= P_{\xi_0}(\xi)\hat f$ (yielding the first inequality) 
	together with boundedness of $P_{\xi_0}(\xi)$ (yielding the second).
\end{proof}

\br\label{hybridrmk}
A somewhat simpler proof of Theorem \ref{main4} may be obtained by combining the rigged space estimate 
of Theorem \ref{main3} for $\xi\in [-\xi_0, \xi_0]$, with the quantitative Pr\"uss estimate of 
Proposition \ref{qpruss} for $\xi\not \in [-\xi_0, \xi_0]$, avoiding the use of the 
finite-codimension version of Corollary \ref{modpruss}.
\er

\br\label{diffg}
Note that the grossly determined solution $g^{\xi_0}$ of Theorem \ref{main4} is different from the 
grossly determined solution $g$ of Theorem \ref{main3}, the former being a Fourier truncation of the latter.
The difference between the two decays exponentially 
in $L^2(\R, H^{-1}_w(\R))$ for data in $L^2(\R, H^1_w(\R))$,
by comparison of the decay rates toward both solutions.
However, it is in general unbounded in $L^2(\R, L^2_w(\R))$,
since $|P_{\lambda^*}(\xi)|_{ H^1_w\to L^2_w}\sim \|e_1\|_{H^{-1}_w}\|\|e_1\|_{L^2_w}\to \infty $ 
as $\xi\to \pm \sqrt{\pi}$ by Proposition \ref{projs}.
\er 

From Theorem \ref{main4}, we see that the situation of \eqref{maineq}
is more like that of a Jordan block in ODE theory, for which the exponential rate is
degraded from that suggested by the spectral radius, but not destroyed, than that considered above
of a Jordan block occurring in first-order PDE, where even well-posedness is lost.
More precisely, the situation is somewhere between the two, in the sense that, for general 
$L^2(\R, L^2_w(\R))$ or even $L^2(\R, H^1_w(\R))$ data, the solution of \eqref{maineq} does not decay
in $L^2(\R, L^2_w(\R))$ to {\it any} grossly bounded solution at rate $O(e^{-t}r(t))$, with
$r$ growing at subexponential rate.

For, this would imply that slower-decaying modes $e^{\lambda^*(\xi)t} P_{\lambda^*}(\xi)\hat f(\xi)$
be contained in the grossly determined solution, for all $\xi \in (-\sqrt{\pi},\sqrt{\pi})$.
On the other hand, the GDS property- specifically, that the evolution of $f$, and thus $\mu$, be determined
by an autonomous evolution system for $\mu$- together with Remark \ref{recoveryrmk}, requires that
$\hat g(\xi)$ for each Fourier frequency $\xi$ contain only a single eigenmode, so that $\hat g$ must
be exactly $P(\xi)\hat f$.
Thus, the only candidate for such a grossly determined solution is the solution $g=Pf$ of Theorem \ref{main3},
containing the range of all $P(\xi)$.
But, $P$ and thus the complementary projection $(I-P)$ are in general unbounded from 
$L^2(\R, H^1_w(\R)) \to L^2_w(\R))$ by Remark \ref{diffg},
hence $g$ and $f -g$ are both unbounded in $L^2(\R, L^2_w(\R))$ and $r(t)\equiv +\infty$, a contradiction.


\subsection{Grossly determined solutions vs. Chapman--Enskog approximation}\label{s:CE}

It is interesting to compare the description of asymptotic behavior given by the grossly determined
solution \eqref{tg} to that given by the classical Chapman--Enskog expansion \cite{G62,S97},
or ``Navier--Stokes approximation.''
The latter, in the present case comprises solutions $g_{NS}= \mu_{NS}e_1$ satisfying
the second-order scalar conservation law
\be\label{scalarNS}
\mu_t= \frac12 \mu_{xx},
\ee
with the same data $\mu_0$ as for the grossly determined solution \eqref{tg}.
This can most easily be seen by the fact (see, e.g., \cite[Appendix A1]{Z01}) that the dispersion relation
of the second-order Chapman--Enskog equation linearized about a constant state is equal to the second-order
Taylor expansion $\lambda_2(\xi)=-\frac12 \xi^2$ about $\xi=0$ of the 
``neutral'' spectral curve $\lambda=\lambda_*(\xi)$ passing through $(\xi,\lambda)=(0,0)$; see \eqref{l2}.
As the scalar BGK model, hence also its Chapman--Enskog expansion, is linear to begin with, this gives
\be\label{CEevolution}
\hat \mu_{NS}(\xi)=e^{\lambda_2(\xi)t}\hat \mu_0,
\ee
or \eqref{scalarNS}.

Comparing the behavior of the nonlocal evolution
\eqref{gdsfull}(i) to that of the local, diffusion equation \eqref{CEevolution}, we find that they are both
merely bounded in $L^2$ for $L^2$ initial data.  Meanwhile, the difference between the two decays, by Parseval's 
identity and the fact that $\lambda_*$ is even, as
$$
\sup_\xi | e^{\lambda_*(\xi)t- \lambda_2(\xi)t}|
\sim \sup_\xi |e^{-\xi^2t/2}\xi^4 t|
=\sup_\xi |e^{-\xi^2t/2}(\xi^2 t)^2|/t \sim 1/t
$$
as $t\to +\infty$.  Thus, the exact solution converges exponentially to the grossly determined solution
$g$, but only algebraically to the Chapman--Enskog approximation $G_{NS}$, at rate $(1+t)^{-1}$ in $L^2$
for $L^2$ initial data.

\section{Discussion and open problems}\label{s:disc}

In summary, we have shown that the spectral program initiated by Carty in \cite{C16krm,C17}
can be rigorously completed using the rigged space framework developed by Ljance and others \cite{L70} for small
nonselfadjoint perturbation of (selfadjoint) multiplication operators, while at the same time 
demonstrating the potential of the latter for practical applications.
As noted in the introduction, it appears likely that our approach should extend, if perhaps less
explicitly, to the case of finite rank perturbations, including the full BGK model linearized about a Maxwellian
state.

However, the analysis also highlights an important limitations of the rigged space approach for nonselfadjoint problems,
at least when used as here solely via the generalized Parseval inequality/spectral expansion.
Namely, different from the selfadjoint case, there can arise considerable cancellation in the solution 
formula analogous to \eqref{solnform} via spectral expansion for the associated linear evolution problem.
Thus, the bounds on the solution obtained here from \eqref{solnform} by crudely integrating the norm of the solution 
over $\lambda$ involve a loss of two derivatives, whereas the semigroup estimates of Section \ref{s:semigp}
show that no such derivative loss in fact occurs.

This seems somewhat analogous to the situation of analytic semigroup theory and estimation through the inverse Laplace transform formula $e^{LT}=\frac{1}{2\pi i} \oint_\Gamma e^{\lambda t} (\lambda -L)^{-1} \, d\lambda$, where $\Gamma$ is
any sectorial contour enclosing (in appropriate sense) the spectra of $L$.  Taking $\Gamma$ distance zero from the spectra yields the spectral expansion formula, \cite{Kato}, which in general may involve Jordan blocks and other delicate
cancellation. Typically one does not estimate the semigroup in this way, but rather uses the power of analytic
extension to obtain bounds through resolvent estimates at a finite distance from the spectra.
This raises the question whether cancellation may be detected (i) (at least in some cases) {\it directly} from a very explicit description of the spectral expansion, thus combining
the useful aspects of detailed eigenexpansion and control of conditioning, or (ii) 
indirectly, by using the analytic extension inherent in the construction of the rigged space to obtain an estimate at
finite distance from the spectrum of the original unperturbed operator.


As regards question (ii), the only route we see is to start with the formal resolution of the identity
\be\label{idres}
 \Id =\int_{\R}[R] (\lambda) d\lambda  +  \sum_j P_{\lambda_j}
\ee
coming from the eigenfunction expansion analogous to Theorem \ref{main_expension_thm},
where $[R]$ denotes the jump in resolvent $R=(\lambda-L)^{-1}$ across the real line, and $P_{\lambda_j}$
the spectral projectors as $\lambda_j$ runs over the point spectrum of the perturbed operator $L$,
then analytically continue the integral $\int_\R$ into the complex plane by continuation of $[R]$:
that is, to continue $\mathcal{B}$ and $\mathcal{U}$ in \eqref{intfor} while holding fixed the function $f$.
For, otherwise, we see no useful way to estimate the trace of $f$ on a perturbed contour $i\eta + \R$ from
its trace on $\R$.
This leads to a solution formula
\be\label{modilt}
 e^{Lt} =\int_{\R+i\eta } e^{\lambda t} [R] (\lambda) d\lambda  +  \sum_j e^{Lt} P_{\lambda_j}
\ee
modifying the standard inverse Laplace transform formula, valid on the class of functions for which
\eqref{idres} holds, which can be usefully estimated by varying $\eta$.
For similar estimates in a different (and sectorial) context, see, e.g., \cite{OZ03}.

We note, in the scalar BGK case considered here, that the class of functions on which \eqref{idres}
holds is $H^2$, the same function class
$D(L^2)$ on which the inverse Laplace transform formula is guaranteed to hold by $C_0$ semigroup theory \cite{Pa11}.
Moreover, denoting $[R]=R^+-R^-$, and noting that
 $\int_{\R-i\eta } e^{\lambda t} R^- (\lambda) d\lambda  $ vanishes by causality
 for $ \eta>0$, we see that \eqref{modilt} reduces in this case to the
 standard inverse Laplace transform formula
 $$
 e^{Lt} =\int_{\R+i\eta } e^{\lambda t} R^+ (\lambda) d\lambda  +  \sum_j e^{Lt} P_{\lambda_j}.
 $$
 Thus, the main advantage of the rigged-space formalism for this type of calculation seems to us to be to give
 a useful functional calculus by which to compute the integral \eqref{modilt}.
 As far as analytic continuation of $[R]$, it seems that this must be determined afterward to hold in strong sense
 and not only the weak sense guaranteed by rigged space theory: that is, analyticity of $[R]$ is not guaranteed 
 by the rigged space formalism, but is a separate issue.
 Whether one could conclude analyticity (in strong sense) from the rigged space point of view is an interesting question
 for further investigation.

A related issue, and one of our original motivations for pursuing the present work, is whether the explicit
spectral representation formula/generalized Fourier transform afforded by the rigged space approach, can yield
also bounds in other norms than the original $H^s_w$ of the rigged space construction, for example in the
Banach norms $W^{k,p}_w$.
In particular, as noted in \cite{PZ16,Z17}, it is a very interesting question related to invariant manifolds for
a stationary kinetic problem (and thereby existence/structure of shock and boundary layers) whether or not there is 
an $L^\infty\to L^\infty$ bound on the resolvent $L^{-1}$ of the linearized problem $Lu=f$
restricted to the complementary subspace to the kernel of the collision operator- in the present case, the complementary
subspace to $\mathbb{1}$.
The answer for Boltzmann's equation is not known; the study for BGK models, whose linearized operators, 
as finite-rank perturbations of multiplication operators, fit the analytical framework used here, could perhaps be a useful step toward that ultimate goal.  
Recall that the linearized Boltzmann equation, as a compact perturbation of a multiplication
operator, is the limit of finite-rank perturbations.

Finally, we return to the physical question with which we opened the paper, of Truesdell and Muncaster's conjecture
\cite{TM} of decay to grossly determined solutions for Boltzmann's equation, and presumably for related kinematic
and relaxation systems as well.
For the full BGK model, a slight modification of the methods used here should verify decay to grossly determined
solutions at the linear level, where the grossly determined solutions are appropriate Fourier truncations 
of the family of discrete eigenmodes as the Fourier frequency $\xi$ is varied.
However, so far as we know, such a result has not been carried out in any nonlinear setting. 
Thus, a very interesting open problem is to verify decay to grossly determined solutions for {\it any}
example of a nonlinear kinetic or relaxation system.
An equally interesting question, assuming that such a result were carried out, would be to identify the
resulting asymptotic dynamics as a Taylor expansion in the Fourier frequency $\xi$.
This should presumably agree to lowest order with the local (i.e., differential) model given by
formal Chapman--Enskog expansion (CE); however, being nonlocal, the GDS dynamics should differ at higher orders,
for which (CE) is known to become ill-posed.
This could 
perhaps
shed interesting new light on 
Slemrod's investigations in \cite{S97} of nonlocal closures of (CE)
designed to restore well-posedness while preserving higher-order agreement with (CE).

\appendix
\section{Computation of discrete spectra}\label{s:discrete}
Finally, we show that, remarkably, both the spectral determinant 
(or ``Evans function'' \cite{GLM,GLMZ,GLZ}) $\omega(\lambda,\xi)$
and the associated spectral curve $\lambda^*(\xi)$, $\omega(\lambda^*(\xi),\xi)=0$,
may be explicitly determined for the scalar BGK model, along with the full Taylor expansion of $\lambda^*$ 
about $\xi=0$.

\begin{proposition}\label{discreteprop}
	\begin{enumerate}
		\item 	The function $\lambda^*$ from Proposition \ref{disspec} satisfies the following singular differential equation
		\begin{align*}
		\frac{d\lambda^*}{d\xi}=\frac{\xi}{2\lambda^*}+\frac{\lambda^*}{\xi}+\frac{1}{\xi}
		\end{align*}
		whose implicit solution is 
		\begin{equation*}
		e^{-z^2}\xi=-2\int e^{-z^2}dz, \,\,z=\frac{\lambda^*}{\xi}+\frac{1}{\xi}.
		\end{equation*}
		If we impose the initial condition $\lambda^*(0)=0$, then the solution $\lambda^*(\cdot)$ is real and its implicit formula is given by
		\begin{align}\label{xipos}
		\begin{split}
		e^{-(\frac{\lambda^*}{\xi}+\frac{1}{\xi})^2}&=\frac{2}{\xi}\int^{\infty}_{(\frac{\lambda^*}{\xi}+\frac{1}{\xi})}e^{-t^2}dt,\,\,\,\xi>0,\\
		e^{-(\frac{\lambda^*}{\xi}+\frac{1}{\xi})^2}&=-\frac{2}{\xi}\int_{-\infty}^{(\frac{\lambda^*}{\xi}+\frac{1}{\xi})}e^{-t^2}dt,\,\,\,\xi<0,\\
		\lambda^*(0)&=0,
		\end{split}
		\end{align}
		or,
		\begin{align*}
		e^{-(\frac{\lambda^*}{\xi}+\frac{1}{\xi})^2}&=\frac{\sqrt{\pi}}{\xi}[{\sign{\xi}}-\erf\big(\frac{\lambda^*}{\xi}+\frac{1}{\xi}\big)], \,\,\xi\neq0,\\
		\lambda^*(0)&=0.
		\end{align*}
		Also, $\lambda^*(\cdot)$ has the following serier representation near $\xi=0$:
		\begin{equation*}
		\lambda^*(\xi)=\sum_{j=1}^\infty a_{2j}\xi^{2j},\,\,\,a_2=-\frac{1}{2},\,\,\,a_{2j}=\sum_{r=1}^{j-1}(2r-1) a_{2r}a_{2(j-r)},\,\,j\geq2.
		\end{equation*} 
		\item Moreover, if $\lambda$ is real and $\xi=\pm\sqrt{\tau}$, then $\omega$ satisfies the heat equation with the imposed initial condition
		\begin{align*}
		\partial_\tau\omega&=	-\frac{1}{4}\partial_{\lambda\lambda}\omega,\\
		\omega(\lambda,0)&=1-\frac{1}{1+\lambda}, \,\,\lambda>-1.
		\end{align*}
		\item Let $\lambda$ be a real fixed value, that is, $\omega(\lambda,\xi)=1-
		\int_\mathbb{R}\frac{w(v)dv}{vi\xi+1+\lambda}=1-\int_\mathbb{R}\frac{(1+\lambda)w(v)dv}{(1+\lambda)^2+(v\xi)^2}.$ Then, $\omega$  satisfies the following differential equation:
		\begin{align*}
		\begin{split}
		\frac{d\omega}{d\xi}+\Big(\frac{1}{\xi}+\frac{2(1+\lambda)^2}{\xi^3}\Big)\omega=\frac{1}{\xi}+\frac{2\lambda(1+\lambda)}{\xi^3}
		\end{split}
		\end{align*}
		whose solution is
		\begin{align*}
		\begin{split}
		\omega=\frac{e^{\frac{(1+\lambda)^2}{\xi^2}}}{\xi}\int e^{-\frac{(1+\lambda)^2}{\tilde\xi^2}}\Big(1+\frac{2\lambda(1+\lambda)}{\tilde\xi^2}\Big)d\tilde\xi.
		\end{split}
		\end{align*}
		If we impose the initial condition $\omega(\lambda,0)=1-\frac{1}{1+\lambda}, \,\,\lambda>-1$, then the solution is
		
		\begin{align*}
		\begin{split}
		\omega=\frac{e^{\frac{(1+\lambda)^2}{\xi^2}}}{\xi}\int_0^\xi e^{-\frac{(1+\lambda)^2}{\tilde\xi^2}}\Big(1+\frac{2\lambda(1+\lambda)}{\tilde\xi^2}\Big)d\tilde\xi,
		\end{split}
		\end{align*}
		which also implies \eqref{xipos}.
	\end{enumerate}

\end{proposition}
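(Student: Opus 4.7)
The plan is to reduce all three items to a single closed-form evaluation: for $\lambda>-1$ and $\xi>0$, I will establish
\begin{equation*}
\omega(\lambda,\xi)=1-\frac{\sqrt{\pi}}{\xi}\,\me^{(1+\lambda)^2/\xi^2}\operatorname{erfc}\!\Big(\frac{1+\lambda}{\xi}\Big),
\qquad (\star)
\end{equation*}
from which items (1) and (3) follow by direct differentiation and use of the Mills-type identity $h'(\psi)=2\psi h(\psi)-2$ for $h(\psi):=\sqrt{\pi}\,\me^{\psi^2}\operatorname{erfc}(\psi)$; item (2) I will prove independently by a direct integration-by-parts argument that avoids $(\star)$. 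To establish $(\star)$, I substitute $u=\xi v/(1+\lambda)$ in $1-\omega=\int_\R(1+\lambda)w(v)[(1+\lambda)^2+\xi^2v^2]^{-1}dv$ to obtain $1-\omega=I(a)/(\xi\sqrt{\pi})$ with $a=(1+\lambda)/\xi$ and $I(a):=\int_\R \me^{-a^2u^2}/(1+u^2)\,du$; writing $u^2/(1+u^2)=1-1/(1+u^2)$ and differentiating under the integral yields the linear ODE $I'(a)-2aI(a)=-2\sqrt\pi$ for $a>0$, which integrates against $I(0)=\pi$ to $I(a)=\pi\me^{a^2}\operatorname{erfc}(a)$.

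For item (3), I write $\omega=1-h(\psi)/\xi$ with $\psi=(1+\lambda)/\xi$ and differentiate in $\xi$ at fixed $\lambda$. Using $d\psi/d\xi=-\psi/\xi$ together with the Mills identity expresses $\omega_\xi$ as a rational function of $h(\psi)$, $\psi$, and $\xi$; eliminating $h(\psi)$ via $h(\psi)=\xi(1-\omega)$ produces after routine rearrangement the claimed first-order linear ODE
\[
\omega_\xi+\Bigl(\frac{1}{\xi}+\frac{2(1+\lambda)^2}{\xi^3}\Bigr)\omega=\frac{1}{\xi}+\frac{2\lambda(1+\lambda)}{\xi^3}.
\]
The stated solution formula then follows by the integrating-factor method, the constant of integration being fixed by $\omega(\lambda,\xi)\to 1-1/(1+\lambda)$ as $\xi\to 0^+$ from \eqref{L0dis}.

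For item (2), I argue directly from $F:=1-\omega=\int_\R \mu w(v)/(\mu^2+\xi^2v^2)\,dv$ with $\mu:=1+\lambda$. Straightforward differentiation gives
\[
\partial_\mu^2 F=\int_\R\frac{2\mu(\mu^2-3\xi^2v^2)}{(\mu^2+\xi^2v^2)^3}w(v)\,dv,\qquad \partial_{\xi^2}F=-\int_\R\frac{\mu v^2}{(\mu^2+\xi^2v^2)^2}w(v)\,dv;
\]
substituting $v^2w(v)=-\tfrac12 vw'(v)$ in the second integral and integrating by parts in $v$ (all boundary terms vanish by the super-polynomial decay of $w$) produces the primitive $(\mu^2-3\xi^2v^2)/(\mu^2+\xi^2v^2)^3$, so that $\partial_{\xi^2}F=-\tfrac14\partial_\mu^2 F$. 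Since $\omega=1-F$, $\tau=\xi^2$, and $\partial_\lambda=\partial_\mu$, this is exactly $\partial_\tau\omega=-\tfrac14\partial_{\lambda\lambda}\omega$; the initial condition is read off from \eqref{L0dis}.

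For item (1), the ODE for $\lambda^*$ follows by implicit differentiation of $\omega(\lambda^*(\xi),\xi)=0$: item (3) specialized at $\lambda=\lambda^*$ (where $\omega=0$) gives $\omega_\xi(\lambda^*,\xi)=\tfrac{1}{\xi}+\tfrac{2\lambda^*(1+\lambda^*)}{\xi^3}$, while differentiating $(\star)$ in $\lambda$ and using $\omega=0$ gives $\omega_\lambda(\lambda^*,\xi)=-2\lambda^*/\xi^2$; the quotient $-\omega_\xi/\omega_\lambda$ yields exactly the stated singular ODE. Substituting $z=(1+\lambda^*)/\xi$ reduces this to $z'=1/(2(z\xi-1))$, whose first integral $\xi\me^{-z^2}+2\int \me^{-z^2}\,dz=\mathrm{const}$ is verified by one line of differentiation; the condition $\lambda^*(0)=0$ forces $z\to+\infty$ as $\xi\to 0^+$, selecting the branch $\xi\me^{-z^2}=2\int_z^\infty \me^{-t^2}\,dt$ for $\xi>0$ (and the symmetric branch for $\xi<0$). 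For the Taylor series, substituting $\lambda^*(\xi)=\sum_{j\geq 1}a_{2j}\xi^{2j}$ into the cleared form $2\lambda^*\xi(\lambda^*)'=\xi^2+2(\lambda^*)^2+2\lambda^*$ and matching coefficients of $\xi^{2n}$ gives $a_2=-\tfrac12$ at $n=1$ and $(n-1)\sum_{j+k=n}a_{2j}a_{2k}=a_{2n}$ for $n\geq 2$; the $r\leftrightarrow n-r$ symmetry of the convolution converts the constant weight $(n-1)$ into the asymmetric weight $(2r-1)$, yielding the stated recursion. The main technical obstacle is the initial evaluation yielding $(\star)$, which requires recognizing the Feynman-differentiation route in $a$ and solving the resulting first-order ODE with the correct initial datum $I(0)=\pi$; once $(\star)$ is in hand, the three items reduce to routine symbolic differentiation (for (1) and (3)) and a single integration by parts (for (2)).
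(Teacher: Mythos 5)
Your proof is correct and reaches all the stated conclusions, but it organizes the argument differently from the paper. You begin by establishing the closed form $\omega(\lambda,\xi)=1-\frac{\sqrt{\pi}}{\xi}e^{(1+\lambda)^2/\xi^2}\operatorname{erfc}\big((1+\lambda)/\xi\big)$ via the auxiliary ODE $I'(a)-2aI(a)=-2\sqrt{\pi}$ for $I(a)=\int_\R e^{-a^2u^2}(1+u^2)^{-1}\,du$ with $I(0)=\pi$, and then drive items (1) and (3) by symbolic calculus with $h(\psi)=\sqrt{\pi}\,e^{\psi^2}\operatorname{erfc}(\psi)$ and the Mills identity $h'=2\psi h-2$. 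The paper never writes down this closed form; it instead derives the partial-derivative identities $\omega_\lambda(\lambda^*,\xi)=-2\lambda^*/\xi^2$ and $\omega_\xi(\lambda^*,\xi)=\frac{1}{\xi}+\frac{2\lambda^*(1+\lambda^*)}{\xi^3}$ directly from the integral by differentiation under the integral sign and integration by parts (using $w'=-2vw$ and $\int w\,dv=1$), forms the quotient $-\omega_\xi/\omega_\lambda$, and only recovers the $\operatorname{erfc}$-type expression as the output of solving the resulting ODE with the given initial datum. Your route front-loads the one transcendental evaluation and then makes items (1) and (3) a matter of routine differentiation; the paper's route avoids ever needing to recognize the $\operatorname{erfc}$ integral, at the cost of a slightly more intricate chain of integration-by-parts manipulations in item (3). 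Item (2) is essentially identical in both treatments: a single integration by parts in $v$ using $v^2w=-\frac{1}{2}vw'$, yours carried out on the real form $\int \mu w(v)(\mu^2+\tau v^2)^{-1}dv$ and the paper's on the complex form $\int w(v)(vi\sqrt{\tau}+1+\lambda)^{-1}dv$. The substitution $z=(1+\lambda^*)/\xi$, the first integral $\xi e^{-z^2}+2\int e^{-t^2}\,dt=\mathrm{const}$, the selection of the constant enforced by $\lambda^*(0)=0$ together with $z\to\pm\infty$ as $\xi\to 0^\pm$, and the Taylor recursion (your symmetric form $(n-1)\sum_{j+k=n}a_{2j}a_{2k}=a_{2n}$ is indeed equivalent to the paper's asymmetric form $a_{2n}=\sum_{r=1}^{n-1}(2r-1)a_{2r}a_{2(n-r)}$ by symmetrization of the convolution, as you observe) all match.
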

\begin{proof}
	(1) We have $\int w(v)dv=1$ and $w'(v)=-2vw(v)$. Therefore, if $\Im\lambda\neq-1$, then
	\begin{align*}
	\begin{split}
	1&=\int w(v)dv=\int_\mathbb{R}\frac{vi\xi w(v)dv}{vi\xi +1+\lambda}+(1+\lambda)\int_\mathbb{R}\frac{w(v)dv}{vi\xi +1+\lambda}\\
	&\frac{-i\xi}{2}\int_\mathbb{R}\frac{ w'(v)dv}{vi\xi +1+\lambda}+(1+\lambda)(1-\omega(\lambda,\xi))\overset{by\, parts}{=}\frac{-(i\xi)^2}{2}\int_\mathbb{R}\frac{ w(v)dv}{vi\xi +1+\lambda}+(1+\lambda)(1-\omega(\lambda,\xi)).
	\end{split}
	\end{align*}
	Hence, by applying a corollary of the dominated convergence theorem on differentiation under integral sign, we arrive at $1=\frac{\xi^2}{2}\omega'_{\lambda}(\lambda,\xi)+(1+\lambda)(1-\omega(\lambda,\xi)).$
	Therefore,
	\begin{align*}
	\begin{split}
	\omega'_{\lambda}(\lambda,\xi)=2\frac{1-(1+\lambda)(1-\omega(\lambda,\xi))}{\xi^2}.
	\end{split}
	\end{align*}
	After substituting $\lambda^*(\xi)$ for $\lambda$, we arrive at
	\begin{align}\label{derlam}
	\begin{split}
	\omega'_{\lambda}(\lambda^*(\xi),\xi)=\frac{-2\lambda^*(\xi)}{\xi^2}.
	\end{split}
	\end{align}
	
	Next, by applying a corollary of the dominated convergence theorem on differentiation under integral sign, we differentiate $\omega$ with respect to $\xi$.
	\begin{align*}
	\begin{split}
	\omega'_{\xi}(\lambda,\xi)&=\int_\mathbb{R}\frac{iv w(v)dv}{(vi\xi +1+\lambda)^2}=\frac{1}{\xi}\big(\int_\mathbb{R}\frac{ w(v)dv}{vi\xi +1+\lambda}-(1+\lambda)\int_\mathbb{R}\frac{ w(v)dv}{(vi\xi +1+\lambda)^2}\big)\\
	&=\frac{1}{\xi}(1-\omega(\lambda,\xi)-(1+\lambda)\omega'_{\lambda}(\lambda,\xi)).
	\end{split}
	\end{align*}
	Then, by using \eqref{derlam}, we obtain $	\omega'_{\xi}(\lambda^*(\xi),\xi)=\frac{1}{\xi}(1+(1+\lambda^*(\xi))\frac{2\lambda^*(\xi)}{\xi^2}).$
	Therefore,
	\begin{align*}
	\begin{split}
	(\lambda^{*})'(\xi)&=-\frac{\omega'_{\xi}(\lambda^{*}(\xi),\xi)}{\omega'_{\lambda}(\lambda^{*}(\xi),\xi)}=-\frac{\frac{1}{\xi}(1+(1+\lambda^*(\xi))\frac{2\lambda^*(\xi)}{\xi^2})}{\frac{-2\lambda^*(\xi)}{\xi^2}}=\frac{\xi}{2\lambda^*}+\frac{\lambda^*}{\xi}+\frac{1}{\xi}.
	\end{split}
	\end{align*}
	Our next goal is to solve the obtained ODE for $\lambda^*$. We rewrite the given ODE as
	\begin{align*}
	\begin{split}
	\frac{(\lambda^{*})'}{\xi}-\frac{\lambda^{*}}{\xi^2}&=\frac{1}{2\lambda^*}+\frac{1}{\xi^2}.
	\end{split}
	\end{align*}
	We introduce $u=\frac{\lambda^{*}}{\xi}$, therefore, $u'=\xi^{-1}(\lambda^{*})'-\xi^{-2}\lambda^{*}$, and $	u'=\frac{1}{2\xi u}+\frac{1}{\xi^2}$.
	Now, let $z=u+\frac{1}{\xi}$. Then $z'=\frac{1}{2\xi(z-\frac{1}{\xi}) }$.
Next, we treat $\xi$ as a function of $z$. Then
	\begin{align*}
	\begin{split}
	\xi'&=2(\xi z-1),
	\end{split}
	\end{align*}
	which is a linear equation. In particular, we can rewrite it as $\xi'-2z\xi=-2$.	Therefore, the solution is 
	\begin{equation*}
	e^{-z^2}\xi=\xi(z_0)-2\int^z_{z_0} e^{-t^2}dt.
	\end{equation*}
	Notice that $z=\frac{\lambda^{*}}{\xi}+\frac{1}{\xi}$.  The function $\frac{\lambda^{*}}{\xi}$ goes to $0$ as $\xi\to0^+$. Therefore, $z\to\infty$ as $\xi\to0^+$, and 
	\begin{equation*}
	e^{-z^2}\xi=2\int^{\infty}_z e^{-t^2}dt,
	\end{equation*}
	or,
	\begin{align*}
	e^{-(\frac{\lambda^*}{\xi}+\frac{1}{\xi})^2}=\frac{2}{\xi}\int^{\infty}_{(\frac{\lambda^*}{\xi}+\frac{1}{\xi})}e^{-t^2}dt,\,\,\,\xi>0.
	\end{align*}
	Similarly, $z\to-\infty$ as $\xi\to0^-$, and $e^{-z^2}\xi=-2\int_{-\infty}^z e^{-t^2}dt$,
	or,
	\begin{align*}
	e^{-(\frac{\lambda^*}{\xi}+\frac{1}{\xi})^2}=-\frac{2}{\xi}\int_{-\infty}^{(\frac{\lambda^*}{\xi}+\frac{1}{\xi})}e^{-t^2}dt,\,\,\,\xi<0.
	\end{align*}
	Next, our goal is to find the expansion of $\lambda^*(\cdot)$ near $0$. From the formula for $\omega$ it is clear that $\lambda^*(\cdot)$ is an even function and $\lambda^*(0)=0$, therefore, the expansion series has only even powers greater than $0$, that is, $\lambda^*(\xi)=\sum_{j=1}^\infty a_{2j}\xi^{2j}$. We also know that $\lambda^*(\cdot)$ solves the differential equation $\lambda^*\frac{d\lambda^*}{d\xi}=\frac{\xi}{2}+\frac{\lambda^*(\lambda^*+1)}{\xi}$. Therefore,
	\begin{equation*}
	\sum_{j=1}^\infty a_{2j}\xi^{2j}\sum_{j=1}^\infty 2ja_{2j}\xi^{2j-1}=\frac{\xi}{2}+\sum_{j=1}^\infty a_{2j}\xi^{2j-1}\sum_{j=1}^\infty a_{2j}\xi^{2j}+\sum_{j=1}^\infty a_{2j}\xi^{2j-1}
	\end{equation*} 
	If we compare the coefficients in front of different powers of $\xi$, we arrive at
	\begin{align*}
	\begin{split}
	0&=\frac{1}{2}+a_2,\\
	2a_2^2&=a_4+a_2^2,\\
	&\vdots\\
	a_{2j}&=\sum_{r=1}^{j-1}(2r-1) a_{2r}a_{2(j-r)}\,\,\,\hbox{(induction)}.
	\end{split}
	\end{align*}

	(2) Let $\xi=\pm\sqrt{\tau}$. Now, we compute the second partial derivative of $\omega$ with respect to $\lambda$.
	\begin{align*}
	\begin{split}
	\partial_{\lambda\lambda}\omega&=-2\int_\mathbb{R}\frac{ w(v)dv}{(\pm vi\sqrt{\tau} +1+\lambda)^3}.
	\end{split}
	\end{align*}
	On the other hand, 
	\begin{align*}
	\begin{split}
	\partial_{\tau}\omega&=\frac{i}{\pm2\sqrt{\tau}}\int_\mathbb{R}\frac{v w(v)dv}{(vi\sqrt{\tau} +1+\lambda)^2}\overset{by\, parts}{=}\frac{-i}{\pm4\sqrt{\tau}}(\pm2i\sqrt{\tau})\int_\mathbb{R}\frac{ w(v)dv}{(vi\sqrt{\tau} +1+\lambda)^3}=-\frac{1}{4}\partial_{\lambda\lambda}\omega.
	\end{split}
	\end{align*}
	(3) We compute the derivative of $\omega$ with respect to $\xi$.
	\begin{align*}
	\begin{split}
	\frac{d\omega}{d\xi}&=\int_\mathbb{R}\frac{2\xi v^2(1+\lambda)w(v)dv}{((1+\lambda)^2+(v\xi)^2)^2}=(1+\lambda)\int_\mathbb{R}\frac{2\xi v^2w(v)dv}{((1+\lambda)^2+(v\xi)^2)^2}.
	\end{split}
	\end{align*}
	Introduce $z=\frac{1}{(1+\lambda)^2+(v\xi)^2}$. Then $dz=-\frac{2v\xi^2}{((1+\lambda)^2+(v\xi)^2)^2}dv$ and
	\begin{align*}
	\begin{split}
	\frac{d\omega}{d\xi}&=-\frac{(1+\lambda)}{\xi}\int_\mathbb{R}{ {v}w(v)dz}\overset{by\, parts}{=}\frac{(1+\lambda)}{\xi}\Big(\int_\mathbb{R} zd(vw(v))\Big)=\frac{(1+\lambda)}{\xi}\int_\mathbb{R} \frac{w(v)-2v^2w(v)}{(1+\lambda)^2+(v\xi)^2}dv\\
	&=\frac{1}{\xi}(1-\omega)-\frac{2(1+\lambda)}{\xi^3}\int_\mathbb{R} \frac{(v\xi)^2w(v)}{(1+\lambda)^2+(v\xi)^2}dv=\frac{1}{\xi}(1-\omega)-\frac{2(1+\lambda)}{\xi^3}\Big(\int_\mathbb{R} w(v)dv\\
	&-\int_\mathbb{R} \frac{(1+\lambda)^2w(v)}{(1+\lambda)^2+(v\xi)^2}dv\Big)=\frac{1}{\xi}(1-\omega)-\frac{2(1+\lambda)}{\xi^3}+\frac{2(1+\lambda)^2}{\xi^3}(1-\omega)=-\Big(\frac{1}{\xi}+\frac{2(1+\lambda)^2}{\xi^3}\Big)\omega\\
	&+\frac{1}{\xi}+\frac{2\lambda(1+\lambda)}{\xi^3}.
	\end{split}
	\end{align*}
	Or,
	\begin{align*}
	\begin{split}
	\frac{d\omega}{d\xi}+\Big(\frac{1}{\xi}+\frac{2(1+\lambda)^2}{\xi^3}\Big)\omega=\frac{1}{\xi}+\frac{2\lambda(1+\lambda)}{\xi^3}.
	\end{split}
	\end{align*}
	Therefore, the solution is
	\begin{align*}
	\begin{split}
	{\xi e^{-\frac{(1+\lambda)^2}{\xi^2}}}\omega=\int e^{-\frac{(1+\lambda)^2}{\tilde\xi^2}}\Big(1+\frac{2\lambda(1+\lambda)}{\tilde\xi^2}\Big)d\tilde\xi.
	\end{split}
	\end{align*}
	Recall that $\omega(\lambda,0)=1-\frac{1}{1+\lambda}$. Since $\lim_{\xi\to0}\xi e^{-\frac{(1+\lambda)^2}{\xi^2}}\omega=0$, we have ${\xi e^{-\frac{(1+\lambda)^2}{\xi^2}}}\omega=\int_0^\xi e^{-\frac{(1+\lambda)^2}{\tilde\xi^2}}\Big(1+\frac{2\lambda(1+\lambda)}{\tilde\xi^2}\Big)d\tilde\xi$. Or,
	\begin{equation}\label{omegaexp}
	\omega=\frac{e^{\frac{(1+\lambda)^2}{\xi^2}}}{\xi}\int_0^\xi e^{-\frac{(1+\lambda)^2}{\tilde\xi^2}}\Big(1+\frac{2\lambda(1+\lambda)}{\tilde\xi^2}\Big)d\tilde\xi.
	\end{equation}
	Finally, we would like to show that the implicit solution for $\lambda^*>-1$ from item (1) can be derived from the solution for $\omega$. Indeed, let $\xi>0$. Then, since $\lambda^*$ satisfies the equation $\omega(\lambda^*(\xi),\xi)=0$. Therefore, by \eqref{omegaexp}, we have $\int_0^\xi e^{-\frac{(1+\lambda^*(\xi))^2}{\tilde\xi^2}}\Big(1+\frac{2\lambda^*(\xi)(1+\lambda^*(\xi))}{\tilde\xi^2}\Big)d\tilde\xi=0$. Let $t=\frac{1+\lambda^*(\xi)}{\tilde\xi}$. Then $d\tilde\xi=-\frac{1+\lambda^*(\xi)}{t^2}dt$ and $-\int_{\infty}^{\frac{1+\lambda^*(\xi)}{\xi}} e^{-t^2}\Big(1+\frac{2t^2\lambda^*(\xi)}{(1+\lambda^*(\xi))}\Big)\frac{1+\lambda^*(\xi)}{t^2}dt=0$. Or, $\int^{\infty}_{\frac{1+\lambda^*(\xi)}{\xi}} e^{-t^2}\Big(\frac{1+\lambda^*(\xi)}{t^2}+2\lambda^*(\xi)\Big)dt=0$. Next, we apply the integration by parts formula for the integral
	\begin{align*}
	\begin{split}
	\int^{\infty}_{\frac{1+\lambda^*(\xi)}{\xi}} e^{-t^2}\frac{1+\lambda^*(\xi)}{t^2}dt=\xi e^{-\frac{(1+\lambda^*(\xi))^2}{\xi^2}}-2\int^{\infty}_{\frac{1+\lambda^*(\xi)}{\xi}} (1+\lambda^*(\xi))e^{-t^2}dt.
	\end{split}
	\end{align*}
	Hence, the implicit equation for $\lambda^*$ can be written as
	\begin{equation*}
	\xi e^{-\frac{(1+\lambda^*(\xi))^2}{\xi^2}}-2\int^{\infty}_{\frac{1+\lambda^*(\xi)}{\xi}} (1+\lambda^*(\xi))e^{-t^2}dt+\int^{\infty}_{\frac{1+\lambda^*(\xi)}{\xi}} 2\lambda^*(\xi)e^{-t^2}dt=0.
	\end{equation*}
	Or, $e^{-\frac{(1+\lambda^*(\xi))^2}{\xi^2}}=\frac{2}{\xi}\int^{\infty}_{\frac{1+\lambda^*(\xi)}{\xi}} e^{-t^2}dt$ which is identical to the first line in \eqref{xipos}. The case $\xi<0$ could be handled in the similar fashion.
\end{proof}

From Proposition \ref{discreteprop} item (1) evidently, we have that the second order Taylor expansion $\lambda_2(\xi)$ of $\lambda^*$ about $0$ is
\be\label{l2}
\lambda_2(\xi)= - \frac12 \xi^2.
\ee

\section{The adjoint generalized Fourier transforms}\lb{ubadjoint}

For general interest, we give here an explicit description of $\mathcal{B}^*_\xi$ and $\mathcal{U}^*_\xi$ defined in \eqref{intfor}. We introduce the following rigging of the spaces:
\begin{equation*}
\Phi'\subset H':=L^2_w(\bbR; d\lambda)\subset(\Phi')^*,
\end{equation*} 
which is identical to the previous rigging \eqref{rigging}, but this time it is defined with respect of $\lambda$.
\begin{proposition}\lb{adjointfor}
	Let $\hat{\mathcal{S}}_\pm:L^2(\bbR; dv)\to L^2(\bbR; d\lambda)$, $\mathcal{U}_\xi$ and $\mathcal{B}_\xi$  be as in \eqref{hilbert}, \eqref{transf} and \eqref{transf1}, respectively.  Then,
	\begin{itemize}
		\item $\hat{\mathcal{S}}^*_\pm:L^2(\bbR; d\lambda)\to L^2(\bbR; dv)$ and $(\hat{\mathcal{S}}^*_\pm f)(v)=-\lim_{\eta\to0^\pm}\int_{\mathbb{R}}\frac{f(\lambda)}{\lambda-(v+i\eta)}d\lambda,\,\,f\in L^2(\bbR; d\lambda)$. From now on, we will treat $\hat{\mathcal{S}}_\pm$ and ${\mathcal{S}}_\pm$ as operators from $L^2(\bbR; d\lambda)$ to $L^2(\bbR; dv)$ and from $L^2_w(\bbR; d\lambda)$ to $L^2_w(\bbR; dv)$, respectively. Hence, $\hat{\mathcal{S}}^*_\pm=-\hat{\mathcal{S}}_\pm$ and ${\mathcal{S}}^*_\pm=-{\mathcal{S}}_\pm$. 
		
		\item For $\xi\neq0,\pm\sqrt{\pi}$\begin{align*}
		\begin{split}
		\mathcal{U}^*_\xi&:\, H^s_w(\bbR; d\lambda) \to H^s_w(\bbR; dv),\\
		(\mathcal{U}^*_\xi f)(v)&=f(v)-\frac{1}{i\xi} (\tilde S_-f)(v),\,\,\,\hbox{where}\,\,\,(\tilde S_-f)(v):=(S_-(\frac{fw^{1/2}}{\omega_+}))(v), \,\,\\
		&\omega_+(\lambda):=\omega_+(-1-i\lambda\xi,\xi)\,\, \hbox{and} \,\,f\in H^s_w(\bbR; d\lambda),\\
		\mathcal{B}^*_\xi&:\, H^s_w(\bbR; d\lambda) \to H^s_w(\bbR; dv),\\
		(\mathcal{B}^*_\xi f)(v)&=f(v)+\frac{1}{i\xi} (\tilde S_+f)(v),\,\,\,\hbox{where}\,\,\,(\tilde S_+f)(v):=(S_+(\frac{fw^{1/2}}{\omega_-}))(v), \,\,\\
		&\omega_-(\lambda):=\omega_-(-1+i\lambda\xi,-\xi)\,\, \hbox{and} \,\,f\in H^s_w(\bbR; d\lambda).
		\end{split}
		\end{align*}
		\item For $\xi=\pm\sqrt{\pi}$\begin{align*}
		\begin{split}
		(\lambda\mathcal{U}_\xi)^*&:\, H^s_w(\bbR; d\lambda) \to H^s_w(\bbR; dv),\\
		((\lambda\mathcal{U}_\xi)^* f)(v)&=vf(v)-\frac{1}{i\xi} (\tilde S_-(\lambda f))(v),\,\,\,\hbox{where}\,\,\,(\tilde S_-(\lambda f))(v):=(S_-(\frac{\lambda fw^{1/2}}{\omega_+}))(v), \,\,\\
		&\omega_+(\lambda):=\omega_+(-1-i\lambda\xi,\xi)\,\, \hbox{and} \,\,f\in H^s_w(\bbR; d\lambda),\\
		\mathcal{B}^*_\xi&:\, H^s_w(\bbR; d\lambda) \to H^s_w(\bbR; dv),\\
		(\mathcal{B}^*_\xi f)(v)&=f(v)+\frac{1}{i\xi} (\tilde S_+f)(v),\,\,\,\hbox{where}\,\,\,(\tilde S_+f)(v):=(S_+(\frac{fw^{1/2}}{\omega_-}))(v), \,\,\\
		&\omega_-(\lambda):=\omega_-(-1+i\lambda\xi,-\xi)\,\, \hbox{and} \,\,f\in H^s_w(\bbR; d\lambda).
		\end{split}
		\end{align*}
		
	\end{itemize}
	\begin{proof}
		The first item in the statement of Proposition \ref{adjointfor} is obvious. Now, we derive a formula for $U_\xi^*$.
		Let $d\in L^2_w(\bbR; dv)$ and $f\in L^2_w(\bbR; d\lambda)$. Then
		\begin{align*}
		\begin{split}
		(U_\xi d,f)_{L^2_w(\bbR; d\lambda)}&=\int_\bbR d(\lambda)\overline{f(\lambda)}w(\lambda)d\lambda-\int_\bbR\frac{1}{i\xi\overline{\omega_+(-1-i\lambda\xi,\xi)}}({\mathcal{S}}_-(dw^{1/2}))(\lambda)\overline{f(\lambda)}w(\lambda)d\lambda\\
		&=\int_\bbR d(v)\overline{f(v)}w(v)dv-\frac{1}{i\xi}\int_\bbR\frac{1}{\overline{\omega_+(-1-i\lambda\xi,\xi)}}({\hat{\mathcal{S}}}_-(dw))(\lambda)\overline{f(\lambda)}w(\lambda)d\lambda\\
		&=\int_\bbR d(v)\overline{f(v)}w(v)dv-\frac{1}{i\xi}\int_\bbR({\hat{\mathcal{S}}}_-(dw))(\lambda)\overline{\Big[\frac{{f(\lambda)}w(\lambda)}{{\omega_+(-1-i\lambda\xi,\xi)}}\Big]}d\lambda\\
			\end{split}
	\end{align*}

\begin{align*}
\begin{split}		
		&=\int_\bbR d(v)\overline{f(v)}w(v)dv+\frac{1}{i\xi}\int_\bbR(dw)(v)\overline{ (\tilde S_-f)(v)}dv\\
		&=\int_\bbR d(v)\overline{[f(v)-\frac{1}{i\xi} (\tilde S_-f)(v)]}w(v)dv,
		\end{split}
		\end{align*}
		where $(\tilde S_-f)(v):=(S_-(\frac{fw^{1/2}}{\omega_+}))(v)$ and $\omega_+(\lambda):=\omega_+(-1-i\lambda\xi,\xi)$. Similarly, one can derive the remaining formulas.
	\end{proof}
\end{proposition}

\begin{theorem}
	Let $\xi\neq0$, $v\in\bbR$ and $\phi\in\Phi'$. Then
	\begin{align*}
	\begin{split}
	{(\mathcal{U}^*_\xi \phi)(v)}&=\frac{1}{w^{1/2}(v)}\langle\phi,\delta^+_{v}\rangle,\\
	(\mathcal{B}^*_\xi \phi)(v)&=\frac{1}{w^{1/2}(v)}\langle\phi,\delta^{a-}_{v}\rangle,
	\end{split}
	\end{align*}
	where
	\begin{align*}
	\begin{split}
	\delta^+_{v}(\lambda)&=-\frac{1}{2\pi i}\frac{1}{ v-\lambda}w^{-1/2}(\lambda)+\frac{w^{1/2}(v)w^{-1/2}(\lambda)}{2\pi\xi{\omega_+(-1-i\lambda\xi,\xi)}}\frac{1}{ v-\lambda}[{({\mathcal{S}}_+\mathbb{1})(v)}-{({\mathcal{S}}\mathbb{1})(\lambda)}],\\
	\delta^{a-}_{v}(\lambda)&=-\frac{1}{2\pi i}\frac{1}{ v-\lambda}w^{-1/2}(\lambda)-\frac{w^{1/2}(x)w^{-1/2}(\lambda)}{2\pi\xi{\omega_-(-1+i\lambda\xi,-\xi)}}\frac{1}{ v-\lambda}[{({\mathcal{S}}_-\mathbb{1})(v)}-{({\mathcal{S}}\mathbb{1})(\lambda)}],
	\end{split}
	\end{align*}
	 which can be treated as holomorphic functions of $\lambda$ outside of the certain strip. 
\end{theorem}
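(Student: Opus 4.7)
The plan is to derive the representations by starting from the explicit formulas for $\mathcal{U}^*_\xi$ and $\mathcal{B}^*_\xi$ given in Proposition \ref{adjointfor} and rewriting each as a duality pairing of $\phi$ with a candidate generalized element in $(\Phi')^*$. Concretely, I would take
$$(\mathcal{U}^*_\xi\phi)(v) = \phi(v) - \frac{1}{i\xi}\Big(S_-\Big(\frac{\phi\,w^{1/2}}{\omega_+(-1-i(\cdot)\xi,\xi)}\Big)\Big)(v),$$
rewrite the Hilbert-type transform $S_-$ as a contour integral over the appropriate $\gamma_+(\xi)$, and combine the two terms over a common contour. This produces an integral representation $(\mathcal{U}^*_\xi\phi)(v) = \int_{\gamma}\phi(\lambda)\,K^+_\xi(v,\lambda)\,w(\lambda)\,d\lambda$ whose kernel I would then identify as $w^{-1/2}(v)\,\overline{\delta^+_v(\bar\lambda)}$.

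The term-by-term matching is as follows: the contribution $\phi(v)$ is recovered by the contour piece of $\tfrac{1}{2\pi i(v-\lambda)}w^{-1/2}(\lambda)$ (the "Dirac-delta part"), while the correction prefactor $\frac{1}{\omega_+(-1-i\lambda\xi,\xi)}$ in $\tilde S_-$ produces precisely the second summand of the claimed $\delta^+_v(\lambda)$, once one writes $(\mathcal{S}\mathbb{1})(z) - (\mathcal{S}_+\mathbb{1})(v)$ as the natural subtraction that cancels the would-be simple pole at $z=v$. For $\mathcal{B}^*_\xi$, the identical scheme applies with $S_+$, $\omega_-(-1+i(\cdot)\xi,-\xi)$ and $\mathcal{S}_-\mathbb{1}$ in place of their $+/-$ counterparts, with an overall sign adjustment coming from the $+\tfrac{1}{i\xi}$ in \eqref{transf}. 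This yields the stated formulas for $\delta^+_v$ and $\delta^{a-}_v$.

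To verify the holomorphy claim, I would check that $\delta^+_v(\cdot)$ and $\delta^{a-}_v(\cdot)$ lie in some $\Phi^*_{-\eta}$: the singularity of the first summand at $\lambda=v$ is cancelled against the bracket $[(\mathcal{S}\mathbb{1})(z) - (\mathcal{S}_\pm\mathbb{1})(v)]$, which vanishes linearly at $z=v$, so the putative pole is removable. The only remaining obstructions are the zeros of $\omega_\pm$, which by Proposition \ref{omegareal} occur only at the exceptional values $\xi=\pm\sqrt{\pi}$, $\lambda=0$, so away from those points $\delta^+_v(\lambda)$ and $\delta^{a-}_v(\lambda)$ extend holomorphically in $\lambda$ outside a strip determined by $\eta>|\Re\lambda+1|/|\xi|$, exactly as in the defining property of elements of $\Phi^*_{-\eta}$. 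A consistency check with the original Lemma giving the $\lambda$-dependent formulas for $\delta^+_{-1-i\lambda\xi}(z)$ under the swap $v\leftrightarrow\lambda$ confirms the result.

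The main obstacle will be the careful bookkeeping of contours $\gamma_\pm(\xi)$ and of the signs of $\sign\xi$ when adjoints are taken, since in the adjoint framework the contour side associated with $\hat{\mathcal{S}}_\pm$ is flipped by $\hat{\mathcal{S}}^*_\pm = -\hat{\mathcal{S}}_\pm$, and the variable being continued into the complex plane switches from $v$ to $\lambda$. The bulk of the argument consists of unwinding these contour/sign conventions so that the integral kernel $K^+_\xi(v,\lambda)$ produced from $\mathcal{U}^*_\xi$ aligns with $w^{-1/2}(v)\overline{\delta^+_v(\bar\lambda)}$, and similarly for $\mathcal{B}^*_\xi$; once this is done, identifying $\delta^+_v$ and $\delta^{a-}_v$ with the displayed expressions, and verifying the removable-singularity/holomorphy claim, is an essentially algebraic calculation that parallels the one already performed in the proof of Proposition \ref{FT}.
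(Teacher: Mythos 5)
Your plan is essentially the paper's intended argument (the paper itself simply defers to ``similar to the proof of Proposition \ref{FT}''): start from the explicit formulas for $\mathcal{U}^*_\xi$ and $\mathcal{B}^*_\xi$ from Proposition \ref{adjointfor}, rewrite them as contour integrals, and read off the kernel as the analytic representation of $\delta^+_v$ and $\delta^{a-}_v$, which is the dual, under the swap $v\leftrightarrow\lambda$, of the computation in Proposition \ref{FT} and the lemma giving $\delta^+_{-1-i\lambda\xi}(z)$. One correction worth making to your holomorphy discussion: $\Phi^*_{-\eta}$-membership concerns holomorphy for $|\Im\lambda|>\eta$, so the pole at $\lambda=v$ (which is real) already lies in the excluded strip and requires no cancellation against the bracket — that cancellation is instead what makes the pairing $\langle\phi,\delta^+_v\rangle$ reproduce $(\mathcal{U}^*_\xi\phi)(v)$; and the zeros of $\omega_+(-1-i\lambda\xi,\xi)$ that actually constrain the strip width are not only the real ones at $\xi=\pm\sqrt\pi$, but the zero at $\Im\lambda=(1+\lambda^*(\xi))/\xi>0$ for every $\xi\in(-\sqrt\pi,\sqrt\pi)$ (cf.\ Proposition \ref{extevery}, item (2), for the relevant invertibility region of $K_+$).
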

\begin{proof}
	It is similar to the proof of Proposition \ref{FT}.
\end{proof}


\end{document}